\newtheorem{theorem}{Theorem}[section]
\newtheorem{corollary}[theorem]{Corollary}
\newtheorem{claim}[]{Claim}
\newtheorem{lemma}[theorem]{Lemma}
\newtheorem{proposition}[theorem]{Proposition}
\theoremstyle{definition}
\newtheorem{definition}[theorem]{Definition}
\newtheorem{remark}[theorem]{Remark}
\newtheorem*{acknowledgements}{Acknowledgements}
\numberwithin{equation}{section}
\newcommand{\V}{\mathcal{V}}
\newcommand{\RV}{\mathcal{RV}}
\newcommand{\IV}{\mathcal{IV}}
\newcommand{\R}{\mathbb{R}}
\newcommand{\N}{\mathbb{N}}
\newcommand{\mH}{\mathcal{H}}
\newcommand{\F}{\mathcal{F}}
\newcommand{\C}{\mathcal{C}}
\newcommand{\Pcal}{\mathcal{P}}
\newcommand{\Om}{\Omega}
\newcommand{\mB}{\mathbb{B}}
\newcommand{\mAn}{\mathbb{A}}
\newcommand{\mZ}{\mathbb{Z}}
\newcommand{\Z}{\mathcal{Z}}
\newcommand{\f}{\mathbf{f}}
\newcommand{\M}{\mathbf{M}}
\newcommand{\mF}{\mathbf{F}}
\newcommand{\mI}{\mathbf{I}}
\newcommand{\mR}{\mathcal{R}}
\newcommand{\bmu}{\boldsymbol \mu}
\newcommand{\btau}{\boldsymbol \tau}
\newcommand{\bleta}{\boldsymbol \eta}
\newcommand{\md}{\mathbf{d}}
\newcommand{\tM}{\widetilde{M}}
\newcommand{\Area}{\textrm{Area}}
\newcommand{\VarTan}{\operatorname{VarTan}}
\newcommand{\spt}{\operatorname{spt}}
\newcommand{\dist}{\operatorname{dist}}
\newcommand{\Div}{\operatorname{div}}
\newcommand{\dom}{\operatorname{dom}}
\newcommand{\interior}{\operatorname{int}}
\newcommand{\Ric}{\operatorname{Ric}}
\newcommand{\Clos}{\operatorname{Clos}}
\newcommand{\rom}[1]{\expandafter\romannumeral #1}
\newcommand{\mfX}{\mathfrak{X}}
\newcommand{\wD}{d^{\sphericalangle}}
\newcommand{\wB}{B^{\sphericalangle}}
\newcommand{\wS}{S^{\sphericalangle}}
\newcommand{\wA}{A^{\sphericalangle}}
\newcommand{\bnu}{\boldsymbol \nu}
\newcommand{\mfa}{\mathfrak{a}}
\newcommand{\tx}{\tilde{x}}
\newcommand{\bd}{\partial}
\newcommand{\clos}{\operatorname{Clos}}
\newcommand{\laa}{\langle}
\newcommand{\raa}{\rangle}
\newcommand{\grad}{\nabla}
\newcommand{\eps}{\varepsilon}
\newcommand{\area}{\operatorname{area}}
\title[Min-max theory for FBMHs in locally wedge-shaped manifolds]{Min-max theory for free boundary minimal hypersurfaces in locally wedge-shaped manifolds}
\author[Liam Mazurowski]{Liam Mazurowski}
\address{Cornell University, Department of Mathematics, Ithaca, New York 14850}
\email{lmm334@cornell.edu}
\author{Tongrui Wang}
\address{Institute for Theoretical Sciences, Westlake Institute for Advanced Study, Westlake University, Hangzhou, Zhejiang, 310024, China}
\email{wangtongrui@westlake.edu.cn}
\begin{document}
\maketitle

\begin{abstract}
	We develop a min-max theory for the area functional in the class of locally wedge-shaped manifolds. Roughly speaking, a locally wedge-shaped manifold is a Riemannian manifold that is allowed to have both boundary and certain types of edges. Fix a dimension $3 \le n+1 \le 6$. As our main theorem, we prove that every compact locally wedge-shaped manifold $M^{n+1}$ with acute wedge angles contains a locally wedge-shaped free boundary minimal hypersurface $\Sigma^n$ which is smooth in its interior and on its faces and is $C^{2,\alpha}$ up to and including its edge. We can also handle the case of 90 degree wedge angles under an additional assumption.
\end{abstract}

%%%%%%%%%%%%%%%%%%%%%%%%%%%%%%%%%%%%%%%%%%%%%%
%%%%%%%%%%%%%%%%%%%%%%%%%%%%%%%%%%%%%%%%%%%%%%
%%%              Section 1                 %%%
%%%%%%%%%%%%%%%%%%%%%%%%%%%%%%%%%%%%%%%%%%%%%%
%%%%%%%%%%%%%%%%%%%%%%%%%%%%%%%%%%%%%%%%%%%%%%
\section{Introduction}

Minimal hypersurfaces are critical points of the area functional.  It is a classical problem in differential geometry to understand the space of minimal hypersurfaces in a given Riemannian manifold (possibly satisfying certain boundary conditions). At the most basic level, one can ask whether this space is non-empty, i.e. do minimal hypersurfaces always exist? This problem can be posed in a variety different contexts. 

\subsection{Minimal surfaces spanning a prescribed curve} The Plateau problem asks whether every Jordan curve in $\R^3$ can be spanned by a minimal disk. Around 1930, Douglas \cite{douglas1931solution} and Rado \cite{rado1930plateau} independently gave an affirmative answer to the Plateau problem: every Jordan curve in $\R^3$ can be spanned by an energy minimizing conformal map from a disk. This was perhaps the first general existence theorem for minimal surfaces. 

\subsection{Closed minimal hypersurfaces} Given a closed manifold $(M^{n+1},g)$, one can inquire about the existence of closed, embedded minimal hypersurfaces in $M$. When $n+1=2$, this reduces to the problem of finding closed geodesics. It is well-known that there is a length-minimizing closed geodesic in each non-trivial free homotopy class.  When $n+1 > 2$ and $H_n(M)$ is non-trivial, the methods of geometric measure theory give the existence of an area minimizer in each homology class. These area minimizers are smooth and embedded away from a singular set of codimension 7. In particular, they are completely smooth when $3 \le n+1 \le 7$. 

Returning again to the case of surfaces, length minimizing geodesics may not exist when $\pi_1(M)$ is trivial.  However, in this case, Birkhoff \cite{birkhoff1927dynamical} used a min-max argument and a curve shortening procedure to show that there exist closed geodesics of saddle type.  Thus every closed surface contains a closed geodesic. Likewise, in higher dimensions, area minimizing minimal hypersurfaces may not exist when $H_n(M)$ is trivial. However, one may hope to produce saddle-type minimal hypersurfaces via a min-max argument. 

Almgren \cite{almgren1965theory} used the techniques of geometric measure theory to develop a variational calculus in the large for the area functional, which is very general and applies in any dimension and codimension.  Almgren's work implies in particular that every closed manifold $(M^{n+1},g)$ contains a stationary codimension 1 varifold. This is a weak notion of minimal hypersurfaces. In the early 1980s, Pitts \cite{pitts2014existence} proved the regularity of Almgren's stationary varifold in ambient dimension $3\le n+1 \le 6$. Schoen and Simon \cite{schoen1981regularity} then showed that regularity in fact holds in dimensions $3 \le n+1 \le 7$, and more generally, that the varifold is smooth away from a set of codimension 7 in all dimensions. Thus the combined work of Almgren, Pitts, and Schoen-Simon implies that every closed manifold $(M^{n+1},g)$ with $3\le n+1\le 7$ contains a smooth, embedded minimal hypersurface. 

In the closed case, refinements of the Almgren-Pitts min-max theory have led to a very detailed understanding of the Morse theory of the area functional.  In the early 1980s, Yau \cite{yau1982seminar} conjectured based on Morse theoretic considerations that  every closed 3-manifold should in fact contain infinitely many distinct minimal surfaces. To each closed manifold, Gromov \cite{gromov2006dimension} associated a sequence of numbers $\{\omega_p\}_{p\in \N}$ called the {\em volume spectrum}. The numbers $\omega_p$ are defined via a min-max formula and act as a sort of non-linear analog to the spectrum of the Laplacian. Marques and Neves used the Almgren-Pitts min-max theory to show that, for each $p$, there is a collection of smooth, disjoint, embedded minimal hypersurfaces $\Sigma_1,\hdots,\Sigma_k \subset M$ and a collection of positive integer multiplicities $m_1,\hdots,m_k\in \N$ such that 
\[
\omega_p = \sum_{i=1}^k m_i\area(\Sigma_i). 
\]
In \cite{marques2016morse}, they moreover proved that $\sum_{i=1}^k \text{index}(\Sigma_i) \le p$. Marques and Neves conjectured that, for generic metrics, the multiplicities $m_i$ can all be taken equal to 1. This Multiplicity One Conjecture was later confirmed by Zhou \cite{zhou2020multiplicity}. The work of Marques-Neves \cite{marques2021morse} combined with Zhou's resolution of the multiplicity one conjecture yields the following: given a generic metric $g$ on a closed manifold $M^{n+1}$ with $3\le n+1 \le 7$, there exists a sequence $\{\Sigma_p\}_{p\in \N}$ of smooth, closed, embedded (possibly disconnected) minimal hypersurfaces such that $\area(\Sigma_p) = \omega_p$ and $\text{index}(\Sigma_p) = p$. This resolves Yau's conjecture in the generic case with very strong information about the sequence of infinitely many minimal hypersurfaces. Additionally, Yau's conjecture had earlier been proven in the generic case by Irie-Marques-Neves \cite{irie2018density}. We also mention that Marques-Neves-Song proved \cite{marques2019equidistribution} that generically a (possibly different) sequence of minimal hypersurfaces becomes equidistributed in $M$. Moreover, Song \cite{song2023existence} proved that Yau's conjecture holds for arbitrary metrics. 

\subsection{Free boundary minimal hypersurfaces} 

Given a set $A$, a free boundary minimal hypersurface is a minimal hypersurface whose boundary is constrained to lie on $A$ but is otherwise free to vary.  Courant \cite{courant2005dirichlet} was among the first to investigate the existence of free boundary minimal hypersurfaces. The typical problem he considered is the following: given a torus $T$ embedded in $\R^3$, find a free boundary minimal disk with boundary on $T$ which ``spans one of the holes in the torus.'' Like Douglas and Rado, Courant proved the existence of a solution in the form of a conformal energy minimizing map. Later Str\"uwe \cite{struwe1984free} proved the existence of saddle type solutions to the same problem. Gr\"uter and Jost \cite{gruter1986allard} used methods of geometric measure theory to show that any convex domain in $\R^3$ contains an unstable, embedded free boundary minimal disk. 

Given a smooth Riemannian manifold with boundary $(M,\bd M)$, one can look for free boundary minimal hypersurfaces with boundary constrained to lie on $\bd M$. 
In this setting, Almgren's variational calculus still produces stationary varifolds with free boundary. In \cite{li2021min}, Li and Zhou  developed a refined min-max procedure and proved the regularity of the resulting min-max free boundary minimal surfaces. In particular, their work shows that every $(M^{n+1},\bd M^{n})$ with $3 \le n+1\le 7$ contains a smooth, free boundary minimal hypersurface $(\Sigma^n, \bd \Sigma^{n-1})$.  The solution $\Sigma$ is embedded, although interior points of $\Sigma$ may touch the boundary of $M$. The result of Li and Zhou is significant in that no curvature assumption is imposed on $\bd M$. 

\subsection{Minimal surfaces in more singular ambient spaces}

One can ask whether minimal surfaces exist in spaces with edges, corners, and other singular features.  In a series of papers in the 1990s, Hildebrandt and Sauvigny \cite{hildebrandt1997minimal1}\cite{hildebrandt1997minimal2}\cite{hildebrandt1999minimal3}\cite{hildebrandt1999minimal4} studied minimal surfaces inside a wedge in $\R^3$. Among other things, they found that the properties of minimal surfaces in a wedge depend crucially on the wedge angle $\theta$. 

Recently there has been renewed interest in the existence of minimal surfaces and other related classes of surfaces in polyhedral-type domains. For example, capillary surfaces in polyhedra were a key ingredient in Li's proof of Gromov's dihedral rigidity conjecture \cite{li2020polyhedron}.  Also Edelen and Li \cite{edelen2022regularity} have proven an Allard-type regularity theorem for minimal hypersurfaces in locally polyhedral spaces. Their work in particular implies partial regularity of area minimizers in locally polyhedral spaces.  The second author's work on equivariant min-max theory can also be regarded as a min-max theory inside the possibly singular orbit space \cite{wang2022min}\cite{wang2023min}. 

\subsection{Locally wedge-shaped manifolds} The goal of this paper is to establish a min-max theory for the area functional in a class of spaces we call {\it locally wedge-shaped manifolds}. Loosely speaking, a locally wedge-shaped manifold $M^{n+1}$ is built up of three strata: an $n+1$ dimensional set of {\em interior points} near which $M$ looks locally like $\R^{n+1}$, an $n$ dimensional set of {\em face points} near which $M$ looks locally like a half-space in $\R^{n+1}$, and an $n-1$ dimensional set of {\em edge points} near which $M$ looks locally like a wedge in $\R^{n+1}$. Denote the interior points, face points, and edge points of $M$ by $\text{int}(M)$, $\bd^F M$, and $\bd^E M$, respectively. Near each edge point $p\in \bd^E M$, let $\theta(p)$ denote the wedge angle at $p$. We allow $\theta(p)$ to vary with $p$. Later, for the purpose of the min-max theory, we will insist that $\theta(p)\le \frac{\pi}{2}$ for all points $p\in \bd^E M$. This condition is important later for establishing regularity.  
\begin{figure}[h]
\centering
\includegraphics[width=2in]{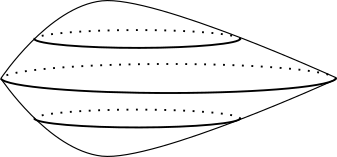}
\caption{A Locally Wedge-Shaped Manifold}
\label{figure:lwsm}
\end{figure}
Figure \ref{figure:lwsm} illustrates a simple example of a locally wedge-shaped manifold which is shaped like a mussel.   There is a single edge running along the center, an upper face, a lower face, and a 3-dimensional interior enclosed by the faces. Note that the wedge angle varies along the edge.  Precise definitions are given in Section \ref{Sec: preliminary}. 

Let $M$ be a locally wedge-shaped manifold and let $\Sigma^n \subset M^{n+1}$ be a locally wedge-shaped hypersurface. We say that $\Sigma$ is {\it properly embedded} in $M$ if $\Sigma$ is embedded in $M$ and $\text{int}(\Sigma)\subset \text{int}(M)$, $\bd^F\Sigma \subset \bd^F M$, and $\bd^E \Sigma \subset \bd^E M$.  Since this is not a closed condition, we will need to work more generally with {\it almost properly embedded} locally wedge-shaped hypersurfaces.  These are embedded locally wedge-shaped hypersurfaces $\Sigma^n \subset M^{n+1}$ for which $\bd^F\Sigma \subset \bd^F M\cup \bd^E M$ and $\bd^E\Sigma\subset \bd^EM$. However, interior points of $\Sigma$ are allowed to touch the faces of $M$, and face points of $\Sigma$ are allowed to touch the edge of $M$. 

\subsection{Main results} In Section \ref{Sec: preliminary} we define a notion of stationary locally wedge-shaped hypersurfaces with free boundary in $M$. An almost properly embedded locally wedge-shaped hypersurface $\Sigma^n \subset M^{n+1}$ is called {\em stationary with free boundary} if the first variation of area vanishes for all vector fields in a certain class $\mathfrak X(M,\Sigma)$ which depends on both $M$ and $\Sigma$.  Loosely speaking, the flow of a vector field in $\mathfrak X(M,\Sigma)$ must map $M$ into $M$, but is allowed to push interior points of $\Sigma$ lying on a face of $M$ into the interior of $M$, and is allowed to push face points of $\Sigma$ lying on the edge of $M$ into a face of $M$. Then the stationarity turns out to be equivalent to the following two conditions:  
\begin{itemize}
\item the mean curvature of $\Sigma$ vanishes at each point $p\in \text{int}(\Sigma)$, and
\item at each point $p\in \bd^F \Sigma$, the outward unit co-normal $\eta(p)$ meets at least one face of $M$ orthogonally. 
\end{itemize}
Note that the second condition ensures that classical free boundary minimal hypersurfaces (without edges) are stationary with free boundary and that limits of such hypersurfaces are also stationary with free boundary. 
\begin{figure}[h]
\centering
\begin{subfigure}{0.4\linewidth}
\centering
\includegraphics[width=2in]{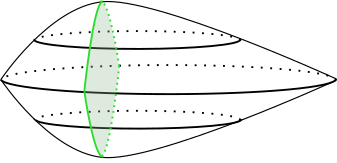} 
\caption{A free boundary minimal surface}
\end{subfigure}
\hspace{1cm}
\begin{subfigure}{0.3\linewidth}
\centering
\includegraphics[width=2in]{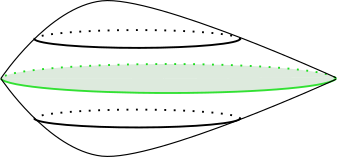}
\caption{A non-example}
\end{subfigure}
\caption{}
\label{figure:examples}
\end{figure}
Figure \ref{figure:examples}(A) shows a free boundary minimal surface, which is properly embedded with two edge points. The surface illustrated in Figure \ref{figure:examples}(B) is almost properly embedded but not properly embedded since face points of $\Sigma$ touch the edge of $M$. This surface is not stationary with free boundary according to our definition since the outward unit co-normal to $\Sigma$ is not perpendicular to either face of $M$. 

We note that the regularity of minimal hypersurfaces in general locally polyhedral spaces seems to be a delicate issue.  Indeed, by analogy with the Neumann problem in a Euclidean wedge, we expect that the optimal regularity of a minimal hypersurface at an edge point is only $C^{1,\alpha}$ when the wedge angle $\theta$ exceeds $\pi/2$. Our assumption that the wedge angle satisfies $\theta(p) \le \pi/2$ is imposed to guarantee a priori $C^{2,\alpha}$ regularity up to and including the edge. Curiously, although one might expect the best regularity when $\theta = \pi/2$, we find that this case requires an additional technical assumption in order for our arguments to carry through. 

Our main result is a min-max theorem which proves the existence of almost properly embedded free boundary minimal hypersurfaces in locally wedge-shaped manifolds. 

\begin{theorem}
\label{theorem:main} 
Let $M^{n+1}$ be a compact locally wedge-shaped manifold of dimension $3\le n+1\le 6$, and assume that $M$ is isometrically embedded in some Euclidean space $\R^L$.  Assume that for each connected component $\bd^E_i M$ of $\bd^E M$ either 
%\begin{align}
%    &\theta(p)<\frac{\pi}{2} \mbox{ for all } p\in \bd^E_i M ; \mbox{ or } \tag{$\dagger$}\label{dag}
%    \\ &\theta(p)\equiv\frac{\pi}{2} \mbox{ and } A_{\bd^+M_{p,r}}(\eta_+, v) = A_{\bd^-M_{p,r}}(\eta_-, v) \mbox{ for all } p\in \bd^E_i M,\ v\in T_p(\bd^E M), \tag{$\ddagger$}\label{ddag}
%\end{align}
%where $\bd^{\pm} M$ denote the two faces of $M$ meeting along $\bd^E_i M$, and $A_{\bd^{\pm}M}$ is the second fundamental form of $\bd^{\pm}M$, and $\eta_{\pm}$ is the outward unit co-normal of $\bd^{\pm}M$ along $\bd^E M$.
\begin{itemize}
\item $\theta(p) < \frac{\pi}{2}$ for all $p\in \bd^E_i M$, or
\item $\theta(p) = \frac{\pi}{2}$ for all $p\in \bd^E_i M$ and moreover 
\[
A_{\bd^+ M}(\eta_+, v) = A_{\bd^- M}(\eta_-, v) \text{ for all } p\in \bd^E_i M,\ v\in T_p(\bd^E M),
\]
where $\bd^{\pm} M$ denote the two faces of $M$ meeting along $\bd^E_i M$, and $A_{\bd^{\pm}M}$ is the second fundamental form of $\bd^{\pm}M$, and $\eta_{\pm}$ is the outward unit co-normal of $\bd^{\pm}M$ along $\bd^E M$.
\end{itemize}
Then there exists an almost properly embedded free boundary minimal hypersurface $\Sigma^n \subset M^{n+1}$. The hypersurface $\Sigma$ is smooth in its interior and along its faces, and it is $C^{2,\alpha}$ up to and including its edge. 
\end{theorem}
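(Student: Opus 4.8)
The plan is to run an Almgren--Pitts type min-max scheme in the class of locally wedge-shaped relative cycles, in the style of Li--Zhou's free boundary theory, and then to establish regularity of the critical varifold it produces; the genuinely new and hardest analysis is concentrated at the edge, and the angle hypotheses in the statement are precisely the thresholds making that analysis go through.

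First I would fix the variational framework. Using the isometric embedding $M\subset\R^L$, consider the space $\Z_n(M,\bd^F M\cup\bd^E M;\mathbb{Z}_2)$ of relative integral $n$-cycles in $M$ (currents modulo those supported in the topological boundary $\bd^F M\cup\bd^E M$), equipped with the flat topology and the mass norm. By the Almgren isomorphism this space is weakly homotopy equivalent to $\mathbb{RP}^\infty$, so $H^1(\,\cdot\,;\mathbb{Z}_2)$ has a canonical generator $\bar\lambda$, and one sets
\[
\omega(M)=\inf_{\Phi}\ \sup_{x}\ \M\big(\Phi(x)\big),
\]
the infimum over continuous families $\Phi$ detecting $\bar\lambda$. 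A Lusternik--Schnirelmann / relative isoperimetric argument on $M$ (using $\interior(M)\ne\emptyset$ and compactness) gives $0<\omega(M)<\infty$. The pull-tight procedure then yields a min-max sequence whose varifold limit $V$ attains $\omega(M)$ and is \emph{stationary with free boundary} in the sense of the preliminaries: $\delta V(X)=0$ for all $X\in\mfX(M,\Sigma)$, i.e. for vector fields tangent to the faces but allowed to push interior points of $\Sigma$ off a face, and tangent to the edge but allowed to push face points of $\Sigma$ off the edge into a face.

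Next one upgrades to an almost minimizing min-max sequence. Running the Almgren--Pitts combinatorial argument in Li--Zhou's ``almost minimizing in small relative annuli'' formulation — but now allowing the admissible annuli to be centered at interior points, at face points, \emph{and} at edge points — produces a min-max sequence that, in addition to converging to a stationary limit $V$ with mass $\omega(M)$, is almost minimizing in sufficiently small relative annuli about every point of $M$. This is the step that makes the regularity theory available.

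The last part is regularity of $V$. Away from $\bd^FM\cup\bd^EM$, interior regularity follows from the Schoen--Simon--Pitts curvature estimates, and $3\le n+1\le 7$ makes $V$ a smooth embedded minimal hypersurface there; at face points off the edge, Li--Zhou's free boundary regularity makes $V$ an almost properly embedded smooth free boundary minimal hypersurface. At an edge point one blows up: tangent varifolds are stationary free boundary cones in a Euclidean wedge $W_\theta\subset\R^{n+1}$, and a barrier/maximum-principle comparison with wedges of hyperplanes — where the sign needed is exactly the acuteness $\theta\le\pi/2$ — forces such a cone to be a single half-hyperplane meeting $\bd W_\theta$ orthogonally, so the density along the edge is $\tfrac12$ and $V$ is a $C^{1,\alpha}$ graph up to the edge. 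Bootstrapping then reduces to Schauder estimates for a mixed Neumann problem for the minimal surface system on a wedge domain, and $\theta\le\pi/2$ keeps the relevant boundary exponent in the range where estimates up to the corner hold, giving $C^{2,\alpha}$ regularity through the edge; when $\theta>\pi/2$ one expects only $C^{1,\alpha}$, which is why the hypothesis is imposed. For a component with $\theta\equiv\pi/2$, the extra hypothesis $A_{\bd^+M}(\eta_+,v)=A_{\bd^-M}(\eta_-,v)$ is exactly what makes the reflection of $M$ across one of its faces glue the other face (and the ambient structure along the edge) together regularly enough that the edge becomes an ordinary face point, so the face regularity above applies; without this matching the doubled structure fails to be regular across the reflected edge and the argument breaks. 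Combining the three regularity statements produces the desired $\Sigma$. The main obstacle throughout is precisely this edge analysis — classifying edge tangent cones, where acuteness is essential to rule out higher-multiplicity or ``book-type'' cones, together with the a priori $C^{2,\alpha}$ estimate up to the edge, a corner Schauder estimate for a Neumann-type problem whose solvability and regularity threshold are dictated by the wedge angle; everything upstream is a careful but essentially routine adaptation of Almgren--Pitts and Li--Zhou.
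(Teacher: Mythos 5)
Your high-level outline (discretize/interpolate, pull-tight, almost minimizing in annuli, blow-up at the edge, Schauder up to the corner) matches the paper's scaffolding, but there are genuine gaps in the edge analysis that hide most of the work, and one of your key intermediate claims is false as stated.

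The classification of edge tangent cones is wrong. The paper's Proposition \ref{Prop: classify tangent cones} shows that a tangent cone $C$ at $p\in\bd^E M$ is either (ii) a \emph{horizontal} wedge-cut hyperplane $S\cap T_pM$ (which contains the edge of $T_pM$ and meets both faces) with density $\frac{\theta(p)}{2\pi}k$ for some integer $k\geq 1$, or (i) a half-hyperplane perpendicular to a single face, which occurs only when $\theta(p)=\pi/2$ and has density $k/2$, again an integer multiple. Your assertion that the cone is forced to be ``a single half-hyperplane meeting $\bd W_\theta$ orthogonally'' with ``density $\frac12$'' is incorrect: the generic case is the horizontal wedge-cut hyperplane, whose density is $\theta/(2\pi)$, and nothing in the blow-up argument alone pins down the multiplicity $k$. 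Moreover, the mechanism you propose, a ``barrier/maximum-principle comparison with wedges of hyperplanes,'' is not how the classification is obtained and it is unclear it can produce it. The paper classifies tangent cones by first invoking the \emph{replacement} construction from the almost-minimizing property (a constrained minimization problem yielding locally mass-minimizing, hence stable, comparison currents), then applying the curvature estimates and compactness theorem for stable locally wedge-shaped FBMHs, and finally a Schoen--Simon--Yau type Bernstein theorem in the wedge. This stability machinery is where the acuteness hypothesis actually enters; a pure comparison argument will not see it.

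The jump from ``tangent cones classified'' to ``$V$ is a $C^{1,\alpha}$ graph up to the edge'' elides the hardest part of the regularity proof. To promote the cone classification to graphicality, the paper must (1) prove rectifiability of $V$, which is not automatic here because stationarity with free boundary in a wedge does \emph{not} imply locally bounded first variation in $\R^L$, so one first needs $\|V\|(\bd^E M)=0$ before the rectifiability theorem applies; (2) construct successive replacements on overlapping wedge annuli centered at the edge point and glue them, which requires knowing that $\spt\|V\|$ hits the inner sphere, and this in turn requires a maximum principle for stationary varifolds with free boundary on a convex wedge ball --- for which the paper has to construct by hand a Fermi-type distance function near the edge (Appendix \ref{Sec: Fermi distance}); (3) push the inner radius to zero, invoke the Allard-type regularity theorem of Edelen--Li to remove the point singularity, and only \emph{then} conclude multiplicity one on the limiting leaves; and (4) finish with a unique continuation argument to show $V$ coincides with the constructed replacement. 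None of these steps appears in your proposal, and the multiplicity control you implicitly assume (``a single half-hyperplane'') is precisely the output of this chain, not an input. The Schauder/reflection discussion at the very end of your proposal is in the right spirit (it corresponds to the paper's Appendix \ref{Sec: elliptic regularity}), but it is the a priori regularity upgrade and cannot substitute for the geometric-measure-theoretic argument that gets you to a multiplicity-one graph in the first place.
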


\begin{remark}
We cannot handle the case of ambient dimension $n+1=7$ since we do not know whether an analog of the Schoen-Simon curvature estimates \cite{schoen1981regularity} holds in our setting. Our arguments rely on the curvature estimates in locally wedge-shaped manifolds from \cite{mazurowski2023curvature}, which are based on the techniques of Schoen-Simon-Yau \cite{schoen1975curvature}. 
\end{remark}

\subsection{Further Directions}

The main result of this paper shows that there always exists at least one free boundary minimal hypersurface in a locally wedge-shaped manifold with suitable wedge angles. What more can be said about the structure of this space? As mentioned above, in the closed case, it is known that there always exist infinitely many minimal surfaces. Moreover, generically the union of all minimal surfaces is dense and there even exist sequences of minimal surfaces that become equidistributed in a suitable sense. 

There has also been recent progress toward the analogous results in manifolds with (smooth) boundary. For example, Guang-Li-Wang-Zhou \cite{guang2021min} proved that for a generic metric on $(M,\bd M)$ the union of all free boundary minimal hypersurfaces in $M$ is dense in $M$. In particular, this implies that generically there exist infinitely many free boundary minimal hypersurfaces. 
Wang \cite{wang2022existence} then showed that for a generic metric on $(M,\bd M)$ there exists a free boundary minimal hypersurface with non-empty boundary. In fact, he proved the much stronger result that generically there are infinitely many such surfaces in $(M,\bd M)$ and the union of their boundaries is dense in $\bd M$.   Finally, we note that Wang \cite{wang2020existence} has proved the existence of infinitely many free boundary minimal hypersurfaces in $(M,\bd M)$ equipped with an arbitrary (not necessarily generic) metric. 

It is natural to ask if analogous theorems hold in the setting of locally wedge-shaped manifolds. In particular, we record the following questions:
\begin{itemize}
\item Does a generic locally wedge-shaped manifold contain infinitely many free boundary minimal hypersurfaces? 
\item Is the union of all free boundary minimal hypersurfaces dense in a generic locally wedge-shaped manifold? 
\item Does a generic locally wedge-shaped manifold contain a free boundary minimal hypersurface with non-empty edge? 
\end{itemize}
A non-trivial challenge in addressing the above questions is the fact that White's bumpy metric theorem is not known for locally wedge-shaped manifolds. Answers to these problems would help shed light on whether smoothness plays some essential role in the above theorems, or if more course Lipschitz geometry suffices. 

The edges in a locally wedge-shaped manifold are a relatively mild type of singularity. 
Of course, it is also interesting to determine if min-max minimal surfaces exist in spaces with other kinds of singular features. Is it possible to perform min-max theory in manifolds with corners or locally polyhedral manifolds? What about closed manifolds with conical singularities? It would also be very interesting to know if the condition on the wedge angle in Theorem \ref{theorem:main} can be relaxed by assuming the metric is generic in a suitable sense. 

\subsection{Outline} In the remainder of the introduction, we outline the structure of the paper and sketch the proof of Theorem \ref{theorem:main}. In Section \ref{Sec: preliminary}, we define locally wedge-shaped manifolds. We also define locally wedge-shaped hypersurfaces and introduce the stationary with free boundary condition. Finally, we recall some further preliminaries regarding the second variation and a key compactness theorem for stable free boundary minimal hypersurfaces in locally wedge-shaped manifolds \cite{mazurowski2023curvature}. 

In Section \ref{Sec: variations in wedge manifold}, we begin by recalling some preliminary results from geometric measure theory. As in \cite{li2021min}, we work with the space $\Z_n(M,\bd M)$ of equivalence classes of relative cycles, where in our case $\bd M = \bd^F M \cup \bd^E M$. Each element $\tau \in \Z_n(M,\bd M)$ possesses a unique canonical representative $T\in Z_n(M,\bd M)$ which ``forgets'' the part lying on  $\bd M$. The key results in \cite{li2021min} about the space $\Z_n(M,\bd M)$ continue to hold in the setting of locally wedge-shaped manifolds. We conclude Section \ref{Sec: variations in wedge manifold} by introducing the almost minimizing property. Loosely speaking, an element $\tau \in \Z_n(M,\bd M)$ is $(\eps,\delta)$-almost minimizing with free boundary if any deformation that decreases the mass of $\tau$ by at least $\eps$ must first increase the mass by at least $\delta$. This type of almost minimizing property is crucial to obtaining regularity in the Almgren-Pitts min-max framework. 

In Section \ref{sec:min-max}, we perform the min-max procedure. We work with discrete mappings and discrete homotopies as in \cite{pitts2014existence}. We then follow the Almgren-Pitts min-max procedure with the modifications of Li-Zhou \cite{li2021min}. The overall argument in this section remains relatively unchanged from \cite{li2021min}. In the end, we obtain the existence of a varifold $V\in \mathcal V_n(M)$ which is stationary with free boundary and which is almost minimizing with free boundary in small annuli. 

Finally in Section \ref{Sec: min-max regualrity}, we prove the regularity of the min-max varifold $V$. To begin, we show that the almost minimizing property can be used to construct {\em replacements} for $V$ in annuli. 
To construct replacements, we solve a sequence of constrained minimization problems. The solution $T_i^*$ to each constrained minimization problem is stable and locally area minimizing.
Crucially, our assumption of an acute wedge angle guarantees that such a hypersurface is smooth in its interior and along its faces and is $C^{2,\alpha}$ up to and including the edge.  This is proven in Appendix \ref{Sec: elliptic regularity}.  We can therefore appeal to the curvature and compactness theorems in \cite{mazurowski2023curvature} to show that a subsequence of the $T_i^*$'s converges to a replacement. 

{Next, we use the good replacement property to classify the tangent cones of $V$ and show $V$ is rectifiable. 
As in \cite{li2021min}, we cannot apply the rectifiability theorem \cite[42.4]{simon1983lectures} in wedge domains since we do not know whether the stationarity with free boundary implies locally bounded first variation in $\R^L$. 
Nevertheless, noting that $\mH^n(\bd^EM)=0$ and that $V$ is regular away from the edge, it is sufficient to show $\|C\|(\bd^E T_p M)=\|V\|(\bd^E M) = 0$ to obtain the rectifiability of $C\in\VarTan(V,p)$ and $V$.} 

Now fix a point $p\in \spt \|V\|$. We can assume $p$ is an edge point, as otherwise regularity is already known. To prove regularity at $p$, we first show that successive replacements on overlapping annuli centered at $p$ can be  nicely glued together. Here it is important to verify that the support of $\|V\|$ intersects the inner annulus. This issue is handled in \cite{li2021min} by using a maximum principle for stationary varifolds with free boundary \cite{li2021maximum}. This maximum principle is proven using a foliation constructed from the Fermi distance function to a point on the boundary.  Unfortunately, in a locally wedge-shaped manifold $M$ there is no natural Fermi distance function near a point $p\in \bd^E M$. Thus in Appendix \ref{Sec: Fermi distance} we construct by hand a Fermi-type distance function near a point $p\in \bd^E M$. This Fermi-type distance function has convex level sets that meet both faces of $M$ orthogonally. In Appendix \ref{Sec: maximum principle for varifolds}, we use the Fermi-type distance function to prove a maximum principle for stationary varifolds with free boundary in a locally wedge-shaped manifold. 

After showing successive replacements glue nicely, we then let the inner radius of the inner annulus tend to zero to obtain a replacement in a punctured neighborhood of $p$. We use the Allard-type theorem of Edelen and Li \cite{edelen2022regularity} to show that the singularity of the replacement at $p$ can be removed. Finally, we use a unique continuation argument to show that $V$ coincides with its replacement in a neighborhood of $p$, proving the regularity. 
    
\begin{acknowledgements}
	The authors would like to thank Professor Xin Zhou for suggesting the problem and for many helpful discussions. The second author also would like to thank Professor Gang Tian for his constant encouragement, thank Gaoming Wang for helpful discussions, and thank Professor Xin Zhou and Cornell University for their hospitality. 
	The second author is partially supported by China Postdoctoral Science Foundation 2022M722844.
\end{acknowledgements}

%%%%%%%%%%%%%%%%%%%%%%%%%%%%%%%%%%%%%%%%%%%%%%
%%%%%%%%%%%%%%%%%%%%%%%%%%%%%%%%%%%%%%%%%%%%%%
%%%              Section 2                 %%%
%%%%%%%%%%%%%%%%%%%%%%%%%%%%%%%%%%%%%%%%%%%%%%
%%%%%%%%%%%%%%%%%%%%%%%%%%%%%%%%%%%%%%%%%%%%%%
\section{Preliminaries}\label{Sec: preliminary}

In this section, we introduce some notation and definitions that will be used throughout the remainder of this paper. 
Firstly, we collect some notation in Euclidean space together with some terminology for Euclidean wedge domains. 
Then, after introducing the related concept of locally wedge-shaped manifolds, we present the definition and some useful theorems for stationary varifolds with free boundary.
Finally, we define almost properly embedded locally wedge-shaped submanifolds and discuss some associated notions for variation of area. 

\subsection{Notation for Euclidean wedge domains}\label{Subsec: Euclidean wedge}

To begin with, we collect the following notation in Euclidean spaces ($m=1,2,\dots$): 
\begin{itemize}
	\item $0^m$: the origin in $\R^m$;
    \item $\mB^m_r(p)$: the open ball in $\R^m$ of radius $r$ centered at $p$;
	\item $\mAn^m_{s,t}(p)$: the open annulus $\mB^m_t \setminus \Clos(\mB^m_s)$ in $\R^m$;
	\item $\bmu_r$: the homothety map $x\mapsto r\cdot x$;
	\item $\btau_p$: the translation map $x\mapsto x - p$;
    \item $\bleta_{p,r}$: the composition $\bmu_{r^{-1}}\circ\btau_p $;
	\item $\Clos(A)$: the closure of a subset $A$;
	\item $\mH^k$: the $k$-dimensional Hausdorff measure;
	\item $\mfX(\R^m)$: the space of smooth vector fields on $\R^m$.
\end{itemize}

Next, we introduce some notations for Euclidean wedge domains. 
\begin{definition}\label{Def: Euclidean wedge domain}
    Given $m\geq 2$ and two closed half-spaces $H^m_{\pm}$ in $\R^{m}$, the intersection 
    \[\Omega^m := H^m_{+}\cap H^m_-\]
    is said to be an $m$-dimensional {\em wedge domain} if it has non-empty interior. 
    In particular, we say $\Omega^m$ is a {\em trivial} wedge domain if $\Omega^m=H^m_+=H^m_-$. 
\end{definition}

Given a wedge domain $\Omega^m$, it is easy to see that there exists $\theta\in (0,\pi]$, called the {\em wedge angle} of $\Omega^m$, so that after a rotation, 
\begin{align}\label{Eq: standard wedge}
	 \Omega^m &= \Omega_\theta^2 \times \R^{m-2} 
	\\  &= \Clos\big(\big\{(x_1,\dots,x_m)\in \R^m: x_1> 0, x_2\in \left(\tan\left(-\theta/2\right) x_1, ~\tan\left(\theta/2\right) x_1\right) \big\} \big) . \nonumber
\end{align}
%Additionally, $\Omega^m$ is a trivial wedge domain if and only if its wedge angle is $\pi$. 
Unless otherwise specified, we always assume $\Omega^m$ is in this standard form, and 
\[H_{\pm}^m = \big\{(x_1,x_2,\dots,x_m)\in \R^m: \pm x_2\leq \pm \tan(\pm \theta/2 )x_1 \big\}.\]
Note also that $\Omega^m_\pi = \{(x_1,\dots,x_m)\in \R^m: x_1\geq 0\}$ represents the trivial wedge domain.

\begin{definition}[Stratification]\label{Def: stratification of wedge}
	For an $m$-dimensional wedge domain $\Omega = \Omega^m_\theta$ with wedge angle $\theta\in (0,\pi]$ in the form of (\ref{Eq: standard wedge}), define
	$$ \partial_m\Omega := \Omega, 
	\quad \partial_{m-1}\Omega := \partial \Omega, 
	\quad \partial_{m-2}\Omega:= \left\{ \begin{array}{ll}
		 0^2\times \R^{m-2}, ~&\theta \in (0,\pi),
		 \\ \emptyset ,~ &\theta=\pi.
	\end{array} \right.$$
    as the stratification $\partial_{m-2}\Omega \subset \partial_{m-1}\Omega \subset \partial_m\Omega$ of $\Omega$. 
	Additionally, define
	\begin{itemize}
		\item $\interior(\Omega) := \partial_m\Omega\setminus\partial_{m-1}\Omega$ to be the {\em interior} of $\Omega$;
		\item $\partial^F\Omega := \partial_{m-1}\Omega\setminus\partial_{m-2}\Omega$ to be the {\em face} of $\Omega$;
		\item $\partial^E\Omega := \partial_{m-2}\Omega$ to be the {\em edge} of $\Omega$. 
	\end{itemize}
\end{definition}

It should be noted that the edge $\partial^E\Omega^m_\theta$ is empty if and only if $\Omega^m_\theta$ is a trivial wedge domain, i.e. $\theta=\pi$. 
Additionally, we have 
\[ T_x\Omega^m = \left\{ \begin{array}{ll}
		 	\R^m ~& x \in \interior(\Omega^m) 
		 \\ H^m_{\pm} ~ & x \in \partial^F \Omega^m \cap H^m_{\pm}
		 \\ \Omega^m ~& x\in \partial^E\Omega^m
	\end{array} \right. , \]
where $T_x\Omega$ is the tangent space of $\Omega$ at $x$.

Now, we introduce the horizontal and vertical hyperplanes in wedge domains.

\begin{definition}\label{Def: vertical/horizontal hyperplane in wedge}
    Let $\Omega^m_\theta$ be a non-trivial wedge domain ($\theta<\pi$) in the standard form (\ref{Eq: standard wedge}). 
    Then, a hyperplane $P\subset \R^m$ is said to be a {\em horizontal hyperplane} of $\Omega$, if there exists an $(m-3)$-subspace $W^{m-3}\subset\R^{m-2}$ so that 
    \[P=\R^2\times W^{m-3}.\]
    Denote by $\mathcal{P}_{\Omega}$ the set of horizontal hyperplanes of $\Omega$. 
    Additionally, given $ \alpha\in [-\theta/2, \theta/2] $,  
    \[P_\alpha^{+} := \left\{(x_1,x_2,\dots,x_m)\in \R^m: x_1\geq 0, x_2=\tan\left(\alpha\right)x_1 \right\}\]
    is called the {\em vertical half hyperplane} of angle $\alpha$ in $\Om^m_\theta$. 
\end{definition}

For simplicity, we also denote and assume 
\[\partial^{\pm} \Omega := \bd\Omega\cap H_{\pm} = P^+_{\pm\frac{\theta}{2}} \]
to be the two smooth pieces of $\bd\Omega$, for a non-trivial wedge domain $\Omega=\Omega^m_\theta$.

%%%%%%%%%%%%%%%%%%%%%%%%%%%%%%%%%%%%%%%%%%%%%%%%%%%%%%%%%
\subsection{Notations for locally wedge-shaped manifolds}\label{Subsec: local wedge manifold}

Now, let us consider the manifolds that are locally modeled by wedge domains. 

\begin{definition}[Locally wedge-shaped manifolds]\label{Def: wedge manifold}
	Let $M^m\subset \R^L$ be a (compact) $m$-dimensional manifold with (possibly empty) boundary $\partial M$, and $m,L\in \{2,3,\dots\}$. 
	Then $M$ is said to be a {\em locally wedge-shaped $m$-manifold} if for any $p\in M$, there exist $R=R(p)>0$ and a diffeomorphism $\phi = \phi_p: \mB^L_{R}(0)\to \mB^L_{R}(p)$ so that 
	\begin{itemize}
		\item[(i)] $\phi(0) = p$, and the tangent map $(D\phi)_0\in O(L)$ is an orthogonal transformation;
		\item[(ii)] $\phi\big((\Omega \times 0^{L-m} ) \cap \mB^L_R(0)\big) = M\cap \mB^L_R(p)$, where
			\[ \Omega = \Omega(p) = \left\{ \begin{array}{ll}
		 		\R^m, ~&p\in \interior(M) ,
			 	\\ \mbox{an $m$-dimensional wedge domain } \Omega^m_\theta(p) ,~ &p\in \partial M,
				\end{array} \right. \]
			for some $\theta = \theta(p)\in (0, \pi]$. 
	\end{itemize}
	We call $(\phi, \mB_R^L(p), \Omega)$ a {\em local model} of $M$ around $p$, and call $\theta$ the {\em wedge angle} of $M$ at $p\in \partial M$. 
    Additionally, given $l\in\mZ_+$ and $\alpha\in (0,1]$, if for any $p\in\bd M$ with $\theta(p)<\pi$, $\phi_p$ is globally a $C^{l,\alpha}$-diffeomorphism and is a $C^\infty$-diffeomorphism in $ (\R^{m}\times 0^{L-m})\setminus \bd^E\Omega(p)$, then we say $M$ is a {\em $C^{l,\alpha}$-to-edge} locally wedge-shaped $m$-manifold.
\end{definition}

Let $M^m\subset \R^L$ be a locally wedge-shaped manifold. 
After equipping $M$ with a Riemannian metric $g_{_M}$ induced from the Euclidean metric $g_0$ in $\R^L$,  it follows from Definition \ref{Def: wedge manifold}(i) that $\theta(p)$ is the {\em intrinsic} wedge angle of $M$ at $p\in\partial M$, and this is independent of the choice of local model. 
Hence, unless otherwise specified, we always assume $g_{_M} = g_0\llcorner M$.

Additionally, $M$ can also be locally extended by the local model $(\phi, \mB_R^L(p), \Omega)$ around any $p\in\partial M$. 
Specifically, let $\R^m\supsetneq \Omega$, and $\tM_{p,R} \subset\mB^L_R(p)$ be given by
\begin{equation}\label{Eq: local extension}
	\tM_{p,R} := \phi\big((\R^m \times 0^{L-m} ) \cap \mB^L_R(0)\big), 
\end{equation}
which is an embedded $m$-manifold in $\mB^L_R(p)$. 
Then, $M_{p,R} := M \cap \mB^L_R(p)$ is a wedge-shaped domain in $\tM_{p,R}$. 
Furthermore, denote by
\begin{equation}\label{Eq: two pieces of face}
	\partial^{\pm} M_{p,R} := \phi\big((\partial^{\pm}\Omega \times 0^{L-m} ) \cap \mB^L_R(0)\big)
\end{equation}
the two smooth pieces of $\bd M \cap \mB^L_R(p)$. 
Then
\begin{equation}\label{Eq: local extension across face}
	\tM_{p,R}^\pm := \phi\big((H^m_{\pm} \times 0^{L-m} ) \cap \mB^L_R(0)\big) \quad{\rm and}\quad  \bd \tM_{p,R}^\pm := \phi\big((\partial H_{\pm} \times 0^{L-m} ) \cap \mB^L_R(0)\big)
\end{equation}
are the extensions of $M_{p,R}$ across $\partial^{\mp} M_{p,R}$ and the extensions of $\partial^{\pm} M_{p,R}$ respectively (see Figure \ref{fig:local-extensions}). 

\begin{figure}[h]
\centering
\begin{subfigure}{0.4\linewidth}
\centering
\includegraphics[height=1.75in]{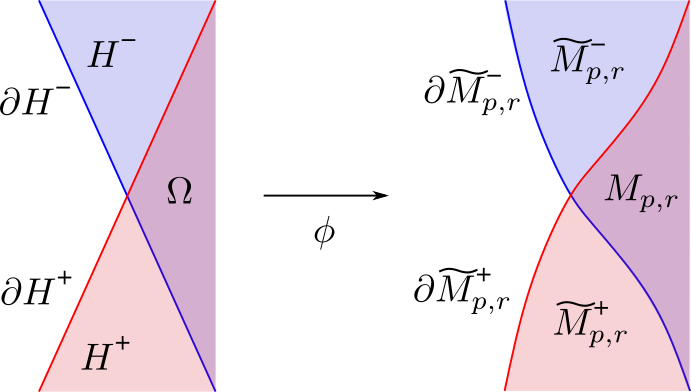} 
\end{subfigure}
\hspace{1cm}
\begin{subfigure}{0.5\linewidth}
\raggedleft
\includegraphics[height=1.75in]{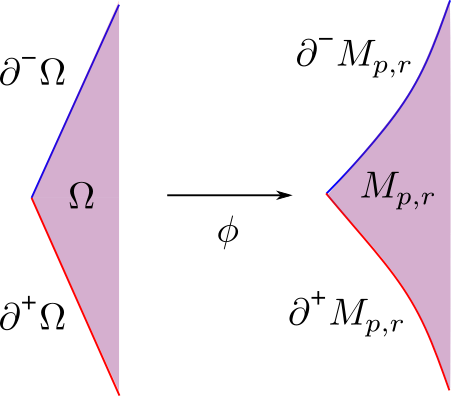}
\end{subfigure}
\caption{Local Extensions of $M$}
\label{fig:local-extensions}
\end{figure}

Next, we stratify a locally wedge-shaped manifold using its local models similar to Definition \ref{Def: stratification of wedge}. 
\begin{definition}[Stratification of manifolds]\label{Def: stratification of wedge manifold}
	Let $M^m\subset \R^L$ be a compact locally wedge-shaped $m$-manifold. 
	Define the stratification $\partial_{m-2}M\subset \partial_{m-1}M\subset \partial_mM$ of $M$ by
	$$ \partial_m M := M, \quad \partial_{m-1}M := \partial M, \quad \partial_{m-2}M := \cup_{p\in\partial M} \phi_p\big((\partial_{m-2}\Omega^m_{\theta}(p)\times 0^{L-m}) \cap \mB^L_R(0)\big),$$
	where $(\phi_p, \mB^L_R(p), \Omega^m_\theta(p))$ is a local model of $M$ around $p$. 
	Moreover, define
	\begin{itemize}
		\item $\interior(M) := \partial_m M\setminus\partial_{m-1}M$ to be the {\em interior} of $M$;
		\item $\partial^FM := \partial_{m-1}M \setminus\partial_{m-2}M$ to be the {\em face} of $M$;
		\item $\partial^EM := \partial_{m-2}M$ to be the {\em edge} of $M$. 
	\end{itemize}
\end{definition}

One should notice that $\bd^F M$ and $\bd^E M$ are the smooth and non-smooth parts of $\bd M$ respectively. 
Specifically, we have the following characterizations, which also show the above definitions are well-defined.   
\begin{lemma}\label{Lem: classify face and edge}
    Let $M^m\subset \R^L$ be a compact ($C^{l,\alpha}$-to-edge, $l\geq 1, \alpha\in (0,1]$) locally wedge-shaped manifold. 
    Then 
    \begin{itemize}
        \item[(i)] $\partial M$ is smooth on $\bd^F M = \{p\in \bd M: \theta(p)=\pi \}$;
        \item[(ii)] $\bd^E M = \{p\in\partial M: \theta(p)< \pi\}$ is a closed ($C^{l,\alpha}$) embedded $(m-2)$-submanifold. 
    \end{itemize}
\end{lemma}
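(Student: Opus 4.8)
The plan is to prove both characterizations by working locally through the defining diffeomorphisms $\phi_p$ and tracking how the wedge angle $\theta(p)$ is determined intrinsically. The key structural fact, already noted after Definition \ref{Def: wedge manifold}, is that $\theta(p)$ is an intrinsic invariant of the Riemannian manifold $(M, g_{_M})$ at $p$, independent of the chosen local model. So the first step is to record that $\theta \colon \bd M \to (0,\pi]$ is a well-defined function; the sets $\{\theta = \pi\}$ and $\{\theta < \pi\}$ are then genuinely subsets of $\bd M$ rather than artifacts of chart choices.

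For part (i), I would fix $p \in \bd M$ with $\theta(p) = \pi$. By definition of $\partial^E M = \partial_{m-2}M$ via the local models, since $\Omega^m_\theta(p)$ is the trivial wedge domain $\Omega^m_\pi = H^m_+ = H^m_-$, we have $\partial_{m-2}\Omega^m_\pi = \emptyset$, so $p \notin \partial^E M$ and hence $p \in \partial^F M$. This shows $\{\theta = \pi\} \subseteq \partial^F M$. Conversely, if $\theta(q) < \pi$ for some $q \in \bd M$, then $\partial_{m-2}\Omega^m_{\theta}(q) = 0^2 \times \R^{m-2} \ni 0$, so $q \in \partial_{m-2}M = \partial^E M$, giving $\{\theta < \pi\} \subseteq \partial^E M$. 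Since $\partial^F M$ and $\partial^E M$ partition $\partial M$ (by Definition \ref{Def: stratification of wedge manifold}, $\partial^F M = \partial_{m-1}M \setminus \partial_{m-2}M$), the two inclusions force $\partial^F M = \{\theta = \pi\}$ and $\partial^E M = \{\theta < \pi\}$, which simultaneously shows the stratification is well-defined (independent of local models). Near such a $p$, the local model gives $\phi_p\big((H^m_+ \times 0^{L-m}) \cap \mB^L_R(0)\big) = M \cap \mB^L_R(p)$, and since $\phi_p$ is a $C^\infty$-diffeomorphism there (the $C^{l,\alpha}$-to-edge hypothesis only restricts regularity near the edge, and $p$ is not on the edge when $\theta(p) = \pi$; more carefully, one shrinks $R$ so that $\mB^L_R(p)$ misses $\bd^E M$), the image of the smooth hyperplane boundary $\partial H^m_+ \times 0^{L-m}$ is a smooth hypersurface-with-boundary, so $\bd M$ is smooth at $p$.

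For part (ii), having identified $\bd^E M = \{\theta < \pi\}$, it remains to show it is a closed $C^{l,\alpha}$-embedded $(m-2)$-submanifold. Closedness: $\bd^E M = \partial_{m-2}M$ is defined as a union of images $\phi_p(\partial_{m-2}\Omega \times 0^{L-m} \cap \mB^L_R(0))$; I would argue it is closed in the compact manifold $M$ by showing its complement $\interior(M) \cup \partial^F M$ is open — around an interior point the local model $\Omega = \R^m$ has no edge, and around a face point we just showed $R$ can be shrunk to avoid the edge entirely. For the submanifold structure, fix $p \in \bd^E M$; the local model $\phi_p$ restricted to $\partial^E\Omega \times 0^{L-m} = (0^2 \times \R^{m-2}) \times 0^{L-m}$ is a $C^{l,\alpha}$-embedding of a neighborhood of $0$ in $\R^{m-2}$ into $\R^L$ with image $\bd^E M \cap \mB^L_R(p)$ (using the $C^{l,\alpha}$-to-edge hypothesis, which is exactly what guarantees $\phi_p$ is $C^{l,\alpha}$ globally including on the edge stratum). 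One then checks the overlap maps between two such local parametrizations are $C^{l,\alpha}$: on an overlap, $\phi_q^{-1} \circ \phi_p$ maps one copy of $\R^{m-2}$ to another, and since both $\phi_p, \phi_q$ are $C^{l,\alpha}$ and the edge is characterized intrinsically (hence preserved set-theoretically by the transition), the restriction is a $C^{l,\alpha}$ chart transition. Patching these gives the $C^{l,\alpha}$ atlas on $\bd^E M$.

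The main obstacle I anticipate is the regularity bookkeeping in part (ii): the diffeomorphisms $\phi_p$ are only $C^{l,\alpha}$ globally (not $C^\infty$) near the edge, so one must be careful that the transition maps $\phi_q^{-1}\circ\phi_p$ restricted to the edge stratum are genuinely $C^{l,\alpha}$ — this requires knowing that the edge stratum is mapped to the edge stratum by such transitions, which in turn relies on the intrinsic characterization of $\theta(p)$ (an edge point cannot be mapped to a non-edge point, since $\theta$ is a Riemannian invariant). A secondary subtlety is verifying that one may always shrink $R(p)$ at a face point so the ball avoids $\bd^E M$; this follows from the (to-be-established) closedness of $\bd^E M$ together with $p \notin \bd^E M$, so the logical order matters — closedness should be proven before or alongside the local smoothness claim in part (i).
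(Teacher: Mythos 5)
Your proposal follows essentially the same route as the paper: identify $\bd^F M$ and $\bd^E M$ with $\{\theta=\pi\}$ and $\{\theta<\pi\}$ using the diffeomorphism-invariance of the wedge angle, then conclude openness of $\bd^F M$ in $\bd M$ and deduce compactness and local $C^{l,\alpha}$ regularity of $\bd^E M$ from the restricted local models. The paper's proof is terser on part (ii)—it does not discuss transition maps, instead simply observing that $\phi_p$ restricted to $\bd^E\Omega\times 0^{L-m}$ gives a local $C^{l,\alpha}$-parametrization—but your atlas-level check is not wrong, just more than is strictly needed.

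There is, however, a gap in your argument for the inclusion $\{\theta=\pi\}\subseteq\bd^F M$. You write that since $\partial_{m-2}\Omega^m_\pi=\emptyset$, we have $p\notin\partial^E M$. But recall that $\partial_{m-2}M$ is defined in Definition~\ref{Def: stratification of wedge manifold} as a union over \emph{all} boundary points $q$ of the images $\phi_q\big((\partial_{m-2}\Omega^m_\theta(q)\times 0^{L-m})\cap\mB^L_R(0)\big)$. Your observation disposes only of the chart at $p$ itself; it does not rule out that $p$ lies in the image of the edge stratum of some \emph{other} chart $\phi_q$ with $\theta(q)<\pi$. To close this, you must invoke exactly the observation the paper leads with: a non-trivial wedge domain cannot be $C^1$-diffeomorphic to a half-space. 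Were $p=\phi_q(x)$ for some $x\in\partial_{m-2}\Omega_q$ with $\theta(q)<\pi$, the map $\phi_p^{-1}\circ\phi_q$ would be a $C^1$-diffeomorphism from a neighborhood of an edge point in a non-trivial wedge onto a neighborhood of $0$ in the trivial wedge $\Omega^m_\pi$, which is impossible. You allude to the intrinsic invariance of $\theta$ in your preamble, which is morally the same fact, but the argument as written does not deploy it where it is actually needed. Once this is made explicit, your proof and the paper's coincide in substance.
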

\begin{proof}
    Firstly, note a non-trivial wedge domain can not be $C^1$-diffeomorphic to a half-space. 
    Hence, for any $p\in\bd M$ with local model $(\phi, \mB_R^L(p), \Omega)$, we have 
    \begin{itemize}
        \item if $\theta(p)=\pi$, then $\theta(q)=\pi$ for all $q\in \bd M\cap \mB_R^L(p) $;
        \item if $\theta(p)<\pi$, then $\theta(q)<\pi$ for all $q\in \phi((\bd^E\Omega\times 0^{L-m})\cap \mB_R^L(0)) $.
    \end{itemize}
    Together with the definitions, we see $\bd^E M =\bd_{m-2}M = \{p\in\bd M: \theta(p)<\pi\}$, $\bd^FM = \{p\in\bd M: \theta(p)=\pi\}$, and $\bd^F M$ is locally diffeomorphic to an $(m-1)$-plane.  
    Additionally, the first bullet indicates $\bd^F M$ is open in $\partial M$, and thus $\bd^E M$ is compact and locally ($C^{l,\alpha}$) diffeomorphic to an open set in $\partial_{m-2}\Omega\cong \R^{m-2}$, which gives the lemma.
\end{proof}

Now, by the above lemma, the term `$C^{l,\alpha}$-to-edge' in Definition \ref{Def: wedge manifold} indicates that $M$ is $C^{l,\alpha}$ near the edge $\bd^E M$ and $C^\infty$ away from the edge. 
Moreover, given $p\in\partial^E M$ with local model $(\phi, \mB^L_R(p), \Omega)$, we have $T_pM = (D\phi)_0(T_0\Omega\times 0^{L-m})$. 
For simplicity, we define the following concepts parallel to Definition \ref{Def: vertical/horizontal hyperplane in wedge}. 

\begin{definition}\label{Def: vertical/horizontal hyperplane in manifold}
    Let $M^m\subset \R^L$ be a compact locally wedge-shaped manifold. 
    Then for any $p\in \bd^EM$ with local model $(\phi, \mB^L_R(p), \Omega)$ and local extension $\tM_{p,R}$, we say $P\subset T_p\tM_{p,R}$ is a {\em horizontal hyperplane} of $T_pM$ if 
    \[ P = (D\phi)_0(P'\times 0^{L-m}) \quad\mbox{for some $P'\in \Pcal_{T_0\Omega}$}.\]
    Denote by $\Pcal_{T_pM}$ the set of horizontal hyperplanes of $T_pM$. 
    Additionally, given $\alpha\in [-\theta(p)/2,\theta(p)/2]$ and the vertical half hyperplane $P_\alpha^+$ in $\Omega$ of angle $\alpha$, we call
    \[P_\alpha(p) := (D\phi)_0(P_\alpha^+\times 0^{L-m}) \]
    the {\em vertical half hyperplane} of angle $\alpha$ in $T_pM$.  
\end{definition}

%Although the local models give the local wedge-shaped structure of the manifold, the Riemannian metric $g_{_M}$ (induced from $g_0$ in $\R^L$) generally does not have a local representation that is as clean as in the Fermi-coordinates (cf. \cite[Appendix A]{li2021min}).
%This in turn brings up a potential difficulty that is the lack of the Fermi-distance function whose level sets are strictly mean convex and orthogonal to $\bd M$.  

%%%%%%%%%%%%%%%%%%%%%%%%%%%%%%%%%%%%%%%%%%%%%%%%%%%%%%%%%
\subsection{Stationary varifolds with free boundary}
In the rest of this paper, we always assume $M^{n+1}\subset \R^L$ is a compact connected $(n+1)$-dimensional locally wedge-shaped Riemannian manifold with induced metric so that $\bd^EM\neq\emptyset$, and on each connected component $\bd^E_i M$ of $\bd^E M$, either
\begin{align}
    &\theta(p)<\frac{\pi}{2} \mbox{ for all } p\in \bd^E_i M ; \mbox{ or } \tag{$\dagger$}\label{dag}
    \\ &\theta(p)\equiv\frac{\pi}{2} \mbox{ and } A_{\bd^+M_{p,r}}(\eta_+, v) = A_{\bd^-M_{p,r}}(\eta_-, v) \mbox{ for all } p\in \bd^E_i M, v\in T_p(\bd^EM), \tag{$\ddagger$}\label{ddag}
\end{align}
%\begin{itemize}
%    \item[(\dag)] $\theta(p)<\frac{\pi}{2}$ for all $p\in \bd^E_i M$; or
%    \item[(\ddag)] $\theta(p)\equiv\frac{\pi}{2}$ and $A_{\bd^+M_{p,r}}(\eta_+, v) = A_{\bd^-M_{p,r}}(\eta_-, v)$ for all $p\in \bd^E_i M$, $v\in T(\bd^EM_{p,r})$,
%\end{itemize}
where $A_{\bd^{\pm}M_{p,r}}$ is the second fundamental form of $\bd^{\pm}M_{p,r}$ (see (\ref{Eq: two pieces of face})), and $\eta_{\pm}$ is the outward unit co-normal of $\bd^{\pm}M_{p,r}$ along $\bd^E M_{p,r}$. 
We also use the following notations throughout the paper (see Appendix \ref{Sec: Fermi distance} for the construction of the Fermi-type distance function):
\begin{itemize}
    \item $\wD_p$: the Fermi-type distance function at $p\in\bd^EM$;
    \item $\wS_r(p)$: the Fermi-type wedge-cut sphere at $p\in\bd^E M$ with radius $r>0$; 
    \item $\wB_r(p)$: the (relatively) open Fermi-type spherical wedge at $p\in\bd^E M$ with radius $r$;
    \item $\wA_{s,t}(p)$: the open Fermi-type wedge-cut annulus $\wB_t(p)\setminus \clos(\wB_s(p))$;
    \item $\interior_M(A)$: the interior of of $A\subset M$ with respect to the subspace topology of $M$;
    \item $\bd_{rel}A$: the relative boundary of $A\subset M$ given by $\clos(A)\setminus\interior_M(A)$.
\end{itemize}
Additionally, we define the following spaces of vector fields parallel to \cite[(2.2)]{li2021min}:
\begin{eqnarray}
    \mfX(M)&:=& \{X\in\mfX(\R^L): X(p)\in T_pM \mbox{ for all $p\in M$}\},\nonumber
    \\
    \mfX_{tan}(M)&:=& \{ X\in \mathfrak{X}(M) : X(p) \in T_p\partial_i M, ~\forall p\in \partial_iM, ~i\in\{n-1,n,n+1\} \}.
\end{eqnarray}
Note that $T_pM$ is an $(n+1)$-dimensional half-space or an $(n+1)$-dimensional wedge domain for $p\in\bd^FM$ or $p\in\bd^E M$ respectively. 
Also, $\mfX_{tan}(M)$ is formed by the fields $X\in \mathfrak X(M)$ whose flow preserves the stratification of $M$. 

Next, we consider the varifolds (see \cite[2.1(18)]{pitts2014existence}\cite{simon1983lectures}) and their variations in the locally wedge-shaped manifold $M^{n+1}\subset \R^L$. 
Firstly, let $\RV_k(M)$ (resp. $\IV_k(M)$) be the space of (integer) rectifiable $k$-varifolds in $\R^L$ with support contained in $M$. 
Then we define $\V_k(M)$ to be the closure of $\RV_k(M)$ in the weak topology. 
For a $k$-varifold $V\in \V_k(M)$, denote by $\|V\|$ the weight measure of $V$, by $\spt(\|V\|)$ the support of $V$, and by $\VarTan(V,p)$ the varifold tangents of $V$ at $p\in\spt(\|V\|)$. 
Additionally, let the $\mF$-metric on $\V_k(M)$ be given as in \cite[2.1(19)]{pitts2014existence}, and recall this induces the weak topology on any mass bounded subset of $\V_k(M)$.

Given a $k$-varifold $V\in \V_k(M)$ and a vector field $X\in\mfX_{tan}(M)$, let $\{f_t\}$ be the diffeomorphisms generated by $X$, and note these satisfy $f_t(M)=M$. 
Then the first variation of $V$ along the vector field $X$ is given by 
\begin{equation}
    \delta V(X) = \frac{d}{dt}\Big|_{t=0} \|(f_t)_\# V\|(M)  = \int \Div_{S} X(p) \, dV(p,S). 
\end{equation}

\begin{definition}\label{Def: stationary varifold}
    Let $U$ be a relatively open subset of $M$. 
    Then we say a varifold $V\in\V_k(M)$ is {\it stationary in $U$ with free boundary} if $\delta V(X) = 0$ for all $X\in \mfX_{tan}(M)$ compactly supported in $U$. 
\end{definition}

Next, we show the monotonicity formula for stationary varifolds with free boundary near the edge of a locally wedge-shaped manifold (see \cite[Appendix A]{mazurowski2023curvature} for the proof). 

\begin{theorem}[Monotonicity Formula]\label{Thm: monotonicity fomula} 
    Let $M^{n+1}\subset\R^L$ be a locally wedge-shaped Riemannian manifold.
    Suppose $V\in \mathcal{V}_n(M)$ is a stationary varifold in $M$ with free boundary. 
    Then there are constants $C_{mono} > 0$ and $r_{mono} > 0$, depending only on $M\subset \R^L$, such that for all 
    $p\in \bd^E M$ and $0<s<t<r_{mono}$ we have 
    \[ \frac{\|V\|(\mB^L_s(p))}{s^n} \le C_{mono}\frac{\|V\|(\mB^L_t(p))}{t^n}.  \]
\end{theorem}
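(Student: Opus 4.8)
The plan is to run the classical monotonicity computation for stationary varifolds, in the free-boundary form used by Li--Zhou \cite{li2021min}, with a radial cut-off vector field adapted to the edge. The only genuinely new point is that the test field must lie in $\mfX_{tan}(M)$ --- its flow must preserve the stratification of $M$ --- whereas the naive Euclidean radial field $q\mapsto q-p$ does not: it is tangent to the tangent wedge $T_pM$ but not to $M$ itself, nor in general to the two faces of $M$.

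To produce an admissible substitute, I would fix $p\in\bd^EM$ together with a local model $(\phi_p,\mB^L_R(p),\Omega)$ normalised so that $(D\phi_p)_0\in O(L)$, and define the \emph{model dilation} $\psi_t(q):=\phi_p\big(t\,\phi_p^{-1}(q)\big)$ for $q\in M\cap\mB^L_R(p)$ and $t$ near $1$. Since the Euclidean dilation $x\mapsto tx$ ($t>0$) maps the wedge $\Omega\times 0^{L-m}$ into itself and preserves its interior, face, and edge, the maps $\psi_t$ preserve $M$ together with $\interior(M)$, $\bd^FM$, and $\bd^EM$ near $p$; hence the generating vector field $R(q):=\tfrac{d}{dt}\big|_{t=1}\psi_t(q)=(D\phi_p)_{\phi_p^{-1}(q)}\big(\phi_p^{-1}(q)\big)$ lies in $\mfX_{tan}(M)$ on $M\cap\mB^L_R(p)$. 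Because $(D\phi_p)_0$ is orthogonal, $R$ obeys the Taylor estimates $R(q)=(q-p)+O(|q-p|^2)$ and $D_qR=\mathrm{Id}+O(|q-p|)$, with constants uniform as $p$ ranges over the compact edge $\bd^EM$; by compactness one also gets a uniform lower bound on the chart radii $R(p)$. Multiplying $R$ by $\gamma(|q-p|/r)$, for a fixed smooth non-increasing $\gamma\colon[0,\infty)\to[0,1]$ with $\gamma\equiv 1$ near $0$ and $\gamma\equiv 0$ near $1$, and extending by zero, yields admissible test fields $X_r\in\mfX_{tan}(M)$ supported in $\mB^L_r(p)$ for all small $r$.

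Next I would substitute $X_r$ into the stationarity identity $0=\delta V(X_r)=\int\Div_S X_r\,dV$. By the product rule $\Div_S X_r=\gamma(|q-p|/r)\,\Div_S R+\gamma'(|q-p|/r)\,r^{-1}\,\nabla^S|q-p|\cdot R$, and the Taylor estimates for $R$ give $\Div_SR=n+O(|q-p|)$ and $\nabla^S|q-p|\cdot R=|q-p|^{-1}|P_S(q-p)|^2+O(|q-p|^2)$, where $P_S$ denotes orthogonal projection onto $S$; collecting the nonnegative term $|P_S(q-p)|^2$ and absorbing the $O(|q-p|)$ corrections, this reduces to the standard differential inequality for $g(r):=\int\gamma(|q-p|/r)\,dV$. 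Integrating it shows $e^{Cr}r^{-n}g(r)$ is monotone non-decreasing on $(0,r_{mono})$; letting $\gamma\uparrow\mathbf 1_{[0,1)}$ gives $g(r)\to\|V\|(\mB^L_r(p))$, hence $e^{Cr}\,\|V\|(\mB^L_r(p))/r^n$ is monotone non-decreasing, which yields the claim with $C_{mono}=e^{Cr_{mono}}$. The free-boundary hypothesis on $V$ enters precisely at $\delta V(X_r)=0$: since $X_r$ is tangent to, but generally nonzero on, the faces of $M$, its vanishing first variation is exactly the content of ``stationary with free boundary,'' and no boundary contribution appears. All constants depend only on $M\subset\R^L$, through the uniform local models fixed above.

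The main obstacle is the construction and verification of $R$: one must check simultaneously that its flow preserves both faces and the edge and that it approximates the Euclidean radial field to second order with $p$-uniform constants, so that the error terms in the monotonicity computation are of the right order. Everything after that is the classical Riemannian monotonicity argument, essentially unchanged from the interior case. (As an alternative to the chart-dilation, one could instead take $R$ to be $q-p$ with its components along the unit conormals of the two faces subtracted off, damped by cut-offs localised near each face; then the delicate point is tangency along the edge, where both cut-offs are simultaneously active.)
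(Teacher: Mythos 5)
Your proposal is correct and matches what the cited source does: the paper defers this proof to \cite[Appendix A]{mazurowski2023curvature}, and the standard argument there is precisely your chart-dilation construction of a stratification-preserving radial field $R\in\mfX_{tan}(M)$ with $R(q)=(q-p)+O(|q-p|^2)$, followed by the classical cut-off monotonicity computation. Your reasoning is also consistent with Remark~\ref{Rem: monotonicity radius}: when $M$ is a Euclidean wedge one may take $\phi_p$ to be (a rotation of) the identity so $R(q)\equiv q-p$ exactly, the error terms vanish, and $r_{mono}=\infty$ with $C_{mono}=1$.
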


\begin{remark}\label{Rem: monotonicity radius}
    It also follows from the proof in \cite[Appendix A]{mazurowski2023curvature} (see also \cite[17.5]{simon1983lectures}) that $r_{mono}$ can be taken as $\infty$ provided $M$ is a Euclidean wedge domain. 
\end{remark}

Moreover, we also have the following maximum principle for $1$-codimensional stationary varifolds with free boundary in a relatively convex domain (\cite[Definition 2.4]{li2021min}) of a locally wedge-shaped manifold. 
%Here we say $A\subset M$ is a relatively convex domain if $A$ is a relatively open connected subset of $M$ so that $\bd_{rel}A$ is a smooth strongly mean-convex hypersurface. 

\begin{theorem}[Maximum principle]\label{Thm: maximum principle for varifolds}
    Let $M^{n+1}\subset\R^L$ be a locally wedge-shaped Riemannian manifold satisfying (\ref{dag}) or (\ref{ddag}), and let $U\subset M$ be a relative open subset. 
    Suppose $V\in\V_n(M)$ is stationary in $U$ with free boundary, and $K\subset\subset U$ is a smooth relatively open connected subset in $M$ so that 
    \begin{itemize}
        \item[(1)] $\bd_{rel}K$ is a smooth strongly mean-convex hypersurface meeting $\bd M$ orthogonally (even at the edge $\bd^E M$);
        \item[(2)] $\spt(\|V\|)\subset \Clos(K)$.
    \end{itemize}
    Then $\spt(\|V\|)\cap \bd_{rel}K=\emptyset$. 
\end{theorem}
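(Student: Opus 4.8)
The plan is to argue by contradiction via the standard ``touching from inside'' scheme, now adapted to a locally wedge-shaped ambient manifold by replacing the usual Fermi distance foliation with the Fermi-type distance function $\wD_p$ constructed in Appendix \ref{Sec: Fermi distance}. Suppose there is a point $q \in \spt(\|V\|) \cap \bd_{rel} K$. The idea is to sweep out a one-parameter family of competitor hypersurfaces that ``peels off'' from $\bd_{rel} K$ near $q$, lies inside $K$, and is strictly mean-convex when viewed from the correct side; the first variation formula for $V$ against the vector field generating this sweep-out then forces $V$ to have a strictly signed first variation near $q$, contradicting $\delta V(X) = 0$. Concretely, I would distinguish two cases according to whether $q$ is an interior/face point of $M$ or an edge point $q \in \bd^E M$. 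In the former case, the argument reduces to the maximum principle of Li--Zhou \cite{li2021min}, since near such $q$ the manifold $M$ is a smooth manifold (with boundary), and $\bd_{rel} K$ meets $\bd M$ orthogonally; so we may assume $q \in \bd^E M$.

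Near the edge point $q$, here is how I would build the foliation. Let $\Sigma := \bd_{rel} K$, which by hypothesis (1) is smooth, strongly mean-convex, and meets $\bd M$ orthogonally even at $\bd^E M$. Because $\Sigma$ meets both faces $\bd^\pm M$ orthogonally at the edge, the normal exponential flow of $\Sigma$ (taken intrinsically in $M$, i.e. the flow of the gradient of the signed distance to $\Sigma$ within $M$) stays inside $M$: it preserves the stratification in the sense that points of $\Sigma \cap \bd^\pm M$ flow along $\bd^\pm M$ and points of $\Sigma \cap \bd^E M$ flow along $\bd^E M$. This is exactly the role played by the hypothesis that $\wD_p$ has level sets meeting both faces orthogonally (Appendix \ref{Sec: Fermi distance}); in fact one expects that near the edge one can take $\Sigma$ itself to be a level set of a Fermi-type distance function and use the resulting foliation $\{\Sigma_t\}$ directly. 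Strong mean-convexity of $\Sigma$, together with the Riccati/Jacobi equation for the mean curvature of the flowed hypersurfaces $\Sigma_t$, guarantees that for small $t>0$ the leaf $\Sigma_t$ lying just inside $K$ has strictly positive mean curvature with respect to the unit normal pointing into $K$ — and crucially, the orthogonality condition (together with $(\dagger)$ or $(\ddagger)$, which controls the contribution of the second fundamental forms $A_{\bd^\pm M}$ at the edge) ensures this leaf still meets $\bd M$ orthogonally, so that it is an admissible barrier for the free boundary problem.

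With the foliation in hand, I would take $q \in \spt\|V\| \cap \Sigma$ and let $\rho$ be the ``first-touching time'': since $\spt\|V\| \subset \Clos(K)$ and $q \in \Sigma = \bd_{rel} K$, the support of $V$ lies on the $K$-side of $\Sigma$, and sliding the leaves $\Sigma_t$ inward, the largest $t$ for which $\Sigma_t$ still touches $\spt\|V\|$ is $t = 0$, with contact point $q$. Now choose a vector field $X \in \mfX_{tan}(M)$ supported in a small relatively open neighborhood of $q$ in $U$, equal to the inward-pointing normal field of the foliation near $q$ (multiplied by a suitable cutoff), so that $X$ pushes $\spt\|V\|$ strictly into $K$. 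Since $X$ is tangent to the stratification (this is where orthogonality of the leaves to $\bd M$ is used to arrange $X \in \mfX_{tan}(M)$), stationarity with free boundary gives $\delta V(X) = \int \Div_S X \, dV(p,S) = 0$. On the other hand, the standard computation — decomposing $\Div_S X$ in terms of the mean curvature of the leaves and using that $V$ is supported where the leaf mean curvature is strictly positive toward $K$, together with the fact that $V$'s tangent planes at the contact point must be tangent to $\Sigma_0 = \Sigma$ — shows $\int \Div_S X\, dV < 0$ (or $>0$, depending on orientation conventions), a contradiction. Hence no such $q$ exists and $\spt(\|V\|) \cap \bd_{rel} K = \emptyset$.

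The main obstacle, and the step requiring the most care, is the construction and control of the barrier foliation near the edge: one must show that starting from a strongly mean-convex $\Sigma$ meeting $\bd M$ orthogonally at the edge, the inward-flowed leaves $\Sigma_t$ remain smooth, remain inside $M$, \emph{remain orthogonal to both faces at the edge}, and have strictly positive mean curvature toward $K$ for small $t$. Maintaining the orthogonality under the flow is precisely where the dichotomy $(\dagger)$/$(\ddagger)$ enters — for acute angles the wedge geometry forces the flow inward off the edge, while for right angles one needs the extra second-fundamental-form matching condition $A_{\bd^+ M}(\eta_+,v) = A_{\bd^- M}(\eta_-, v)$ to guarantee the leaves do not drift to violate orthogonality (equivalently, that the Fermi-type distance function of Appendix \ref{Sec: Fermi distance} is well-defined and its level sets behave correctly). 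Once the foliation is established with these properties, the rest is the classical varifold maximum principle argument of \cite{li2021maximum}\cite{li2021min} applied verbatim, with $\mfX_{tan}(M)$ in place of the tangential vector fields used there.
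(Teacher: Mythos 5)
Your overall architecture matches the paper's: reduce to a contact point $q\in\spt\|V\|\cap\bd_{rel}K\cap\bd^E M$ (citing \cite{white2010maximum} for interior contact and \cite{li2021maximum} for face contact), build a foliation near $q$ by hypersurfaces orthogonal to $\bd M$ and strictly mean-convex toward $K$, feed a cutoff of the foliation normal into the first variation as a field in $\mfX_{tan}(M)$, and contradict stationarity. However, the way you propose to \emph{produce} the foliation has a genuine gap. You take the normal exponential flow of $\Sigma:=\bd_{rel}K$ inward (equivalently, level sets of the signed distance to $\Sigma$ in $M$) and assert that orthogonality $\Sigma\perp\bd M$ implies the flowed leaves remain orthogonal to $\bd M$. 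This is false in general: $\nu_\Sigma(q)\in T_q\bd M$ along $\Sigma\cap\bd M$, but the geodesic $\exp_q(t\nu_\Sigma(q))$ stays in $\bd M$ only if $\bd M$ is totally geodesic, which is not assumed; so the normal foliation leaves drift off $\bd M$ and lose orthogonality already at the \emph{faces} $\bd^F M$, before the edge is even an issue (so (\ref{dag})/(\ref{ddag}) cannot rescue this). Without orthogonality of the leaves, the resulting normal field fails to be tangent to the strata, hence is not in $\mfX_{tan}(M)$, and you cannot invoke $\delta V(X)=0$. The paper avoids this entirely: it never flows $\Sigma$. Instead it first constructs an auxiliary hypersurface $\Gamma$ lying \emph{outside} $K$ and touching $\bd_{rel}K$ from outside to second order at $q$, then builds a foliation $\{\Gamma_s\}$ by an explicit graph construction in the edge-adapted coordinates of Lemma~\ref{Lem: local chart}, where orthogonality to both faces is imposed as algebraic constraints on the polynomial-like graph functions (mirroring Lemma~\ref{Lem: Fermi-type distance function}), and strict mean-convexity near $\Gamma_0$ is inherited from second-order contact with $\bd_{rel}K$ together with an $\epsilon$-cubic perturbation.

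A secondary point: you assert that "$V$'s tangent planes at the contact point must be tangent to $\Sigma_0=\Sigma$" and that the computation is then standard. For a general varifold this tangency is not available, and the paper does not use it. Instead, with $X=\phi(s)\nu$ for a steeply decaying $\phi$ (so $\phi'\le -\phi/\delta^2$), the paper estimates ${\rm tr}_P Q(q)$ for an \emph{arbitrary} $n$-plane $P\subset T_qM$ by decomposing $P$ against the leaf through $q$ and letting the $-\sin^2\theta/(2\delta^2)$ term coming from $\phi'$ dominate the off-diagonal contributions of $\nabla X$. Your sketch should not assume alignment of $\operatorname{VarTan}(V,q)$ with the foliation; it needs a uniform sign on ${\rm tr}_P Q$ over all $n$-planes $P$, which is exactly what the cutoff $\phi$ and the ensuing elementary inequality deliver.
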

\begin{proof}
    See Appendix \ref{Sec: maximum principle for varifolds}. 
\end{proof}

%%%%%%%%%%%%%%%%%%%%%%%%%%%%%%%%%%%%%%%%%%%%%%%%%%%%%%%%%
\subsection{Almost properly embedded FBMHs}\label{subsec: almost properly embedded FBMHs}
As in the above sub-section, we consider a compact connected $(n+1)$-dimensional locally wedge-shaped Riemannian manifold $M^{n+1}\subset \R^L$ satisfying (\ref{dag}) or (\ref{ddag}). 
We first define the locally wedge-shaped hypersurfaces in $M$. 

\begin{definition}[Locally wedge-shaped hypersurfaces]\label{Def: locally wedge-shaped hypersurfaces}
    Let $\Sigma\subset M^{n+1}$ be a subset of $M$ that admits a $C^{l,\alpha}$-to-edge locally wedge-shaped $n$-manifold structure for some $l\in\mZ_+$, $\alpha\in(0,1]$. 
	Then $\Sigma$ is said to be {\em $C^{l,\alpha}$-to-edge embedded} in $M^{n+1}$, if $\Sigma\setminus\bd^E\Sigma$ is $C^\infty$-embedded in $M$ and $\Sigma$ is $C^{l,\alpha}$-embedded in $M$ near $\bd^E\Sigma$. 
    %for any $p\in \Sigma$, there are local models $(\psi,\mB^L_R(p), \Omega_\Sigma)$ and $(\phi, \mB^L_R(p), \Omega_M)$ of $\Sigma$ and $M$ around $p$ respectively, so that the corresponding local extension (see (\ref{Eq: local extension})) $\widetilde{\Sigma}_{p,R}$ is $C^{l,\alpha}$-embedded in $\tM_{p,R}$, and the embedding is $C^\infty$ away from $\bd^E \Sigma$.
\end{definition}

Equivalently speaking, the term `$C^{l,\alpha}$-to-edge embedded' in the above definition means that for any $p\in \Sigma$, there are local models $(\psi,\mB^L_r(p), \Omega_\Sigma)$ and $(\phi, \mB^L_r(p), \Omega_M)$ of $\Sigma$ and $M$ around $p$ respectively, so that the corresponding local extension (see (\ref{Eq: local extension})) $\widetilde{\Sigma}_{p,r}$ is $C^{l,\alpha}$-embedded in $\tM_{p,r}$, and the embedding is $C^\infty$ away from $\bd^E \Sigma$. 
Additionally, we also generalize the definition of almost proper embeddings from \cite[Definition 2.6]{li2021min} using the stratification of locally wedge-shaped manifolds. 

\begin{definition}[Almost proper embeddings]\label{Def: almost properly embedded}
	Let $\Sigma^n\subset M^{n+1}$ be a $C^{l,\alpha}$-to-edge embedded locally wedge-shaped hypersurface in $M$ for some $l\in\mZ_+$, $\alpha\in(0,1]$. 
	We say $\Sigma$ is {\em almost properly embedded} in $M$, denoted by $(\Sigma,\{\partial_m\Sigma\})\subset (M, \{\partial_{m+1} M\})$, if 
	\[ \partial_m\Sigma \subset \partial_{m+1}M,\qquad \forall m\in\{n-2, n-1, n\}. \]
	In particular, if $\partial^F\Sigma = \partial^F M \cap \Sigma$ and $\partial^E\Sigma = \partial^E M\cap\Sigma$, then we say $(\Sigma,\{\partial_m\Sigma\})\subset (M, \{\partial_{m+1} M\})$ is {\em properly embedded}. 
\end{definition}

Let $(\Sigma,\{\partial_m\Sigma\})\subset (M, \{\partial_{m+1} M\})$ be a $C^{l,\alpha}$-to-edge almost properly embedded locally wedge-shaped hypersurface. 
Then $\interior(\Sigma)$ must touch $\bd M$ tangentially at $\interior(\Sigma)\cap \bd^F M$, and $\bd^F\Sigma$ must touch $\bd^E M$ tangentially at $\bd^F\Sigma\cap \bd^E M$.
We can introduce the notations
\begin{equation}\label{Eq: boundary points on F&E}
	\partial^{FF}\Sigma := \partial^F\Sigma \cap \partial^FM \qquad{\rm and}\qquad \partial^{FE}\Sigma := \partial^F\Sigma\cap\partial^EM
\end{equation}
to represent the face of $\Sigma$ on $\partial^FM$ and $\partial^EM$ respectively. 
Note $\Sigma\cap \bd^EM= \bd^E\Sigma\cup\bd^{FE}\Sigma$. 
Thus, we call $p\in\Sigma\cap\bd^EM$ a 
\begin{itemize}
    \item {\em true edge point} if $p\in \bd^E\Sigma$; or a 
    \item {\em false edge point} if $p\in\bd^{FE}\Sigma$. 
\end{itemize}
Moreover, one can also classify the true/false edge points of $\Sigma$ by their tangent spaces. 
Specifically, for any $p\in\Sigma\cap\partial^EM$, $T_p\Sigma$ is an $n$-dimensional wedge domain almost properly embedded in $T_pM$. 
Therefore, by Lemma \ref{Lem: classify face and edge}, if $p\in\bd^E\Sigma$ is a true edge point, then $(T_p\Sigma,\{\partial_iT_p\Sigma\})\subset (T_pM,\{\partial_{i+1}T_pM\})$ is a non-trivial wedge domain. 
On the other hand, if $p\in\bd^{FE}\Sigma$ is a false edge point, then $T_p\Sigma$ is a half-hyperplane in an $(n+1)$-dimensional wedge domain $T_pM$.
Hence, 
\begin{equation}\label{Eq: classify flase edge point}
	p\in\bd^{FE}\Sigma \quad \Leftrightarrow \quad  T_p\Sigma = P_\alpha(p) \mbox{ for some $\alpha\in [-\theta/2, \theta/2]$},
\end{equation}
where $\theta=\theta(p)$ is the wedge angle of $M$ at $p$. 

Next, we consider the variation vector fields associated with an almost properly embedded locally wedge-shaped hypersurface. 

\begin{definition}\label{Def: variation vector fields}
    Let $(\Sigma,\{\bd_m \Sigma\})\subset  (M,\{\bd_{m+1}M\})$ be a $C^{l,\alpha}$-to-edge almost properly embedded locally wedge-shaped hypersurface with $l\in\mZ_+, \alpha\in(0,1]$. 
    Then we define \[\mfX(M,\Sigma)\] to be the space of vector fields $X\in \mathfrak X(M)$ such that for any $p\in\bd\Sigma\cap\bd M$ there exists $r=r(p,\Sigma,M)>0$ satisfying
    \begin{itemize}
        \item[(i)] if $p\in \bd^E \Sigma$, then $X(q)\in T_q \bd^E M$ for all $q\in \mB^L_r(p) \cap \bd^E M$;
        \item[(ii)] if $p\in \clos(\bd^{FF} \Sigma)$, then $X(q)\in T_q\bd M$ for all $q\in \mB^L_r(p)\cap \bd M$;
        \item[(iii)] if $p\in \bd^F\Sigma \setminus \clos(\bd^{FF}\Sigma)$, then $\bd^F \Sigma \cap \mB^L_r(p) \subset \bd^E M$ and $X\llcorner (\mB^L_r(p) \cap M) = \widetilde{X}\llcorner M$ for some
        $$
        \widetilde{X} \in
        \begin{cases}
            \mfX_{tan}(\tM_{p,r}^+) & \text{if } \alpha < 0,\\
            \mfX_{tan}(\tM_{p,r}^-) & \text{if } \alpha > 0,\\
            \mfX_{tan}(\tM_{p,r}^+) \cup \mfX_{tan}(\tM_{p,r}^-) &\text{if } \alpha = 0,
        \end{cases}
        $$
        where $\alpha\in [-\theta(p)/2,\theta(p)/2]$ is given by (\ref{Eq: classify flase edge point}) with $T_p\Sigma = P_\alpha(p)$.
    \end{itemize}
\end{definition}

Note $\{X\llcorner\Sigma: X\in \mfX_{tan}(M)\}= \{X\llcorner \Sigma: X\in \mfX(M,\Sigma)\}$ provided $\Sigma$ is properly embedded in $M$. 
In general, $\mfX_{tan}(M)\subset \mfX(M,\Sigma)$, which indicates $\mfX(M,\Sigma)$ is non-empty. 
Indeed, similar to \cite[Section 2.3]{li2021min} where the variations can push $\interior(\Sigma)\cap \bd M$ into the interior of $M$, we here also allow the flow of $X\in \mfX(M,\Sigma)$ to push $\bd^{FE}\Sigma$ into certain pieces of $\bd^FM$. 
See Figure \ref{fig:admissible-variations} for an illustration of the variations allowed by (iii). 

\begin{figure}[h]
    \centering
    \includegraphics[width=3in]{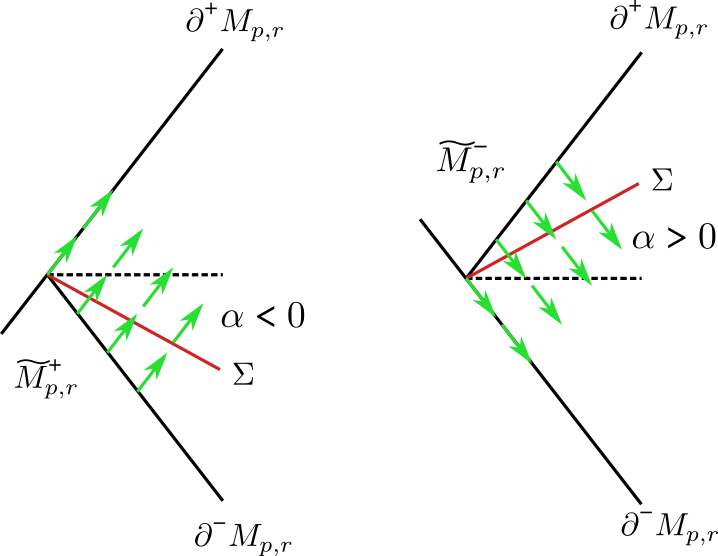}
    \caption{Admissible Variations}
    \label{fig:admissible-variations}
\end{figure}

Next, given $l\in\mZ_+, \alpha\in(0,1]$, and a $C^{l,\alpha}$-to-edge almost properly embedded locally wedge-shaped hypersurface $(\Sigma,\{\bd_m \Sigma\})\subset  (M,\{\bd_{m+1}M\})$, let $X\in\mfX(M,\Sigma)$ and let $\{f_t\}$ be the diffeomorphisms generated by $X$. 
Then $f_t(\Sigma)$ is also almost properly embedded in $M$ for $t\geq 0$ small enough (see \cite[Proposition 3.5]{mazurowski2023curvature}). 
Noting $\mH^{n-1}(\bd^E\Sigma)=0$, the first variation formula gives
\begin{equation}\label{Eq: 1st variation formula}
    \delta \Sigma(X) = \frac{d}{dt}\Big|_{t=0} {\rm Area}(f_t(\Sigma)) = -\int_{\Sigma} \laa H_{\Sigma}, X\raa + \int_{\bd^F \Sigma} \laa \eta,X\raa,
\end{equation}
where $\eta$ denotes the unit outward co-normal along $\bd^F \Sigma$. 

\begin{definition}\label{Def: stationary FBMH}
    Given a relative open subset $U\subset M$ and $l\in\mZ_+,\alpha\in(0,1]$, we say a $C^{l,\alpha}$-to-edge almost properly embedded locally wedge-shaped hypersurface $(\Sigma,\{\bd_m\Sigma\})\subset (U,\{U\cap\bd_{m+1}M\})$ is {\em stationary in $U$} if $\delta \Sigma(X) = 0$ for all $X\in \mathfrak X(M,\Sigma)$ that is compactly supported in $U$. 
    %In particular, if $U=M$, then we say $\Sigma$ is a {\em free boundary minimal hypersurface} (abbreviated as FBMH) in $M$. 
\end{definition}

\begin{remark}
    As we mentioned after Definition \ref{Def: variation vector fields}, $\mfX_{tan}(M)\subsetneq \mfX(M,\Sigma)$ provided the touching set $(\interior(\Sigma)\cap\bd^FM)\cup \bd^{FE}\Sigma$ is non-empty. 
    Hence, a stationary varifold with free boundary and regular support (i.e. induced by some $(\Sigma,\{\bd_m \Sigma\})\subset  (M,\{\bd_{m+1}M\})$, may not be stationary in the sense of Definition \ref{Def: stationary FBMH}.
\end{remark}

Combining the first variation formula with Definition \ref{Def: variation vector fields}, one verifies that a $C^{l,\alpha}$-to-edge almost properly embedded locally wedge-shaped hypersurface $(\Sigma,\{\bd_m \Sigma\})\subset  (M,\{\bd_{m+1}M\})$ is stationary in $M$ if and only if the mean curvature of $\Sigma$ vanishes identically in $\interior(\Sigma)$ and $\Sigma$ is orthogonal to at least one smooth piece of the faces of $\bd M$ along $\bd^F\Sigma$, i.e. 
\begin{equation}\label{Eq: classify FBMH}
    H_\Sigma \equiv 0 ~\mbox{in $\interior(\Sigma)$}, \quad \eta \perp \bd M ~\mbox{on $\bd^F\Sigma$}, \quad \eta\perp \bd^+ M_{p,r}~{\rm or} ~\bd^- M_{p,r} ~\mbox{for $p\in \bd^{FE}\Sigma$},
\end{equation}
where $H_\Sigma$ is the mean curvature of $\Sigma$ and $\eta$ is the unit outward co-normal along $\bd^F \Sigma$. (See \cite[Proposition 3.8]{mazurowski2023curvature}.)
As a direct generalization of the definition in smooth manifolds, we say $(\Sigma,\{\bd_m \Sigma\})\subset  (M,\{\bd_{m+1}M\})$ is a {\em free boundary minimal hypersurface} in $M$ (abbreviated as FBMH) if it satisfies (\ref{Eq: classify FBMH}). 
We also say $\Sigma$ is a FBMH in a relative open subset $U\subset M$ if it satisfies (\ref{Eq: classify FBMH}) in $U$, or equivalently it is stationary in $U$.    

In addition, under the assumption (\ref{dag}) or (\ref{ddag}), the $C^{1,\alpha}$-to-edge regularity of a locally wedge-shaped FBMH can be upgraded to $C^{2,\alpha_0}$-to-edge ($\alpha_0\in (0,1)$) by Appendix \ref{Sec: elliptic regularity}. 
\begin{theorem}\label{Thm: regularity of FBMH}
    Suppose $M^{n+1}\subset\R^L$ is a locally wedge-shaped Riemannian manifold satisfying (\ref{dag}) or (\ref{ddag}). 
    Let $\alpha\in(0,1]$ and $(\Sigma,\{\bd_m \Sigma\})\subset  (M,\{\bd_{m+1}M\})$ be a $C^{1,\alpha}$-to-edge almost properly embedded locally wedge-shaped FBMH. 
    Then $\Sigma$ is $C^{2,\alpha_0}$-to-edge for some $\alpha_0\in (0,1)$. 
\end{theorem}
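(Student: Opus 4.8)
\textbf{Proof proposal for Theorem \ref{Thm: regularity of FBMH}.}

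The plan is to reduce the statement to a boundary regularity problem for a quasilinear elliptic PDE in a Euclidean wedge, and then bootstrap. First I would work locally: fix a true edge point $p \in \bd^E\Sigma$ (away from the edge the interior and face regularity is already classical elliptic regularity for the minimal surface equation with a Neumann-type free boundary condition, so there is nothing to do there) and pass to a local model $(\phi, \mB^L_R(p), \Omega_M)$ of $M$ together with the compatible local model of $\Sigma$. Pulling back by $\phi$, we may assume we are working with a $C^{1,\alpha}$-to-edge FBMH $\Sigma$ inside a Euclidean wedge domain $\Omega^{n+1}_\theta \times 0^{L-n-1}$ with a metric $g$ that is $C^{l,\alpha}$ up to the edge (and $C^\infty$ away from it), satisfying the minimal surface equation in the interior, the orthogonality condition $\eta \perp \bd^\pm\Omega$ along $\bd^F\Sigma$, and — crucially — the compatibility condition coming from (\ref{dag}) or (\ref{ddag}).

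The key step is to write $\Sigma$ as a graph over its tangent wedge $T_p\Sigma$ at $p$, which is a non-trivial $n$-dimensional wedge domain $\Omega^n_{\theta'} \subset T_pM$. Since $\Sigma$ is $C^{1,\alpha}$-to-edge, for $R$ small this graph function $u$ is $C^{1,\alpha}$ up to the edge of $\Omega^n_{\theta'}$, with $u$ and $Du$ vanishing at $p$. The minimal surface equation for $\Sigma$ becomes a quasilinear uniformly elliptic equation $\mathrm{div}(a(x,u,Du)) + b(x,u,Du) = 0$ for $u$ on $\Omega^n_{\theta'}$, and the free boundary condition on each of the two faces $\bd^\pm$ becomes an oblique (in fact, to leading order, conormal/Neumann) boundary condition. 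I would then invoke the elliptic theory for such mixed boundary value problems in wedge domains developed in Appendix \ref{Sec: elliptic regularity}: the point of the angle hypothesis $\theta' \le \pi/2$ (inherited from $\theta \le \pi/2$ together with the almost-proper embedding) is that the relevant spectral gap for the model Neumann problem in the wedge $\Omega^n_{\theta'}$ is strictly greater than $1$, which is exactly what is needed to upgrade $C^{1,\alpha}$ to $C^{2,\alpha_0}$ regularity up to the edge; for $\theta' > \pi/2$ one would in general only get $C^{1,\alpha}$. This is a Schauder-type estimate for the linearized problem (obtained by freezing coefficients at $p$ and treating lower-order and variable-coefficient terms perturbatively), combined with a bootstrap.

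The subtle point — and the reason the orthogonal case $\theta = \pi/2$ needs the extra assumption (\ref{ddag}) — is the following. When one reflects or straightens the wedge to set up the model boundary value problem, the two Neumann conditions on $\bd^+$ and $\bd^-$ must be compatible along the edge for the solution to have a $C^{2,\alpha_0}$ extension; the obstruction to this compatibility is measured precisely by the difference $A_{\bd^+M}(\eta_+, v) - A_{\bd^-M}(\eta_-, v)$ for $v$ tangent to the edge. When $\theta < \pi/2$, the strict spectral gap absorbs this mismatch (it contributes only to lower-order, sub-$C^{2,\alpha_0}$ behavior which is still within the allowed class), but when $\theta = \pi/2$ the gap is borderline and one genuinely needs the mismatch to vanish, i.e. (\ref{ddag}). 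I expect this compatibility analysis at the edge — correctly identifying that the curvature condition (\ref{ddag}) is exactly what makes the doubled/straightened problem have a $C^{2,\alpha_0}$ solution — to be the main obstacle; the rest is a relatively standard (if technically involved) Schauder bootstrap for quasilinear oblique-derivative problems, all of which I would relegate to Appendix \ref{Sec: elliptic regularity}.
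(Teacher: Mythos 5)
Your proposal is essentially the approach the paper takes in Appendix~\ref{Sec: elliptic regularity}, and the high-level structure you describe — graph $\Sigma$ over its tangent wedge $T_p\Sigma$, derive a quasilinear elliptic equation with oblique (to leading order, conormal) conditions on the two faces, and use $\theta\le\pi/2$ to control the model corner problem — is exactly right. One point of framing worth correcting: in the acute case the paper does \emph{not} reflect or straighten and then worry about compatibility. Instead it first applies Lieberman's oblique-derivative theory directly in the wedge to push the H\"older exponent of $Du$ above $1/2$ (Lemma~\ref{Lem: C1 alpha with large alpha}), establishes weighted growth estimates $|x|^{1-\alpha}|D^2u|\le C$, $|x|^{2-\alpha}|D^3u|\le C$ near the edge (Lemma~\ref{Lem: growth rate of DDDu}), and then invokes Grisvard's corner decomposition $u-\sum C_m\widetilde{\Im}_m\in C^{2,\sigma}$; the strict inequality $\theta<\pi/2$ enters exactly as you suspect — through the location of the eigenvalues $\lambda_m$ — to guarantee the singular eigenfunctions $\widetilde{\Im}_m$ are themselves $C^{2,\alpha_0}$, so no curvature-mismatch quantity appears in that case at all. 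The reflection and the compatibility condition you flag enter only in the borderline case $\theta\equiv\pi/2$ (Theorem~\ref{Thm: C2 alpha regularity in n-dim right domain}), where the paper evenly reflects across one face and checks, via the special coordinates of Lemma~\ref{Lem: local chart}(iv), that (\ref{ddag}) is precisely the necessary condition (\ref{Eq: necessary condition}) ensuring the reflected coefficients are regular. You also did not mention the $n>2$ case, which is handled by restricting to two-dimensional slices through the edge and arguing by continuity; this is routine but must be addressed. With those clarifications, your plan aligns with what is actually carried out.
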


Let $(\Sigma,\{\bd_m \Sigma\})\subset  (M,\{\bd_{m+1}M\})$ be a $C^{l,\alpha}$-to-edge almost properly embedded locally wedge-shaped FBMH. 
Combining the orthogonally meeting conditions in (\ref{Eq: classify FBMH}) with (\ref{Eq: classify flase edge point}) and conditions (\ref{dag})(\ref{ddag}), we see 
\begin{eqnarray}
    &p\in\bd^{E}\Sigma &\Rightarrow ~~T_p\Sigma=P\cap T_pM \mbox{ for some horizontal hyperplane $P\in \mathcal{P}_{T_pM}$;}
    \\
    &p\in\bd^{FE}\Sigma &\Rightarrow ~~\theta(p)=\pi/2, ~T_p\Sigma = P_{-\pi/4}(p) {\rm ~or~} P_{\pi/4}(p) \mbox{ (see Definition \ref{Def: vertical/horizontal hyperplane in manifold}).}
\end{eqnarray}
Additionally, further assume that $\Sigma$ is {\em two-sided}, i.e. there is a continuous choice of the unit normal vector field $\nu$ on $\Sigma$. 
Then for any $X\in\mfX(M,\Sigma)$ with $X\llcorner\Sigma = f\nu$, the second variation of $\Sigma$ for the area functional along $X$ is given by 
\begin{equation}\label{Eq: 2nd variation formula}
    \delta^2\Sigma(X) = Q_\Sigma(f,f) := \int_{\Sigma} \vert \nabla^\Sigma f\vert^2 - (\vert A_\Sigma\vert^2 - \Ric_M(\nu,\nu))f^2  + \int_{\bd \Sigma} h_{\pm}(\nu,\nu)f^2,
\end{equation}
where $A_\Sigma$ is the second fundamental form of $\Sigma$, and $h_{\pm}$ is the second fundamental form of $\bd^FM$ (for $p\in\bd^F\Sigma$) or $\bd^{\pm}M_{p,r}$ (for $p\in\bd^{FE}\Sigma$ with $\eta\perp \bd^{\pm}M_{p,r}$) with respect to the outward unit normal $\nu_{\bd M}$. 
Note the boundary integral is taken over $\bd^F\Sigma$ by Lemma \ref{Lem: classify face and edge}.

\begin{definition}\label{Def: stabe FBMH}
    Let $U\subset M$ be a relative open subset and $(\Sigma,\{\bd_m\Sigma\}) \subset (U,\{U\cap \bd_{m+1}M\})$ be a $C^{l,\alpha}$-to-edge almost properly embedded locally wedge-shaped hypersurface which is stationary in $U$ in the sense of Definition \ref{Def: stabe FBMH}. 
    Then we say $\Sigma$ is {\it stable in $U$} if $\Sigma$ is two-sided and $\delta^2\Sigma(X) \ge 0$ for all $X = f\nu \in \mathfrak X(M,\Sigma)$ compactly supported in $U$.  
\end{definition}

Finally, we state the following compactness theorem for stable almost properly embedded locally wedge-shaped FBMHs with a uniform area bound. 
\begin{theorem}[Compactness theorem for stable FBMHs]\label{Thm: compactness for FBMHs}
    Let $2\leq n\leq 5$, $M^{n+1}\subset\R^L$ be a locally wedge-shaped Riemannian manifold satisfying (\ref{dag}) or (\ref{ddag}), and $U\subset M$ be a simply connected relative open subset. 
    Suppose $\{(\Sigma_k,\{\bd_m\Sigma_k\}) \subset (U,\{U\cap \bd_{m+1}M\})\}_{k\in\N}$ is a sequence of $C^{2,\alpha}$-to-edge almost properly embedded locally wedge-shaped FBMHs which are stable in $U$ and $\sup_{k\in\N}\Area(\Sigma_k)<\infty$. 
    
    Then, up to a subsequence, $(\Sigma_k,\{\bd_m\Sigma_k\})$ converges (possibly with multiplicities) to an $C^{2,\alpha}$-to-edge almost properly embedded locally wedge-shaped FBMH $(\Sigma_\infty,\{\bd_m\Sigma_\infty\}) \subset (U,\{U\cap \bd_{m+1}M\})$ which is also stable in $U$. 
    Additionally, the convergence is $C^{2,\alpha'}$-to-edge ($\forall \alpha'\in (0,\alpha)$), $C^\infty$ away from the edge $\bd^E\Sigma_\infty$, locally uniform and graphical. 
\end{theorem}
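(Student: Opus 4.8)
The plan is to run the Schoen--Simon compactness scheme, adapted to the free boundary setting as in \cite{li2021min} and to the edge by means of the a priori estimate of Appendix~\ref{Sec: elliptic regularity}. The two essential inputs are the Schoen--Simon--Yau-type curvature estimates for stable locally wedge-shaped FBMHs (available precisely because $2\le n\le 5$; see \cite{mazurowski2023curvature}, \cite{schoen1975curvature}) and the $C^{2,\alpha_0}$-to-edge estimate of Theorem~\ref{Thm: regularity of FBMH}/Appendix~\ref{Sec: elliptic regularity}, which is exactly where the hypotheses \eqref{dag}/\eqref{ddag} enter.

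\textbf{Step 1 (uniform local graphical control).} Fix a compact $K\subset\subset U$. The curvature estimates give $\sup_{k}\sup_{\Sigma_k\cap K}|A_{\Sigma_k}|<\infty$, and the monotonicity formula (Theorem~\ref{Thm: monotonicity fomula}) bounds the local area ratios of $\Sigma_k$ on $K$, hence the number of sheets. Consequently, for each $p\in K$ there is a uniform radius $\rho=\rho(p,K)>0$ such that in an appropriate chart --- Euclidean coordinates if $p\in\interior(M)$, Fermi coordinates for $\bd^FM$ if $p\in\bd^FM$, or the Fermi-type coordinates of Appendix~\ref{Sec: Fermi distance} straightening $M$ to a model wedge if $p\in\bd^EM$ --- the set $\Sigma_k\cap\mB^L_\rho(p)$ is a finite union, of multiplicity bounded in terms of the area ratio, of graphs with uniformly bounded gradient and Hessian over a fixed model plane: an $n$-plane at interior points, a half $n$-plane meeting $\bd M$ orthogonally at face points, and, using the tangent-plane classification (a horizontal hyperplane $P\in\Pcal_{T_pM}$ at true edge points, resp.\ $P_{\pm\pi/4}(p)$ at false edge points), a horizontal resp.\ vertical half $n$-plane inside the model wedge at edge points.

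\textbf{Step 2 (uniform higher-order estimates).} Each graphing function $u_k$ solves the minimal surface equation supplemented by the orthogonality (Neumann-type) conditions of \eqref{Eq: classify FBMH} along the faces. Away from $\bd^EM$ this is a uniformly elliptic Neumann problem with smooth data, so interior and boundary Schauder estimates upgrade the $C^{1}$ bound of Step~1 to uniform $C^{\infty}$ bounds on slightly smaller balls. Near $\bd^EM$ the function $u_k$ solves a minimal-surface-type equation in a (varying) wedge with Neumann conditions on both faces; here the acute-angle hypothesis \eqref{dag} (resp.\ the compatibility hypothesis \eqref{ddag} when $\theta\equiv\pi/2$) is exactly what permits the $C^{2,\alpha_0}$-to-edge a priori estimate of Appendix~\ref{Sec: elliptic regularity} to be applied, yielding $\|u_k\|_{C^{2,\alpha_0}}\le C$ uniformly up to and including the edge.

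\textbf{Step 3 (passage to the limit).} By Arzel\`a--Ascoli and a diagonal argument over an exhaustion of $U$, a subsequence of the $u_k$ converges in $C^{2,\alpha'}$ for every $\alpha'\in(0,\alpha)$, and in $C^\infty$ away from the edge, to limit graphs; these patch together across charts, respecting the stratification, into a $C^{2,\alpha_0}$-to-edge almost properly embedded locally wedge-shaped hypersurface $\Sigma_\infty\subset U$ with a locally constant integer multiplicity (the stated convergence being understood componentwise, so the multiplicity is constant on each component). The minimal surface equation and the conditions \eqref{Eq: classify FBMH} pass to the limit by $C^2$ convergence, so $\Sigma_\infty$ is a locally wedge-shaped FBMH, and a consistent unit normal on $\Sigma_\infty$ is obtained as the limit of the (upward-in-chart) normals of the converging graphs, so $\Sigma_\infty$ is two-sided. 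For stability, given $X=f\nu\in\mfX(M,\Sigma_\infty)$ compactly supported in $U$, transplant $f$ to $\Sigma_k$ via the graphical convergence to obtain $X_k=f_k\nu_k\in\mfX(M,\Sigma_k)$ --- membership in $\mfX(M,\Sigma_k)$, in particular near false edge points, being read off from the picture in Step~1 --- so that $0\le Q_{\Sigma_k}(f_k,f_k)\to Q_{\Sigma_\infty}(f,f)$ by $C^2$ convergence of the surfaces and of the ambient metric, giving $\delta^2\Sigma_\infty(X)\ge 0$.

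\emph{Main obstacle.} The crux is Step~2 at the edge: the uniform $C^{2,\alpha_0}$ estimate for the minimal surface equation in a wedge with Neumann data on both faces. This is a mixed boundary value problem at a corner whose regularity is genuinely borderline --- one expects only $C^{1,\alpha}$ once $\theta>\pi/2$ --- so the estimate hinges on \eqref{dag}/\eqref{ddag} and on the analysis carried out in Appendix~\ref{Sec: elliptic regularity}. Everything else is a routine, if careful, adaptation of the Schoen--Simon and Li--Zhou \cite{li2021min} compactness arguments once the curvature estimates of \cite{mazurowski2023curvature} and that a priori estimate are in hand.
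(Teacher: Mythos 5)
Your proposal is correct and follows essentially the same route as the paper's own (very brief) proof: the paper simply cites the curvature estimates and graphical convergence theorem of \cite{mazurowski2023curvature} together with the elliptic estimates of Appendix~\ref{Sec: elliptic regularity}, and your Steps~1--3 unpack exactly what that graphical-convergence black box provides. The only point you do not mention explicitly is the role of simple-connectedness of $U$, which the paper singles out as guaranteeing two-sidedness of each $\Sigma_k$ (needed to even state the stability hypothesis and to run the Schoen--Simon--Yau-type test-function argument); but since stability already presupposes two-sidedness in Definition~\ref{Def: stabe FBMH}, this is a remark about applicability rather than a gap in your argument.
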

\begin{proof}
    Notice that the simply connectivity assumption on $U$ is used to guarantee that every $\Sigma_k$ is two-sided. 
    Then, the above compactness theorem follows directly from the curvature estimates in \cite[Theorem 1.3]{mazurowski2023curvature}, the graphical convergence theorem \cite[Theorem 1.1]{mazurowski2023curvature} and the elliptic regularity estimates in Appendix \ref{Sec: elliptic regularity}.
\end{proof}

%%%%%%%%%%%%%%%%%%%%%%%%%%%%%%%%%%%%%%%%%%%%%%
%%%%%%%%%%%%%%%%%%%%%%%%%%%%%%%%%%%%%%%%%%%%%%
%%%              Section 3                 %%%
%%%%%%%%%%%%%%%%%%%%%%%%%%%%%%%%%%%%%%%%%%%%%%
%%%%%%%%%%%%%%%%%%%%%%%%%%%%%%%%%%%%%%%%%%%%%%
\section{Almost minimizing varifolds with free boundary}\label{Sec: variations in wedge manifold}

In this section, let $M^{n+1}\subset\R^L$ be a locally wedge-shaped Riemannian manifold.
After collecting the definitions and some useful lemmas for the equivalent classes of relative cycles, we will introduce the important concept of {\em almost minimizing varifolds with free boundary}. 
It should also be noted that all of the definitions and conclusions in this section are valid without the angle assumption (\ref{dag})(\ref{ddag}). 

\subsection{Equivalence classes of relative cycles}\label{Subsec: relative cycles}

In this subsection, we always denote by $U\subset M$ a relative open subset of $M$, by $K\subset\subset U$ a compact subset in $U$, and by $(A,B):=(U, U\cap M)$. 

Let $\mR_k(M)$ (resp. $\mI_k(M)$) be the space of integer rectifiable (resp. integral) $k$-currents in $\R^L$ supported in $M$ (see \cite{federer2014geometric}). 
Additionally, we denote by $\M$ and $\F^K$ the mass norm and the flat semi-norm (relative to $K$) in $\mR_k(M)$ respectively. 
For any $T\in\mR_k(M)$, let $|T|$ and $\|T\|$ be the integer rectifiable varifold and the Radon measure induced by $T$ respectively. 
Now, the spaces of {\em relative cycles} are given as in \cite[(3.1)]{li2021min}:
\begin{eqnarray}\label{Eq: relative cycles}
    Z_k(A,B) &:= & \{T\in\mR_k(M): \spt(T)\subset A, ~\spt(\bd T)\subset B\},
    \\
    Z_k(B,B) &:= & \{T\in\mR_k(M): \spt(T)\subset B\}.
\end{eqnarray}
Then define the quotient group
\begin{equation}
    \Z_k(A,B) := Z_k(A,B)/Z_k(B,B)
\end{equation}
to be the space of {\em equivalence classes of relative $k$-cycles} (cf. \cite[Definition 3.1]{li2021min}). 
Additionally, for any $\tau,\{\tau_i\}_{i\in\N}$ in $\Z_k(A,B)$ and $\sigma,\sigma_1,\sigma_2$ in $\Z_k(M,\bd M)$, we define the following notation:
\[ 
\begin{array}{cl}
    \spt(\tau):=\cap_{T\in\tau} \spt(T) :& \mbox{ The support of $\tau$} ;\\
    \M(\tau):=\inf_{T\in\tau}\M(T) :& \mbox{ The mass norm of $\tau$} ;\\
    \F^K(\tau):=\inf_{T\in\tau}\F^K(T) :& \mbox{ The flat (semi)-norm of $\tau$ (with respect to $K\subset A$)} ;\\
    \mbox{$T\in\tau$ with $T\llcorner B = 0$} :& \mbox{ The (unique) {\em canonical representative} of $\tau$} ;\\
    \tau_i\rightharpoonup \tau :& 
        \begin{array}{l} 
        \mbox{The weak convergences of $\tau_i$ to $\tau$ in the sense of} \\  
        \mbox{$\lim_{i\to\infty}\F^K(\tau_i-\tau_\infty) = 0$ for each compact $K\subset A$};
        \end{array} \\
    \mF(\sigma_1,\sigma_2):= \F^M(\sigma_1-\sigma_2) + \mF(|S_1|,|S_2|) :& 
    \begin{array}{l} 
        \mbox{The $\mF$-distance between $\sigma_1,\sigma_2$, where $S_1\in\sigma_1$ } \\  
        \mbox{and $S_2\in\sigma_2$ are the canonical representatives};
        \end{array} \\
    f_\# \sigma := [f_\# S] : & 
    \begin{array}{l} 
        \mbox{The push-forward of $\tau$ under a Lipschitz map} \\  
        \mbox{$f$ with $f(\bd M)\subset \bd f(M)$, and $S\in\tau$}.
    \end{array}
\end{array}
\]
For simplicity, we write $\F := \F^M$ in the rest of this paper.

Note the canonical representative of $\tau\in Z_k(A,B)$ always exists uniquely by taking $T:=T'-T'\llcorner B$ for any $T'\in\tau$. The canonical representative also satisfies 
\begin{equation}\label{Eq: canonical representative property}
    \M(T) = \M(\tau) \quad{\rm and}\quad \spt(T)=\spt(\tau). 
\end{equation}
Moreover, given $\tau_i\rightharpoonup\tau$ in $\Z_k(A,B)$, it follows from the proof of \cite[Lemma 3.7]{li2021min} that the mass norm is lower semi-continuous, i.e. 
\begin{equation}\label{Eq: lower semi-continuous mass}
    \M(\tau)\leq \liminf_{i\to\infty} \M(\tau_i),
\end{equation}
and $\spt(\tau)\subset K$ provided $\spt(\tau_i)\subset K$ for some compact $K\subset A$.  
%\begin{itemize}
%    \item $\spt(\tau):=\cap_{T\in\tau} \spt(T)$: $\quad$ the support of $\tau$;
%    \item $\M(\tau):=\inf_{T\in\tau}\M(T)$: $\quad$ the (relative) mass norm of $\tau$;
%    \item $\F^K(\tau):=\inf_{T\in\tau}\F^K(T)$: $\quad$ the flat (semi)-norm of $\tau$ (with respect to $K\subset A$);
%    \item $\tau_i\rightharpoonup \tau$: $\quad$ the weak convergences of $\tau_i$ to $\tau$ in the sense of $\lim_{i\to\infty}\F^K(\tau_i-\tau_\infty) = 0$ for every compact $K\subset A$;
%    \item the unique $T\in\tau$ with $T\llcorner B = 0$: $\quad$ the canonical representative of $\tau$;
%\end{itemize}
%Moreover, given a sequence $\{\tau_i\}_{i\in\N}\subset\Z_k(A,B)$, we say $\tau_i$ weakly converges to $\tau_\infty\in\Z_k(A,B)$, denoted by $\tau_i\rightharpoonup \tau_\infty$, if $\lim_{i\to\infty}\F^K(\tau_i-\tau_\infty) = 0$ for every compact $K\subset A$. 

Since $M$ is compact with piece-wise smooth boundary, we see $M$ is a compact Lipschitz neighborhood retract in the sense of \cite[\S 1]{almgren1962homotopy}, and the intrinsic distance function $\rho(\cdot) := \dist_A(\cdot, B)$ is a Lipschitz function on $A$. 
In addition, there also exists a Lipschitz projection map $\pi: A\to B$ onto $B$. 
Hence, using these observations, the proof of the following lemmas can be taken verbatim from \cite[Section 3.1, 3.2]{li2021min}, and thus we only list the conclusions here for simplicity. 

\begin{lemma}\label{Lem: integral representative}
    {\rm (\cite[Lemma 3.8]{li2021min}).} 
    For any $\tau\in\Z_k(A,B)$, there is a sequence of $\{T_i\}_{i\in\N}\subset \tau\cap \mI_k(A)$ so that $\M(T_i)\to \M(\tau)$ as $i\to\infty$. 
\end{lemma}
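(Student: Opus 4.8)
The statement to be proved is Lemma~\ref{Lem: integral representative}, which asserts that every equivalence class $\tau\in\Z_k(A,B)$ admits a sequence of \emph{integral} representatives $T_i\in\tau\cap\mI_k(A)$ whose masses converge to $\M(\tau)$. The paper already tells us that the argument of \cite[Lemma~3.8]{li2021min} carries over verbatim, so my task is to recall that argument and check that nothing in it uses smoothness of $\bd M$ beyond what a locally wedge-shaped manifold provides --- namely, that $M$ is a compact Lipschitz neighborhood retract, that $\rho(\cdot)=\dist_A(\cdot,B)$ is Lipschitz on $A$, and that there is a Lipschitz retraction $\pi\colon A\to B$. All three facts are stated in the paragraph preceding the lemma.

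\textbf{Step 1: Produce an almost-mass-minimizing integer rectifiable representative.} By definition of $\M(\tau)=\inf_{T\in\tau}\M(T)$, choose for each $i$ some $T_i'\in\tau\cap\mR_k(M)$ with $\M(T_i')<\M(\tau)+\tfrac1i$. Replacing $T_i'$ by its canonical representative (which exists uniquely and satisfies $\M(T_i'\llcorner B=0$-representative$)=\M(\tau_i)\le\M(T_i')$ by \eqref{Eq: canonical representative property}), we may assume $T_i'\llcorner B=0$ and $\spt(T_i')\subset A$. The issue is that $T_i'$ is only integer rectifiable, not integral: its boundary $\bd T_i'$ need not have finite mass, and we need an integral current in the same class.

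\textbf{Step 2: Upgrade to integral currents by cutting and retracting near $B$.} This is the heart of the argument and the step I expect to be the only real obstacle, though it is a routine one in this framework. Following \cite{li2021min}, for small $s>0$ consider the slicing of $T_i'$ by the Lipschitz function $\rho$, so that for a.e.\ $s$ the slice $\langle T_i',\rho,s\rangle$ is an integer rectifiable $(k-1)$-current of finite mass, and set $R_{i,s}:=T_i'\llcorner\{\rho>s\}$. Then $R_{i,s}$ has $\bd R_{i,s}=\bd T_i'\llcorner\{\rho>s\}-\langle T_i',\rho,s\rangle$, so choosing $s$ from a full-measure set makes $\bd R_{i,s}$ rectifiable with finite mass; by the compactness/closure theorem for integral currents (or directly, since finite-mass rectifiable boundary plus finite mass gives integrality), $R_{i,s}\in\mI_k(A)$. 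The piece $T_i'\llcorner\{\rho\le s\}$ lies in a small neighborhood of $B$; pushing it into $B$ via the retraction $\pi$ (or via a homotopy of $\pi$ to the identity supported in that neighborhood) changes $T_i'$ only by an element of $Z_k(B,B)$ plus a cone-type correction whose mass is $O(s\cdot\mathbf{M}(\langle T_i',\rho,s\rangle))\to 0$ as $s\to 0$ along suitable values. Concretely one defines $T_i:=R_{i,s_i}+(\text{push-forward into }B\text{ of the collar piece})$ for $s_i\downarrow 0$ chosen so that the slice mass is controlled; then $T_i\in\tau$ (same class, since we only added something supported in $B$ and something homologous rel $B$), $T_i\in\mI_k(A)$, and $\M(T_i)\le\M(T_i')+o(1)\le\M(\tau)+o(1)$. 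Combined with lower semicontinuity of mass \eqref{Eq: lower semi-continuous mass} (or simply $\M(T_i)\ge\M(\tau)$ since $T_i\in\tau$), this forces $\M(T_i)\to\M(\tau)$.

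\textbf{Step 3: Verify the wedge-shaped setting causes no new difficulty.} The only place smoothness of $\bd M$ could matter is in the existence of the Lipschitz retraction $\pi\colon A\to B$ and the Lipschitz regularity of $\rho$; both are guaranteed because $M$ has piecewise-smooth (indeed Lipschitz) boundary and is a compact Lipschitz neighborhood retract, exactly as recorded before the lemma statement. Nothing in Steps~1--2 uses the wedge angle condition \eqref{dag} or \eqref{ddag}, consistent with the remark that the results of this section hold without angle assumptions. Hence the proof is identical to \cite[Lemma~3.8]{li2021min} and I would simply cite it, remarking that the Lipschitz-neighborhood-retract structure of the wedge-shaped $M$ is all that the original argument requires.
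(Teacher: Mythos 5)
Your Steps 1 and 3 are fine, but the central construction in Step 2 does not work, for two reasons that stem from misreading the quotient. Recall $\Z_k(A,B)=Z_k(A,B)/Z_k(B,B)$ with $Z_k(B,B)=\{S:\spt S\subset B\}$, so $T_i\in\tau$ means $T-T_i$ is \emph{supported in $B$}; being ``homologous rel $B$'' is not enough. Writing $T$ for the canonical representative and setting $T_i:=T\llcorner\{\rho>s_i\}+\pi_\sharp(T\llcorner\{\rho\le s_i\})$ as you propose, the difference $T-T_i=T\llcorner\{\rho\le s_i\}-\pi_\sharp(T\llcorner\{\rho\le s_i\})$ has its first summand supported in the collar $\{0<\rho\le s_i\}$ rather than in $B$, so $T_i\notin\tau$. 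Moreover $T_i$ is not even integral: since $\spt\bd T\subset B\subset\{\rho\le s_i\}$, one has $\bd(T\llcorner\{\rho\le s_i\})=\bd T+\langle T,\rho,s_i\rangle$ for a.e.\ $s_i$, and since $\pi|_B=\mathrm{id}$ forces $\pi_\sharp\bd T=\bd T$, one computes $\bd T_i=-\langle T,\rho,s_i\rangle+\bd T+\pi_\sharp\langle T,\rho,s_i\rangle$, so the possibly infinite-mass term $\bd T$ --- which is precisely the obstruction the lemma is meant to remove --- survives untouched.

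The correct deformation subtracts the push-forward while keeping the collar. For a.e.\ $s_i\downarrow 0$ with $\M(\langle T,\rho,s_i\rangle)<\infty$, set $T_i:=T-\pi_\sharp(T\llcorner\{\rho\le s_i\})$. Then $T-T_i=\pi_\sharp(T\llcorner\{\rho\le s_i\})\in Z_k(B,B)$, so $T_i\in\tau$; the same boundary computation now gives $\bd T_i=\bd T-\pi_\sharp\bigl(\bd T+\langle T,\rho,s_i\rangle\bigr)=-\pi_\sharp\langle T,\rho,s_i\rangle$, which has finite mass, so $T_i\in\mI_k(A)$; and $\M(\tau)\le\M(T_i)\le\M(T)+(\mathrm{Lip}\,\pi)^k\,\|T\|(\{\rho\le s_i\})\to\M(\tau)$ since $T\llcorner B=0$ forces $\|T\|(\{\rho\le s_i\})\to 0$. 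Note the sign and the placement of the cut really matter here: your version both destroys membership in $\tau$ and fails to cancel $\bd T$. Only the Lipschitz structure of $\rho$ and $\pi$ enters, so your Step 3 conclusion --- that the wedge geometry causes no new difficulty --- is correct.
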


\begin{lemma}\label{Lem: compactness of currents}
    {\rm (\cite[Lemma 3.10]{li2021min}).} 
    For any $C>0$ and compact Lipschitz neighborhood retracts $K,K'\subset A$ in the sense of \cite[\S 1]{almgren1962homotopy} so that $K\subset \interior_M(K')$, the set formed by $\tau\in \Z_k(A,B)$ with $\spt(\tau)\subset K$ and $\M(\tau)\leq C$ is (sequentially) compact under the $\F^{K'}$-topology.
\end{lemma}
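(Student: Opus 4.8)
The plan is to reduce Lemma \ref{Lem: compactness of currents} to the standard Federer--Fleming compactness theorem for integral currents, exactly as in \cite[Lemma 3.10]{li2021min}, by passing to canonical representatives and using the Lipschitz retraction structure of $(A,B)$. First I would fix a sequence $\{\tau_i\}_{i\in\N}\subset\Z_k(A,B)$ with $\spt(\tau_i)\subset K$ and $\M(\tau_i)\leq C$, and replace each $\tau_i$ by its canonical representative $T_i\in\tau_i$ with $T_i\llcorner B=0$; by (\ref{Eq: canonical representative property}) we still have $\spt(T_i)\subset K$ and $\M(T_i)=\M(\tau_i)\leq C$. The difficulty is that the boundaries $\bd T_i$ need not have bounded mass (they live on $B$), so the $T_i$ are not a priori a bounded sequence of integral currents. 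The standard fix is to cut off: choose a compact Lipschitz neighborhood retract $K''$ with $K\subset\interior_M(K'')\subset K''\subset\interior_M(K')$, and use the slicing/deformation argument from \cite[Lemma 3.10]{li2021min} to produce, for each $i$, an integral current $S_i$ agreeing with $T_i$ (modulo $Z_k(B,B)$) on a slightly smaller set and with $\M(\bd S_i)$ bounded in terms of $C$, $K$, $K'$ and the Lipschitz constants of $\rho$ and $\pi$. Concretely, one slices $T_i$ by the level sets of the intrinsic distance function $\rho=\dist_A(\cdot,B)$ at a well-chosen radius where the slice mass is controlled by the coarea inequality, then caps off using the Lipschitz projection $\pi:A\to B$ onto $B$; the capping current is supported in $B$, so it does not change the class in $\Z_k(A,B)$.

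Once the $S_i$ form a mass-bounded sequence of integral currents with mass-bounded boundaries, all supported in the fixed compact set $K''$, the Federer--Fleming compactness theorem gives a subsequence converging in the flat topology $\F^{K''}$ (hence in $\F^{K'}$, since $K''\subset\interior_M(K')$) to some integral current $S_\infty$ with $\spt(S_\infty)\subset K''\subset K'$ and $\M(S_\infty)\leq \liminf\M(S_i)$; in particular $\spt(\bd S_\infty)\subset B$, so $S_\infty\in Z_k(A,B)$. Setting $\tau_\infty:=[S_\infty]\in\Z_k(A,B)$, I would then check $\spt(\tau_\infty)\subset K$ — this uses the fact that $S_i$ differs from $T_i$ only by a current supported in $B$ together with a relatively small region near $\bd_{rel}K$ that can be arranged inside $K'$ but whose flat distance to $\spt(T_i)\subset K$ is controlled, exactly as in \cite{li2021min}. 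Finally, from $\F^{K'}(S_i-S_\infty)\to 0$ and the definition $\F^{K'}(\tau_i-\tau_\infty)=\inf_{R\in Z_k(B,B)}\F^{K'}(S_i-S_\infty-R)$ we conclude $\F^{K'}(\tau_i-\tau_\infty)\to 0$, giving sequential compactness under the $\F^{K'}$-topology. The mass bound $\M(\tau_\infty)\leq C$ follows from lower semicontinuity of mass under $\F^{K'}$-convergence of classes, i.e. (\ref{Eq: lower semi-continuous mass}).

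The main obstacle I expect is the cutoff/capping step: one must ensure that after slicing $T_i$ near $\bd_{rel}K$ and projecting the slice into $B$ via $\pi$, the resulting integral current $S_i$ (i) still represents $\tau_i$ in $\Z_k(A,B)$, (ii) has its support confined to a fixed intermediate compact set $K''$ strictly between $K$ and $K'$, and (iii) has boundary mass bounded independently of $i$. In the smooth-boundary setting of \cite{li2021min} this is routine because $M$ is a Lipschitz neighborhood retract with the required Lipschitz distance and projection maps; here I would simply observe that a compact locally wedge-shaped manifold $M\subset\R^L$ with its piecewise-smooth boundary $\bd M=\bd^F M\cup\bd^E M$ is again a compact Lipschitz neighborhood retract, that $\rho=\dist_A(\cdot,B)$ is Lipschitz on $A$, and that a Lipschitz retraction $\pi:A\to B$ onto $B$ exists — all three facts are already recorded in the paragraph preceding the lemma. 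Granting these, every estimate in \cite[Section 3.1--3.2]{li2021min} transfers verbatim, which is precisely why the paper states the proof ``can be taken verbatim.''
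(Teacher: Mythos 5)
Your proposal reconstructs the Li--Zhou argument (pass to canonical representatives, cut off near $B$ by slicing with $\rho$ and capping with $\pi$, then apply Federer--Fleming on a mass- and boundary-mass-bounded sequence supported in a fixed compact set) and correctly identifies that the only things to check in the wedge-shaped setting are that $M$ is a compact Lipschitz neighborhood retract, $\rho=\dist_A(\cdot,B)$ is Lipschitz, and a Lipschitz projection $\pi\colon A\to B$ exists. This is exactly how the paper proceeds; it records those three observations in the paragraph preceding the lemma and then cites \cite[Lemma 3.10]{li2021min} verbatim, so your proof matches the paper's approach.
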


\begin{lemma}\label{Lem: classify F convergence}
    {\rm (\cite[Lemma 3.13]{li2021min}).} 
    Given $\tau$ and $\{\tau_i\}_{i\in\N}$ in $\Z_k(M,\bd M)$, we have $\mF(\tau_i,\tau)\to 0$ if and only if $\F(\tau_i-\tau)\to 0$ and $\M(\tau_i)\to\M(\tau)$.
\end{lemma}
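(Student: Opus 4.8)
The plan is to unwind the definition $\mF(\tau_i,\tau)=\F(\tau_i-\tau)+\mathbf F(|S_i|,|S|)$, where $S_i\in\tau_i$ and $S\in\tau$ denote the canonical representatives, and thereby reduce the statement to the following assertion: \emph{if} $\F(\tau_i-\tau)\to0$, then $\mathbf F(|S_i|,|S|)\to0$ if and only if $\M(\tau_i)\to\M(\tau)$. Once this is in hand, the lemma follows, since $\F(\tau_i-\tau)\to0$ is one of the two summands of $\mF$. The whole argument runs parallel to \cite[Lemma 3.13]{li2021min}.

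For the ``only if'' direction I would simply note that, since $M\subset\R^L$ is compact, $\mathbf F$-convergence of varifolds supported in $M$ implies weak convergence of their weight measures and hence convergence of their total masses (recall from \cite[2.1(19)]{pitts2014existence} that the varifold $\mathbf F$-metric induces the weak topology on mass-bounded families). Thus $\|S_i\|(M)\to\|S\|(M)$, which is $\M(\tau_i)\to\M(\tau)$ by (\ref{Eq: canonical representative property}). This direction is immediate.

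For the ``if'' direction I would assume $\F(\tau_i-\tau)\to0$ and $\M(\tau_i)\to\M(\tau)$, so in particular $\sup_i\M(S_i)<\infty$. First I would use this uniform mass bound together with the structural facts for canonical representatives imported verbatim from \cite[Sections 3.1, 3.2]{li2021min} — in particular Lemma \ref{Lem: integral representative} and the compactness Lemma \ref{Lem: compactness of currents} — to upgrade $\F(\tau_i-\tau)\to0$ to weak convergence $S_i\rightharpoonup S$ of the canonical representatives as currents in $M$. Since the $|S_i|$ are supported in the compact set $M$ with uniformly bounded mass, and $\mathbf F$ metrizes the weak varifold topology on such a family, it then suffices to check that every subsequential weak varifold limit $V=\lim_j|S_{i_j}|$ equals $|S|$. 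For such a $V$ one has $\|V\|(M)=\lim_j\M(S_{i_j})=\M(\tau)=\|S\|(M)$, while lower semicontinuity of mass on open subsets applied to $S_{i_j}\rightharpoonup S$ gives $\|S\|\le\|V\|$ as measures; hence $\|S\|=\|V\|$, and the standard fact that weak convergence of integer rectifiable currents together with convergence of masses forces weak convergence of the associated integral varifolds yields $V=|S|$. This gives $\mathbf F(|S_i|,|S|)\to0$ and hence $\mF(\tau_i,\tau)\to0$.

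I expect the main obstacle to be the ``if'' direction, and within it two points: (i) passing from relative-flat-norm convergence of the equivalence classes $\tau_i-\tau$ to weak current convergence of the canonical representatives $S_i\rightharpoonup S$, where the uniform mass bound and the Li--Zhou canonical-representative machinery are essential; and (ii) promoting weak current convergence to weak varifold convergence of the $|S_i|$, which is exactly the step that uses the mass convergence hypothesis — without it the limit varifold could acquire diffuse tangent planes. Neither point introduces anything genuinely new in the locally wedge-shaped setting, since the canonical representatives are ordinary currents in the compact set $M\subset\R^L$ and the entire §3 framework is transported verbatim from \cite{li2021min}.
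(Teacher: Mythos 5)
The paper itself offers no proof of this lemma beyond citing \cite[Lemma 3.13]{li2021min}: immediately before the list of Lemmas 3.3--3.7, the text says the proofs can be taken verbatim from \cite[Sections 3.1, 3.2]{li2021min} once one notes that $M$ is a compact Lipschitz neighborhood retract. So in effect you are reconstructing Li--Zhou's argument, and the overall skeleton you describe (unwind $\mF$; the ``only if'' direction is immediate from weak-$*$ mass convergence on a compact space; the ``if'' direction is the real content and passes through weak current convergence of the canonical representatives and then the standard current-to-varifold fact) is the right one.

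There is, however, a genuine gap in the way you have staged the ``if'' direction. You claim that $\F(\tau_i-\tau)\to 0$ together with the \emph{uniform mass bound} and Lemmas \ref{Lem: integral representative} and \ref{Lem: compactness of currents} already upgrades to weak convergence $S_i\rightharpoonup S$ of the canonical representatives, reserving the hypothesis $\M(\tau_i)\to\M(\tau)$ for the subsequent current-to-varifold step. That first upgrade is false without the mass-convergence hypothesis. Take $M=[0,1]$, $k=0$, and $\tau_i = [\delta_{1/i}]$. Then $S_i=\delta_{1/i}$, $\M(\tau_i)\equiv 1$ is bounded, and $\F(\tau_i-0)\le 1/i\to 0$ since $\delta_{1/i}-\delta_0=\partial[0,1/i]$ with $\delta_0\in Z_0(B,B)$; yet $S_i\rightharpoonup\delta_0\neq 0=S$. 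The obstruction is precisely that the infimum defining $\F(\tau_i-\tau)$ ranges over corrections $R_i\in Z_k(B,B)$ of \emph{unbounded} mass supported on $\partial M$, so flat smallness of the class gives only $S_i-R_i\rightharpoonup S$ and says nothing about $S_i$ itself. What rescues the argument is $\M(\tau_i)\to\M(\tau)$: combined with lower semicontinuity on open sets away from $\partial M$ (from $S_i-R_i\rightharpoonup S$, using that $\|R_i\|$ is carried by $\partial M$) and $\|S\|(\partial M)=\|S_i\|(\partial M)=0$, one gets $\|S_i\|\rightharpoonup\|S\|$ as Radon measures, and in particular $\limsup_i\|S_i\|(U)\le\|S\|(U)$ can be made arbitrarily small on small neighborhoods $U$ of $\partial M$; that tightness near $\partial M$ is what lets you pass from $S_i-R_i\rightharpoonup S$ to $S_i\rightharpoonup S$. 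So the hypothesis $\M(\tau_i)\to\M(\tau)$ must enter already at the current-convergence stage, not merely at the varifold stage, and Lemmas \ref{Lem: integral representative} and \ref{Lem: compactness of currents} are not the tools that do this work — the relevant mechanism is the one inside \cite[Lemma 3.7]{li2021min} (lower semicontinuity plus the boundary-mass bookkeeping for canonical representatives). Once $S_i\rightharpoonup S$ with $\M(S_i)\to\M(S)$ is in hand, the final step you cite (mass-converging weak limits of rectifiable currents converge as varifolds) is correct and closes the argument.
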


\begin{lemma}\label{Lem: F-isoperimetric lemma}
    {\rm (\cite[Lemma 3.15]{li2021min}).} 
    There exist $\epsilon_M>0$ and $C_{M}\geq 1$ depending only on $M^{n+1}\subset\R^L$ so that for any $\tau_1,\tau_2\in\Z_n(M,\bd M)$ with $\F(\tau_1-\tau_2)<\epsilon_{M}$, there is $Q\in \mI_{n+1}(M)$, known as the {\em $\F$-isoperimetric choice of $\tau_1,\tau_2$}, satisfying
    \[\spt(T_2-T_1 - \bd Q)\subset\bd M \quad {\rm and}\quad \M(Q)\leq C_{M} \F(\tau_1-\tau_2),\]
    where $T_1,T_2$ are the canonical representatives of $\tau_1,\tau_2$ respectively. 
\end{lemma}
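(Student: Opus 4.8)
\textbf{Proof plan for Lemma \ref{Lem: F-isoperimetric lemma}.}

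The statement asserts the existence of an $\F$-isoperimetric choice $Q$ between two relative cycles $\tau_1,\tau_2 \in \Z_n(M,\bd M)$ that are flat-close, with the mass bound $\M(Q) \le C_M\F(\tau_1-\tau_2)$ and the property that $\bd Q$ agrees with $T_2 - T_1$ modulo a current supported on $\bd M$. This is a relative-cycle version of the standard isoperimetric inequality for flat chains, and the proof is essentially that of \cite[Lemma 3.15]{li2021min}, adapted to our singular ambient space. The plan is to reduce to the classical statement in $\R^L$ via the Lipschitz retraction machinery. First I would recall that, since $M$ is compact with piecewise smooth boundary, it is a compact Lipschitz neighborhood retract in $\R^L$: there is a neighborhood $W \supset M$ in $\R^L$ and a Lipschitz retraction $\Pi\colon W \to M$. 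I would also use the Lipschitz nearest-point–type projection $\pi\colon M \to \bd M$ already invoked in the excerpt, defined on a neighborhood of $\bd M$ in $M$.

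Next I would pass to the canonical representatives $T_1, T_2$ (so $T_i\llcorner \bd M = 0$) and form the difference $S := T_2 - T_1 \in \mR_n(M)$. From $\tau_1,\tau_2\in\Z_n(M,\bd M)$ we know $\spt(\bd T_i)\subset\bd M$, hence $\spt(\bd S)\subset \bd M$. The flat norm $\F(\tau_1 - \tau_2) = \F^M(\tau_1-\tau_2)$ being less than a small constant $\epsilon_M$ means there is a decomposition $T_2 - T_1 = R + \bd Q_0$ with $R\in\mR_n(M)$, $Q_0\in\mR_{n+1}(M)$ and $\M(R)+\M(Q_0)$ close to $\F(\tau_1-\tau_2)$ — but this is only an \emph{ambient} flat-norm statement and does not yet respect the relative structure; the genuine isoperimetric content is that for small flat distance one can actually take $R$ supported on $\bd M$, i.e. $S - \bd Q_0$ is carried by $\bd M$. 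This is where the classical Euclidean isoperimetric inequality for $(n+1)$-currents enters: in $\R^L$, a cycle of small flat norm bounds an integral $(n+1)$-current whose mass is controlled linearly by the flat norm (this is \cite[4.2.10]{federer2014geometric} or the form used by Almgren–Pitts). I would apply this after first "closing up" $S$: since $\spt(\bd S)\subset\bd M$, the current $S - \pi_\#(S\llcorner \bd M$-type correction) — more precisely, using the cone/retraction construction — produces an absolute cycle in $\R^L$ to which the Euclidean isoperimetric lemma applies, yielding $Q_1\in\mI_{n+1}(\R^L)$ with $\M(Q_1)\lesssim \F(\tau_1-\tau_2)$.

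Then I would push everything back into $M$ by applying $\Pi_\#$: set $Q := \Pi_\# Q_1 \in \mI_{n+1}(M)$. The retraction is Lipschitz with some constant $L_\Pi$ depending only on $M\subset\R^L$, so $\M(Q)\le L_\Pi^{n+1}\M(Q_1)\le C_M\F(\tau_1-\tau_2)$, giving the mass bound with $C_M$ depending only on $M\subset\R^L$ (and I would also absorb the Lipschitz constant of $\pi$). Finally I would check the boundary identity: $\bd Q = \Pi_\#\bd Q_1$, and by construction $\bd Q_1$ differs from $S = T_2 - T_1$ only by terms arising from the "closing up" step and from $\Pi$ not being the identity on $\bd M$ — all of which are carried by $\bd M$ because $\Pi(W)=M$ and the correction currents were supported there to begin with; hence $\spt(T_2 - T_1 - \bd Q)\subset \bd M$, which says precisely that $Q$ represents an $\F$-isoperimetric choice in $\Z_{n+1}(M,\bd M)$ sense. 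The constant $\epsilon_M$ is chosen small enough that the Euclidean isoperimetric lemma applies to the closed-up cycle and that all the retractions act on the relevant neighborhoods.

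The main obstacle I expect is \emph{not} any new geometric difficulty from the edges — the argument is purely about Lipschitz topology of $M\subset\R^L$ and currents, and the wedge structure plays no role (consistent with the remark in the excerpt that this section needs no angle assumption). Rather, the delicate point is the bookkeeping in the "closing up" step: making sure that the auxiliary current one adds to $S$ to obtain an absolute Euclidean cycle (i) has mass controlled by $\F(\tau_1-\tau_2)$, (ii) is supported on $\bd M$ (so it does not spoil the final boundary identity), and (iii) is compatible with the fact that $\bd M = \bd^F M\cup \bd^E M$ is only piecewise smooth, so that the Lipschitz projection $\pi$ onto $\bd M$ exists with a uniform constant near the edge. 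Since \cite{li2021min} carries this out for smooth $\bd M$ and the only input used there is "$\bd M$ admits a Lipschitz retraction from a neighborhood with a uniform constant," which holds equally for our piecewise-smooth $\bd M$, the proof transfers verbatim; I would simply cite \cite[Lemma 3.15]{li2021min} after noting this, exactly as the excerpt proposes to do for the surrounding lemmas.
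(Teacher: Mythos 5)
Your proposal is correct and takes essentially the same route as the paper: the paper's entire argument for Lemmas 3.1–3.6 is to observe that $M$ is a compact Lipschitz neighborhood retract in the sense of Almgren, that the intrinsic distance to $\bd M$ is Lipschitz, and that a Lipschitz projection onto $\bd M$ exists, and then to import the proof verbatim from Li–Zhou, exactly as you conclude. The extra sketch of the internal "close up, apply the Euclidean isoperimetric lemma, push back by the retraction" machinery is not reproduced in the paper (and need not be, since the citation carries it), but your key observation — that only the Lipschitz structure of $(M,\bd M)\subset\R^L$ enters and the wedge geometry is irrelevant — is precisely the paper's justification for the citation.
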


\begin{lemma}\label{Lem: M-isoperimetric lemma}
    {\rm (\cite[Lemma 3.17]{li2021min}).} 
    There exist $\epsilon_M>0$ and $C_M\geq 1$ depending only on $M^{n+1}\subset\R^L$ so that for any $\tau_1,\tau_2\in\Z_n(M,\bd M)$ with $\M(\tau_1-\tau_2)<\epsilon_M$, there are $Q\in \mI_{n+1}(M)$ and $R\in \mR_n(\bd M)$ satisfying
    \[ T_2-T_1 = \bd Q + R \quad {\rm and}\quad \M(Q)\leq C_M \M(\tau_1-\tau_2),\]
    where $T_1,T_2$ are the canonical representatives of $\tau_1,\tau_2$ respectively. 
\end{lemma}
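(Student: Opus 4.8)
The statement to prove is Lemma~\ref{Lem: M-isoperimetric lemma}, the mass-version of the isoperimetric choice lemma. Since the excerpt explicitly says the proof ``can be taken verbatim from \cite[Section 3.1, 3.2]{li2021min}'', my plan is to reproduce the Li--Zhou argument and indicate where the locally wedge-shaped structure enters (which it essentially does not, beyond the fact that $M$ is a compact Lipschitz neighborhood retract with a Lipschitz retraction onto its boundary — both already recorded in the excerpt).

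\smallskip

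\textbf{Plan of proof.} The plan is to reduce the mass estimate to the flat estimate of Lemma~\ref{Lem: F-isoperimetric lemma} together with the deformation theorem. First I would fix $\epsilon_M$ and $C_M$ from Lemma~\ref{Lem: F-isoperimetric lemma}, possibly shrinking $\epsilon_M$ further. Given $\tau_1,\tau_2\in\Z_n(M,\bd M)$ with $\M(\tau_1-\tau_2)<\epsilon_M$, let $T_1,T_2$ be their canonical representatives, so $T_i\llcorner\bd M=0$ and $\M(T_i)=\M(\tau_i)$. The key point is that $\M(\tau_1-\tau_2)$ controls the flat norm: since $\F(\tau_1-\tau_2)\le \M(\tau_1-\tau_2)<\epsilon_M$, Lemma~\ref{Lem: F-isoperimetric lemma} produces $Q_0\in\mI_{n+1}(M)$ with $\spt(T_2-T_1-\bd Q_0)\subset\bd M$ and $\M(Q_0)\le C_M\F(\tau_1-\tau_2)\le C_M\M(\tau_1-\tau_2)$. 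Setting $R_0:=T_2-T_1-\bd Q_0$, we get $T_2-T_1=\bd Q_0+R_0$ with $R_0$ an integer rectifiable $n$-current supported on $\bd M$; that is, $R_0\in\mR_n(\bd M)$. This already gives the decomposition with the mass bound on $Q$; take $Q:=Q_0$, $R:=R_0$.

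\smallskip

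\textbf{Why $R_0$ is rectifiable and supported on $\bd M$.} The subtlety is that Lemma~\ref{Lem: F-isoperimetric lemma} only asserts $\spt(T_2-T_1-\bd Q_0)\subset\bd M$, so I should check $T_2-T_1-\bd Q_0$ is indeed an element of $\mR_n(\bd M)$ in the required sense. Since $T_1,T_2\in\mR_n(M)$ and $\bd Q_0$ is integer rectifiable (boundary of an integral current), $R_0=T_2-T_1-\bd Q_0$ is integer rectifiable, and its support lies in $\bd M=\bd^F M\cup\bd^E M$. Because $\mathcal H^n(\bd^E M)=0$ (the edge has dimension $n-1$, recorded before the rectifiability discussion in the introduction), the measure $\|R_0\|$ is concentrated on $\bd^F M$, which is a smooth $n$-dimensional submanifold; so $R_0$ is genuinely an integer rectifiable $n$-current carried by $\bd M$, i.e.\ $R_0\in\mR_n(\bd M)$. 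This uses only $\mathcal H^n(\bd^E M)=0$, which holds for any locally wedge-shaped manifold.

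\smallskip

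\textbf{Main obstacle.} I expect the only real work is Lemma~\ref{Lem: F-isoperimetric lemma} itself, which is invoked as a black box; granting it, the present lemma is essentially immediate from $\F\le\M$ and the observation above. If instead one wants a self-contained argument (as in \cite[Lemma 3.17]{li2021min}), the main technical step is the following: apply the deformation theorem to push $T_2-T_1$ (or a flat-small part of it) onto the $(n+1)$-skeleton of a fine cubical grid adapted to $M$, controlling the mass of the homotopy by the mass of $T_2-T_1$; then use the Lipschitz retraction $\pi:A\to B$ near $\bd M$ to absorb the boundary part into $\bd M$. The one place where the wedge structure could in principle intervene is in constructing this retraction and the cubical deformation near the non-smooth edge $\bd^E M$; but since $M$ is a compact Lipschitz neighborhood retract and $\rho(\cdot)=\dist_A(\cdot,B)$ is Lipschitz (both stated in the excerpt just before Lemma~\ref{Lem: integral representative}), the same constructions as in \cite{li2021min} go through unchanged, and I would simply cite them. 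Hence the proposal is: invoke Lemma~\ref{Lem: F-isoperimetric lemma} with the bound $\F(\tau_1-\tau_2)\le\M(\tau_1-\tau_2)$, set $Q$ equal to the resulting isoperimetric choice and $R:=T_2-T_1-\bd Q$, and verify $R\in\mR_n(\bd M)$ using $\spt R\subset\bd M$ and $\mathcal H^n(\bd^E M)=0$.
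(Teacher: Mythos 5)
Your reduction is correct and clean: from $\F(\tau_1-\tau_2)\le\M(\tau_1-\tau_2)$ and the fact that $T_1-T_2$ is the canonical representative of $\tau_1-\tau_2$ (so $\M(\tau_1-\tau_2)=\M(T_1-T_2)$ by (\ref{Eq: canonical representative property})), the hypothesis $\M(\tau_1-\tau_2)<\epsilon_M$ puts you in the setting of Lemma~\ref{Lem: F-isoperimetric lemma}, and the resulting $Q$ automatically satisfies $\M(Q)\le C_M\F(\tau_1-\tau_2)\le C_M\M(\tau_1-\tau_2)$, with $R:=T_2-T_1-\bd Q$ supported in $\bd M$. The paper does not supply its own argument here (it cites \cite[Lemma 3.17]{li2021min} verbatim, as it does for all of Lemmas 3.6--3.10), so one cannot compare line by line; but your route is the natural one and is certainly adequate, regardless of whether Li--Zhou derive the $\M$-version from the $\F$-version or re-run the deformation/retraction argument directly.

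One small correction of emphasis: your paragraph invoking $\mH^n(\bd^E M)=0$ to conclude $R\in\mR_n(\bd M)$ is not needed and slightly misstates where the rectifiability comes from. The class of integer rectifiable currents is closed under finite sums, $T_1,T_2\in\mR_n(M)$, and $\bd Q$ is integer rectifiable because $Q\in\mI_{n+1}(M)$ is integral; hence $R=T_2-T_1-\bd Q$ is integer rectifiable for free, independent of any geometry of $\bd M$. Given the paper's definition of $\mR_n(\bd M)$ (integer rectifiable $n$-currents in $\R^L$ supported in $\bd M$), the support inclusion from Lemma~\ref{Lem: F-isoperimetric lemma} is then the only remaining thing to check. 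The observation $\mH^n(\bd^E M)=0$ is used elsewhere in the paper (e.g.\ in Section~\ref{Subsec: classify tangent cone}) but plays no role here.
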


\subsection{Almost minimizing varifolds}\label{Subsec: def amv}

We now introduce the definition of almost minimizing varifolds with free boundary. 
As before, denote by $U\subset M$ a relative open subset of $M$, and by $\F:=\F^M$. 
To begin with, we recall the definition of $(\epsilon,\delta)$-deformations. 

\begin{definition}\label{Def: epsilon delta deformations}
    Let $\bnu$ be $\F$, $\M$, or $\mF$. 
    Then for any $\epsilon,\delta>0$ and $\tau\in \Z_n(M,\bd M)$, a finite sequence $\{\tau_i\}_{i=1}^q$ in $\Z_n(M,\bd M)$ is said to be an {\em $(\epsilon,\delta)$-deformation} of $\tau$ in $U$ under the metric $\bnu$ provided 
    \begin{itemize}
        \item $\tau=\tau_0$ and $\spt(\tau - \tau_i)\subset U$ for $i=1,\dots,q$;
        \item $\bnu(\tau_i - \tau_{i-1})\leq \delta$ for $i=1,\dots,q$;
        \item $\M(\tau_i)\leq \M(\tau) + \delta$ for $i=1,\dots,q$;
        \item $\M(\tau_q)<\M(\tau) - \epsilon$. 
    \end{itemize}
    Additionally, denote by 
    \[ \mfa_n(U;\epsilon,\delta;\bnu) \]
    the set of all $\tau\in\Z_n(M,\bd M)$ which admits no $(\epsilon,\delta)$-deformation in $U$. 
\end{definition}

Next, we can define the almost minimizing varifolds with free boundary. 
\begin{definition}\label{Def: amv}
    We say a varifold $V\in\V_n(M)$ is {\em almost minimizing in $U$ with free boundary} if there are $\{\epsilon_i\}_{i\in\N}, \{\delta_i\}_{i\in\N}\subset \R_+$ and $\{\tau_i\}_{i\in\N}\subset \mfa_n(U;\epsilon_i,\delta_i;\F)$ so that as $i\to\infty$, $\epsilon_i\to 0$, $\delta_i\to 0$, and $\mF(|T_i|,V)\to 0$, where $\{T_i\in\tau_i\}_{i\in\N}$ are the canonical representatives. 
\end{definition}

\begin{lemma}\label{Lem: amv imply stationary}
    Suppose $V\in \V_n(M)$ is almost minimizing in a relative open subset $U\subset M$ with free boundary, then $V$ is stationary in $U$ with free boundary.
\end{lemma}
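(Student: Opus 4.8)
The plan is to show that if $V$ admits no $(\epsilon_i,\delta_i)$-deformations with $\epsilon_i,\delta_i\to 0$, then the first variation $\delta V(X)$ must vanish for all $X\in\mfX_{tan}(M)$ compactly supported in $U$. The argument is by contradiction: suppose $\delta V(X)\neq 0$ for some such $X$; after replacing $X$ by $-X$ we may assume $\delta V(X)<-2c<0$ for some $c>0$. Let $\{f_t\}$ be the one-parameter family of diffeomorphisms generated by $X$, which satisfy $f_t(M)=M$ and $f_t=\mathrm{id}$ outside a compact subset of $U$ because $X\in\mfX_{tan}(M)$ is compactly supported in $U$. The idea is that pushing forward under $f_t$ decreases mass at a definite rate for small $t$, and this can be converted into an $(\epsilon,\delta)$-deformation of each $\tau_i$ for $i$ large, contradicting $\tau_i\in\mfa_n(U;\epsilon_i,\delta_i;\F)$.

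First I would make the rate of mass decrease uniform. Since $\mF(|T_i|,V)\to 0$ and in particular $\|T_i\|\rightharpoonup \|V\|$ as measures (and masses converge by Lemma \ref{Lem: classify F convergence}), the first variation is weakly continuous: $\delta|T_i|(X)\to \delta V(X)$. Hence for $i$ large, $\delta (f_t)_\# T_i / dt|_{t=0} = \delta|T_i|(X) < -c$. Moreover, because the second variation is also controlled (the quantity $\frac{d^2}{dt^2}\mathbf{M}((f_t)_\#T_i)$ is bounded by $C\,\mathbf{M}(T_i)\leq C(\mathbf{M}(V)+1)$ uniformly in $i$, using that $X$ and its derivatives are bounded), there is a fixed time $t_0>0$, independent of $i$, such that
\[
\mathbf{M}((f_{t_0})_\# T_i) \leq \mathbf{M}(T_i) - \tfrac{c}{2} t_0 \quad\text{and}\quad \mathbf{M}((f_t)_\# T_i) \leq \mathbf{M}(T_i) + 1 \ \text{ for all } t\in[0,t_0].
\]
Set $\epsilon := \tfrac{c}{4}t_0$. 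Then I would subdivide $[0,t_0]$ into a partition $0=s_0<s_1<\dots<s_q=t_0$ so fine that, for the corresponding currents $\tau_i^{(j)} := [(f_{s_j})_\# T_i]\in\Z_n(M,\bd M)$, each consecutive flat distance $\F(\tau_i^{(j)}-\tau_i^{(j-1)})$ is small — this follows because $s\mapsto (f_s)_\# T_i$ is continuous in the flat norm with a modulus depending only on $X$, $t_0$, and $\mathbf{M}(V)+1$, hence uniform in $i$.

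For $i$ large enough that $\epsilon_i<\epsilon$, $\delta_i$ exceeds the (uniform) bound on the flat-distance steps, and $\delta_i>1\geq$ the mass-excess bound, this partition gives an $(\epsilon_i,\delta_i)$-deformation of $\tau_i$ in $U$ under $\F$: the supports of $\tau_i-\tau_i^{(j)}$ lie in $\spt X\subset\subset U$ since $f_s=\mathrm{id}$ off a compact subset of $U$; the consecutive $\F$-steps are $\leq\delta_i$; the mass excess along the way is $\leq 1<\delta_i$; and the final mass satisfies $\mathbf{M}(\tau_i^{(q)})\leq\mathbf{M}(T_i)-\tfrac{c}{2}t_0=\mathbf{M}(\tau_i)-2\epsilon<\mathbf{M}(\tau_i)-\epsilon_i$. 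This contradicts $\tau_i\in\mfa_n(U;\epsilon_i,\delta_i;\F)$, so $\delta V(X)=0$ and $V$ is stationary in $U$ with free boundary.

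The main obstacle is the bookkeeping to make every estimate uniform in $i$ simultaneously: the rate of mass decrease at $t=0$, the second-order (curvature) bound on $t\mapsto\mathbf{M}((f_t)_\#T_i)$, and the flat-norm modulus of continuity of $t\mapsto (f_t)_\#T_i$ must all be controlled by constants depending only on $X$ and on the uniform mass bound $\sup_i\mathbf{M}(T_i)<\infty$ (which holds because $\mathbf{M}(T_i)\to\mathbf{M}(V)$). One subtle point worth checking is that $X\in\mfX_{tan}(M)$ guarantees $(f_t)_\#$ genuinely maps $\Z_n(M,\bd M)$ to itself and respects the canonical-representative/equivalence-class structure — but this is exactly the setting in which push-forward was defined in Section \ref{Subsec: relative cycles} ($f(\bd M)\subset\bd f(M)$ with $f(M)=M$), so no new work is needed there. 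The rest is a routine adaptation of the corresponding argument in \cite{pitts2014existence} and \cite{li2021min}.
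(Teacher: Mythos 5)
Your overall strategy is the right one and is exactly the content of \cite[Theorem~3.3]{pitts2014existence}, which the paper cites verbatim: first variation is weakly continuous, so if some admissible $X$ has $\delta V(X)<0$ then flowing the almost-minimizing comparison currents $T_i$ by $X$ produces an $(\epsilon_i,\delta_i)$-deformation, a contradiction. However, you have the direction of $\delta_i$ backwards, and this invalidates the bookkeeping at the end.

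In Definition~\ref{Def: amv} the sequence $\delta_i$ tends to \emph{zero}, not infinity. So for $i$ large you have $\delta_i<1$, and the two claims you invoke to close the argument --- that ``$\delta_i$ exceeds the (uniform) bound on the flat-distance steps'' for a fixed partition, and that ``$\delta_i>1\geq$ the mass-excess bound'' --- are both false. With a fixed partition the consecutive flat steps are a fixed positive number $\Delta$, so $\delta_i<\Delta$ for $i$ large and the second bullet of Definition~\ref{Def: epsilon delta deformations} fails; and bounding the mass excess by $1$ is useless once $\delta_i<1$. The way to repair the argument is exactly what makes Pitts's proof work: \emph{(a)} choose $t_0>0$ (uniform in $i$, using the uniform bound on $\frac{d^2}{dt^2}\M\bigl((f_t)_\#T_i\bigr)$) so small that $t\mapsto\M\bigl((f_t)_\#T_i\bigr)$ is strictly decreasing on $[0,t_0]$ --- then the mass excess along the path is $\le 0\le\delta_i$ for every $i$, so the third bullet is automatic; and \emph{(b)} refine the partition of $[0,t_0]$ separately for each $i$, using that the flat modulus of continuity $\F\bigl((f_s)_\#T_i-(f_{s'})_\#T_i\bigr)\le C\lvert s-s'\rvert\,\M(T_i)$ is uniform in $i$, to make each consecutive step $\le\delta_i$. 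With those two changes the final mass drop $\tfrac{c}{2}t_0$ eventually exceeds $\epsilon_i\to 0$, and the contradiction goes through. As written, your argument does not actually contradict $\tau_i\in\mfa_n(U;\epsilon_i,\delta_i;\F)$ for any $i$.
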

\begin{proof}
    The proof can be obtained almost verbatim from \cite[Theorem 3.3]{pitts2014existence}. 
\end{proof}

Finally, although the flat norm $\F$ is used in Definition \ref{Def: amv}, the following theorem shows that the $\M$-norm and the $\mF$-distance can also be used to define almost minimizing varifolds with free boundary after shrinking the relative open subset $U\subset M$. 

\begin{theorem}\label{Thm: equivalent amv}
    Given $V\in\V_n(M)$, the following statements satisfies $(a)\Rightarrow(b)\Rightarrow(c)\Rightarrow(d)$:
    \begin{itemize}
        \item[(a)] $V$ is almost minimizing in $U\subset M$ with free boundary;
        \item[(b)] for any $\epsilon$, there exist $\delta$ and $\tau\in \mfa_n(U;\epsilon,\delta;\mF)$ so that $\mF(|T|,V)<\epsilon$, where $T\in\tau$ is the canonical representative;
        \item[(c)] for any $\epsilon$, there exist $\delta$ and $\tau\in \mfa_n(U;\epsilon,\delta;\M)$ so that $\mF(|T|,V)<\epsilon$, where $T\in\tau$ is the canonical representative;
        \item[(d)] $V$ is almost minimizing in $W$ with free boundary for any relative open $W\subset\subset U$.
    \end{itemize}
\end{theorem}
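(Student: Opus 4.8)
The implications $(a)\Rightarrow(b)\Rightarrow(c)\Rightarrow(d)$ should be obtained by adapting the corresponding chain of implications in Li--Zhou \cite[Theorem 5.3]{li2021min} (itself modeled on Pitts's equivalence of almost minimizing notions \cite{pitts2014existence}) to the locally wedge-shaped setting. The crucial point is that all of the geometric-measure-theoretic ingredients that make that argument work---the compactness Lemma \ref{Lem: compactness of currents}, the lower semi-continuity of mass \eqref{Eq: lower semi-continuous mass}, the $\F$-isoperimetric Lemma \ref{Lem: F-isoperimetric lemma}, the $\M$-isoperimetric Lemma \ref{Lem: M-isoperimetric lemma}, and the comparison Lemma \ref{Lem: classify F convergence} between $\mF$-convergence and $(\F,\M)$-convergence---have already been established for $\Z_n(M,\bd M)$ when $M$ is locally wedge-shaped, with constants $\epsilon_M, C_M$ depending only on $M\subset\R^L$. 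Since nothing in the argument uses the angle condition \eqref{dag}/\eqref{ddag}, the plan is simply to run the Li--Zhou proof with these wedge-adapted lemmas substituted in.

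The step $(a)\Rightarrow(b)$ is the assertion that an $(\epsilon,\delta)$-deformation under $\F$ can, after shrinking $\delta$, be upgraded to one under $\mF$: given an $\F$-$(\epsilon,\delta')$-deformation $\{\tau_i\}_{i=0}^q$ with $\delta'$ small, one interpolates each consecutive pair $\tau_{i-1},\tau_i$ by finitely many intermediate cycles (using the $\F$-isoperimetric choice from Lemma \ref{Lem: F-isoperimetric lemma} to build a mass-controlled interpolation and discretizing it), thereby producing a refined sequence whose consecutive $\mF$-distances are small while the mass bound $\M(\tau_i)\le\M(\tau)+\delta$ is preserved up to a fixed multiple of $\delta'$; choosing $\delta'$ small relative to $\epsilon$ and $\delta$ gives an $\mF$-$(\epsilon,\delta)$-deformation, whose non-existence is exactly statement (b). The step $(b)\Rightarrow(c)$ goes the other way: since $\M(\tau_i-\tau_{i-1})$ small implies $\mF(\tau_i,\tau_{i-1})$ small (mass dominates flat norm, and closeness of masses is built into the $\M$-deformation definition, so Lemma \ref{Lem: classify F convergence} applies), every $\M$-$(\epsilon,\delta)$-deformation is in particular an $\mF$-$(\epsilon,\delta)$-deformation, hence $\mfa_n(U;\epsilon,\delta;\mF)\subset\mfa_n(U;\epsilon,\delta;\M)$ and (b) yields (c) with the same data. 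Finally $(c)\Rightarrow(d)$: given $W\subset\subset U$ one fixes a sequence $\epsilon_i\to 0$, extracts from (c) the cycles $\tau_i\in\mfa_n(U;\epsilon_i,\delta_i;\M)$ with $\mF(|T_i|,V)<\epsilon_i$, and must show each $\tau_i$ (or a nearby cycle) lies in $\mfa_n(W;\epsilon_i',\delta_i';\F)$ for suitable $\epsilon_i',\delta_i'\to 0$; this is where an $\F$-deformation supported in $W$ is converted---via the $\M$-isoperimetric Lemma \ref{Lem: M-isoperimetric lemma} and a cut-and-paste near $\bd_{rel}W$ inside the collar $U\setminus W$---into an $\M$-deformation supported in $U$, contradicting membership in $\mfa_n(U;\epsilon_i,\delta_i;\M)$ unless no such $W$-deformation exists.

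I expect the main obstacle to be the $(c)\Rightarrow(d)$ step, specifically the localization argument that trades an $\F$-small deformation in the smaller set $W$ for an $\M$-small deformation in the larger set $U$. One must run an interpolation/discretization inside the collar region $U\setminus\clos_M(W)$ to convert flat-norm control into mass control while keeping the deformation supported in $U$ and keeping the mass excess under $\delta$; this requires the $\M$-isoperimetric inequality and a careful bookkeeping of how the error constants accumulate over the (finitely many, but $\epsilon_i$-dependent) interpolation steps. In the wedge-shaped setting the only thing to check is that the relevant retraction and cut-off constructions near $\bd M = \bd^F M\cup\bd^E M$ still have Lipschitz constants depending only on $M\subset\R^L$---which is exactly what Lemma \ref{Lem: F-isoperimetric lemma} and Lemma \ref{Lem: M-isoperimetric lemma} guarantee---so no genuinely new difficulty arises beyond reproducing the bookkeeping of \cite[Theorem 5.3]{li2021min} with $\bd M$ interpreted as the piecewise-smooth boundary $\bd^F M\cup\bd^E M$.
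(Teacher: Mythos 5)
Your overall decomposition is right (the content is entirely in $(c)\Rightarrow(d)$, via an analog of Li--Zhou's Lemma B.1), but two things diverge from the paper.

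First, you have the logic of $(a)\Rightarrow(b)$ backwards. Since $\F\le\mF$, every $\mF$-$(\epsilon,\delta)$-deformation is automatically an $\F$-$(\epsilon,\delta)$-deformation, so $\mfa_n(U;\epsilon,\delta;\F)\subset\mfa_n(U;\epsilon,\delta;\mF)$ and $(a)\Rightarrow(b)$ is a one-line containment: any cycle admitting no $\F$-deformation a fortiori admits no $\mF$-deformation. You instead propose to interpolate an $\F$-deformation into an $\mF$-deformation; that would establish the opposite inclusion $\mfa_n(\cdot;\mF)\subset\mfa_n(\cdot;\F)$, which is neither needed nor what you want. Your $(b)\Rightarrow(c)$ reasoning (every $\M$-deformation is an $\mF$-deformation) has the correct orientation. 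The paper simply notes both steps are immediate.

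Second, and more substantively, for $(c)\Rightarrow(d)$ the paper does not re-run Li--Zhou's cut-and-paste bookkeeping near the edge. Instead it proves the needed interpolation claim (the analog of their Lemma B.1) by a bilipschitz smoothing reduction: one fixes a smooth manifold-with-boundary $(N,\bd N)$ and a bilipschitz homeomorphism $f\colon(M,\bd M)\to(N,\bd N)$ with $\operatorname{Lip}(f),\operatorname{Lip}(f^{-1})\le 1+\gamma$, pushes the cycles forward with $f_\sharp$, invokes Li--Zhou's Lemma B.1 on the smooth target, and pulls back with $(f^{-1})_\sharp$; the Lipschitz factors inflate masses and flat norms by $(1+\gamma)^n$ and $(1+\gamma)^{n+1}$, and taking $\gamma$ small absorbs the error into $\delta$. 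This sidesteps entirely the question you flag as the "main obstacle"---whether Fermi-ball cut-and-paste constructions behave near $\bd^E M$---which is exactly why the paper takes this route. Your proposal to reproduce the bookkeeping directly in the wedge geometry is plausible in principle (and the paper does use that kind of direct adaptation elsewhere, e.g. in the discretization Theorem~\ref{Thm: discretization} and in Step~1 of Proposition~\ref{Prop: good replacement property}), but it would require re-verifying the covering and interpolation estimates near the edge and is considerably heavier than the paper's two-paragraph smoothing argument.
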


\begin{proof}
It is easy to see that (a)$\implies$(b)$\implies$(c). The proof that (c)$\implies$(d) is identical to the proof given in \cite[Theorem 3.20]{li2021min}, once we show that the analog of \cite[Lemma B.1]{li2021min} holds in our setting. Thus we need to prove the following.  

\begin{claim} Given $L>0$, $\delta > 0$, a relatively open subset $W\subset\subset U$, and $\tau \in \Z_n(M,\bd M)$, there exists 
\[
\eta = \eta(L,\delta,W,\tau) > 0
\]
such that for any $\sigma_1,\sigma_2\in \Z_n(M,\bd M)$ satisfying 
\begin{itemize}
\item $\spt(\sigma_i-\tau)\subset W$, $i=1,2$,
\item $\M(\sigma_i) \le L$, $i=1,2$, 
\item $\mathcal F(\sigma_1-\sigma_2)\le \eta$, 
\end{itemize}
there exists a sequence $\sigma_1 = \tau_0,\tau_1,\hdots,\tau_k = \sigma_2\in \Z_n(M,\bd M)$ such that for $j=0,1,\hdots,k-1$ we have 
\begin{itemize}
\item $\spt(\tau_j-\tau)\subset U$,
\item $\M(\tau_j) \le L+\delta$,
\item $\M(\tau_j-\tau_{j+1})\le \delta$. 
\end{itemize}
\end{claim}

Rather than giving a direct proof of the claim, we will instead use a smoothing procedure and then appeal to Lemma B.1 in \cite{li2021min}. Suppose we are given $L > 0$, $\delta > 0$, a relatively open subset $W\subset\subset U$, and $\tau \in \Z_n(M,\bd M)$. Fix a number $\gamma > 0$ to be specified later. By smoothing out the edge of $M$, we can find a smooth manifold with boundary $(N,\bd N)$ such that there is a bilipschitz homeomorphism $f\colon (M,\bd M)\to (N,\bd N)$ satisfying $\text{Lip}(f),\text{Lip}(f^{-1}) \le 1+\gamma$. We can then apply Lemma B.1 in \cite{li2021min} with $L(1+\gamma)^n$, $\frac{\delta}{2(1+\gamma)^n}$, the relatively open set $f(W)\subset\subset f(U)$, and $f_\sharp \tau \in \Z_n(N,\bd N)$ to get a constant 
\[
\eps = \eps\left(L(1+\gamma)^n,\frac{\delta}{2(1+\gamma)^n},f(W),f_\sharp(\tau)\right)=\eps(L,\delta,W,\tau,\gamma).
\]
Define $\eta :=\frac{\eps}{(1+\gamma)^{n+1}}$. 

Now suppose that $\sigma_1,\sigma_2\in \Z_n(M,\bd M)$ satisfy the assumptions of the claim. Observe that 
\begin{itemize}
\item $\spt(f_\sharp(\sigma_i)-f_\sharp(\tau))\subset f(W)$, $i=1,2$,
\item $\M(f_\sharp(\sigma_i)) \le (1+\gamma)^n \M(\sigma_i) \le L(1+\gamma)^n$, $i=1,2$, 
\item $\mathcal F(f_\sharp(\sigma_1)-f_\sharp(\sigma_2)) \le (1+\gamma)^{n+1} \mathcal F(\sigma_1-\sigma_2) \le \eta (1+\gamma)^{n+1} \le \eps$. 
\end{itemize}
Hence Lemma B.1 in \cite{li2021min} gives the existence of $\rho_0 = f_\sharp(\sigma_1),\rho_1,\hdots,\rho_k = f_\sharp(\sigma_2)\in \Z_n(N,\bd N)$ such that for $j=0,1,\hdots,k-1$ we have 
\begin{itemize}
\item $\spt(\rho_j-f_\sharp(\tau)) \subset f(U)$, $\phantom{\frac{\delta}{\gamma^n}}$
\item $\M(\rho_j) \le L(1+\gamma)^n + \frac{\delta}{2(1+\gamma)^n}$,
\item $\M(\rho_j-\rho_{j+1}) \le \frac{\delta}{2(1+\gamma)^n}$.
\end{itemize} 
Now define $\tau_j = (f^{-1})_\sharp \rho_j$ and observe that $\tau_0 = \sigma_1$ and $\tau_k = \sigma_2$ and for $j=0,1,\hdots,k-1$ we have
\begin{itemize}
\item $\spt(\tau_j-\tau)\subset U$,
\item $\M(\tau_j) \le L(1+\gamma)^{2n} + \frac{\delta}{2}$,
\item $\M(\rho_j-\rho_{j+1}) \le \frac{\delta}{2}$. 
\end{itemize}
Finally, select $\gamma = \gamma(L,\delta)$ small enough that $L(1+\gamma)^{2n} + \frac{\delta}{2} \le L + \delta$. Then $\eta = \eta(L,\delta,W,\tau)$ is as required. This completes the proof of the claim, and hence the proof of the theorem.
\end{proof}

%%%%%%%%%%%%%%%%%%%%%%%%%%%%%%%%%%%%%%%%%%%%%%
%%%%%%%%%%%%%%%%%%%%%%%%%%%%%%%%%%%%%%%%%%%%%%
%%%              Section 4                 %%%
%%%%%%%%%%%%%%%%%%%%%%%%%%%%%%%%%%%%%%%%%%%%%%
%%%%%%%%%%%%%%%%%%%%%%%%%%%%%%%%%%%%%%%%%%%%%%
\section{Min-max constructions}
\label{sec:min-max}

This section is devoted to the construction of stationary varifolds with free boundary satisfying a suitable almost-minimizing property. We continue to assume that $M^{n+1}\subset \R^L$ is a locally wedge-shaped manifold. 

\subsection{Homotopy notions}

To begin, we recall the discrete homotopy classes introduced by Almgren and Pitts \cite{pitts2014existence}.  The basic objects in the construction are sequences of maps $\phi_i\colon \dom(\phi_i)\to \Z_{n}(M,\bd M)$ whose domains are finer and finer discretizations of the unit cube.  

\begin{definition}
\label{cube-complex} 
Let $j,k,m,p,q\in \N$. Let $I^m = [0,1]^m$ and let $I^m_0 = \bd I^m$.  
\begin{itemize}
\item[(i)] Let $I(1,j)$ denote the cell structure on the unit interval $I$ whose 0-cells are the points $[\frac{i}{3^k}]$ and whose intervals are $[\frac{i}{3^k},\frac{i+1}{3^k}]$. 
\item[(ii)] Let $I(m,j)$ denote the cubical complex structure on $I^m$ given by 
\[
\underbrace{I(1,j) \otimes I(1,j) \otimes \cdots \otimes I(1,j)}_{m \text{ times}}.
\]
Thus a cell in $I(m,j)$ has the form $\alpha =\alpha_1 \otimes \cdots \otimes \alpha_m$, where each $\alpha_i$ is either a vertex or an interval in $I(1,j)$.  We say $\alpha$ is a $p$-cell when $\sum_{i=1}^m \dim(\alpha_i) = p$. 
\item[(iii)] Let $I_0(m,j)$ denote the cubical complex structure on $I^m_0$ induced by $I(m,j)$. 
\item[(iv)] Let $I(m,j)_p$ denote the set of all $p$-cells in $I(m,j)$. Likewise let $I_0(m,j)_p$ denote the set of all $p$-cells in $I_0(m,j)$.
\item[(v)] Given a cell $\alpha \in I(m,j)$, let $\alpha(p)$ be the cubical subcomplex consisting of all cells in $I(m,j+p)$ which are contained in $\alpha$. Likewise, let $\alpha_0(p)$ be the set of all cells in $I(m,j+p)$ which are contained in the boundary of $\alpha$.  Let $\alpha(p)_q$ denote the set of all $q$-cells in $\alpha(p)$, and let $\alpha_0(p)_q$ denote the set of all $q$-cells in $\alpha_0(p)$. 
\item[(vi)] The boundary map is defined on $I(1,j)$ by $\bd [a,b] = [b]-[a]$ for a 1-cell and $\bd [v] = 0$ for a 0-cell. More generally, the boundary map $\bd \colon I(m,j)\to I(m,j)$ 
is defined by 
\[
\bd(\alpha_1 \otimes \cdots \otimes \alpha_m) =  \sum_{i=1}^m (-1)^{\sigma(i)} \alpha_1 \otimes \cdots \otimes \bd \alpha_i \otimes \cdots \otimes \alpha_m 
\]
where $\sigma(i) = \sum_{\ell < i} \dim(\alpha_\ell)$. 
\item[(vii)] The distance function $\md \colon I(m,j)_0 \times I(m,j)_0 \to \N$ is defined by 
\[
\md(x,y) = 3^j \sum_{i=1}^m \vert x_i - y_i\vert.
\]
\item[(viii)] Finally, the map $n(j,k)\colon I(m,j)_0\to I(m,k)_0$ is defined by letting $n(j,k)(x)$ be the unique $y\in I(m,k)_0$ which is closest to $x$. 
\end{itemize}
\end{definition}

\begin{definition}
Given a map $\phi\colon I(m,j)_0\to\mathcal Z_n(M,\bd M)$, the {\em fineness} of $\phi$ is 
\[
\f(\phi) = \sup \left\{\frac{\M(\phi(x)-\phi(y))}{\md(x,y)}:\, x,y\in I(m,j)_0,\ x\neq y\right\}.
\]
\end{definition}

\begin{definition} We write $\phi\colon I(m,j)_0\to (\mathcal Z_n(M,\bd M),\{0\})$ for a map $\phi \colon I(m,j)_0 \to \mathcal Z_n(M,\bd M)$ such that $\phi(x) = 0$ for all $x\in I_0(m,j)_0$.  
\end{definition} 

Next, we recall the notion of homotopy for discrete mappings. 

\begin{definition}
Two maps 
\[
\phi \colon I(m,j)_0 \to (\Z_n(M,\bd M),\{0\}) \text{ and }  \theta \colon I(m,k)_0 \to (\Z_n(M,\bd M),\{0\})
\]
are called {\em homotopic with fineness $\delta$} if there exists an integer $\ell \ge \max\{j,k\}$ and a map $\psi\colon I(m,\ell)_0\times I(1,\ell)_0\to \mathcal Z_n(M,\bd M)$ such that 
\begin{itemize}
\item[(i)] $\psi(x,[0]) = \phi(n(\ell,j)(x))$ and $\psi(x,[1]) = \theta(n(\ell,k)(x))$ for all $x\in I(m,\ell)_0$,
\item[(ii)] $\psi(x,t) = 0$ for all $x\in I_0(m,\ell)_0$ and all $t\in I(1,\ell)_0$,
\item[(iii)] $\f(\psi) \le \delta$. 
\end{itemize}
\end{definition}

\begin{definition}
Let $\{j_i\}_{i\in \N}$ be an increasing sequence of natural numbers. 
An {\em $(m,\M)$-homotopy sequence into $(\mathcal Z_n(M,\bd M),\{0\})$} is a sequence of maps 
\[
\phi_i\colon I(m,j_i)_0\to (\Z_n(M,\bd M),\{0\})
\]
such that 
\begin{itemize}
\item[(i)] $\phi_i$ is homotopic to $\phi_{i+1}$ with fineness $\delta_i$ and $\delta_i\to 0$ as $i\to \infty$,
\item[(ii)] $\sup\{\M(\phi_i(x)): i\in \N,\ x\in I(m,j_i)_0\} < \infty$. 
\end{itemize}
\end{definition}

\begin{definition}
Let 
\[
\phi_i\colon I(m,j_i)_0\to (\Z_n(M,\bd M),\{0\}) \text{ and }\theta_i\colon I(m,k_i)_0 \to (\Z_n(M,\bd M),\{0\})
\]
be two $(m,\M)$-homotopy sequences of mappings into $(\Z_n(M,\bd M),\{0\})$. We say that $\{\phi_i\}_{i\in \N}$ is homotopic to $\{\theta_i\}_{i\in \N}$ if $\phi_i$ is homotopic to $\theta_i$ with fineness $\eta_i$ and $\eta_i\to 0$ as $i\to \infty$. 
\end{definition} 

As in \cite{pitts2014existence} (also see \cite{li2021min}), it is straightforward to verify that homotopy is an equivalence relation on $(m,\M)$-homotopy sequences of mappings. Therefore it makes sense to consider the set of all homotopy classes of such sequences. 

\begin{definition}
Let $\pi^\sharp_m(\Z_n(M,\bd M),\{0\})$ denote the set of equivalence classes of $(m,\M)$-homotopy sequences of mappings into $(\Z_n(M,\bd M),\{0\})$. 
\end{definition} 

Now consider a homotopy class $\Pi \in \pi^\sharp_m(\Z_n(M,\bd M),\{0\})$. We associate to $\Pi$ the following min-max value called the $\it width$. 

\begin{definition}
Given an $(m,\M)$-homotopy sequence $\phi_i\colon I(m,j_i)_0\to (\Z_n(M,\bd M),\{0\})$, define 
\[
\mathbf{L}(\{\phi_i\}_{i\in \N}) = \limsup_{i\to \infty} \bigg[\sup\left\{\M(\phi_i(x)):\, x\in I(m,j_i)_0\right\}\bigg]. 
\]
Given a homotopy class $\Pi\in  \pi^\sharp_m(\Z_n(M,\bd M),\{0\})$, the quantity 
\[
\mathbf{L}(\Pi) = \inf \{ \mathbf{L}(\{\phi_i\}_{i\in \N}):\, \{\phi_i\}_{i\in \N} \in \Pi\}
\]
is called the width of $\Pi$. 
\end{definition}

A {\em critical sequence} for $\Pi$ is an $(m,\M)$-homotopy sequence $\{\phi_i\}_{i\in \N} \in \Pi$ such that $\mathbf L(\Pi) = \mathbf{L}(\{\phi_i\}_{i\in \N})$.  A straightforward diagonal argument shows that critical sequences always exist.  Given a critical sequence, we associate it with the following critical set. 

\begin{definition}
Let $\Pi \in  \pi^\sharp_m(\Z_n(M,\bd M),\{0\})$ and let $\phi_i \colon I(m,j_i)_0 \to (\Z_n(M,\bd M),\{0\})$ be a critical sequence for $\Pi$. The critical set $K(\{\phi_i\}_{i\in \N})$ is the set of all varifolds $V\in \mathcal V_n(M)$ such that 
\[
V = \lim_{k\to \infty} \vert T_k\vert
\]
where 
\begin{itemize}
\item[(i)] $\{i_k\}_{k\in\N}$ is an increasing sequence of natural numbers,
\item[(ii)] $x_k \in I(m,j_{i_k})_0$,
\item[(iii)] $T_k$ is the canonical representative for $\phi_{i_k}(x_k)$, and 
\item[(iv)] $\M(T_k) \to \mathbf L(\Pi)$ as $k\to \infty$. 
\end{itemize}
\end{definition}

\begin{proposition}
Let $\Pi \in  \pi^\sharp_m(\Z_n(M,\bd M),\{0\})$ and let $\{\phi_i\}_{i\in \N}$ be a critical sequence for $\Pi$. Then the critical set $K(\{\phi_i\}_{i\in \N})$ is a non-empty, compact subset of $\mathcal V_n(M)$. 
\end{proposition}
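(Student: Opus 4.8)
The plan is to show non-emptiness via a diagonal/compactness argument and compactness by a closedness argument, both exploiting the uniform mass bound built into the definition of an $(m,\M)$-homotopy sequence. First, for non-emptiness: since $\{\phi_i\}_{i\in\N}$ is a critical sequence, $\mathbf L(\{\phi_i\}_{i\in\N})=\mathbf L(\Pi)$, so we can choose an increasing sequence $i_k$ and points $x_k\in I(m,j_{i_k})_0$ with $\M(T_k)\to\mathbf L(\Pi)$, where $T_k\in\phi_{i_k}(x_k)$ is the canonical representative (such points exist because the supremum of $\M(\phi_i(x))$ over $x$ converges to $\mathbf L(\Pi)$ along the critical sequence, up to passing to a subsequence of $i$). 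By the hypothesis $\sup\{\M(\phi_i(x))\}<\infty$, the masses $\M(|T_k|)$ are uniformly bounded, so by the compactness theorem for varifolds with bounded mass (weak compactness of mass-bounded subsets of $\mathcal V_n(M)$ in the $\mathbf F$-metric, as recalled after Definition~\ref{Def: stationary varifold}), a subsequence of $|T_k|$ converges to some $V\in\mathcal V_n(M)$. This $V$ lies in $K(\{\phi_i\}_{i\in\N})$ by construction, so the critical set is non-empty.

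Next, compactness. Since mass-bounded subsets of $\mathcal V_n(M)$ are compact in the weak ($\mathbf F$) topology and $K(\{\phi_i\}_{i\in\N})$ consists of limits of varifolds $|T_k|$ with $\M(T_k)$ converging to $\mathbf L(\Pi)$ — hence uniformly bounded — it suffices to show $K(\{\phi_i\}_{i\in\N})$ is closed, i.e.\ sequentially closed in the $\mathbf F$-metric. So let $V^{(\ell)}\in K(\{\phi_i\}_{i\in\N})$ with $V^{(\ell)}\to V$ in $\mathcal V_n(M)$. For each $\ell$ there is a sequence $|T^{(\ell)}_k|\to V^{(\ell)}$ with $T^{(\ell)}_k$ a canonical representative of some $\phi_{i_k^{(\ell)}}(x_k^{(\ell)})$ and $\M(T^{(\ell)}_k)\to\mathbf L(\Pi)$. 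A standard diagonalization: for each $\ell$ pick $k(\ell)$ large enough that $\mathbf F(|T^{(\ell)}_{k(\ell)}|,V^{(\ell)})<1/\ell$, $|\M(T^{(\ell)}_{k(\ell)})-\mathbf L(\Pi)|<1/\ell$, and $i^{(\ell)}_{k(\ell)}$ is strictly increasing in $\ell$. Then the diagonal sequence $T_\ell:=T^{(\ell)}_{k(\ell)}$, with indices $\tilde i_\ell:=i^{(\ell)}_{k(\ell)}$ and points $\tilde x_\ell:=x^{(\ell)}_{k(\ell)}$, satisfies $|T_\ell|\to V$ (by the triangle inequality in the $\mathbf F$-metric) and $\M(T_\ell)\to\mathbf L(\Pi)$, exhibiting $V$ as an element of $K(\{\phi_i\}_{i\in\N})$. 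Hence $K(\{\phi_i\}_{i\in\N})$ is closed, and being a closed subset of a compact set, it is compact.

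\emph{Main obstacle.} There is no deep obstacle here — the argument is essentially the one in \cite{pitts2014existence} and \cite{li2021min}. The only point requiring mild care is the bookkeeping in the diagonalization for closedness: one must arrange the auxiliary indices $i_k$ to remain strictly increasing so that the resulting diagonal sequence genuinely has the form demanded in the definition of $K(\{\phi_i\}_{i\in\N})$, and one must invoke the fact (recalled after Definition~\ref{Def: stationary varifold}) that on a mass-bounded subset the $\mathbf F$-metric induces the weak topology, so that weak convergence and $\mathbf F$-convergence may be used interchangeably. The non-emptiness half additionally uses that, along a critical sequence, $\sup_x \M(\phi_i(x))\to\mathbf L(\Pi)$, which follows from the definitions of $\mathbf L(\{\phi_i\}_{i\in\N})$ and $\mathbf L(\Pi)$ together with the criticality assumption (passing to a subsequence in $i$ if necessary to replace $\limsup$ by $\lim$). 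None of this depends on the wedge structure of $M$ beyond the monotonicity-free fact that $\mathcal V_n(M)$ with bounded mass is compact, so the proof is identical to the smooth and free-boundary cases.
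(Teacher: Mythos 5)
Your proof is correct and takes essentially the same approach as the paper, which simply cites the proof of Pitts' Theorem 4.2 for this standard fact. Both arguments rely on the uniform mass bound built into the definition of an $(m,\M)$-homotopy sequence to obtain weak (sequential) compactness of mass-bounded subsets of $\mathcal V_n(M)$, extract a near-maximizing subsequence of slices to prove non-emptiness, and establish compactness by verifying closedness through a diagonal argument inside the mass-bounded compact set.
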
 

\begin{proof}
    The proof is essentially identical to \cite[4.2]{pitts2014existence}.
\end{proof}

\subsection{Discretization and interpolation}

For min-max theory in closed manifolds, Marques and Neves \cite{marques2014min} showed that the discrete mapping sequences and discrete homotopies of Almgren and Pitts \cite{pitts2014existence} can actually be replaced by continuous maps and continuous homotopies. To do this, they proved a discrete to continuous interpolation theorem that takes in a discrete map on $I(m,j)_0$ and produces an $\M$-continuous map on $I^m$ taking the same values on $I(m,j)_0$. They also proved a counterpart discretization theorem which takes in a continuous map and produces an $(m,\M)$-homotopy sequence.  Li and Zhou \cite{li2021min} proved corresponding interpolation and discretization theorems in the classical free boundary setting. In this section, we discuss the analogous interpolation and discretization theorems in locally wedge-shaped manifolds. 

We will be interested in maps $\Phi\colon I^m \to \mathcal Z_n(M,\bd M)$ which are continuous with respect to either the flat topology, the $\mathbf F$-topology, or the mass topology.  For technical reasons, we also assume that the maps in consideration have no concentration of mass. 

\begin{definition}
Let $\Phi\colon I^m \to \mathcal Z_n(M,\bd M)$ be a continuous map in the flat topology.   Then $\Phi$ has {\it no concentration of mass} provided one has
\[
\lim_{r\to 0} \sup \left\{\vert \| T(x)\|\vert (\mB^L_r(p)):\, p\in M, x\in I^m \right\} = 0,
\]
where $T(x)$ is the canonical representative for $\Phi(x)$.
\end{definition}

We can now state and prove the  interpolation and discretization theorems.

\begin{theorem}[Interpolation]\label{Thm: interpolation}
Let $M^{n+1}\subset \R^L$ be a locally wedge-shaped manifold. There exist constants $C_1 > 0$ and $\delta_1 > 0$, depending only on $m$ and $M\subset \R^L$, such that the following is true. Given any map $\phi\colon I(m,j)_0\to \Z_n(M,\bd M)$ with $\f(\phi) < \delta_0$, 
there exists an $\M$-continuous map $\Phi \colon I^m\to \Z_n(M,\bd M)$ such that 
\begin{itemize} 
\item[(i)] $\Phi(x) = \phi(x)$ for all $x\in I(m,j)_0$,
\item[(ii)] the values of $\Phi$ on any cell $\alpha \in I(m,j)$ are completely determined by the values of $\phi$ on the vertices of $\alpha$, and 
\item[(iii)] for any cell $\alpha\in I(m,j)$ and any $x,y\in \alpha$ we have $\M(\Phi(x)-\Phi(y)) \le C_0\f(\phi)$.  
\end{itemize}
\end{theorem}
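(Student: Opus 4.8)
The plan is to follow the strategy of Marques--Neves \cite{marques2014min} and Li--Zhou \cite{li2021min}, reducing to the already-established interpolation theorem on a smooth manifold with boundary via the same bilipschitz smoothing trick that was used in the proof of Theorem \ref{Thm: equivalent amv}. First I would fix a small parameter $\gamma > 0$ (to be pinned down at the end in terms of the desired constants $C_1$ and $\delta_1$) and invoke the smoothing of the edge: there is a smooth compact manifold with boundary $(N,\bd N)$ together with a bilipschitz homeomorphism $f\colon (M,\bd M)\to (N,\bd N)$ with $\mathrm{Lip}(f),\mathrm{Lip}(f^{-1})\le 1+\gamma$ and $f(\bd M)\subset \bd N$. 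Push-forward by $f$ and $f^{-1}$ induces maps $f_\sharp\colon \Z_n(M,\bd M)\to \Z_n(N,\bd N)$ and $(f^{-1})_\sharp$ in the reverse direction; both distort mass by at most a factor $(1+\gamma)^n$, and both are continuous in the $\M$-topology (indeed in the flat topology as well), since push-forward under a bilipschitz map is mass-continuous. Composing the given discrete map $\phi$ with $f_\sharp$ produces $\psi := f_\sharp\circ\phi\colon I(m,j)_0\to \Z_n(N,\bd N)$ with $\f(\psi)\le (1+\gamma)^n\f(\phi)$.

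Next I would apply the interpolation theorem in the smooth boundary setting --- this is \cite[Theorem 5.1]{li2021min} (the free boundary analog of the Marques--Neves interpolation result) --- to $\psi$ on $(N,\bd N)$. Provided $\f(\psi)$ is smaller than the threshold $\delta^{(N)}_1$ coming from that theorem, we obtain an $\M$-continuous map $\Psi\colon I^m\to \Z_n(N,\bd N)$ with $\Psi = \psi$ on $I(m,j)_0$, with the value of $\Psi$ on each cell $\alpha$ determined by the values of $\psi$ on the vertices of $\alpha$, and with $\M(\Psi(x)-\Psi(y))\le C^{(N)}_1\f(\psi)$ for $x,y$ in a common cell. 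Define $\Phi := (f^{-1})_\sharp\circ\Psi\colon I^m\to \Z_n(M,\bd M)$. Then $\Phi$ is $\M$-continuous as a composition of an $\M$-continuous map with the mass-continuous operator $(f^{-1})_\sharp$; property (i) holds because $(f^{-1})_\sharp\circ f_\sharp = \mathrm{id}$ on $\Z_n(M,\bd M)$ and $\Psi$ agrees with $\psi$ on the $0$-skeleton; property (ii) is inherited from the corresponding cell-locality of $\Psi$; and property (iii) follows from the chain of estimates
\[
\M(\Phi(x)-\Phi(y))\le (1+\gamma)^n\,\M(\Psi(x)-\Psi(y))\le (1+\gamma)^n C^{(N)}_1\,\f(\psi)\le (1+\gamma)^{2n}C^{(N)}_1\,\f(\phi).
\]
Finally I would fix $\gamma$ small enough and set $C_1 := (1+\gamma)^{2n}C^{(N)}_1$ and $\delta_1 := (1+\gamma)^{-n}\delta^{(N)}_1$, noting that $\f(\psi)\le (1+\gamma)^n\f(\phi) < (1+\gamma)^n\delta_1 = \delta^{(N)}_1$ whenever $\f(\phi)<\delta_1$, so the smooth interpolation theorem does apply. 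One caveat: the statement as written refers to thresholds $\delta_0$, $\delta_1$ and a constant $C_0$ whose relationship should be made consistent (presumably $\delta_0 = \delta_1$ and $C_0 = C_1$); I would normalize this notation in the write-up.

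The main obstacle I anticipate is not in the interpolation step itself --- that is essentially a black box once one has the smooth-boundary version --- but rather in checking carefully that the smoothing map $f$ and its inverse genuinely preserve the relevant structures: that $f_\sharp$ and $(f^{-1})_\sharp$ are well-defined on the \emph{quotient} spaces $\Z_n(\cdot,\bd\cdot)$ (i.e.\ they send $Z_n(\bd M,\bd M)$ into $Z_n(\bd N,\bd N)$, which follows from $f(\bd M)\subset\bd N$), that the canonical representatives behave compatibly under push-forward up to elements supported in the boundary, and that mass-continuity survives the quotient. All of these are routine given the properties of push-forward under bilipschitz maps and the structure of $\Z_n(M,\bd M)$ recalled in Section \ref{Sec: variations in wedge manifold}, but they are the points where the locally wedge-shaped geometry interacts with the argument, so they deserve explicit verification.
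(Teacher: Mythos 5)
Your proposal is correct and uses essentially the same argument as the paper: smooth the edge of $M$ to get a bilipschitz homeomorphism onto a smooth manifold with boundary $(N,\bd N)$, push forward with $f_\sharp$, apply the Li--Zhou interpolation theorem on $N$, and pull back with $(f^{-1})_\sharp$, tracking the mass distortion factor. The paper simply fixes $\mathrm{Lip}(f),\mathrm{Lip}(f^{-1})\le 2$ rather than carrying a parameter $\gamma$, and you are right that the statement's $\delta_0,C_0$ should read $\delta_1,C_1$ for consistency with the proof.
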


\begin{proof}
By smoothing out the edge of $M$, we can fix once and for all a bilipschitz homeomorphism $f\colon (M,\bd M)\to (N,\bd N)$ where $(N,\bd N)$ is a smooth manifold with boundary and $\text{Lip}(f),\text{Lip}(f^{-1})\le 2$.  Let $C_0$ and $\delta_0$ be the constants from \cite[Theorem 4.14]{li2021min} applied to $N$.  Define $C_1 := 4^n C_0$ and $\delta_1 := 2^{-n} \delta_0$. 

Now consider a map $\phi\colon I(m,j)_0\to \Z_n(M,\bd M)$ with $\f(\phi)<\delta_1$.
%and 
%\[
%\sup_{x\in I(m,j)_0} \M(\phi(x)) \le M_0. 
%\]
Then the map $\theta = f_\sharp\circ \phi\colon I(m,j)_0 \to \Z_n(N,\bd N)$ satisfies $\f(\theta) \le 2^n \f(\phi) < \delta_0$.  Consequently by \cite{li2021min} Theorem 4.14, there exists an $\M$-continuous map $\Theta\colon I^m \to \Z_n(N,\bd N)$ such that 
\begin{itemize}
\item[(i')] $\Theta(x) = \theta(x)$ for all $x\in I(m,j)_0$,
\item[(ii')] the values of $\Theta$ on any cell $\alpha\in I(m,j)$ are completely determined by the values of $\theta$ on the vertices of $\alpha$, and 
\item[(iii')] for any cell $\alpha\in I(m,j)$ and any $x,y\in \alpha$ we have $\M(\Theta(x)-\Theta(y)) \le C_0 \f(\theta)$. 
\end{itemize}
Now define $\Phi = (f^{-1})_\sharp \circ \Theta\colon I(m,j)_0\to \Z_n(M,\bd M)$. Then $\Phi$ is $\M$-continuous and 
\begin{itemize}
\item[(i)] $\Phi(x) = \phi(x)$ for all $x\in I(m,j)_0$,
\item[(ii)] the values of $\Phi$ on any cell $\alpha \in I(m,j)$ are completely determined by the values of $\phi$ on the vertices of $\alpha$, and 
\item[(iii)] for any cell $\alpha\in I(m,j)$ and any $x,y\in \alpha$ we have $\M(\Phi(x)-\Phi(y)) \le 2^n C_0 \f(\theta) \le C_1\f(\phi)$.
\end{itemize}
Therefore $\Phi$ is as required. 
\end{proof}

\begin{theorem}[Discretization]\label{Thm: discretization} Assume that $\Phi\colon I^m \to \Z_n(M,\bd M)$ is an $\mathcal F$-continuous map with no concentration of mass. Further assume that $\Phi$ is $\mathbf F$-continuous on $\bd I^m$.  Then there exists an increasing sequence of integers $\{j_i\}_{i\in\N}$ and a decreasing sequence of positive numbers $\delta_i \to 0$ and  maps $\phi_i\colon I(m,j_i)\to Z_n(M,\bd M)$ such that  
\begin{itemize}
\item[(i)] $\{\phi_i\}_{i\in \N}$ is an $(m,\M)$-homotopy sequence of mappings into $(\Z_n(M,\bd M), \{0\})$,
\item[(ii)] $\f(\phi_i) < \delta_i$, 
\item[(iii)] there is an increasing sequence of integers $k_i$ such that 
\[
\M(\phi_i(x)) \le \sup\{ \M(\Phi(y)): \alpha \in I(m,k_i)_m \text{ and } x,y\in \alpha\} + \delta_i,
\] 
\item[(iv)] $\mathcal F(\phi_i(x) - \Phi(x)) < \delta_i$ for all $x\in I(m,j_i)_0$, and
\item[(v)] $\M(\phi_i(x)) \le \M(\Phi(x)) + \delta_i$ for all $x\in I(m,j_i)_0$. 
\end{itemize}
Moreover, if $\Phi$ is $\M$-continuous on the bottom face $I^{m-1}\times \{0\}$ then we can ensure $\phi_i(x,[0]) = \Phi(x,0)$ for all $x\in I(m-1,j_i)$. 
\end{theorem}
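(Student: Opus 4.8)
The plan is to run the Almgren--Pitts discretization procedure in the form proved by Marques--Neves \cite{marques2014min} and adapted to the free boundary setting by Li--Zhou \cite{li2021min}, directly on $M$, and to verify that every ingredient of that argument survives in the locally wedge-shaped category. Unlike the interpolation theorem above, I do not expect a reduction to the smooth-boundary case through a bilipschitz homeomorphism $(M,\bd M)\to(N,\bd N)$ to work here: pushing $\Phi$ forward to $N$, discretizing there, and pulling the result back would degrade the absolute mass bounds (iii) and (v) by a multiplicative bilipschitz constant strictly larger than $1$, and such a loss cannot be absorbed into the errors $\delta_i\to 0$ --- it would, for example, spoil the estimate $\mathbf L(\{\phi_i\})\le\sup_x\M(\Phi(x))$ needed later in the min-max argument. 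So the construction has to be carried out inside $M$ itself.

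The ingredients of the Li--Zhou proof that I will need, together with their locally wedge-shaped replacements, are: the two isoperimetric lemmas for equivalence classes of relative cycles, for which I would use Lemma~\ref{Lem: F-isoperimetric lemma} and Lemma~\ref{Lem: M-isoperimetric lemma}; the interpolation theorem, for which I would use Theorem~\ref{Thm: interpolation} (its slightly worse constants being harmless, as they enter only through fineness); lower semicontinuity of mass, (\ref{Eq: lower semi-continuous mass}), and --- where the argument requires it --- the compactness Lemma~\ref{Lem: compactness of currents}; and the structural facts recorded in Section~\ref{Subsec: relative cycles} that $M$ is a compact Lipschitz neighborhood retract admitting a Lipschitz nearest-point projection $\pi\colon A\to B=\bd M$ and that $\rho=\dist_A(\cdot,\bd M)$ is Lipschitz. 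None of these depends on $\bd M$ being smooth, and since $\mH^n(\bd^E M)=0$ the edge is measure-theoretically invisible; with these substitutions the Li--Zhou argument should go through with only notational changes.

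Concretely, the construction will proceed as follows. First I would fix a rapidly increasing sequence $\{k_i\}$ of subdivision parameters so that on each $m$-cell of $I(m,k_i)$ the values of $\Phi$ all lie within a prescribed flat distance of one another --- this is where $\mathcal F$-continuity of $\Phi$ enters --- and then a finer grid $I(m,j_i)_0$. On the vertices of $I(m,j_i)_0$ I would define $\phi_i$ by modifying $\Phi$ through $\F$-isoperimetric fillings (Lemma~\ref{Lem: F-isoperimetric lemma}) between neighboring values; this controls $\mathcal F(\phi_i(x)-\Phi(x))$ and, after an optimization, yields (iv) and (v). To force $\f(\phi_i)<\delta_i\to 0$ while keeping the refined cell-wise bound (iii) with only the additive error $\delta_i$, I would invoke the no-concentration-of-mass hypothesis exactly as in Marques--Neves, together with the device of pushing the excess of a filling onto a lower-dimensional skeleton (or onto $\bd M$ via $\pi$), which is where the Lipschitz retract structure is used. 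Consecutive maps $\phi_i$ and $\phi_{i+1}$ I would connect by passing to a common refinement and applying Theorem~\ref{Thm: interpolation} to the relevant discrete maps, producing discrete homotopies whose fineness tends to $0$; the $\mathbf F$-continuity of $\Phi$ on $\bd I^m$ is used here to keep everything vanishing on $\bd I^m$, so that $\{\phi_i\}$ is an $(m,\M)$-homotopy sequence of maps into $(\Z_n(M,\bd M),\{0\})$. For the ``moreover'' clause, I would note that on the bottom face $I^{m-1}\times\{0\}$ the map $\Phi$ is already $\M$-continuous, hence can be discretized there without error, and carry that choice through the construction so that $\phi_i(x,[0])=\Phi(x,0)$ for $x\in I(m-1,j_i)$.

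The hardest part, as in the original Marques--Neves argument, will be the simultaneous control of fineness and of the refined mass bound (iii): obtaining $\f(\phi_i)\to 0$ while guaranteeing that $\M(\phi_i(x))$ exceeds the cell-supremum of $\M(\Phi)$ by no more than the additive error $\delta_i$ is the delicate core of the discretization, and --- as noted above --- it is precisely where a crude bilipschitz transfer would fail. The remaining work is bookkeeping: confirming that each appeal to smoothness of $\bd M$ in \cite{li2021min} is in fact only an appeal to the Lipschitz retract structure or to the isoperimetric lemmas, and checking that the edge $\bd^E M$ never forces a separate case.
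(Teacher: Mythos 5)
Your proposal matches the paper's proof in all essentials: it runs the Marques--Neves/Li--Zhou discretization directly in $M$, using the same list of ingredients (the $\F$- and $\M$-isoperimetric lemmas, the interpolation theorem, lower semicontinuity of mass, the compactness lemma, the Lipschitz retract structure and projection onto $\bd M$, and $\mH^n(\bd^E M)=0$). Your diagnosis of why a bilipschitz reduction to a smooth $(N,\bd N)$ cannot work here --- the multiplicative loss it introduces is incompatible with the additive errors in (iii) and (v), though it is harmless for the fineness estimate in the interpolation theorem --- is exactly right and explains the asymmetry in how the paper treats the two theorems.

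The one modification the paper makes explicit that you glide past is in the cut-and-paste (slicing) step, which you yourself correctly flag as the delicate core. Li--Zhou's Lemma B.3, Case 1, slices the isoperimetric filling with geodesic (Fermi) balls and the intrinsic distance function; near $\bd^E M$ these are not uniformly well behaved. The paper's fix is to replace them throughout by ambient Euclidean balls $\mB^L_r(p)$ and the Euclidean distance function $d^{\R^L}_p$, which are smooth and slice cleanly in wedge domains. This is not a generic appeal to the Lipschitz retract structure or the nearest-point projection $\pi$; it is a specific substitution in the slicing apparatus of the no-concentration-of-mass argument, and it should be named. Otherwise the ``bookkeeping'' in your last paragraph would stall at precisely the step you identify as hardest.
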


\begin{proof}
The proof is essentially identical to that of \cite[Theorem 4.12]{li2021min}. Note that Lemma \ref{Lem: F-isoperimetric lemma} shows the $\mathcal F$-isoperimetric Lemma is still valid in $M$. We can then emulate the cut and paste method in \cite[Lemma B.3 Case 1]{li2021min}, except replacing geodesic balls with Euclidean balls in the ambient $\R^L$ and replacing the intrinsic distance functions with Euclidean distance functions $d^{\R_L}_p$. This allows us to adapt the proof of Marques-Neves \cite[Lemma 13.4]{marques2014min}, following the outline put forth in \cite[Theorem 4.12]{li2021min}. The remainder of the proof then follows that of Marques-Neves \cite[Theorem 13.1]{marques2014min} as described in \cite[Theorem 4.12]{li2021min}.
\end{proof}

\subsection{Tightening}

Next, we describe a pull-tight procedure that can be used to ensure that all varifolds in the critical set are stationary with free boundary.  The construction is by now relatively standard and so we shall be somewhat brief. 

\begin{proposition}
\label{proposition:pull-tight}
Fix a homotopy class $\Pi \in \pi^\sharp_m(\Z_n(M,\bd M),\{0\})$. Let $\{\phi_i\}_{i\in \N}$ be a critical sequence for $\Pi$. Then there exists another critical sequence $\{\theta_i\}_{i\in \N}$ for $\Pi$ such that every varifold $V\in K(\{\theta_i\}_{i\in \N})$ is stationary in $M$ with free boundary. 
\end{proposition}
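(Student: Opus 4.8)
The strategy is the standard Almgren--Pitts tightening (``pull-tight'') argument, adapted to the locally wedge-shaped setting exactly as in \cite[Section 4]{pitts2014existence} and \cite[Section 4.3]{li2021min}. The idea is to construct a continuous vector-field-valued map on the space of varifolds that points in a direction decreasing mass wherever the varifold fails to be stationary with free boundary, then deform the critical sequence along the associated flows.

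\textbf{Step 1: A deformation map on the space of varifolds.} First I would isolate the ``bad set'' $A = \{V \in \mathcal V_n(M) : \mathbf{M}(V) \le 2\mathbf L(\Pi),\ V \text{ is not stationary in } M \text{ with free boundary}\}$. For each $V \in A$ there is, by definition, a vector field $X_V \in \mathfrak X_{tan}(M)$ with $\delta V(X_V) < 0$; I may normalize so that $\|X_V\|_{C^1} \le 1$ and $\delta V(X_V) \le -c(V) < 0$. Using the continuity of $V \mapsto \delta V(X)$ in the $\mathbf F$-metric and a partition-of-unity argument over the (metrizable, and on mass-bounded sets compact) space $\mathcal V_n(M)$, one produces a continuous assignment $V \mapsto X_V \in \mathfrak X_{tan}(M)$ such that $\delta V(X_V) < 0$ for every $V \in A$ and $X_V = 0$ for $V$ stationary with free boundary (or far from $A$). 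The key point specific to our setting is that every vector field used here must lie in $\mathfrak X_{tan}(M)$, so that its flow $\{f_t^V\}$ preserves $M$ and its stratification; this is exactly what Definition \ref{Def: stationary varifold} requires, and it is what makes $\delta((f_t^V)_\# V)(M)$ the right quantity to differentiate.

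\textbf{Step 2: Deform the sweepouts and estimate.} Given the critical sequence $\{\phi_i\}$, first interpolate each $\phi_i$ to an $\mathbf M$-continuous map $\Phi_i \colon I^m \to \mathcal Z_n(M,\bd M)$ via Theorem \ref{Thm: interpolation}. Then define $\Psi_i(x, t)$ to be (the equivalence class of) $(f^{\,V}_{t \cdot T(x)})_\# \Phi_i(x)$, where $V$ is the varifold of the canonical representative of $\Phi_i(x)$ and $T(x)$ is chosen (continuously, via a cutoff of $\mathbf M(\Phi_i(x)) - \mathbf L(\Pi) + 1/i$ or similar) so that no deformation is applied near the boundary $I_0^m$ where $\Phi_i = 0$. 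Set $\Theta_i = \Psi_i(\cdot, 1)$ and re-discretize via Theorem \ref{Thm: discretization} to obtain an $(m,\mathbf M)$-homotopy sequence $\{\theta_i\}$ homotopic to $\{\phi_i\}$, hence in $\Pi$. The mass estimate along the flow, $\mathbf M((f^{\,V}_t)_\# \Phi_i(x)) \le \mathbf M(\Phi_i(x)) + o(1)$ with strict decrease whenever the varifold of $\Phi_i(x)$ lies in a neighborhood of the bad set, shows $\mathbf L(\{\theta_i\}) \le \mathbf L(\{\phi_i\}) = \mathbf L(\Pi)$, so $\{\theta_i\}$ is again critical. Moreover any $V \in K(\{\theta_i\})$ arises as a limit of canonical representatives whose masses tend to $\mathbf L(\Pi)$; if such a $V$ were non-stationary, the uniform strict mass decrease near $V$ along the flow would have pushed the masses of $\theta_i(x_k)$ below $\mathbf L(\Pi) - \varepsilon$ for large $k$, contradicting $\mathbf M \to \mathbf L(\Pi)$. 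Hence every $V \in K(\{\theta_i\})$ is stationary in $M$ with free boundary.

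\textbf{Main obstacle.} The part requiring the most care is the construction in Step 1 of the \emph{continuous} choice $V \mapsto X_V \in \mathfrak X_{tan}(M)$ together with the uniform quantitative estimates (a lower bound on $-\delta V(X_V)$ and an $\mathbf F$-continuity modulus) that make the mass-decrease in Step 2 uniform over the relevant part of the sweepout. In the classical closed and smooth-boundary cases this is handled by the arguments of \cite[Section 4]{pitts2014existence} and \cite[Section 4.3]{li2021min} respectively; the only genuinely new ingredient here is checking that these constructions can be carried out using only vector fields in $\mathfrak X_{tan}(M)$ on a locally wedge-shaped manifold, which is straightforward since $\mathfrak X_{tan}(M)$ is a $C^1$-closed linear subspace of $\mathfrak X(\R^L)$ that separates non-stationary varifolds by definition, and all the partition-of-unity and interpolation arguments respect this linear structure. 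I would therefore carry out the argument essentially verbatim from \cite[Section 4.3]{li2021min}, indicating only the (minor) modifications needed to keep all vector fields tangential to the stratification of $M$.
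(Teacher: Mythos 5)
Your proposal is correct and follows essentially the same route as the paper: an annular-decomposition/partition-of-unity construction of a continuous assignment $V\mapsto X_V\in\mathfrak X_{\text{tan}}(M)$ with $\delta V(X_V)<0$ off the stationary set, interpolation of $\phi_i$ to an $\M$-continuous $\Phi_i$, pull-tightening along the associated flows with a continuous stopping time, and re-discretization. The only cosmetic difference is that the paper fixes a mass bound $C>\sup_{i,x}\M(\phi_i(x))$ once and for all and phrases the quantitative mass-decrease via the auxiliary functions $G,S$ of $\mathbf F(V,\mathcal V_\infty)$, whereas you gesture at the same uniformity via the annular decomposition without naming it; both bookkeeping conventions work.
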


\begin{proof}
Equip $\mathfrak X_{\text{tan}}(M)$ with the $C^1$ topology and equip $\mathcal V(M)$ with the $\mathbf F$-topology. Choose a constant $C > 0$ such that $\M(\phi_i(x)) < C$ for all $i\in \N$ and all $x \in \text{dom}(\phi_i)$. Define 
\[
\mathcal V^C = \{V\in \mathcal V(M): \|V\|(M) \le C\}. 
\]
Let $\mathcal V_\infty = \{V\in \mathcal V^C: V\mbox{ is stationary in $M$ with free boundary}\}$ and for each $k\in \mathbb Z$ let 
\[
\mathcal V_k = \left\{V\in \mathcal V^C: \frac{1}{2^{k+1}} \le \mathbf F(V,\mathcal V_\infty) \le \frac{1}{2^{k-1}}\right\}. 
\] 
The sets $\mathcal V_\infty$ and $\mathcal V_k$ are all compact. 
Using this annular decomposition and a partition of unity argument, one obtains a continuous map $V\in \mathcal V^C \mapsto X_V \in \mathfrak X_{\text{tan}}(M)$ (cf. \cite[Theorem 4.3]{pitts2014existence})
such that 
\begin{itemize}
\item[(i)] $\|X_V\|_{\infty} \le 1$ for all $V\in \mathcal V^C$,
\item[(i)] $\delta V(X_V) < 0$ for all $V\in \mathcal V^C\setminus \mathcal V_\infty$. 
\end{itemize}
Let $H_V(x,t)$ denote the flow of $X_V$. Then one further obtains the existence of continuous functions $G\colon (0,\infty)\to (0,\infty)$ and $S\colon(0,\infty)\to(0,\infty)$ with $\lim_{t \to0} G(t) = 0$ and $\lim_{t\to 0} S(t) = 0$ such that 
\[
\|(H_V)_\sharp(V,S(\mathbf F(V,\mathcal V_\infty)))\|(M) \le  \|V\|(M) - G(\mathbf F(V,\mathcal V_\infty))
\]
for all $V\in \mathcal V^C$. See \cite{colding2003min} or \cite{marques2014min} for more details.

Now let $\phi_i \colon I(m,j_i)_0\to (\Z_n(M,\bd M),\{0\})$ be as in the statement of the proposition. Fix an integer $i$ large enough and apply the interpolation Theorem \ref{Thm: interpolation} to $\phi_i$ to obtain an $\mathbf M$-continuous map $\Phi_i\colon I^m \to \Z_n(M,\bd M)$ such that 
\begin{itemize}
\item[(i)] $\Phi_i(x) = \phi_i(x)$ for all $x\in I(m,j_i)_0$, and 
\item [(ii)] $
\sup_{x\in I^m} \M(\Phi_i(x)) \le \sup_{x\in I(m,j_i)_0} \M(\phi_i(x)) + C\f(\phi_i)$.
\end{itemize}
Define a map $\Psi_i\colon I^m\times I\to \Z_n(M,\bd M)$ by setting
\[
\Psi_i(x,t) = (H_{\vert T_i(x)\vert})_\sharp(\Phi_i(x),t S(\mathbf F(\vert T_i(x)\vert,\mathcal V_\infty)))
\]
where $T_i(x)$ is the canonical representative for $\Phi_i(x)$. We then take a suitably fine discretization $\psi_i\colon I(m,\ell_i)_0\times I(1,\ell_i)_0\to \Z_n(M,\bd M)$ of $\Psi_i$ (Theorem \ref{Thm: discretization}) and define $\theta_i \colon I(m,\ell_i)_0\to \Z_n(M,\bd M)$ by 
\[
\theta_i(x) = \begin{cases}
    0, &\text{if } x\in I_0(m,\ell_i)_0\\
    \psi_i(x,[1]), &\text{if } x\in I(m,\ell_i)_0 \setminus I_0(m,\ell_i)_0.
\end{cases}
\]
It can be shown that $\{\theta_i\}_{i\in \N}$ is the necessary critical sequence. See \cite{marques2014min} for more details. 
\end{proof}

\subsection{Existence of almost minimizing varifolds}
It remains to show that there is a critical sequence for which there is a varifold $V$ in the critical set satisfying a suitable almost-minimizing property.  

\begin{definition}\label{Def: amv in annuli}
    A varifold $V\in \mathcal V(M)$ is said to be {\it almost minimizing in annuli with free boundary} if for each $p\in M$ there is a number $r_{am}(p) > 0$ such that $V$ is almost minimizing in $\mathbb{A}^L_{s,t}(p) \cap M$ with free boundary for all $0 < s < t < r_{am}(p)$. 
\end{definition}

\begin{remark}
    Assume $V \in \mathcal V(M)$ is {\it almost minimizing in annuli with free boundary}. By shrinking $r_{am}(p)$ if necessary, we can assume that 
    \[
    \begin{cases}
        \mB^L_{r_{am}(p)}(p)\cap \bd M = \emptyset, &\text{whenever } p\in \text{int}(M),\\
        \mB^L_{r_{am}(p)}(p) \cap \bd^E M = \emptyset, &\text{whenever } p\in \bd^F M.
    \end{cases}
    \] 
    We will always assume that $r_{am}$ is chosen so that this is the case. 
\end{remark}

\begin{remark}
    Assume $V$ is almost minimizing in annuli with free boundary. Then 
    \begin{itemize}
        \item $V$ is almost minimizing with free boundary in small geodesic annuli centered at points $p\in \text{int}(M)$;
        \item $V$ is almost minimizing with free boundary in small Fermi annuli centered at points $p\in \bd^F M$;
        \item $V$ is almost minimizing with free boundary in small Fermi-type wedge-cut annuli at points $p\in \bd^E M$. 
    \end{itemize}
\end{remark}

The next theorem shows that it is always possible to find a critical sequence whose critical set contains a stationary varifold with free boundary that is also almost minimizing in annuli with free boundary. The proof is essentially identical to \cite[Theorem 4.21]{li2021min}

\begin{theorem}
\label{theorem:existence of almost minimizers}
Fix a homotopy class $\Pi \in \pi^\sharp_m(\Z_n(M,\bd M),\{0\})$. Then there exists a varifold $V\in \mathcal V_n(M)$ such that 
\begin{itemize}
    \item[(i)] $\|V\|(M) = \mathbf L(\Pi)$,
    \item[(ii)] $V$ is stationary in $M$ with free boundary,
    \item[(iii)] $V$ is almost minimizing in annuli with free boundary. 
\end{itemize}
\end{theorem}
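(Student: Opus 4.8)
The plan is to follow the combinatorial min-max machinery of Almgren-Pitts as adapted to the free boundary setting by Li-Zhou, now transplanted to the locally wedge-shaped manifold $M$. Since the excerpt has already assembled all the required infrastructure --- the interpolation and discretization theorems (Theorems \ref{Thm: interpolation} and \ref{Thm: discretization}), the pull-tight construction (Proposition \ref{proposition:pull-tight}), the equivalence of the various almost minimizing notions (Theorem \ref{Thm: equivalent amv}), and the fact that almost minimizing implies stationary (Lemma \ref{Lem: amv imply stationary}) --- the proof is essentially a matter of running the standard argument and invoking these results at the appropriate places, exactly as in \cite[Theorem 4.21]{li2021min}.

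First I would start from a critical sequence $\{\phi_i\}_{i\in\N}\in\Pi$ and apply Proposition \ref{proposition:pull-tight} to replace it with a tightened critical sequence, still denoted $\{\phi_i\}_{i\in\N}$, all of whose critical-set varifolds are stationary in $M$ with free boundary; this takes care of (ii), and (i) is automatic from $\mathbf{L}(\{\phi_i\})=\mathbf{L}(\Pi)$ together with the definition of the critical set. The heart of the matter is (iii). The strategy, due to Pitts, is an argument by contradiction combined with a combinatorial "almost minimizing in the sense of Pitts" ($\mathfrak{am}$) formulation: one shows that if \emph{every} varifold in \emph{every} critical set (over all critical sequences) fails to be almost minimizing in annuli with free boundary, then one can perform local competitor replacements on the discrete maps --- cutting and pasting on a controlled collection of small annuli, using that the number of "bad" annuli needed can be bounded using a combinatorial covering argument (Pitts' "$(\mathbf{M},m,\ldots)$" type lemmas) --- to push the width strictly below $\mathbf{L}(\Pi)$, contradicting the definition of the width. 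Implementing the deformations requires gluing the local competitors back using the isoperimetric lemmas (Lemmas \ref{Lem: F-isoperimetric lemma} and \ref{Lem: M-isoperimetric lemma}), and converting between the continuous competitors and discrete maps requires the interpolation and discretization Theorems \ref{Thm: interpolation} and \ref{Thm: discretization}; the various notions of $(\epsilon,\delta)$-deformation are reconciled via Theorem \ref{Thm: equivalent amv}.

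Concretely the steps I would carry out are: (1) tighten the critical sequence via Proposition \ref{proposition:pull-tight}; (2) set up the combinatorial notion of a varifold being "almost minimizing in the sense of Pitts" and verify (as in \cite[\S4]{li2021min} following \cite[\S4]{pitts2014existence}) that this is equivalent to Definition \ref{Def: amv} on small annuli, using Theorem \ref{Thm: equivalent amv}; (3) run Pitts' contradiction argument: assuming no $V$ in any critical set is almost minimizing in annuli, extract for each candidate $V$ a finite family of annuli and associated deformations, use a combinatorial "3-annuli / covering" lemma to select a controlled subfamily on which deformations can be performed simultaneously, and interpolate/discretize (Theorems \ref{Thm: interpolation}, \ref{Thm: discretization}) to produce a new admissible $(m,\mathbf{M})$-homotopy sequence in $\Pi$ with strictly smaller width --- a contradiction; (4) conclude that some critical sequence has a critical-set varifold $V$ which is almost minimizing in annuli with free boundary, and this $V$ is also stationary by Lemma \ref{Lem: amv imply stationary} (or directly from the pull-tight), giving all of (i)--(iii).

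The main obstacle is not conceptual but technical: one must check that every step of the Pitts/Li-Zhou argument that uses the smooth (or smooth-boundary) structure of the ambient space goes through with only Lipschitz regularity at the edge $\bd^E M$. The key points where care is needed are the isoperimetric lemmas (already supplied as Lemmas \ref{Lem: F-isoperimetric lemma}, \ref{Lem: M-isoperimetric lemma}), the interpolation/discretization (already supplied, and proved by a bilipschitz smoothing trick), and the cut-and-paste deformations near $\bd^E M$, which must respect the stratification so that competitors remain relative cycles in $\Z_n(M,\bd M)$; here one uses Euclidean balls in the ambient $\R^L$ rather than geodesic balls, exactly as in the proof of Theorem \ref{Thm: discretization}. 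Since all of these ingredients have been arranged in the preceding sections precisely so that the Li-Zhou proof transfers verbatim, I expect the write-up to consist of a careful statement of the combinatorial setup followed by the remark that the remaining details are identical to \cite[Theorem 4.21]{li2021min} and \cite[\S4--6]{pitts2014existence}.
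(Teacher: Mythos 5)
Your proposal matches the paper's proof: both run the Almgren--Pitts combinatorial contradiction argument as adapted by Li--Zhou, with the pull-tight supplying stationarity and with Theorem \ref{Thm: equivalent amv}, the $\mathbf M$-isoperimetric choice, and the cut-and-paste deformations (carried out in ambient Euclidean balls near $\bd^E M$) used to produce a width-decreasing competitor sequence if no critical-set varifold is almost minimizing in annuli. The paper's actual write-up is essentially the brief outline you anticipated, citing \cite[Theorem 4.21]{li2021min} and \cite[Theorem 4.10]{pitts2014existence} with exactly the three points of modification you identified.
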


\begin{proof}
    The theorem is trivial if $\mathbf L(\Pi) = 0$. So assume that $\mathbf L(\Pi) > 0$. Choose a critical sequence $\{\phi_i\}_{i\in \N}$. By the pull-tight argument Proposition \ref{proposition:pull-tight}, we can assume without loss of generality that every element in the critical set $K(\{\phi_i\}_{i\in \N})$ is stationary with free boundary. 
    
    It remains to show that some element of $K(\{\phi_i\}_{i\in \N})$ is almost minimizing in annuli with free boundary. Here we use the Almgren-Pitts combinatorial argument with minor modifications, as in Li-Zhou \cite[Theorem 4.21]{li2021min}. Suppose to the contrary that no element of $K(\{\phi_i\}_{i\in \N})$ is almost minimizing in annuli with free boundary. Then for each $V$ we can find a point $p_V$ in $M$ and a collection of $c$ disjoint annuli centered at $p_V$ such that $V$ is not almost minimizing with free boundary in any of these annuli. Here $c$ is the constant appearing in \cite[Theorem 4.10]{pitts2014existence}. We then proceed to follow the Almgren-Pitts combinatorial argument \cite[Theorem 4.10]{pitts2014existence}.  For annuli centered at points $p_V \in \text{int}(M)$ no modifications are necessary. For annuli centered at points $p\in \bd^F M \cup \bd^E M$, we note the following modifications. First, Theorem \ref{Thm: equivalent amv} can be used in place of \cite[Theorem 3.9]{pitts2014existence}. Second, we can use the $\M$-isoperimetric choice Lemma 5.3 in \cite[Part 5(c), Page 165]{pitts2014existence}. Finally, in \cite[Part 9, Page 166]{pitts2014existence} we can use the cut-and-paste method from \cite[Lemma B.3 Case 1]{li2021min}, except with ambient Euclidean balls in place of geodesic balls and the Euclidean distance function in place of the intrinsic distance function. This allows the sequence $\{\phi_i\}_{i\in \N}$ to be deformed into a new sequence $\{\theta_i\}_{i\in \N}$ representing the same homotopy class and satisfying $\mathbf L(\{\theta_i\}_{i\in \N}) < \mathbf L(\Pi)$. This is a contradiction. 
\end{proof}

To conclude this section, we note that non-trivial homotopy classes always exist. Indeed, the Almgren isomorphism theorem implies that $\pi_1^\sharp(\Z_n(M,\bd M),\{0\})$ is isomorphic to $H_{n+1}(M,\bd M)$. Choosing $\Pi$ to be such a non-trivial homotopy class, we have $\mathbf L(\Pi) > 0$. Applying the previous theorem to this homotopy class $\Pi$ gives the following corollary. 

\begin{corollary}
    Let $M\subset \R^L$ be a locally wedge-shaped manifold. Then there exists a non-zero varifold $V\in \mathcal V(M)$ which is stationary in $M$ with free boundary and almost minimizing in annuli with free boundary. 
\end{corollary}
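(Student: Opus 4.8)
The plan is to deduce the corollary directly from Theorem \ref{theorem:existence of almost minimizers} by exhibiting a homotopy class $\Pi \in \pi_1^\sharp(\Z_n(M,\bd M),\{0\})$ with positive width. First I would invoke the Almgren isomorphism theorem in the relative setting: following \cite{almgren1962homotopy} (and its adaptation to the free boundary case in \cite{li2021min}), there is an isomorphism $\pi_1^\sharp(\Z_n(M,\bd M),\{0\}) \cong H_{n+1}(M,\bd M;\mZ)$. Since $M^{n+1}$ is a compact connected $(n+1)$-manifold with nonempty boundary $\bd M = \bd^F M \cup \bd^E M$, the relative fundamental class shows $H_{n+1}(M,\bd M;\mZ) \cong \mZ$ is nontrivial. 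The only subtlety here is that $M$ is only a locally wedge-shaped manifold rather than smooth; but since $\bd^E M$ has Hausdorff codimension $2$, it carries no $(n+1)$- or $n$-dimensional homology, and one can smooth out the edge (as done repeatedly in the excerpt, e.g. in the proof of Theorem \ref{Thm: interpolation}) via a bilipschitz homeomorphism $f\colon (M,\bd M)\to (N,\bd N)$ to a smooth manifold with boundary; this $f$ induces an isomorphism on relative homology and, via $f_\sharp$, a bijection between the relevant homotopy classes of discrete sequences, so the Almgren isomorphism carries over.

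Next I would fix $\Pi$ to be the homotopy class corresponding to a generator of $H_{n+1}(M,\bd M;\mZ)$ under this isomorphism. The key point is that $\mathbf{L}(\Pi) > 0$. This is standard: if $\mathbf{L}(\Pi) = 0$, then there is a critical sequence $\{\phi_i\}_{i\in\N}\in\Pi$ with $\sup_x \M(\phi_i(x)) \to 0$; by the interpolation Theorem \ref{Thm: interpolation} one can promote this to continuous maps $\Phi_i\colon I^1 \to \Z_n(M,\bd M)$ of arbitrarily small mass, and such maps are nullhomotopic (a small-mass relative cycle in $\Z_n(M,\bd M)$ is, by the relative isoperimetric Lemma \ref{Lem: M-isoperimetric lemma}, the relative boundary of a small-mass $(n+1)$-current, hence can be continuously contracted to $0$). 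This forces $\Pi$ to be the trivial class under the Almgren isomorphism, contradicting the choice of $\Pi$ as a generator. Hence $\mathbf{L}(\Pi) > 0$.

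Finally I would apply Theorem \ref{theorem:existence of almost minimizers} to this $\Pi$: it produces a varifold $V\in\V_n(M)$ with $\|V\|(M) = \mathbf{L}(\Pi) > 0$, which is stationary in $M$ with free boundary and almost minimizing in annuli with free boundary. In particular $V\neq 0$, which is exactly the assertion of the corollary.

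The main obstacle, and the step requiring the most care, is verifying the Almgren isomorphism theorem in the locally wedge-shaped setting — specifically, confirming that the smoothing map $f$ interacts correctly with the discrete homotopy groups $\pi_1^\sharp$ and that no relative homology is lost or created at the edge stratum. Once this topological input is in hand, the positivity of the width and the appeal to Theorem \ref{theorem:existence of almost minimizers} are routine.
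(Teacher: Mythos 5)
Your proposal takes exactly the route the paper does: invoke the Almgren isomorphism $\pi_1^\sharp(\Z_n(M,\bd M),\{0\}) \cong H_{n+1}(M,\bd M)$ to produce a nontrivial homotopy class $\Pi$ with $\mathbf{L}(\Pi) > 0$, then apply Theorem~\ref{theorem:existence of almost minimizers}. The additional details you supply — smoothing the edge via a bilipschitz homeomorphism to justify the isomorphism, and the isoperimetric-lemma argument that a width-zero class would be trivial — are correct and simply make explicit what the paper states without elaboration.
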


%%%%%%%%%%%%%%%%%%%%%%%%%%%%%%%%%%%%%%%%%%%%%%
%%%%%%%%%%%%%%%%%%%%%%%%%%%%%%%%%%%%%%%%%%%%%%
%%%              Section 5                 %%%
%%%%%%%%%%%%%%%%%%%%%%%%%%%%%%%%%%%%%%%%%%%%%%
%%%%%%%%%%%%%%%%%%%%%%%%%%%%%%%%%%%%%%%%%%%%%%
\section{Regularity of almost minimizing varifolds}\label{Sec: min-max regualrity}

In this section, we prove the regularity theorem for the stationary almost minimizing varifold $V$ with free boundary constructed by the above min-max theory. 
The interior regularity ($\spt(\|V\|)\cap\interior(M)$) and the face regularity ($\spt(\|V\|)\cap\bd^FM$) of $V$ have been achieved in \cite{pitts2014existence}\cite{schoen1981regularity} and \cite{li2021min}. 
Specifically, we have the following regularity result. 

\begin{theorem}\label{Thm: min-max regularity away from edge}
    {(\rm \cite{pitts2014existence}\cite{schoen1981regularity}\cite{li2021min})}
    Let $M^{n+1}\subset \R^L$ be a locally wedge-shaped Riemannian manifold with dimension $3\leq n+1\leq 7$. 
    Suppose $V\in\V_n(M)$ is a stationary varifold in $M$ with free boundary and is almost minimizing in annuli with free boundary. 
    Then $\spt(\|V\|)\cap (M\setminus\bd^EM)$ is a smooth almost properly embedded FBMH $(\Sigma, \bd\Sigma) \subset (M\setminus\bd^E M, \bd M\setminus\bd^EM)$. 
    Moreover, there exist $\{n_i\}_{i=1}^N\subset \N$ so that 
    \[V\llcorner(M\setminus\bd^EM) = \sum_{i=1}^N n_i|\Sigma_i|,\]
    where $\{\Sigma_i\}_{i=1}^N$ are the connected components of $\Sigma$.  
\end{theorem}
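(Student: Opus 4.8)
The plan is to derive this regularity statement by combining the min-max machinery of Pitts \cite{pitts2014existence} and Schoen-Simon \cite{schoen1981regularity} in the interior with the free boundary adaptations of Li-Zhou \cite{li2021min} along the smooth faces, noting that everything takes place away from the edge $\bd^E M$, where $M$ is (locally) either a smooth closed manifold or a smooth manifold with boundary. First I would fix a point $p\in \spt(\|V\|)\cap (M\setminus \bd^E M)$ and choose $r < \dist(p,\bd^E M)$ so that the relatively open set $\mB^L_r(p)\cap M$ is contained in the smooth region. In this region the notion of stationary with free boundary for $M$ and the notion of almost minimizing with free boundary in annuli reduce exactly to the notions used in \cite{li2021min} (when $p\in \bd^F M$) or to the closed-manifold notions of \cite{pitts2014existence}\cite{schoen1981regularity} (when $p\in \interior(M)$), because all admissible vector fields $\mfX_{tan}(M)$ restricted to this neighborhood are precisely the vector fields tangent to the smooth boundary. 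Thus the hypotheses of the regularity theorems in those references are satisfied locally at $p$.

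Next I would invoke the replacement construction: by Theorem \ref{Thm: equivalent amv} the almost minimizing property passes to all relatively open $W\subset\subset \mB^L_r(p)\cap M$, and in the smooth region one constructs replacements for $V$ in small annuli exactly as in \cite{pitts2014existence} for interior points and as in \cite[Section 6]{li2021min} for face points. The key inputs are the compactness and curvature estimates for stable almost properly embedded free boundary minimal hypersurfaces; in the smooth region these are the classical Schoen-Simon estimates \cite{schoen1981regularity} together with the free boundary versions of \cite{li2021min}, valid precisely in the dimension range $3\le n+1\le 7$. Having replacements, the tangent cone analysis proceeds as in \cite{pitts2014existence,li2021min}: one uses the monotonicity formula (the interior version away from the boundary, the Fermi-distance version of \cite{li2021min} near $\bd^F M$) to show tangent cones are stationary cones, classifies them as hyperplanes or half-hyperplanes using the replacement property and the maximum principle, and deduces that $\spt(\|V\|)$ is, near $p$, an almost properly embedded smooth free boundary minimal hypersurface. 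Standard unique continuation then shows $V$ agrees with an integer multiple of this hypersurface near $p$.

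Finally, to globalize: since $\spt(\|V\|)\cap (M\setminus \bd^E M)$ is locally a smooth almost properly embedded FBMH with integer density, it is globally an embedded hypersurface $(\Sigma,\bd\Sigma)\subset (M\setminus\bd^E M, \bd M\setminus \bd^E M)$; the density function is locally constant on $\spt(\|V\|)\cap (M\setminus\bd^E M)$ by the constancy theorem applied component by component (using that $M\setminus\bd^E M$ is a smooth manifold with boundary and that the regular part carries a stationary integral varifold), so writing $\Sigma = \sqcup_{i=1}^N \Sigma_i$ for the connected components we obtain $V\llcorner(M\setminus\bd^E M) = \sum_{i=1}^N n_i |\Sigma_i|$ with $n_i\in\N$. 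The main obstacle in writing this out is not any single new estimate but rather verifying carefully that the localization at $p$ genuinely reduces to the hypotheses of \cite{pitts2014existence}\cite{schoen1981regularity}\cite{li2021min}: one must check that an $(\eps,\delta)$-almost minimizing element of $\Z_n(M,\bd M)$, when restricted to a neighborhood avoiding the edge, gives an almost minimizing element in the sense of those papers (including the matching of relative-cycle spaces $\Z_n(M,\bd M)$ versus $\Z_n(N,\bd N)$ for the smooth $N$), and that the class of admissible deformations coincides. This bookkeeping, together with confirming that the number $N$ of components is finite (which follows from the upper bound $\|V\|(M)=\mathbf L(\Pi)<\infty$ and a lower density bound from monotonicity), is where the real work lies; no genuinely new analytic ingredient beyond the cited results is needed since we stay away from $\bd^E M$.
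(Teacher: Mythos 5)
Your proposal is correct and matches the paper's intent: the paper does not give an independent proof of this statement but simply cites Pitts, Schoen--Simon, and Li--Zhou, relying on exactly the localization you describe. The key structural observation — that near any $p\in M\setminus\bd^E M$ one can choose a neighborhood $U$ avoiding $\bd^E M$ so that the wedge-shaped notions of stationarity with free boundary and of almost minimizing in annuli reduce verbatim to those of \cite{pitts2014existence} (for $p\in\interior(M)$) or \cite{li2021min} (for $p\in\bd^F M$) — is exactly what the paper encodes in the remark following Definition \ref{Def: amv in annuli}, where $r_{am}(p)$ is shrunk so that annuli at interior points avoid $\bd M$ and annuli at face points avoid $\bd^E M$. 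This is precisely the ``bookkeeping'' you flag. Two small points of precision you might tighten: the localization of the almost minimizing property from $\Z_n(M,\bd M)$ to the local smooth setting is unproblematic because all competing deformations are supported in $U$, so quotienting by $Z_n(\bd M,\bd M)$ versus by the corresponding group for the local smooth boundary changes nothing; and the integer multiplicity on each connected component arises directly from the replacement/tangent-cone regularity theory (where the replacements are integer-multiplicity stable minimal hypersurfaces) rather than from a separate constancy-theorem argument, although the conclusion is the same. No genuine gap is present.
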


By Theorem \ref{Thm: min-max regularity away from edge}, it is sufficient to consider the regularity of $V$ near the edge points $p\in\spt(\|V\|)\cap\bd^E M$. 
Indeed, we will show that the FBMH $\Sigma$ given in Theorem \ref{Thm: min-max regularity away from edge} can be $C^{2,\alpha}$ extended to the edge $\bd^EM$ as a $C^{2,\alpha}$-to-edge almost properly embedded locally wedge-shaped FBMH.

\subsection{Good replacement property}\label{Subsec: good replacement}

First of all, let us show the {\em good replacement property} for the min-max varifolds. 
Since we mainly focus on the regularity near $\bd^EM$ by Theorem \ref{Thm: min-max regularity away from edge}, we always assume the relative open subset $U\subset M$ satisfies $U\cap\bd^EM\neq\emptyset$ in the following constructions. 

\begin{proposition}\label{Prop: good replacement property}
    Given a relative open subset $U\subset M$ with $U\cap\bd^EM\neq\emptyset$, suppose $V\in\V_n(M)$ is almost minimizing in $U$ with free boundary. 
    Then for any compact subset $K\subset U$, there exists $V^*\in\V_n(M)$, called the {\em replacement of $V$ in $K$}, so that
    \begin{itemize}
        \item[(i)] $V\llcorner (M\setminus K) = V^*\llcorner(M\setminus K)$;
        \item[(ii)] $\|V\|(M) = \|V^*\|(M)$;
        \item[(iii)] $V^*$ is almost minimizing in $U$ with free boundary;
        \item[(iv)] $V^*=\lim_{i\to\infty}|T_i^*|$ as varifolds, where $T_i^*\in Z_n(M,\bd M)$ is locally mass minimizing in $\interior_M (K)$ (relative to $\bd M$). 
    \end{itemize}
    Moreover, if $2\leq n\leq 5$, $\interior(K)$ is simply connected, and $M^{n+1}$ satisfies (\ref{dag}) or (\ref{ddag}), then $V^*\llcorner\interior_M(K)$ is an integer rectifiable varifold whose support $\Sigma:=\spt(\|V^*\|)\cap \interior_M(K)$ is a $C^{2,\alpha}$-to-edge almost properly embedded locally wedge-shaped stable FBMH $(\Sigma,\{\bd_m\Sigma\})\subset (\interior_M(K),\{\interior_M(K)\cap \bd_{m+1}M\})$. 
\end{proposition}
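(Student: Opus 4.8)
The plan is to follow the replacement construction of Pitts \cite{pitts2014existence} and Li--Zhou \cite{li2021min}, adapted to the wedge setting. Fix a compact $K\subset U$. Since $V$ is almost minimizing in $U$ with free boundary, Theorem \ref{Thm: equivalent amv} lets us pass to the $\M$-version: for each $\epsilon$ there are $\delta$ and $\tau\in\mfa_n(U;\epsilon,\delta;\M)$ with $\mF(|T|,V)<\epsilon$, where $T$ is the canonical representative of $\tau$. For the chosen $\epsilon_i\to 0$ we take such $\tau_i$ and then minimize mass in the class $C_{\tau_i}=\{\sigma\in\Z_n(M,\bd M):\spt(\sigma-\tau_i)\subset\interior_M(K),\ \M(\sigma)\le\M(\tau_i)\}$ with the constraint that the deformations stay an $(\epsilon_i,\delta_i)$-admissible competitor; that is, solve the constrained minimization problem exactly as in \cite[\S3.10]{li2021min}. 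A minimizing sequence has mass bounded by $\M(\tau_i)$ and is supported in a fixed compact set, so by Lemma \ref{Lem: compactness of currents} it subconverges in $\F^{K'}$ to a limit $\tau_i^*$ with canonical representative $T_i^*$, and by lower semicontinuity of mass $\M(\tau_i^*)\le\M(\tau_i)$. One checks, via the standard argument, that $T_i^*$ is locally mass minimizing in $\interior_M(K)$ relative to $\bd M$ (any competitor supported in a small ball in $\interior_M(K)$ either strictly decreases mass, contradicting minimality, or else would allow one to construct an $(\epsilon_i,\delta_i)$-deformation of $\tau_i$, contradicting $\tau_i\in\mfa_n$), and that $\tau_i^*$ is itself $(\epsilon_i,\delta_i')$-almost minimizing in $U$ for a comparable $\delta_i'$. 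Setting $V^*:=\lim_i|T_i^*|$ (along a subsequence, using mass bounds and the compactness of $\V_n(M)$) gives a varifold satisfying (iv); properties (i), (ii) follow from $V\llcorner(M\setminus K)=V^*\llcorner(M\setminus K)$ (the deformations are supported in $\interior_M(K)$) together with $\mF(|T_i|,V)\to 0$ and $\M(T_i^*)\to\M(V^*)$; and (iii) follows from the almost minimizing property of the $\tau_i^*$ by Definition \ref{Def: amv}.

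For the second (regularity) statement, assume $2\le n\le 5$, $\interior(K)$ simply connected, and $M$ satisfying (\ref{dag}) or (\ref{ddag}). Each $T_i^*$ is locally mass minimizing in $\interior_M(K)$ relative to $\bd M$. By the interior and face regularity theory (Theorem \ref{Thm: min-max regularity away from edge}, or directly the classical minimizing regularity of Pitts/Schoen--Simon and Li--Zhou away from the edge, valid since $n\le 5$), $\spt(\|T_i^*\|)$ away from $\bd^E M$ is a smooth almost properly embedded FBMH. Near an edge point, the Allard-type regularity theorem of Edelen--Li \cite{edelen2022regularity} for minimizers in locally polyhedral spaces applies: it shows $\spt(\|T_i^*\|)$ is a $C^{1,\alpha}$-to-edge almost properly embedded locally wedge-shaped FBMH. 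Under the acute-angle hypothesis (\ref{dag}) (or the extra condition (\ref{ddag}) when $\theta=\pi/2$), Theorem \ref{Thm: regularity of FBMH} upgrades this to $C^{2,\alpha_0}$-to-edge. Being an area minimizer, $\spt(\|T_i^*\|)$ is two-sided (here simple connectivity of $\interior(K)$ is used) and stable in $\interior_M(K)$. We also record the uniform area bound $\M(T_i^*)\le\M(\tau_i)\le\|V\|(M)+o(1)$.

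Now apply the compactness theorem for stable FBMHs, Theorem \ref{Thm: compactness for FBMHs}, with $U$ replaced by $\interior_M(K)$ (simply connected by assumption): passing to a subsequence, $\spt(\|T_i^*\|)$ converges (possibly with multiplicity) in $C^{2,\alpha'}$-to-edge, locally graphically, to a $C^{2,\alpha}$-to-edge almost properly embedded locally wedge-shaped stable FBMH $\Sigma\subset\interior_M(K)$. Since varifold convergence is compatible with this graphical convergence, $V^*\llcorner\interior_M(K)=\lim_i|T_i^*|\llcorner\interior_M(K)$ is the integer rectifiable varifold $\sum_j m_j|\Sigma_j|$ over the components $\Sigma_j$ of $\Sigma$, so $\spt(\|V^*\|)\cap\interior_M(K)=\Sigma$, as claimed. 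The main obstacle is making the constrained minimization/replacement construction genuinely compatible with the wedge boundary structure: one must check that the $\F$- and $\M$-isoperimetric lemmas (Lemmas \ref{Lem: F-isoperimetric lemma}, \ref{Lem: M-isoperimetric lemma}) and the cut-and-paste comparisons of \cite[Lemma B.3]{li2021min} go through near the edge (using ambient Euclidean balls $\mB^L_r(p)$ rather than geodesic or Fermi balls), and above all that the minimizers $T_i^*$ are regular \emph{up to and including} the edge --- this is exactly where the acute-angle hypothesis enters through Appendix \ref{Sec: elliptic regularity} and the Edelen--Li theorem, and where the argument would fail for obtuse wedge angles.
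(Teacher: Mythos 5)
Your proposal follows essentially the same route as the paper: a constrained minimization in the class reachable from $\tau_i$ by small deformations supported in $K$, passage to a minimizer $\tau_i^*$ whose canonical representative is locally mass minimizing (and still almost minimizing), extraction of $V^*=\lim|T_i^*|$, and then regularity via Edelen--Li's theorem, the $C^{1,\alpha}\to C^{2,\alpha_0}$ upgrade of Theorem~\ref{Thm: regularity of FBMH}, and the compactness Theorem~\ref{Thm: compactness for FBMHs}. The minor deviations (passing first to the $\M$-version via Theorem~\ref{Thm: equivalent amv} rather than working with $\F$ directly, and asserting stability from ``being an area minimizer'' rather than from the contradiction with constrained minimality as the paper spells out) do not change the substance of the argument.
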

\begin{proof}
    The proof is parallel to \cite[Proposition 5.3, 5.5]{li2021min}, and we shall include some details here for the sake of completeness. 

    {\bf Step 1.} {\em A constrained minimization problem.}
    
    Given $\epsilon,\delta >0$ and $\tau\in \mfa_n(U;\epsilon,\delta;\F)$, let $\C_\tau$ be the set of all $\sigma\in\Z_n(M,\bd M)$ so that there is a finite sequence $\{\tau_i\}_{i=0}^q\subset\Z_n(M,\bd M)$ satisfying $\tau_0=\tau$, $\tau_q=\sigma$, and for all $i=1,\dots,q$,
    \begin{equation}\label{Eq: replacement: finite sequence property}
        \spt(\tau - \tau_i)\subset K,\quad \F(\tau_i - \tau_{i-1})\leq \delta,\quad \M(\tau_i)\leq\M(\tau)+\delta.
    \end{equation}
    Then for any sequence $\{\sigma_i\}_{i\in\N}\subset\C_\tau$ with $\M(\sigma_i)\to \inf\{\M(\sigma):\sigma\in\C_\tau\}$, it follows from the compactness theorem (Lemma \ref{Lem: compactness of currents}) and the lower semi-continuous of mass (\ref{Eq: lower semi-continuous mass}) that $\sigma_i$ weakly converges (up to a subsequence) to some $\tau^*\in\Z_n(M,\bd M)$ with $\spt(\tau^*-\tau)\subset K$ and $\M(\tau^*)\leq \inf\{\M(\sigma):\sigma\in\C_\tau\}$. 
    Noting $\sigma_i\rightharpoonup \tau^*$ implies $\F(\sigma_i - \tau^*)<\delta$ for $i$ large, we can insert $\tau^*$ at the end of the finite sequence in the definition of $\sigma_i\in\C_\tau$ ($i$ large enough), which suggests $\tau^*\in\C_\tau$ is a mass minimizer: $\M(\tau^*) = \inf\{\M(\sigma):\sigma\in\C_\tau\}$. 

    In addition, suppose $\{\tau^*_i\}_{i=1}^q\subset\Z_n(M,\bd M)$ is a finite sequence satisfying $\tau^*=\tau_0^*$ and (\ref{Eq: replacement: finite sequence property}) with $\tau^*,\tau^*_i$ in place of $\tau,\tau_i$. 
    Then, after adding $\{\tau^*_i\}_{i=1}^q$ at the end of the finite sequence in the definition of $\tau^*\in \C_\tau$, we obtain $\M(\tau^*_q)\geq \M(\tau) - \epsilon \geq \M(\tau^*) -\epsilon$ since $\tau\in \mfa_n(U;\epsilon,\delta;\F)$ and $\tau^*$ is a mass minimizer in $\C_\tau$. 
    Hence, $\tau^*\in\mfa_n(U;\epsilon,\delta;\F)$. 

    Moreover, let $T^*$ be the canonical representative of $\tau^*$. 
    %Then for any $p\in\interior_M(K)$, we choose $r>0$ small enough so that $\mB^L_r(p)\cap M\subset \interior_M(K)$ and $\M(T^*\llcorner \mB^L_r(p))<\delta/2$. 
    %Let $(A,B):= (\mB^L_r(p)\cap M, \mB^L_r(p)\cap \bd M)$, and $S\in Z_n(A,B)$, $\tau':=[T^*+S]$. 
    Then, after replacing the Fermi half-balls by $\mB^L_r(p)\cap M$, the arguments in \cite[Proposition 5.3, Claim 2]{li2021min} can be taken verbatim to show $T^*$ is locally mass minimizing in $\interior_M(K)$ (relative to $\bd M$). 

    {\bf Step 2.} {\em The existence of replacements.}
    
    By the almost minimizing assumption on $V$, we can take $\{\tau_i\}_{i\in\N}\subset \mfa_n(U;\epsilon_i,\delta_i;\F)$ with $\epsilon_i,\delta_i\to 0$ so that the canonical representatives $\{T_i\in\tau_i\}_{i\in\N}$ satisfy $|T_i|\to \tau$. 
    Then for each $i\in\N$, let $\tau_i^*\in\C_{\tau_i}$ be the mass minimizer given as in {\bf Step 1} so that $\tau_i^*\in \mfa_n(U;\epsilon_i,\delta_i;\F)$ and the canonical representative $T_i^*\in\tau_i^*$ is locally mass minimizing in $\interior_M(K)$. 
    Since $\M(T_i^*)=\M(\tau_i^*)$ is uniformly bounded, a subsequence $|T_i^*|$ converges as varifolds to some $V^*\in\V_n(M)$ satisfying (iii)(iv). 
    Finally, since $\tau_i^*\in\C_{\tau_i}$ and $\tau_i\in \mfa_n(U;\epsilon_i,\delta_i;\F)$, we have $\spt(T_i-T_i^*) = \spt(\tau_i - \tau_i^*)\subset K$ and $\M(\tau_i)-\epsilon_i\leq \M(\tau_i^*)\leq \M(\tau_i)$, which gives (i)(ii). 

    {\bf Step 3.} {\em The regularity of replacements.}
    
    By the additional assumptions, the manifold $M$ is locally wedge-shaped with wedge angle $\theta(p) \leq \pi/2$ for all $p\in\bd^EM$. 
    Therefore, using \cite[Theorem 1.2]{edelen2022regularity} and the statements after it, the local mass minimizers $\{T_i^*\}_{i\in\N}$ constructed in {\bf Step 2} are $C^{1,\alpha}$-to-edge, properly embedded, locally wedge-shaped FBMHs in $\interior_M(K)$ (with integer multiplicities) whose regularity can be further upgraded to $C^{2,\alpha}$-to-edge by Theorem \ref{Thm: regularity of FBMH}.  
    
    Next, we claim each $T_i^*$ is stable in $\interior_M(K)$ (Definition \ref{Def: stabe FBMH}). 
    Otherwise, combining the proper embeddedness of $T_i^*\llcorner \interior_M(K)$ with Definition \ref{Def: stabe FBMH}, there exists $X_i\in \mfX_{tan}(M)$ compact supported in $\interior_M(K)$ so that $\M(T_i^t)< \M(T_i^*)$ for $0\neq t\in (-t_0, t_0)$, where $T_i^t := (f_t^{X_i})_\#T_i^*$ and $\{f_t^{X_i}\}_{t\in(-t_0,t_0)}$ are the diffeomorphisms generated by $X_i$. 
    Then, we have $T_i^0=T_i^*$, and $T_i^t \equiv T_i^*$ in $M\setminus\interior_M(K)$ for all $t\in (t_0,t_0)$. 
    Note $T_i^t$ is also properly embedded in $\interior_M(K)$ for $t>0$ small enough. 
    Hence, for $t>0$ sufficiently small, $\tau_i^t:=[T_i^t]\in\Z_n(M,\bd M)$ has its canonical representative given by $T_i^t$, and satisfies 
    \[ \spt(\tau_i^t - \tau_i^*)\subset\spt(T_i^t - T_i^*)\subset K,\quad \F(\tau_i^t - \tau_{i}^*)\leq\F(T_i^t - T_{i}^*)\leq \delta,\quad \M(\tau_i^t) < \M(\tau^*).\]
    This suggests $\tau_i^t\in \C_{\tau_i^*}\subset\C_{\tau_i}$ for $t>0$ small, which contradicts the mass minimizing property of $\tau_i^*\in\C_{\tau_i}$ in {\bf Step 1}. 

    Finally, by the compactness theorem \ref{Thm: compactness for FBMHs}, we obtain the regularity of $V^*\llcorner\interior_M(K)$.
\end{proof}

\subsection{Tangent cones and rectifiability}\label{Subsec: classify tangent cone}

In this subsection, we use the good replacement property of min-max varifolds to classify their tangent cones and show the rectifiability. 
The proof shares the same spirit as in \cite[Section 5.2]{li2021min}, while we use the fact $\mH^n(\bd^E M) = 0$ (Lemma \ref{Lem: classify face and edge}) to simplify the arguments. 

\begin{lemma}[Rectifiability of tangent cones]\label{Lem: rectifiability of tangent cones}
    Let $2\leq n\leq 5$ and assume $M$ satisfies (\ref{dag}) or (\ref{ddag}). 
    Suppose $V\in\V_n(M)$ is almost minimizing in annuli with free boundary and is stationary in $M$ with free boundary. 
    Then for any $p\in\spt(\|V\|)\cap \bd^EM$ and $C\in\VarTan(V,p)$, $C\in\V_n(T_pM)$ is a rectifiable cone which is stationary in $T_pM$ with free boundary. 
\end{lemma}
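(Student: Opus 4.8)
The plan is to realize $C$ as a blow-up limit of $V$ at $p$, transport the stationary-with-free-boundary property and a monotonicity formula to the limiting flat wedge $T_pM$, and then rule out any mass of $\|C\|$ on the low-dimensional edge. By the monotonicity formula (Theorem~\ref{Thm: monotonicity fomula}, and its proof in \cite[Appendix~A]{mazurowski2023curvature}) the density ratios $r\mapsto\|V\|(\mB^L_r(p))/r^n$ stay bounded as $r\to 0^+$ and the density $\Theta^n(\|V\|,p)$ exists; so for any $r_i\to 0^+$ a subsequence of $(\bleta_{p,r_i})_\#V$ converges weakly to some $C\in\V_n(T_pM)$, where $\bleta_{p,r_i}(M)=r_i^{-1}(M-p)$ converges to the Euclidean wedge domain $T_pM$ and $\bleta_{p,r_i}(\bd^EM)\to\bd^ET_pM$. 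Since stationarity with free boundary is invariant under the translations and homotheties $\bleta_{p,r_i}$, each $(\bleta_{p,r_i})_\#V$ is stationary with free boundary in $\bleta_{p,r_i}(M)$; given a compactly supported $Y\in\mfX_{tan}(T_pM)$, I would approximate $Y$ by rescalings of fields in $\mfX_{tan}(M)$ --- using that $\bd^EM$ is a $C^1$ submanifold with tangent $T_p\bd^EM$ at $p$ --- and pass to the limit to get $\delta C(Y)=0$, i.e.\ $C$ is stationary in $T_pM$ with free boundary.

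To see $C$ is a cone, observe that since $T_pM$ is a Euclidean wedge domain the radial field $x\mapsto x-x_0$ based at any $x_0\in\bd^ET_pM$ is tangent to both faces of $T_pM$ and hence belongs to $\mfX_{tan}(T_pM)$; feeding it into the first variation identity produces the \emph{exact} monotonicity formula for $C$ on $T_pM$ (cf.\ Remark~\ref{Rem: monotonicity radius}) with the standard conical-defect integrand. On the other hand, for a.e.\ $r>0$,
\[
\frac{\|C\|(\mB^L_r(0))}{r^n}=\lim_{i\to\infty}\frac{\|V\|(\mB^L_{rr_i}(p))}{(rr_i)^n}=\Theta^n(\|V\|,p),
\]
which is independent of $r$; equality in the monotonicity formula then forces $C$ to be invariant under dilations about $0$, so $C$ is a cone.

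It remains to prove rectifiability, and here the edge must be handled directly, since stationarity with free boundary is not known to imply locally bounded first variation across $\bd^ET_pM$ inside $\R^L$. I would first show $\|V\|(\bd^EM)=0$ and $\|C\|(\bd^ET_pM)=0$: $\bd^EM$ is a compact $(n-1)$-submanifold (Lemma~\ref{Lem: classify face and edge}) and $\bd^ET_pM$ is an $(n-1)$-plane; the monotonicity formula gives a bound $\|V\|(\mB^L_s(x))\lesssim s^n$ (resp.\ $\|C\|(\mB^L_s(x))\lesssim s^n$) uniform for $x$ in these edges (the latter uniformly on bounded pieces) and $s$ small, and covering an $(n-1)$-dimensional compact piece by $\lesssim s^{1-n}$ balls of radius $s$ makes its $\|\cdot\|$-measure $\lesssim s$, which tends to $0$. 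Hence $\|V\|(\bd^EM)=0$, and $\|C\|(\bd^ET_pM\cap\mB^L_R(0))=0$ for every $R$, so $\|C\|(\bd^ET_pM)=0$. On the smooth manifold with boundary $T_pM\setminus\bd^ET_pM$ --- whose boundary consists of the two open faces of the wedge --- the cone $C$ is stationary with free boundary, hence rectifiable: near an interior point it has locally bounded first variation and \cite[42.4]{simon1983lectures} applies, while near a face point one reflects across the flat face to obtain a stationary varifold in a smooth manifold, applies \cite[42.4]{simon1983lectures}, and restricts back (the Gr\"uter--Jost doubling, cf.\ \cite{gruter1986allard, li2021min}). Combined with $\|C\|(\bd^ET_pM)=0$, this gives rectifiability of $C$ on all of $T_pM$.

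The main obstacle I expect is exactly this last step --- removing potential mass of $C$ on $\bd^ET_pM$ and patching together the rectifiability of $C$ near the edge --- which is why the argument relies on the low dimension of the edge and the monotonicity formula rather than a direct appeal to \cite[42.4]{simon1983lectures}. A secondary technical point is the approximation identifying compactly supported test fields on $T_pM$ as limits of rescaled fields from $\mfX_{tan}(M)$, together with the (easy but essential) observation that the radial field from an edge point of the flat wedge is admissible, which is what makes the monotonicity on $T_pM$ exact and thereby yields conicality.
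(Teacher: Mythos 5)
Your proposal is correct and follows essentially the same route as the paper's: blow up, transport stationarity, derive exact monotonicity from the radial field based at an edge point of the flat wedge to get conicality, bound the density at edge points of $C$ by $\Theta^n(\|V\|,p)$ via monotonicity, and kill the mass of $C$ on $\bd^E T_pM$ using the fact that this set is $(n-1)$-dimensional. The one place you diverge from the paper is the rectifiability of $C$ away from $\bd^E T_pM$: the paper simply invokes its Theorem~\ref{Thm: min-max regularity away from edge} (regularity for stationary, almost-minimizing varifolds off the edge), whereas you obtain it directly by the Gr\"uter--Jost reflection across the flat faces followed by \cite[42.4]{simon1983lectures}. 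Your route is slightly more self-contained and does not require knowing that $C$ inherits the almost-minimizing property from $V$, which is the implicit (and somewhat delicate) point the paper's citation glosses over; in that sense your argument is arguably cleaner, though both work. Proving $\|V\|(\bd^E M)=0$ here is harmless but not needed for this lemma — the paper defers that fact to Proposition~\ref{Prop: classify tangent cones}, where it follows more directly from the classification of the tangent cones.
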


\begin{proof}
    Let $r_i\to 0$ and $V_i:=(\bleta_{p,r_i})_\# V$ so that $V_i\to C\in \VarTan(V,p)$. 
    Note $\bleta_{p,r_i}(M)\to T_pM$ smoothly. 
    We have that $C\in\V_n(T_pM)$ is stationary in $T_pM$ with free boundary because the notion of stationary with free boundary is invariant under the map $\bleta_{p,r_i}:\R^L\to\R^L$. 
    Additionally, it follows from Theorem \ref{Thm: min-max regularity away from edge} that $C$ is integer rectifiable in $T_pM\setminus \bd^E T_pM$. 

    \begin{claim}\label{Claim: no measure on edge}
        $\|C\|(\bd^ET_pM) = 0$, and thus $C$ is rectifiable. 
    \end{claim}
    \begin{proof}[Proof of Claim \ref{Claim: no measure on edge}]
        By the monotonicity formula (Theorem \ref{Thm: monotonicity fomula}), we have
        \[\frac{\|C\|(\mB^L_r(0))}{\omega_n r^n} = \lim_{i\to\infty} \frac{\|V_i\|(\mB^L_r(0))}{\omega_n r^n} = \lim_{i\to\infty}\frac{\|V\|(\mB^L_{rr_i}(p))}{\omega_n (rr_i)^n} = \Theta^n(\|V\|, p) <\infty \quad \forall r>0.\]
        For any $y\in \spt(\|C\|)\cap \bd^ET_pM$ and $0<r<R$, the monotonicity formula also gives
        \begin{eqnarray*}
            \frac{\|C\|(\mB^L_r(y))}{\omega_n r^n} \leq \frac{\|C\|(\mB^L_R(y))}{\omega_n R^n} \leq \frac{\|C\|(\mB^L_{R+|y|}(0))}{\omega_n (R+|y|)^n} \left( 1+\frac{|y|}{R} \right)^n = \Theta^n(\|V\|, p)\left( 1+\frac{|y|}{R} \right)^n
        \end{eqnarray*}
        After taking $r\to 0$ and $R\to\infty$ (Remark \ref{Rem: monotonicity radius}), we see $\Theta^n(\|C\|, y) \leq \Theta^n(\|C\|, 0) = \Theta^n(\|V\|, p)$, which suggests $\|C\|(\bd^ET_pM) = 0$ by $\mH^n(\bd^ET_pM)=0$ (Lemma \ref{Lem: classify face and edge}). 
    \end{proof}
    
    Finally, note the direction vector field $\vec{x}\in \mfX_{tan}(T_pM)$. 
    The arguments in \cite[Theorem 19.3]{simon1983lectures} therefore carry over in the Euclidean wedge domain $T_pM$ to show $C$ is a cone. 
\end{proof}

\begin{remark}\label{Rem: rectifiability of blowup}
    Suppose $C=\lim_{p,r_i}\bleta_{p,r_i}V_i$ for some $r_i\to 0$ and $\{V_i\}_{i\in\N}\subset\V_n(M)$ which are almost minimizing in annuli and stationary in $M$ with free boundary with $\sup\|V_i\|(M)<\infty$. 
    Then, after modifying the first formula in the proof of Claim \ref{Claim: no measure on edge} by 
    \[ \lim_{i\to\infty}\frac{\|V_i\|(\mB^L_{rr_i}(p))}{\omega_n (rr_i)^n} \leq \lim_{i\to\infty} C_{mono}\frac{\|V_i\|(\mB^L_{r_{mono}}(p))}{\omega_n r_{mono}^n} < C_0,\]
    one can take the arguments almost verbatim from Lemma \ref{Lem: rectifiability of tangent cones} to show such a varifold $C$ is also stationary rectifiable in $T_pM$ with free boundary. 
\end{remark}

Next, we show the classification of tangent cones for the min-max varifold $V$, which further suggests the rectifiability of $V$. 

\begin{proposition}[Classification of tangent cones]\label{Prop: classify tangent cones}
    Let $2\leq n\leq 5$ and assume $M$ satisfies (\ref{dag}) or (\ref{ddag}). 
    Suppose $V\in\V_n(M)$ is almost minimizing in annuli with free boundary and is stationary in $M$ with free boundary. 
    Then for any $p\in\spt(\|V\|)\cap \bd^EM$ and $C\in\VarTan(V,p)$, either 
    \begin{itemize}
        \item[(i)] $C= 2\Theta^n(\|V\|, p) |S^+|$ and $\theta(p)=\frac{\pi}{2}$ where $2\Theta^n(\|V\|, p)\in\N$ and $S^+\in \{ P_{\pm\frac{\pi}{4}}(p) \}$ is a half $n$-plane in $T_pM$ perpendicular to $\bd^{\mp}T_pM$; or
        \item[(ii)] $C = \frac{2\pi}{\theta(p)}\Theta^n(\|V\|, p)|S\cap T_pM|$ where $\frac{2\pi}{\theta(p)}\Theta^n(\|V\|, p)\in \N$ and $S\in\mathcal{P}_{T_pM}$ is a horizontal hyperplane of $T_pM$ (see Definition \ref{Def: vertical/horizontal hyperplane in manifold}).
    \end{itemize}
    Moreover, $\|V\|(\bd^E M) = 0$, and thus $V$ is rectifiable. 
\end{proposition}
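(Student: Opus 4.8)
The proof follows the scheme of \cite[Section 5.2]{li2021min}: transfer the good replacement property to the tangent cone, use it to force the cone to be smooth away from its vertex, and then classify the resulting smooth stable free boundary minimal cones in the flat wedge $T_pM$. As setup, recall from Lemma \ref{Lem: rectifiability of tangent cones} that every $C\in\VarTan(V,p)$ is a rectifiable cone in $T_pM=\Omega^{n+1}_{\theta(p)}$ which is stationary with free boundary and satisfies $\|C\|(\bd^ET_pM)=0$ and $\Theta^n(\|C\|,0)=\Theta^n(\|V\|,p)$. Note $T_pM$ is a Euclidean wedge with $\theta(p)\le\frac\pi2$ and flat faces, so (\ref{ddag}) is automatic when $\theta(p)=\frac\pi2$ and (\ref{dag}) holds when $\theta(p)<\frac\pi2$; moreover the monotonicity formula holds at all scales in $T_pM$ with $C_{mono}=1$ (Remark \ref{Rem: monotonicity radius}).

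Since $V$ is almost minimizing in the Fermi-type wedge-cut annuli $\wA_{r,r'}(p)$ for small $r<r'$, and the dilations $\bleta_{p,r_i}$ defining $C$ carry such annuli to Euclidean annuli $\mAn^L_{s,t}(0)\cap T_pM$ in the blow-up (the Fermi-type distance $\wD_p$ agreeing with the Euclidean distance to first order at $p$, Appendix \ref{Sec: Fermi distance}), the cone $C$ inherits the property of being almost minimizing with free boundary in $\mAn^L_{s,t}(0)\cap T_pM$ for every $0<s<t$, exactly as in \cite{pitts2014existence}\cite{li2021min}. Fixing $0<s<s'<t'<t$ and applying Proposition \ref{Prop: good replacement property} to $C$ in $U=\mAn^L_{s,t}(0)\cap T_pM$ with $K=\clos(\mAn^L_{s',t'}(0)\cap T_pM)$ (working inside a large Euclidean ball to make the ambient space compact; $\interior(K)$ is simply connected because $\theta(p)<\pi$) produces a replacement $C^*$ with $C^*=C$ on $T_pM\setminus K$ and with $C^*\llcorner\interior_{T_pM}(K)$ an integer multiple of a $C^{2,\alpha}$-to-edge stable almost properly embedded FBMH. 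Because $C^*=C$ both near $0$ and outside $\mB^L_t(0)$, and $C$ is a cone, the density ratio $r\mapsto\|C^*\|(\mB^L_r(0))/(\omega_nr^n)$ equals $\Theta^n(\|V\|,p)$ for all small and all large $r$; by monotonicity it is constant in $r$, so $C^*$ is itself a cone, and dilating $K$ shows $\spt(\|C^*\|)\setminus\{0\}$ is a $C^{2,\alpha}$-to-edge stable almost properly embedded FBMH in all of $T_pM\setminus\{0\}$.

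A Simons-type argument on the smooth stable cone $C^*$—using stability, the dimension bound $n\le 5$, and the angle bound $\theta(p)\le\frac\pi2$, i.e.\ precisely the input behind the curvature estimates of \cite{mazurowski2023curvature}—forces $C^*$ to be totally geodesic, so $\spt(\|C^*\|)$ is the intersection of $T_pM$ with a linear hyperplane or half-hyperplane through $0$; the almost proper embeddedness and $C^{2,\alpha}$-to-edge regularity rule out crossings so this is a single connected piece, and the stationarity condition (\ref{Eq: classify FBMH}) (the conormal meeting a face of $T_pM$ orthogonally) forces it to be either a horizontal hyperplane slice $S\cap T_pM$ with $S\in\Pcal_{T_pM}$, or—only when $\theta(p)=\frac\pi2$, in which case $P_{\pm\pi/4}(p)$ meets the opposite face orthogonally—a single face $P_{\pm\pi/4}(p)$ of $T_pM$. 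By the constancy theorem on its connected smooth support, $C^*=k\,|S\cap T_pM|$ or $C^*=k\,|P_{\pm\pi/4}(p)|$ for some $k\in\N$. Since $C^*$ and $C$ are cones agreeing on $\{|x|>t\}\cap T_pM$, they coincide, so $C=C^*$. Finally, computing $\Theta^n(\|S\cap T_pM\|,0)=\frac{\theta(p)}{2\pi}$ and $\Theta^n(\|P_{\pm\pi/4}(p)\|,0)=\frac12$, and equating with $\Theta^n(\|C\|,0)=\Theta^n(\|V\|,p)$, yields $k=\frac{2\pi}{\theta(p)}\Theta^n(\|V\|,p)\in\N$ in the first case and $k=2\Theta^n(\|V\|,p)\in\N$ in the second, which are exactly alternatives (ii) and (i).

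For the final assertion, the monotonicity formula (Theorem \ref{Thm: monotonicity fomula}) gives a uniform bound $\Theta^{*n}(\|V\|,q)\le C_{mono}\|V\|(M)/(\omega_nr_{mono}^n)$ for all $q\in\bd^EM$; since $\mH^n(\bd^EM)=0$ by Lemma \ref{Lem: classify face and edge}, a standard density comparison argument \cite{simon1983lectures} gives $\|V\|(\bd^EM)=0$, and together with Theorem \ref{Thm: min-max regularity away from edge} this shows $V$ is rectifiable. I expect the classification of smooth stable free boundary minimal cones in a wedge of angle $\le\frac\pi2$ to be the main obstacle: this is where the dimension restriction $n\le 5$ and the angle hypothesis are essential, with condition (\ref{ddag}) in the $\theta(p)=\frac\pi2$ case serving precisely to guarantee that reflecting a FBMH across one face produces a minimal hypersurface in an ambient space with $C^{1,1}$ metric, so that the Simons-type rigidity can be carried out.
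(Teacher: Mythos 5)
Your overall strategy matches the paper's, but there is one genuine gap at the very first step that does real work. You claim that ``the cone $C$ inherits the property of being almost minimizing with free boundary in $\mAn^L_{s,t}(0)\cap T_pM$ for every $0<s<t$, exactly as in \cite{pitts2014existence}\cite{li2021min},'' and you then apply Proposition~\ref{Prop: good replacement property} directly to $C$ inside the wedge $T_pM$. This is not what those references do, and it is not automatic. The almost-minimizing property (Definitions~\ref{Def: epsilon delta deformations},~\ref{Def: amv}) is a scale-dependent approximation statement: for each fixed annulus $\wA_{s,t}(p)$ one has cycles $\tau_i$ with $\epsilon_i,\delta_i\to 0$ admitting no $(\epsilon_i,\delta_i)$-deformations. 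When you dilate by $\bleta_{p,r_j}$, these parameters rescale (by powers of $r_j$), the relevant annulus in $M$ also shrinks with $j$, and the sequences $(\epsilon_i^j,\delta_i^j)$ depend on the annulus. Nothing forces the rescaled parameters to tend to zero along any diagonal, so the tangent cone is not known \emph{a priori} to be almost minimizing in $T_pM$, and Proposition~\ref{Prop: good replacement property} cannot simply be invoked for $C$. The paper avoids this precisely by constructing the replacement \emph{at finite scale}: take $V_i^*$ a replacement of $V$ in $K_i=\Clos(\mAn^L_{\sigma r_i,r_i}(p))\cap M$, verify via the maximum principle (Theorem~\ref{Thm: maximum principle for varifolds}) that $V_i^*$ is nontrivial in the annulus, pass to the blow-up limit $D=\lim(\bleta_{p,r_i})_\#V_i^*$ (using Remark~\ref{Rem: rectifiability of blowup} to get stationarity and rectifiability of $D$), and then identify $D=C$ through the monotonicity formula and the equality of masses in $\mB^L_2(0)$. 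Only after this does the compactness Theorem~\ref{Thm: compactness for FBMHs} give smoothness of $\spt\|C\|$ in $\mAn_{\sigma,1}(0)$. If you replace your first paragraph with this finite-scale construction, the rest of your argument (cone from constant density ratio; Bernstein/Simons-type rigidity using stability, $n\le 5$, $\theta(p)\le\pi/2$, and the reflection trick across flat faces; identification of the flat alternatives) is in line with the paper's. There is also a minor point you wave at but should not lean on: $T_pM$ is noncompact, so one cannot literally feed $C$ to Proposition~\ref{Prop: good replacement property} as stated; working at finite scale in $M$ also dispenses with this.

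On the other hand, your argument for $\|V\|(\bd^EM)=0$ is correct and actually cleaner than the paper's: the monotonicity formula gives a uniform bound $\Theta^{*n}(\|V\|,q)\le C_{mono}\|V\|(M)/(\omega_n r_{mono}^n)$ for $q\in\bd^EM$, Lemma~\ref{Lem: classify face and edge} gives $\mH^n(\bd^EM)=0$ (indeed $\bd^EM$ is $(n-1)$-dimensional), and the standard upper-density comparison \cite[\S 3]{simon1983lectures} yields $\|V\|(\bd^EM)=0$ without using the classification (i)(ii). The paper instead derives $\|V\|(\bd^EM)=0$ \emph{from} (i)(ii) via a blow-up contradiction; your route is independent of the classification and equally valid.
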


\begin{proof}
    Let $r_i\to 0$ and let $C=\lim_{i\to\infty}(\bleta_{p,r_i})_\# V$ be a stationary rectifiable cone in $T_pM$ with free boundary (Lemma \ref{Lem: rectifiability of tangent cones}). 
    Fix any $\sigma\in (0,1/4)$. 
    Then for each $i$ large enough, we take $V_i^*$ as the replacement of $V$ in $K_i=\Clos(\mAn^L_{\sigma r_i, r_i}(p))\cap M$ given by Proposition \ref{Prop: good replacement property}. 
    Note 
    \begin{equation}\label{Eq: non-empty replacement}
        V_i^*\llcorner (\mAn^L_{\sigma r_i, r_i}(p)\cap M) \neq 0.
    \end{equation}
    %$V_i^*\llcorner (\mAn^L_{\sigma r_i, r_i}(p)\cap M) \neq 0$. 
    Otherwise, there is a smallest $r>0$ so that $V_i^*\llcorner (\mB^L_{r_i}\cap M) = V_i^*\llcorner\Clos(\wB_r(p))$ and $\spt(\|V_i^*\|)\cap \bd_{rel}\wB_r(p)\neq \emptyset$, which contradicts the maximum principle (Theorem \ref{Thm: maximum principle for varifolds}). 
    Next, combining Proposition \ref{Prop: good replacement property}(ii)(iii) with Lemma \ref{Lem: amv imply stationary}, we conclude that $V_i^*$ is also almost minimizing in annuli and stationary in $M$ with free boundary so that $\sup_{i\in\N}\|V_i^*\| <\infty$, which further implies $(\bleta_{p,r_i})_\#V_i^*$ converges to some rectifiable stationary varifold $D\in\V_n(T_pM)$ with free boundary by Remark \ref{Rem: rectifiability of blowup}. 
    In addition, 
    \[ \|D\|(\mB^L_2(0)) = \|C\|(\mB^L_2(0)) \quad{\rm and}\quad \spt(\|D-C\|)\subset \Clos(\mAn^L_{\sigma,1}(0)) \] %D\llcorner (T_pM\setminus\Clos(\mAn^L_{\sigma,1}(0))) = C\llcorner (T_pM\setminus\Clos(\mAn^L_{\sigma,1}(0)))\]
    by Proposition \ref{Prop: good replacement property}(i)(ii). 
    Together with the monotonicity formula (Theorem \ref{Thm: monotonicity fomula}) and $\|C\|(\mB^L_r(0))/(\omega_n r^n) \equiv \Theta^n(\|V\|,p)$ for all $r>0$ (Lemma \ref{Lem: rectifiability of tangent cones}), one can see that $D$ is also a rectifiable cone in $T_pM$, and thus $C=D$. 

    Next, for each $i$ large enough, denote by $\Sigma_\infty :=\spt(\|D\|)\cap\mAn^L_{\sigma,1}(0)$, $\Sigma_i := \spt((\bleta_{p,r_i})_\#V_i^*)\cap \mAn^L_{\sigma,1}(0)$, and $M_i:=\bleta_{p,r_i}(M)$. 
    Then by the last statement in Proposition \ref{Prop: good replacement property}, $(\Sigma_i, \{\bd_m\Sigma_i\}) \subset (\mAn_{\sigma,1}(0)\cap M_i, \{\mAn_{\sigma,1}(0) \cap \bd_{m+1}M_i\})$ is a $C^{2,\alpha}$-to-edge almost properly embedded locally wedge-shaped FBMH. 
    Additionally, 
    \[\mH^n(\Sigma_i) \leq \frac{\|V_i^*\|(\mB^L_{r_i}(p))}{r_i^n} = \frac{\|V\|(\mB^L_{r_i}(p))}{r_i^n} \leq C_{mono}\frac{\|V\|(M)}{r_{mono}^n} < \infty,\]
    by Proposition \ref{Prop: good replacement property}(i)(ii) and the monotonicity formula Theorem \ref{Thm: monotonicity fomula}. 
    It now follows from the compactness theorem \ref{Thm: compactness for FBMHs} that $(\Sigma_\infty, \{\bd_m\Sigma_\infty\}) \subset (\mAn_{\sigma,1}(0)\cap T_pM, \{\mAn_{\sigma,1}(0) \cap \bd_{m+1}T_pM\})$ is a $C^{2,\alpha}$-to-edge almost properly embedded locally wedge-shaped stable FBMH. 
    
    Therefore, noting $\sigma\in (0,1/4)$ is arbitrary, $C=D$ is an almost properly embedded locally wedge-shaped stable minimal cone (with integer multiplicity) in a non-obtuse Euclidean wedge domain $T_pM$ which is $C^{2,\alpha}$ except possibly at the vertex. 
    We claim $C$ is flat by a Bernstein-type theorem, and thus $C$ must be type (i) or (ii). 
    
    Indeed, if $\Sigma:=\spt(\|C\|)$ is not properly embedded, then the maximum principle for FBMHs indicates $\Sigma$ is flat (cf. \cite[Proposition 5.1, 5.2]{mazurowski2023curvature}). 
    Suppose next $\Sigma$ is properly embedded. 
    Then the unit normal vector field of $\Sigma\setminus\{0\}$ is in $\mfX_{tan}(T_pM\setminus\{0\})$. 
    Additionally, the boundary integral in the second variation formula (\ref{Eq: 2nd variation formula}) (used in \cite[(3.5)]{schoen1975curvature}) vanishes since both faces $\bd^{\pm}T_pM$ are flat. 
    Moreover, $\langle \grad \vert A_\Sigma\vert,\eta\rangle(p) =0$ for all $p\in\bd^F\Sigma\setminus\{0\}$ by the local reflection technique in the Bernstein-type theorem \cite[Theorem 5.3]{mazurowski2023curvature}, where $\eta$ is the co-normal of $\Sigma$ along $\bd^F\Sigma\setminus\{0\}$ and $A_\Sigma$ is the second fundamental form of $\Sigma$. 
    This cancels out the boundary term in the integral of Simon's inequality by parts (\cite[(3.4)]{schoen1975curvature}). 
    Hence, for any $C^1$ function $f$ compactly supported in $\Sigma\setminus (\bd^E\Sigma\cup\{0\})$, we can follow the proof in \cite[\S 3]{schoen1975curvature} with the above observations, and obtain the estimate \cite[(3.6))]{schoen1975curvature} for such $\Sigma$ and $f$. 
    Noting $\dim(\bd^E\Sigma) = n-2$, a standard logarithmic cut-off trick implies \cite[(3.6))]{schoen1975curvature} is also valid for all $C^1$ function $f$ compactly supported in $\Sigma\setminus\{0\}$. 
    Thus, the proof in \cite[\S 3]{schoen1975curvature} would carry over to show $\Sigma$ is flat. 

    Finally, we assume by contradiction that $\|V\|(\bd^EM)> 0$. 
    Then there exists $p\in\spt(\|V\|)\cap\bd^E M$ with $\Theta^{*n}(\|V\|\llcorner\bd^EM, p)>0$. 
    However, we can also take $r_i\to 0$ so that $C=\lim_{i\to\infty}(\bleta_{p,r_i})_\# V \in \VarTan(V,p)$ and
    \[ \Theta^{*n}(\|V\|\llcorner\bd^E M, p) = \lim_{r_i\to 0}\frac{\|V\|(\mB^L_{r_i}(p)\cap\bd^E M)}{\omega_n r_i^n} =\|C\|(\mB^L_1(0)\cap T_p(\bd^E M)) =0\]
    by (i)(ii), which is a contradiction. 
    And thus, $V$ is rectifiable by Theorem \ref{Thm: min-max regularity away from edge}. 
\end{proof}

\subsection{Regularity of almost minimizing varifolds}\label{Subsec: main regularity}

Now, we can state and prove our main regularity theorem for min-max varifolds near edge points of $M$. 
Note the dimension of $M$ is at most $6$ in Theorem \ref{Thm: main regularity} instead of at most $7$ as in Theorem \ref{Thm: min-max regularity away from edge}. 
This is because the Bernstein-type Theorem \cite[Theorem 1.2]{mazurowski2023curvature} and the curvature estimates \cite[Theorem 1.3]{mazurowski2023curvature} the authors found in wedge domains only hold in the $3\leq n+1\leq 6$ scenario. 

\begin{theorem}\label{Thm: main regularity}
    Let $3\leq n+1\leq 6$, and $M^{n+1}\subset \R^L$ be a locally wedge-shaped Riemannian manifold satisfying (\ref{dag}) or (\ref{ddag}). 
    Suppose $V\in\V_n(M)$ is a varifold stationary in $M$ with free boundary and is almost minimizing in annuli with free boundary. 
    Then $\spt(\|V\|)$ is a $C^{2,\alpha}$-to-edge almost properly embedded locally wedge-shaped FBMH $(\Sigma, \{\bd_m\Sigma\}) \subset (M, \{\bd_{m+1} M\})$. 
    Moreover, there exists $N\in\N$ and $\{n_i\}_{i=1}^N\subset \N$ so that 
    \[V = \sum_{i=1}^N n_i|\Sigma_i|,\]
    where $\{\Sigma_i\}_{i=1}^N$ are the connected components of $\Sigma$.  
\end{theorem}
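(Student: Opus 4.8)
The plan is to reduce everything to regularity near an edge point and then run the replacement-and-gluing scheme of Li--Zhou \cite[Section 5]{li2021min}, substituting the ingredients developed above for the locally wedge-shaped setting. By Theorem \ref{Thm: min-max regularity away from edge} the support of $V$ is already a smooth almost properly embedded FBMH on $M\setminus\bd^E M$, so fix $p\in\spt(\|V\|)\cap\bd^E M$ and a small radius $r_0>0$ (less than $r_{am}(p)$, $r_{mono}$, and the local extension radius at $p$), and work inside the Fermi-type spherical wedge $\wB_{r_0}(p)$. The goal is to show that $\spt(\|V\|)\cap\wB_{r_0}(p)$ is a $C^{2,\alpha}$-to-edge almost properly embedded locally wedge-shaped FBMH on which $V$ has constant integer multiplicity along each connected component.

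\textbf{Iterated replacements and gluing.} First I would produce, via Proposition \ref{Prop: good replacement property}, a replacement $V_1$ of $V$ in $\Clos(\wA_{\rho_1,r_0}(p))\cap M$, so that $V_1$ is a stable $C^{2,\alpha}$-to-edge FBMH in $\wA_{\rho_1,r_0}(p)$, agrees with $V$ outside, and is again stationary and almost minimizing in annuli with free boundary. Iterating, given $V_k$ regular in $\wA_{\rho_k,r_0}(p)$, replace $V_k$ in $\Clos(\wA_{\rho_{k+1},\rho_k'}(p))$ with $\rho_{k+1}<\rho_k<\rho_k'<r_0$ to obtain $V_{k+1}$, which is regular in $\wA_{\rho_{k+1},\rho_k'}(p)$ and coincides with $V_k$, hence is regular, on $\wA_{\rho_k',r_0}(p)$. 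To glue across the overlapping Fermi-type sphere $\wS_{\rho_k'}(p)$, I would invoke the classification of tangent cones (Proposition \ref{Prop: classify tangent cones}) to see that at each $q\in\spt(\|V_{k+1}\|)\cap\wS_{\rho_k'}(p)$ the tangent cone of $V_{k+1}$ is, on each connected sheet, a multiplicity-one plane, half-plane, or wedge-plane; the Allard-type theorem of Edelen--Li \cite{edelen2022regularity} together with unique continuation for minimal hypersurfaces then forces $V_{k+1}$ to be a smooth $C^{2,\alpha}$-to-edge FBMH across $\wS_{\rho_k'}(p)$, hence regular in all of $\wA_{\rho_{k+1},r_0}(p)$. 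A point that must be verified here, handled by the maximum principle (Theorem \ref{Thm: maximum principle for varifolds}) exactly as in \eqref{Eq: non-empty replacement}, is that $\spt(\|V_{k+1}\|)$ actually meets the inner annulus: otherwise $\spt(\|V_{k+1}\|)$ would be confined in some $\Clos(\wB_r(p))$ with support touching $\bd_{rel}\wB_r(p)$, contradicting Theorem \ref{Thm: maximum principle for varifolds}. This is where the Fermi-type distance function of Appendix \ref{Sec: Fermi distance} is essential, since no honest Fermi distance exists near an edge point.

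\textbf{Replacement in a punctured neighborhood and removal of the singularity.} Since $\|V_k\|(M)=\|V\|(M)$ for every $k$ and the monotonicity formula (Theorem \ref{Thm: monotonicity fomula}) gives locally uniform area bounds, the compactness theorem \ref{Thm: compactness for FBMHs} and a diagonal argument over $\rho_k\to 0$ yield, along a subsequence, a varifold $\tilde V$ which is a replacement of $V$ in $\Clos(\wB_{r_0}(p))\cap M$, equals $V$ outside $\wB_{r_0}(p)$, is stationary and almost minimizing in annuli with free boundary, and whose support is a $C^{2,\alpha}$-to-edge almost properly embedded locally wedge-shaped FBMH in the punctured wedge $\wB_{r_0}(p)\setminus\{p\}$. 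By the monotonicity formula $\Theta^n(\|\tilde V\|,p)$ is finite, and by Proposition \ref{Prop: classify tangent cones} each tangent cone of $\tilde V$ at $p$ is a multiplicity-$m$ plane or half-plane; on each connected component of $\spt(\|\tilde V\|)\setminus\{p\}$ near $p$ this forces multiplicity one and a planar tangent cone, so the Allard-type/removable-singularity theorem of Edelen--Li \cite{edelen2022regularity} shows $\spt(\|\tilde V\|)$ extends across $p$ as a $C^{1,\alpha}$-to-edge FBMH, which Theorem \ref{Thm: regularity of FBMH} upgrades to $C^{2,\alpha}$-to-edge. Thus $\spt(\|\tilde V\|)\cap\wB_{r_0}(p)$ is a $C^{2,\alpha}$-to-edge almost properly embedded locally wedge-shaped FBMH carrying the integer-multiplicity varifold $\tilde V$.

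\textbf{Unique continuation and conclusion.} It remains to identify $V$ with $\tilde V$ near $p$. Since $\|V\|(\bd^E M)=0$ (Proposition \ref{Prop: classify tangent cones}) and $\spt(\|V\|)\cap\wS_{r_0}(p)\neq\emptyset$ by the maximum principle as above, there is $q\in\spt(\|V\|)\cap\wS_{r_0}(p)$ with $q\notin\bd^E M$; near $q$ both $\spt(\|V\|)$ and $\spt(\|\tilde V\|)$ are smooth minimal hypersurfaces agreeing on the exterior side $M\setminus\wB_{r_0}(p)$, hence they are tangent to infinite order at $q$, so by unique continuation they agree on a neighborhood. Propagating this equality along the connected sheets of $\spt(\|\tilde V\|)$ and using $\|V\|(\bd^E M)=\|\tilde V\|(\bd^E M)=0$ to cross the edge, one gets $\spt(\|V\|)\cap\wB_{r_0}(p)=\spt(\|\tilde V\|)\cap\wB_{r_0}(p)$ with equal multiplicities, so $V=\tilde V$ near $p$ and in particular $V$ is a $C^{2,\alpha}$-to-edge FBMH near $p$. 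Combining with Theorem \ref{Thm: min-max regularity away from edge}, $\spt(\|V\|)=\Sigma$ is a $C^{2,\alpha}$-to-edge almost properly embedded locally wedge-shaped FBMH, and since its regular part is a smooth minimal hypersurface with locally constant integer multiplicity (constancy theorem on each component $\Sigma_i$) and $\|V\|(\bd^E M)=0$, we obtain $V=\sum_{i=1}^N n_i|\Sigma_i|$ with $n_i\in\N$. I expect the main obstacle to be the gluing step at edge points: coordinating the tangent-cone classification, the Edelen--Li Allard-type theorem, the Fermi-type maximum principle, and unique continuation so that two $C^{2,\alpha}$-to-edge minimal pieces match up across a Fermi-type sphere, all in the absence of a genuine Fermi distance function.
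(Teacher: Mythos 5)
Your proposal follows the same broad strategy as the paper — successive replacements on overlapping Fermi-type annuli, gluing, extension to a punctured neighborhood, removable singularity, and finally unique continuation — but two of the key technical steps are sketched in a way that papers over genuine difficulties.

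First, the gluing step. You propose to glue the two replacements across $\wS_{\rho_k'}(p)$ by classifying tangent cones of $V_{k+1}$ at points $q$ on the sphere and then invoking the Edelen--Li Allard-type theorem plus unique continuation. This is not how the paper does it, and I think it would not work as stated. Proposition \ref{Prop: classify tangent cones} only classifies tangent cones at edge points $q\in\bd^E M$, not at arbitrary points of the sphere, and the Allard-type theorem needs density close to $1$, whereas at a generic point of the sphere the density of $V_{k+1}$ can be any positive integer (or half-integer on the face). The paper instead invokes \emph{Sard's theorem} to choose the outer radius $s_2$ so that $\wS_{s_2}(p)$ is transversal to $\Sigma_1$ (so in particular $\Gamma=\Sigma_1\cap\wS_{s_2}(p)$ is not contained in $\bd^E M$), picks a point $q\in\Gamma$ away from the edge, and applies the classical gluing argument of Schoen--Simon (and Li--Zhou for the face) at $q$, where everything is smooth. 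Only afterward does it use unique continuation plus the fact $\Clos(\Sigma_i)=\Clos(\Sigma_i\setminus\bd^E M)$ to propagate the identification across the entire overlap. Without the transversality/Sard step you have no good point at which to start the identification.

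Second, removing the singularity at $p$. You note that each tangent cone of $\widetilde V$ at $p$ is a multiplicity-$m$ plane or half-plane (by Proposition \ref{Prop: classify tangent cones}) and then claim that each "connected sheet'' is forced to be multiplicity one and planar. The paper needs a substantially stronger statement here: it proves a \emph{uniqueness of cone type} (Claim \ref{Claim: strengthened classify tangent cones}), showing that either \emph{all} tangent cones at $p$ are the half-plane $P_{\pi/4}(p)$, or all are $P_{-\pi/4}(p)$, or all are horizontal wedge-planes. Ruling out oscillation between these three cases is where most of the work in Step 4 is concentrated (the three-case contradiction argument built on rescaled convergence and the compactness theorem). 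Only once the cone type is pinned down can one write $\widetilde V$ as a finite union of graphical sheets $\widetilde\Sigma_i$ over $S\cap T_pM$, run the density-counting argument $\sum k_i m_i=\sum k_i=k$ with $m_i\geq 1$ to conclude $m_i=1$, apply the Edelen--Li Allard theorem to each sheet, and finally use the FBMH maximum principle in wedge domains to conclude $l=1$. Your sketch skips the uniqueness-of-cone-type argument, the $m_i=1$ counting, and the $l=1$ step entirely; these are precisely the parts of the regularity proof that are new relative to Li--Zhou and cannot be waved away.

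The remaining parts — the use of the Fermi-type maximum principle to guarantee non-empty intersections, the diagonal/compactness argument to pass to a replacement on the punctured ball, the final unique-continuation step to identify $V$ with $\widetilde V$ — are in line with the paper's argument.
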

\begin{proof}
    By Theorem \ref{Thm: min-max regularity away from edge}, we only consider the regularity of $V$ near $p\in\spt(\|V\|)\cap\bd^EM$. 
    For any such $p$, take $r>0$ sufficiently small so that $r<\frac{1}{4}\min\{r_{am}(p), r_{mono}, r_0\}$, where $r_{am}(p), r_{mono},r_0$ are given by Definition \ref{Def: amv in annuli}, Theorem \ref{Thm: monotonicity fomula} and Lemma \ref{Lem: Fermi-type distance function} respectively. 
    Using the maximum principle (Theorem \ref{Thm: maximum principle for varifolds}) as in (\ref{Eq: non-empty replacement}), we have
    \begin{equation}
        \emptyset\neq \spt(\|W\|)\cap \wS_t(p) = \Clos\big(\spt(\|W\|)\setminus\Clos(\wB_t(p))\big) \cap \wS_t(p)
    \end{equation}
    for any $t\in (0,r)$ and $W\in\V_n(M)$ that is stationary in $\wB_t(p)$ with free boundary and $W\llcorner\wB_t(p)\neq 0$. 

    {\bf Step1.} {\em Successive replacements on overlapping annuli.}

    Fix any $0<s<t<r$. 
    Then by Proposition \ref{Prop: good replacement property}, we can take $V^*$ as the replacement of $V$ in $\Clos(\wA_{s,t}(p))$ so that 
    \[\Sigma_1:=\spt(\|V^*\|)\cap\wA_{s,t}(p)\] 
    is a $C^{2,\alpha}$-to-edge almost properly embedded locally wedge-shaped FBMH $(\Sigma_1,\{\bd_m\Sigma_1\})\subset (\wA_{s,t}(p), \{\wA_{s,t}(p)\cap\bd_{m+1}M\})$ which is stable in $\wA_{s,t}(p)$. 
    By Sard's Theorem, there exists $s_2\in(s,t)$ so that $\wS_{s_2}(p)$ is transversal to $\Sigma_1$ (even at $\bd^F\Sigma_1$ and $\bd^E\Sigma_1$). 
    Then for any $s_1\in (0,s)$, we apply Proposition \ref{Prop: good replacement property} again to get a replacement $V^{**}$ of $V^*$ in $\Clos(\wA_{s_1,s_2}(p))$, whose support 
    \[\Sigma_2:=\spt(\|V^{**}\|)\cap\wA_{s_1,s_2}(p)\]
    is a $C^{2,\alpha}$-to-edge almost properly embedded locally wedge-shaped stable FBMH in $\wA_{s_1,s_2}(p)$. 

    {\bf Step 2.} {\em Gluing $\Sigma_1$ and $\Sigma_2$ across $\wS_{s_2}(p)$.}

    Denote by $\Gamma:=\Sigma_1\cap \wS_{s_{2}}(p)$. 
    Then by the transversal assumption, we see $(\Gamma, \{\bd_m\Gamma\}\subset (\wS_{s_{2}}(p), \{\bd_{m+1}\wS_{s_{2}}(p)\})$ is a $C^{2,\alpha}$-to-edge almost properly embedded locally wedge-shaped $(n-1)$-submanifold. 
    Note in addition that the transversality also implies $\Gamma$ is not contained entirely $\bd^E M $. 
    Take any $q\in \Gamma\setminus\bd^E M$. 
    Then by the smooth gluing arguments in \cite[Theorem 4]{schoen1981regularity} (for $p\in \interior(M)$) and \cite[Theorem 5.2, Step 2]{li2021min} (for $p\in\bd^F M$), $\Sigma_1$ coincides with $\Sigma_2$ in a small neighborhood of $q$. 
    Combining the unique continuation of FBMHs and the fact that $\Clos(\Sigma_i) = \Clos(\Sigma_i\setminus\bd^E M)$ for $i\in \{1,2\}$, we can now conclude 
    \begin{equation}\label{Eq: gluing support}
        \Sigma_1\llcorner\wA_{s,s_2}(p) = \Sigma_2\llcorner\wA_{s,s_2}(p).
    \end{equation}
    Hence, $\Sigma_1$ and $\Sigma_2$ can be glued together as a $C^{2,\alpha}$-to-edge almost properly embedded locally wedge-shaped FBMH in $\wA_{s_1,t}(p)$. 

    Moreover, for any $q\in \Gamma$, it follows from Proposition \ref{Prop: good replacement property}(i) and the classification of tangent cones (see Proposition \ref{Prop: classify tangent cones}, \cite[Proposition 5.10]{li2021min} and \cite[Theorem 7.8]{pitts2014existence} for $q\in\bd^EM,\bd^FM,\interior(M)$ respectively) that 
    \begin{equation}\label{Eq: gluing multiplicity}
        \Theta^n(\|V^{**}\|,q) = \Theta^n(\|V^*\|,q) = 
        \left\{ 
	    \begin{array}{ll}
	    	\frac{\theta(q)}{2\pi} k  & {\rm if~} q\in\bd^E\Sigma_1,
	    	\\
	    	\frac{1}{2}k & {\rm if~}q\in\bd^F\Sigma_1,
            \\
            k & {\rm if~}q\in\interior(\Sigma_1),
	    \end{array}
	    \right.
    \end{equation}
    where $k\in\N$ is the multiplicity of $V^*$ near $q$. 
    Combining (\ref{Eq: gluing support})(\ref{Eq: gluing multiplicity}) with the regularity of replacements (Proposition \ref{Prop: good replacement property}), we conclude 
    $V^*\llcorner \wA_{s,t}(p) = V^{**}\llcorner\wA_{s,t}(p)$.

    {\bf Step 3.} {\em Extending the replacements to a FBMH in a punctured ball center at $p$.}

    Denote by $V^{**}_{s_1}$ the second replacement in {\bf Step 1} with respect to the inner radius $s_1\in (0,s)$, and by $\Sigma_{s_1}:=\spt(\|V^{**}_{s_1}\|)\cap \wA_{s_1,s_2}(p)$. 
    Then for any $0<s_1'<s_1<s$, noting $\Sigma_{s_1} = \Sigma_1$ in $\wA_{s,s_2}(p)$, we can use the unique continuation arguments as in (\ref{Eq: gluing support}) to see $\Sigma_{s_1'}=\Sigma_{s_1}$ in $\wA_{s_1,s_2}(p)$. 
    Therefore, 
    \[\widetilde{\Sigma} := \cup_{0<s_1<s}\Sigma_{s_1}\]
    is a $C^{2,\alpha}$-to-edge almost properly embedded locally wedge-shaped FBMH $(\widetilde{\Sigma},\{\bd_m\widetilde{\Sigma}\})\subset (\wB_{s_2}(p)\setminus\{p\},\{\bd_{m+1}M\cap \wB_{s_2}(p) \setminus\{p\}\})$ which is also stable in $\wB_{s_2}(p)\setminus\{p\}$. 

    By Proposition \ref{Prop: good replacement property}(ii), $V^{**}_{s_1}$ has uniformly bounded mass for $s_1\in (0,s)$, which implies 
    \[\widetilde{V} = \lim_{s_1\to 0} V^{**}_{s_1}\]
    for some stationary varifold $\widetilde{V}\in\V_n(M)$ with free boundary. 
    In addition, by the construction of $\widetilde{\Sigma}$, we see $\spt(\|V^{**}_{s_1}\|) = \widetilde{\Sigma}$ in $\wA_{s_1,s_2}(p)$ for any $s_1\in (0,s)$. 
    Hence, %together with the regularity of replacements (the last statement in Proposition \ref{Prop: good replacement property}), 
    using the compactness theorem \ref{Thm: compactness for FBMHs} and $\|V_{s_1}^{**}\|(\wB_{s_1}(p)) = \|V\|(\wB_{s_1}(p))\to 0$ (as $s_1\to 0$), 
    we have $\widetilde{V}\llcorner(\wB_{s_2}(p)\setminus\{p\})$ is induced by $\widetilde{\Sigma}$ with integer multiplicity. 
    %Note the integer multiplicity of $\widetilde{V}$ at any fixed point $q\in\widetilde{\Sigma}$ coincides with the integer multiplicity of $V^{**}_{s_1}$ at $q$ for $s_1>0$ small enough. 

    {\bf Step 4.} {\em Remove the singularity of $\widetilde{V}$ at $p$.}

    Note $\widetilde{V}$ is stationary in $M$ with free boundary, and $\widetilde{V}\llcorner(\wB_{s_2}(p)\setminus\{p\})$ is a $C^{2,\alpha}$-to-edge almost properly embedded locally wedge-shaped stable FBMH $\widetilde{\Sigma}$ with integer multiplicity. 
    Hence, one can use the blowup arguments as in Lemma \ref{Lem: rectifiability of tangent cones} and Proposition \ref{Prop: classify tangent cones} (without taking replacements) to show $C\in \VarTan(\widetilde{V},p)$ is in the form of (i) or (ii) in Proposition \ref{Prop: classify tangent cones}. 
    Moreover, we have the following strengthened classification for $\VarTan(\widetilde{V},p)$.

    \begin{claim}\label{Claim: strengthened classify tangent cones}
        Either 
        \begin{itemize}
            \item[(1)] $\VarTan(\widetilde{V},p) = \{ 2\Theta^n(\|\widetilde{V}\|, p) |P_{\pi/4}(p)| \} $ with $\theta(p) = \pi/2$; or
            \item[(2)] $\VarTan(\widetilde{V},p) = \{ 2\Theta^n(\|\widetilde{V}\|, p) |P_{-\pi/4}(p)| \} $ with $\theta(p) = \pi/2$; or
            \item[(3)] $\VarTan(\widetilde{V},p) = \{ \frac{2\pi}{\theta(p)} \Theta^n(\|\widetilde{V}\|, p) |S\cap T_pM| : S\in \mathcal{P}_{T_pM} \} $.
        \end{itemize}
    \end{claim}
    \begin{proof}[Proof of Claim \ref{Claim: strengthened classify tangent cones}]
        We prove by contradictions and separate the arguments into three cases. 
        
        {\bf Case 1:} {\em There exist $C_1,C_2\in \VarTan(\widetilde{V},p)$ in the form of (1)(3) respectively.}
        
        Let $r_i\to 0$ and $r_i'\to 0$ so that 
        \begin{itemize}
            \item $C_1=\lim_{i\to\infty}(\bleta_{p,r_i})_\# \widetilde{V} = 2\Theta^n(\|\widetilde{V}\|, p) |P_{\pi/4}(p)| = 2\Theta^n(\|\widetilde{V}\|, p) |\bd^+T_pM|$;
            \item $C_2=\lim_{i\to\infty}(\bleta_{p,r_i'})_\# \widetilde{V} = \frac{2\pi}{\theta(p)} \Theta^n(\|\widetilde{V}\|, p) |S\cap T_pM|$,
        \end{itemize}
        for some $S\in \mathcal{P}_{T_pM} $. 
        Up to a subsequence, we can assume 
        \[2r_{i+1}'<r_i< \frac{1}{2}r_i',\qquad\forall i\in\N.\]
        For $i$ large, by the stability of $\widetilde{\Sigma}$ in $\wB_{s_2}(p)\setminus\{p\}$ and the compactness theorem \ref{Thm: compactness for FBMHs}, 
        \begin{itemize}
            \item in $\mAn^L_{r_i,2r_i}(p)\cap M$, $\bd^F\widetilde{\Sigma} \subset \bd^-M_{p,r}$ and $\widetilde{\Sigma}$ is orthogonal to $\bd^-M_{p,r}$ along $\bd^F\widetilde{\Sigma}$;
            \item in $\mAn^L_{r_i',2r_i'}(p)\cap M$, $\bd^F\widetilde{\Sigma} \cap (\bd^{\pm}M_{p,r}\setminus\bd^E M) \neq\emptyset$ and $\widetilde{\Sigma}$ is orthogonal to $\bd^{\pm}M_{p,r}$ along $\bd^F\widetilde{\Sigma} \cap (\bd^{\pm}M_{p,r}\setminus\bd^E M)$.
        \end{itemize} 
        Next, we take
        \[R_i:= \sup\{R>0: \bd^F\widetilde{\Sigma} \subset \bd^-M_{p,r}\mbox{ in } \mAn^L_{r_i,R}(p)\cap M\},\]
        which clearly satisfies $2r_i\leq R_i\leq 2r_i'$. 
        Additionally, we also have $R_i/r_i \to \infty$. Otherwise, $\lim_{i\to\infty}\bleta_{p,r_i}(\widetilde{\Sigma})$ will be a stable FBMH $\spt(\|C_1\|)$ meeting $\bd^+T_pM$ orthogonally at some $q\in \bd^F\spt(\|C_1\|)$, which contradicts the choices of $\{r_i\}_{i\in\N}$ and $C_1$. 

        Now, consider $C:=\lim_{i\to\infty}(\bleta_{p,R_i})_\#\widetilde{V}\in\VarTan(\widetilde{V},p)$ (up to a subsequence). 
        Then by Theorem \ref{Thm: compactness for FBMHs} and the choice of $R_i$, 
        \begin{itemize}
            \item in $\mAn^L_{1/2,1}(p)\cap T_pM$, $\spt(\|C\|)$ is orthogonal to $\bd^-T_pM$ along $\bd^F\spt(\|C\|)\subset \bd^-T_pM$;
            \item in $\mAn^L_{1,2}(p)\cap T_pM$, there exists $q\in \bd^F\spt(\|C\|)\cap \bd^+T_pM$ so that $\spt(\|C\|)$ is orthogonal to $\bd^+T_pM$ at $q$,
        \end{itemize}
        which contradicts all the possible choices of $C\in \VarTan(\widetilde{V},p)$ in Proposition \ref{Prop: classify tangent cones}(i)(ii).
        
        {\bf Case 2:}{\em There exist $C_1,C_2\in \VarTan(\widetilde{V},p)$ in the form of (2)(3) respectively.}

        Similar to {\bf Case 1} with $+,-$ replaced by $-,+$.

        {\bf Case 3:}{\em There exist $C_1,C_2\in \VarTan(\widetilde{V},p)$ in the form of (1)(2) respectively.}

        The proof is also similar to {\bf Case 1}. First, let $r_i\to 0$ and $r_i'\to 0$ so that 
        \begin{itemize}
            \item $C_1=\lim_{i\to\infty}(\bleta_{p,r_i})_\# \widetilde{V} = 2\Theta^n(\|\widetilde{V}\|, p) |P_{\pi/4}(p)| = 2\Theta^n(\|\widetilde{V}\|, p) |\bd^+T_pM|$;
            \item $C_2=\lim_{i\to\infty}(\bleta_{p,r_i'})_\# \widetilde{V} = 2\Theta^n(\|\widetilde{V}\|, p) |P_{-\pi/4}(p)| = 2\Theta^n(\|\widetilde{V}\|, p) |\bd^-T_pM|$.
        \end{itemize}
        After passing to a subsequence with $2r_{i+1}'<r_i< \frac{1}{2}r_i'$, we similarly have
        \begin{itemize}
            \item in $\mAn^L_{r_i,2r_i}(p)\cap M$, $\bd^F\widetilde{\Sigma} \subset \bd^-M_{p,r}$ and $\widetilde{\Sigma}$ is orthogonal to $\bd^-M_{p,r}$ along $\bd^F\widetilde{\Sigma}$;
            \item in $\mAn^L_{r_i',2r_i'}(p)\cap M$, $\bd^F\widetilde{\Sigma} \subset \bd^+M_{p,r}$ and $\widetilde{\Sigma}$ is orthogonal to $\bd^+M_{p,r}$ along $\bd^F\widetilde{\Sigma}$.
        \end{itemize} 
        Next, we take
        \[R_i:= \sup\{R>0: \mbox{in  $\mAn^L_{r_i,R}(p)\cap M$, $\widetilde{\Sigma}$ is orthogonal to $\bd^-M_{p,r}$ along $\bd^F\widetilde{\Sigma}\subset \bd^-M_{p,r}$}\}.\]
        As before, we have $2r_i\leq R_i\leq 2r_i'$ and $R_i/r_i \to \infty$.
        Finally, consider $C:=\lim_{i\to\infty}(\bleta_{p,R_i})_\#\widetilde{V}\in\VarTan(\widetilde{V},p)$. 
        A contradiction then follows from the same argument as in {\bf Case 1}. 
    \end{proof}
    
    If we are in the case (1) or (2) of Claim \ref{Claim: strengthened classify tangent cones}, then the $C^\infty$-regularity of $\widetilde{V}$ follows from the arguments in \cite[Theorem 5.2, Step4]{li2021min}. 

    Next, suppose the case (3) of Claim \ref{Claim: strengthened classify tangent cones} occurs. 
    Then by Proposition \ref{Prop: classify tangent cones}, we see 
    \[\frac{2\pi}{\theta(p)}\Theta^n(\|\widetilde{V}\|,p) = k \in \N.\] 
    For any $r_i\to 0$, it follows from the compactness theorem \ref{Thm: compactness for FBMHs} that 
    \begin{equation}\label{Eq: min-max regularity: blowup convergence}
        \bleta_{p,r_i}(\widetilde{\Sigma}) \to k |S\cap T_pM|
    \end{equation}
    locally uniformly and $C^{2,\alpha'}$-to-edge in $\R^L\setminus\{0\}$, where $S\in \mathcal{P}_{T_pM}$ is a horizontal $n$-plane. 
    Hence, $\widetilde{\Sigma}$ is properly embedded in a punctured neighborhood of $p$. 
    Also, by the locally uniformly $C^{2,\alpha'}$-to-edge convergence (\ref{Eq: min-max regularity: blowup convergence}), we can take any $\sigma>0$ small enough so that 
    \begin{equation}\label{Eq: min-max regularity: graphs representation}
        \widetilde{V}\llcorner\mAn^L_{\sigma/2,\sigma}(p) = \sum_{i=1}^{l(\sigma)} k_i(\sigma)|\widetilde{\Sigma}_i(\sigma)|,
    \end{equation}
    where $\widetilde{\Sigma}_i(\sigma)$ is a connected graph over $S\cap \mAn^L_{\sigma/2,\sigma}(p)$, and $l(\sigma), \{k_i(\sigma)\}_{i=1}^{l(\sigma)}$ are positive integers satisfying 
    \begin{equation}\label{Eq: min-max regularity: multiplicity sum}
        \sum_{i=1}^{l(\sigma)} k_i(\sigma) = k. 
    \end{equation}
    By the continuity of $\widetilde{\Sigma}$, we see from (\ref{Eq: min-max regularity: graphs representation}) that $l(\sigma)$ and $\{k_i(\sigma)\}_{i=1}^{l(\sigma)}$ are constants independent on $\sigma$ (for $\sigma>0$ sufficiently small). 
    Therefore, after fixing $\sigma>0$ small enough, each $\widetilde{\Sigma}_i(\sigma)$ can be continued to 
    $\widetilde{\Sigma}_i$ in $(\mB^L_{\sigma}(p)\setminus\{p\})\cap M$ as a $C^{2,\alpha}$-to-edge properly embedded locally wedge-shaped stable FBMH. 
    It then follows from a standard extension argument (cf. the proof in \cite[Theorem 4.1]{harvey1975extending}) that each $\widetilde{\Sigma}_i$ can be extended as a stationary varifold in $\mB^L_{\sigma}(p)\cap M$ with free boundary. 
    By the compactness theorem \ref{Thm: compactness for FBMHs} and Claim \ref{Claim: strengthened classify tangent cones}(3) (as we assumed), $C_i\in\VarTan(\|\widetilde{\Sigma}_i\|,p)$ is a horizontal wedge domain in $T_pM$ with integer multiplicity $m_i\in\mZ_+$, and thus $\frac{2\pi}{\theta(p)}\Theta^n(\|C_i\|,0) = m_i \geq 1$. 
    Combining with (\ref{Eq: min-max regularity: graphs representation})(\ref{Eq: min-max regularity: multiplicity sum}), we conclude that $\Theta^n(\|\Sigma_i\|,p) = \Theta^n(\|C_i\|,0) = \frac{\theta(p)}{2\\pi}$ and $m_i=1$. 
    Now, we can apply the Allard-type regularity theorem (\cite[Theorem 1.1]{edelen2022regularity}) and Theorem \ref{Thm: regularity of FBMH} to show $\widetilde{\Sigma}_i$ extends as a $C^{2,\alpha}$-to-edge properly embedded locally wedge-shaped FBMH near $p$, and $p\in\bd^E\widetilde{\Sigma}_i$. 
    Finally, the maximum principle for FBMHs in wedge domains (\cite[Theorem B.1]{mazurowski2023curvature}) indicates $l=1$, and thus $\widetilde{V}$ is a regular FBMH in $\wB_{s_2}(p)$ with integer multiplicity. 

    {\bf Step 5.} {\em $V=\widetilde{V}$ in a neighborhood of $p$.}

    By the regularity of $\widetilde{V}$ near $p$ given by {\bf Step 4}, we can write $\widetilde{V}\llcorner \wB_{s_0}(p) = k|\widetilde{\Sigma}|$ for some $s_0\in (0,s)$ sufficiently small, where $k\in\mZ_+$ and $\widetilde{\Sigma}$ is a {\em connected} $C^{2,\alpha}$-to-edge almost properly embedded locally wedge-shaped FBMH containing $p$. 
    Then, given $s_1\in (0,s_0)$, the constructions in {\bf Step 3} suggests $V^{**}_{s_1} \llcorner \wA_{s_1,s_0}(p) = k_{s_1}|\widetilde{\Sigma}\cap \wA_{s_1,s_0}(p)| $ for some $k_{s_1}\in \N$. 
    In addition, by Theorem \ref{Thm: compactness for FBMHs} and $\widetilde{V}=\lim_{s_1\to 0}V^{**}_{s_1}$, there exists $s_0'\in (0,s_0)$ so that $k_{s_1}\equiv k$ for all $s_1\in (0,s_0')$. 
    Thus, for any $s_1\in (0,s_0')$, 
    \[V_{s_1}^{**} \llcorner \wA_{s_1,s_0}(p) = \widetilde{V} \llcorner \wA_{s_1,s_0}(p) = k |\widetilde{\Sigma}\cap \wA_{s_1,s_0}(p)|.\]
    On the other hand, it follows from Lemma \ref{Lem: classify face and edge}(ii) and the rectifiability of $V$ that $\spt(\|V\llcorner\wB_{s_0'}(p)\|)$ can not be contained entirely in $\bd^E M$. 
    Hence, by the regularity of $V$ in $M\setminus\bd^EM$ (Theorem \ref{Thm: min-max regularity away from edge}), there exists $s_1\in (0,s_0')$ so that %$\wS_{s_1}(p)\cap \spt(\|V\|) \setminus\bd^E M \neq \emptyset$, and 
    $\wS_{s_1}(p)$ meets $\spt(\|V\|) $ transversally in $\mB^L_\epsilon(q)$ for some $q\in \wS_{s_1}(p)\cap \spt(\|V\|) \setminus\bd^E M$ and $\epsilon>0$ with $\mB^L_\epsilon(q)\cap M \subset \wB_{s_0'}(p)\setminus\bd^E M$. 
    By the gluing arguments in {\bf Step 1} (\ref{Eq: gluing support})(\ref{Eq: gluing multiplicity}), we have $V=V^{**}_{s_1}$ in $\mB^L_\epsilon(q)$, and thus $V=V^{**}_{s_1}=\widetilde{V}$ in a small half ball $\mB^L_\epsilon(q)\cap \wA_{s_1,s_0}(p)$. 
    Finally, as in the proof of (\ref{Eq: gluing support}), one can use the unique continuation of FBMHs and the fact $\Clos(\widetilde{\Sigma}) = \Clos(\widetilde{\Sigma}\setminus\bd^E M)$ to see $V = \widetilde{V}$ in $\wB_{s_0}(p)$, which finishes the proof. 
\end{proof}

\appendix
	\addcontentsline{toc}{section}{Appendices}
	\renewcommand{\thesection}{\Alph{section}}
%%%%%%%%%%%%%%%%%%%%%%%%%%%%%%%%%%%%%%%%%%%%%%
%%%%%%%%%%%%%%%%%%%%%%%%%%%%%%%%%%%%%%%%%%%%%%
%%%              Appendix A                %%%
%%%%%%%%%%%%%%%%%%%%%%%%%%%%%%%%%%%%%%%%%%%%%%
%%%%%%%%%%%%%%%%%%%%%%%%%%%%%%%%%%%%%%%%%%%%%% 
\section{Fermi-type distance function}\label{Sec: Fermi distance}

Consider a compact connected $(n+1)$-dimensional locally wedge-shaped Riemannian manifold $M^{n+1}\subset \R^L$ satisfying (\ref{dag}) or (\ref{ddag}). 
In this Appendix, we are going to construct a function $\wD_p$ for any $p\in\bd^EM$ whose level sets are smooth strictly mean convex locally wedge-shaped hypersurfaces meeting $\bd M$ orthogonally (even at $\bd^E M$). 

To begin with, we first construct a nice local coordinates system near any $p\in \bd^E M$. 
Roughly speaking, we will construct a foliation $\{S_t\}$ starting at $S_0=\bd^+M_{p,r}$ so that $S_t\cap \bd^-M_{p,r}=(\dist_{\bd^-M_{p,r}}(\cdot , \bd^E M))^{-1}(t)$. Then for any $q$ near $p$, $x_1$ represents the leaf in the foliation containing $q$, $x_2$ represents the geodesic distance in $S_{x_1}$ from $q$ to $S_{x_1}\cap \bd^-M_{p,r}$, and $(x_3,\dots,x_{n+1})$ represent the coordinates of $n_1\circ n_2 (q)\in \bd^EM$ on the edge, where $n_2$ is the geodesic nearest projection in $S_{x_1}$ to $S_{x_1}\cap \bd^-M_{p,r}$, and $n_1$ is the geodesic nearest projection in $\bd^-M_{p,r}$ to $\bd^E M$. 
\begin{lemma}\label{Lem: local chart}
    For any $p\in\bd^E M$, there exists $r>0$ so that $M_{p,r}:=M\cap\mB^L_r(p)$ admits local coordinates $x=(x_1,x_2,\dots,x_{n+1}) = (x_1,x_2,x')$ satisfying $p=0$, $M_{p,r}\subset\{x:x_1\geq 0, x_2\geq 0\}$ in the coordinates, and 
    \begin{itemize}
        \item[(i)] $\bd \tM^+_{p,r} = \{x:x_1=0\}$, $\bd \tM^-_{p,r} = \{x:x_2=0\}$ in the coordinates (see (\ref{Eq: local extension across face}));
        \item[(ii)] the Riemannian metric $g$ in the coordinates satisfies 
            \begin{eqnarray*}
               && g(x_1,0,x') = 
	           \begin{pmatrix}
	           	1	&	\cos(\theta(x'))	& \mathbf{0} \\
	           	\cos(\theta(x'))	& 1 &	\mathbf{0} \\
	           	\mathbf{0}	&	\mathbf{0}	& (g_{ij})
	           \end{pmatrix}, \qquad
	           g(0,x_2,x') = 
	           \begin{pmatrix}
	           	g_{11}	&	g_{12}	& g_{1i} \\
	           	g_{21}	& 1 &	\mathbf{0} \\
	           	g_{i1}	&	\mathbf{0}	& (g_{ij})
	           \end{pmatrix},
                \\
               && g^{-1}(x_1,0,x') = 
	           \begin{pmatrix}
		          \frac{1}{\sin^2(\theta(x'))}	&	-\frac{\cos(\theta(x'))	}{\sin^2(\theta(x'))} & \mathbf{0} \\
		          -\frac{\cos(\theta(x'))	}{\sin^2(\theta(x'))}	& \frac{1}{\sin^2(\theta(x'))} &	\mathbf{0} \\
		          \mathbf{0}	&	\mathbf{0}	& (g^{ij})
	           \end{pmatrix}, ~
                g(0,0,0) = 
	           \begin{pmatrix}
	           	1	&	\cos(\theta(0))	& \mathbf{0} \\
	           	\cos(\theta(0))	& 1 &	\mathbf{0} \\
	           	\mathbf{0}	&	\mathbf{0}	& (\delta_{ij})
	           \end{pmatrix}; 
            \end{eqnarray*}
        \item[(iii)] $\nabla_{\bd_1}\bd_1\perp \{x:x_2=0\}$ and $\nabla_{\bd_2}\bd_2 \perp \{x:x_1=t\}$ for any $t\geq 0$;
        \item[(iv)] if (\ref{ddag}) is satisfied in $M_{p,r}$, then $A_{\{x:x_2=0\}}(\bd_1,\bd_l) = A_{\{x:x_1=0\}}(\bd_2,\bd_l) = -\frac{1}{2} \frac{\bd \theta}{\bd  x_l}$,
	       \[ \frac{\bd g_{1l}}{\bd x_2}(0,0,x') = \frac{\bd g_{2l}}{\bd x_1}(0,0,x') = \frac{\bd g^{1k}}{\bd x_2}(0,0,x') =\frac{\bd g^{2k}}{\bd x_1}(0,0,x') =0, \]
        for all $3\leq k,l\leq n+1$.
    \end{itemize}
\end{lemma}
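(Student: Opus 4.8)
The plan is to construct the chart by hand through an iterated semi-geodesic (Fermi-type) construction — first on the smooth face $\bd\tM^-_{p,r}$, then leaf-by-leaf inside an auxiliary foliation $\{S_t\}$, exactly as sketched in the paragraph preceding the lemma — and then to read off (i)--(iv) from the defining geodesic and distance-function properties of the resulting coordinates.

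First I would set up coordinates on the $-$ face. Recall from (\ref{Eq: local extension}) and (\ref{Eq: local extension across face}) that $\tM_{p,r}$ and $\bd\tM^{\pm}_{p,r}$ are smooth local extensions, and that by Lemma \ref{Lem: classify face and edge} the edge $\bd^E M$ is a smooth codimension-one submanifold of $\bd\tM^-_{p,r}$. Fix geodesic normal coordinates $(x_3,\dots,x_{n+1})$ of $\bd^E M$ centered at $p$, and on $\bd\tM^-_{p,r}$ near $p$ take the semi-geodesic coordinates $(x_1,x')$ adapted to $\bd^E M$: $x_1$ is the signed $\bd\tM^-_{p,r}$-distance to $\bd^E M$, and $x'$ records the $\bd\tM^-_{p,r}$-nearest point projection $n_1$ onto $\bd^E M$ in the fixed chart. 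In these coordinates the induced metric of $\bd\tM^-_{p,r}$ is semi-geodesic, the $x_1$-curves are unit-speed $\bd\tM^-_{p,r}$-geodesics meeting $\bd^E M$ orthogonally, and $\{x_1=0\}\cap\bd\tM^-_{p,r}=\bd^E M$; the level sets $\Gamma_t:=\{x_1=t\}$ have unit conormal $\eta_-:=\grad^{\bd\tM^-}x_1$ inside $\bd\tM^-_{p,r}$. Writing $N_-$ for the unit normal of $\bd\tM^-_{p,r}$ pointing towards $M$ and $\theta=\theta(x')$ for the wedge-angle function, I set $W:=\cos\theta\,\eta_-+\sin\theta\,N_-$ along $\bd\tM^-_{p,r}$. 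Because the two faces meet along $\bd^E M$ at the wedge angle one checks $\eta_+=W$ on $\bd^E M$, so $\bd\tM^+_{p,r}$ is tangent to $T\bd^E M\oplus\R W$ along $\bd^E M$; hence there is a smooth foliation $\{S_t\}$ of a neighborhood of $p$ in $\tM_{p,r}$ with $S_0=\bd\tM^+_{p,r}$, $\Gamma_t\subset S_t$, and $TS_t|_{\Gamma_t}=T\Gamma_t\oplus\R W$, obtained for instance by flowing $\bd\tM^+_{p,r}$ along a suitable extension of $W$. On each leaf $S_t$ I then install the semi-geodesic coordinates $(x_2,x')$ adapted to $\Gamma_t$, where $x_2$ is the $S_t$-distance to $\Gamma_t$ (realized by the projection $n_2$) and $x'=y'\circ n_1\circ n_2$. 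This produces the desired coordinates $(x_1,x_2,x')$.

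Most of the verification is then routine. The map is a diffeomorphism near $p$ since $\bd\tM^+_{p,r}$ and $\bd\tM^-_{p,r}$ are transverse along $\bd^E M$ (here $\theta(p)<\pi$) and $(x_2,x')$ restricts to a semi-geodesic chart on $S_0$ near $p$; orienting $x_1,x_2$ to increase into $M$ gives $M_{p,r}\subset\{x_1\ge0,x_2\ge0\}$, and by construction $\{x_1=0\}=S_0=\bd\tM^+_{p,r}$ and $\{x_2=0\}=\bd\tM^-_{p,r}$, which is (i). For (ii), Gauss's lemma inside the leaves (which carry the induced metric) gives $g_{22}\equiv1$ and $g_{2j}\equiv0$ for $j\ge3$ throughout the chart; on $\{x_2=0\}$ the coordinates $(x_1,x')$ are semi-geodesic on $\bd\tM^-_{p,r}$, so $g_{11}=1$ and $g_{1j}=0$ for $j\ge3$ there, while $g_{12}=\langle\eta_-,W\rangle=\cos\theta(x')$; the only additional entries on $\{x_1=0\}$ are the already-known $g_{22}=1$, $g_{2j}=0$; inverting the block matrix on $\{x_2=0\}$ and evaluating at $0$ (with $\bd_{x_i}$ orthonormal at $p$) completes (ii). Part (iii) follows from the geodesic properties: the $x_2$-curves are $S_{x_1}$-geodesics, hence $\grad_{\bd_2}\bd_2\perp S_{x_1}=\{x_1=t\}$, and on $\{x_2=0\}$ the $x_1$-curves are $\bd\tM^-_{p,r}$-geodesics, hence $\grad_{\bd_1}\bd_1\perp\bd\tM^-_{p,r}=\{x_2=0\}$.

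The genuinely delicate part is (iv), and this is exactly where hypothesis (\ref{ddag}) enters (so $\theta\equiv\tfrac\pi2$). The key geometric identity is the first variation of the dihedral angle along the edge: $A_{\bd^+M_{p,r}}(\eta_+,v)+A_{\bd^-M_{p,r}}(\eta_-,v)=-\bd_v\theta$ for $v\in T(\bd^E M)$, obtained by differentiating $\langle\eta_+,\eta_-\rangle=\cos\theta$ along the edge and using the Weingarten equations. Combined with (\ref{ddag}), which forces the two terms equal, and with $\bd_v\theta\equiv0$, this gives $A_{\bd^+M_{p,r}}(\eta_+,v)=A_{\bd^-M_{p,r}}(\eta_-,v)=0=-\tfrac12\bd_v\theta$ along $\bd^E M$; since $\bd_1=\eta_-$ and $\bd_2=\eta_+$ along the edge, this is the first line of (iv). Expanding $A_{\{x_2=0\}}(\bd_1,\bd_l)$ via Christoffel symbols and the unit normal $\nu=\tfrac1{\sin\theta}(\bd_2-\cos\theta\,\bd_1)$, and feeding in the values of $g$ on $\{x_2=0\}$ from (ii), identifies $A_{\{x_2=0\}}(\bd_1,\bd_l)$ (up to sign) with $\tfrac12\,\bd_{x_2}g_{1l}$ at the edge, so $\bd_{x_2}g_{1l}(0,0,x')=0$, and symmetrically $\bd_{x_1}g_{2l}(0,0,x')=0$; differentiating $g^{ik}g_{kj}=\delta^i_j$ at the edge and combining with these vanishings and $g_{2j}\equiv0$ then yields $\bd_{x_2}g^{1k}(0,0,x')=\bd_{x_1}g^{2k}(0,0,x')=0$. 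The step I expect to be the main obstacle is the foliation construction itself — arranging $\{S_t\}$ so that all of the properties invoked above hold simultaneously and smoothly, in particular $g_{12}=\cos\theta(x')$ on the \emph{entire} hyperplane $\{x_2=0\}$, which is precisely why $W$ must be prescribed through $\theta$ rather than chosen arbitrarily — together with the sign bookkeeping in the dihedral-angle identity behind (iv).
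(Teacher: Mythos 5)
Your construction is the same nested Fermi/semi-geodesic construction as the paper's, and your verification of (i)--(iv) matches in substance. In particular, the dihedral-angle identity $A_{\bd^+M_{p,r}}(\eta_+,v)+A_{\bd^-M_{p,r}}(\eta_-,v)=-\bd_v\theta$ you derive by differentiating $\langle\eta_+,\eta_-\rangle=\cos\theta$ is exactly what the paper obtains (more implicitly) by combining its two displayed equations for $\tfrac{\bd g_{1l}}{\bd x_2}$ and $\tfrac{\bd g_{2l}}{\bd x_1}$ at the edge; and the Christoffel computation giving $A_{\{x_2=0\}}(\bd_1,\bd_l)=-\tfrac1{2\sin\theta}\tfrac{\bd g_{1l}}{\bd x_2}-\tfrac12\bd_l\theta$, the observations $g_{22}\equiv1$, $g_{2j}\equiv0$ (Gauss's lemma in each leaf), and the inverse-metric derivatives all coincide with the paper's.

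The one place your proposal does not actually hang together --- and you flag it as the obstacle --- is the construction of the auxiliary foliation $\{S_t\}$. Obtaining $S_t$ by ``flowing $\bd\tM^+_{p,r}$ along a suitable extension of $W$'' does not produce leaves containing $\Gamma_t$: since $W$ has a nonzero transverse component $\sin\theta\,N_-$, any flow line through a point of $\bd^E M$ leaves $\bd\tM^-_{p,r}$ immediately, so the time-$t$ image of $\bd\tM^+_{p,r}$ does not contain $\Gamma_t$ for $t\neq0$. The paper sidesteps this by first introducing the ambient Fermi coordinate $x_2$ (signed distance to $\tM^-_{p,r}$), writing $\bd\tM^+_{p,r}=\{x_1=f(x_2,x')\}$ with $\tfrac{\bd f}{\bd x_2}(0,x')=\cot\theta(x')$, and taking the leaves to be the translated graphs $\{x_1=f(x_2,x')+s\}$. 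Translation preserves $\tfrac{\bd f}{\bd x_2}(0,x')$, so each leaf passes through the corresponding $\Gamma_s$ with tangent direction $\mathbb{R}W$ --- which is what forces $g_{12}=\cos\theta(x')$ along the entire face $\{x_2=0\}$, as you correctly anticipate. So the gap is real but localized: replace the flow-based construction with the paper's translated-graph construction and the rest of your argument goes through. A small simplification you could use: the vanishing $\tfrac{\bd g_{2l}}{\bd x_1}(0,0,x')=0$ is immediate from $g_{2l}\equiv0$, so the ``symmetric'' second-fundamental-form computation on $\{x_1=0\}$ is not needed for that particular identity.
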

\begin{remark}
    Note one can also rotate the coordinates $(x_3,\dots,x_{n+1})$ of the edge. 
\end{remark}
\begin{proof}
    Firstly, take the Fermi coordinates of $\tM^-_{p,r}$ in $\tM_{p,r}$ with respect to $\bd^E M$ so that $(0,0,x_3,\dots,x_{n+1})$ is the normal coordinates of $p$ in $\bd^E M$, $x_1$ stands for the (signed) geodesic distance to $\bd^E M$ in $\tM^-_{p,r}$ (positive in $M$), and $x_2$ stands for the (signed) geodesic distance to $\tM^-_{p,r}$ in $\tM_{p,r}$ (positive in $M$). 
    For simplicity, denote by $x'=(x_3,\dots,x_{n+1})$. 
    
    Then $\tM^-_{p,r}=\{x:x_2=0\}$, and $\tM^+_{p,r}=\{x:x_1=f(x_2,x')\}$ for some smooth function $f:\R^n=\{x:x_1=0\}\to \R$ so that $f(0,x')=0$. 
    By a direct computation, the unit normal of ${\rm Graph}(f) = \tM^+_{p,r}$ is
    \[
    	\nu_f:=\frac{-D_if g^{ia} + g^{1a}}{\sqrt{g^{11}-2D_if g^{1i}+ D_ifD_jfg^{ij}}} (f(x_2,x'), x_2,x') \cdot \frac{\bd }{\bd x_a},
    \]
    where $i\in\{2,\dots,n+1\}$, $a\in\{1,\dots,n+1\}$. 
    Hence, we have 
    \begin{equation}\label{Eq: local chart: meeting angle}
        \frac{\bd f}{\bd x_2}(0,x') = \cot(\theta(x')),
    \end{equation}
    where $\theta(x') \in (0,\pi)$ is the wedge angle of $M=\{x: x_1\geq f, x_2\geq 0\}$ at $(0,0,x')$. 

    For $s\in (-r,r)$ small enough, $f_s(x_2,x') := f(x_2,x')+s$ is also a smooth function satisfying $f_s(0,x')=0$ and (\ref{Eq: local chart: meeting angle}), which implies ${\rm Graph}(f_s):=\{x: x_1= f(x_2,x') + s\}$ is also a smooth hypersurface meeting $\tM^-_{p,r}$ with the same angle as $\tM^+_{p,r}$. 
    Therefore, we have a new coordinates $(y_1,y_2,y'):= (s,x_2,x')$ near $p$, where $s$ stands for $x_1 = f_s(x_2,x')$. 
    In this coordinates, $\tM^-_{p,r}=\{y:y_2=0\}$, $\tM^+_{p,r}=\{y:y_1=0\}$, and $y_1=x_1$, $y'=x'$ on $\tM^-_{p,r}$. 
    %One can also check that $ \frac{\bd}{\bd y_1} = \frac{\bd}{\bd x_1}$, $\frac{\bd}{\bd y_2} = \frac{\bd}{\bd x_2} + D_2f \frac{\bd}{\bd x_1}$, $ \frac{\bd}{\bd y_i} = \frac{\bd}{\bd x_i} + D_if \frac{\bd}{\bd x_1} $, and $\frac{\bd}{\bd y_i}= \frac{\bd}{\bd x_i}$ on $\{x: x_2=0\}$. 

    Next, for any $t\geq 0$, denote by $S_{t}:= \{y : y_1=t, y_2\geq 0\}$ the half $t$-leaf in the foliation $\{{\rm Graph}(f_s)\}_{|s|<\sigma}$, and $\bd S_t := \{y : y_1=t, y_2= 0\}$. 
    Given $y=(y_1,y_2,y')\in M$, we define 
    \[ z_1:= y_1,\qquad z_2:= \dist_{S_{y_1}}(y, \bd S_{y_1}), \qquad z':= y'(n_{S_{y_1}}(y)) ,\]
    where $\dist_{S_{y_1}}(y, \bd S_{y_1})$ is the geodesic distance from $y$ to $\bd S_{y_1}$ in $S_{y_1}$, and $n_{S_{y_1}}(y)$ is the nearest geodesic projection from $y$ to $\bd S_{y_1}$ in $S_{y_1}$. 
    One verifies that $(z_1,z_2,z')$ is also a local coordinate system centered at $p$. 
    In addition, (i)-(iii) are obtained by noting that $(y_2,y')$ are the Fermi-type coordinates in each half leaf $S_t$, and $z_1=y_1$, $z'=y'$ on $\tM^-_{p,r}$. 

    Rename the coordinates $(z_1,z_2,z')$ by $(x_1,x_2,x')$. 
    Then for any $3\leq l\leq n+1$, by (i)-(iii), we have $\frac{\bd g_{2l}}{\bd x_2}(0,0,x')= \langle \nabla_{\bd_2}\bd_2,\bd_l\rangle +\frac{1}{2}\frac{\bd g_{22}}{\bd x_l}=0$ and $\frac{\bd g_{1l}}{\bd x_1}(0,0,x')=\langle \nabla_{\bd_1}\bd_1,\bd_l\rangle +\frac{1}{2}\frac{\bd g_{11}}{\bd x_l}=0$. 
    Moreover, it also follows from (i)-(iii) and a direct computation that 
    \begin{eqnarray}
        \frac{\bd g_{1l}}{\bd x_2}(0,0,x') &=& \frac{\bd g_{2l}}{\bd x_1} -\sin(\theta(x'))\frac{\bd \theta}{\bd x_l}(x') - 2\sin(\theta(x')) A_{\{x:x_2=0\}}(\bd_1,\bd_l) ,
        \\
        0=\frac{\bd g_{2l}}{\bd x_1}(0,0,x') &=& \frac{\bd g_{1l}}{\bd x_2} -\sin(\theta(x'))\frac{\bd \theta}{\bd x_l}(x') - 2\sin(\theta(x')) A_{\{x:x_1=0\}}(\bd_2,\bd_l) , 
    \end{eqnarray}
    which implies $2A_{\{x:x_1=0\}}(\bd_2,\bd_l)  +\frac{\bd \theta}{\bd  x_l} = 2A_{\{x:x_2=0\}}(\bd_1,\bd_l) +\frac{\bd \theta}{\bd  x_l}=-\frac{1}{\sin(\theta(x'))}\frac{\bd g_{1l}}{\bd x_2}$ at $(0,0,x')$. 
    Therefore, (\ref{ddag}) suggests $0=\frac{\bd g_{1l}}{\bd x_2}(0,0,x')=\frac{\bd g_{2l}}{\bd x_1}(0,0,x')$. 
    Finally, at $(0,0,x')$, we have
    \[
    \frac{\bd g^{1k}}{\bd x_2} = - g^{11}g^{kl} \frac{\bd g_{1l}}{\bd x_2} - g^{12}g^{kl}\frac{\bd g_{2l}}{\bd x_2} = 0 \quad {\rm and }\quad \frac{\bd g^{2k}}{\bd x_1} = - g^{22}g^{kl} \frac{\bd g_{2l}}{\bd x_1} - g^{21}g^{kl}\frac{\bd g_{1l}}{\bd x_1}=0,
    \]
    which gives (iv).
\end{proof}

For any $p\in\bd^EM$, the (inward) unit normal of $\bd^{\pm} M_{p,r}$ is given respectively by
\[ \nu_{\{x_1=0\}}(0,x_2,x') = \sum_{a=1}^{n+1}\frac{g^{1a}}{\sqrt{g^{11}}} \frac{\bd }{\bd x_a}(0,x_2,x'),\quad  \nu_{\{x_2=0\}}(x_1,0,x') = \sum_{a=1}^{n+1}\frac{g^{2a}}{\sqrt{g^{22}}} \frac{\bd }{\bd x_a}(x_1,0,x'),\]
in the coordinates defined in Lemma \ref{Lem: local chart}. 
Suppose $f$ is a smooth function in $M_{p,r}$ whose level sets are orthogonal to $\bd M$. 
Then
\begin{itemize}
	\item[(1)]	$\left(g^{11}\frac{\bd f}{\bd x_1} +g^{12} \frac{\bd f}{\bd x_2} + g^{1l}\frac{\bd f}{\bd x_l}\right) (0,x_2,x') = 0$;
	\item[(2)] $\left(g^{21}\frac{\bd f}{\bd x_1} +g^{22} \frac{\bd f}{\bd x_2} + g^{2l}\frac{\bd f}{\bd x_l}\right) (x_1,0,x') = 0$, 
\end{itemize} 
where summation convention is used for $l\in\{3,\dots,n+1\}$. 
Since $g^{1l}(0,0,x') = g^{2l}(0,0,x')=0$ and the matrix $[g^{11}, g^{12}; ~g^{21},g^{22}]$ is invertible, we further have 
\begin{itemize}
	\item[(3)] $\frac{\bd f}{\bd x_1}(0,0,x') = \frac{\bd f}{\bd x_2}(0,0,x') = 0$. 
\end{itemize}
Additionally, at $(0,0,x')$, considering the derivative of (1) with respect to $x_2$ and the derivative of (2) with respect to $x_1$, we have the following results by (3) and Lemma \ref{Lem: local chart}(ii):
\begin{itemize}
	\item[(4)] $\left(g^{11}\frac{\bd^2 f}{\bd x_1\bd x_2} +g^{12} \frac{\bd^2 f}{\bd x_2\bd x_2} + \frac{\bd g^{1l}}{\bd x_2}\frac{\bd f}{\bd x_l}\right) (0,0,x') = 0$;
	\item[(5)] $\left(g^{22}\frac{\bd^2 f}{\bd x_2\bd x_1} +g^{21} \frac{\bd^2 f}{\bd x_1\bd x_1} + \frac{\bd g^{2l}}{\bd x_1}\frac{\bd f}{\bd x_l}\right) (0,0,x') = 0$,
\end{itemize}
where $l\in\{3,\dots,n+1\}$. 
To construct the desired function $\wD_p$, we will use the equations (1)-(5) to design a second order polynomial-like function $F$ as $(\wD_p)^2$, which is close to $|x|^2$. 

\begin{lemma}\label{Lem: Fermi-type distance function}
    Let $M^{n+1}\subset \R^L$ be a compact connected $(n+1)$-dimensional locally wedge-shaped Riemannian manifold satisfying (\ref{dag}) or (\ref{ddag}). 
    Then for any $p\in\bd^EM$, there exist $r_0>0$ and a function $\wD_p$ defined in a neighborhood of $p$ so that 
    \begin{itemize}
        \item[(i)] $(\wD_p)^2$ is smooth, and $\wD_p(q)\geq 0$ with equality only at $q=p$;
        \item[(ii)] for any $t\in (0,r)$, $\wS_t(p):= \{q\in M : \wD_p(q) = t\}$ is a smooth strictly mean convex locally wedge-shaped hypersurface meeting $\bd M$ orthogonally. 
    \end{itemize}
    %$(\wD_p)^2$ is smooth, and for any $t\in (0,r)$, $\wS_t(p):= \{q\in M : \wD_p(q) = t\}$ is a smooth strictly mean convex locally wedge-shaped hypersurface meeting $\bd M$ orthogonally. 
\end{lemma}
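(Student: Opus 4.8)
\textbf{Proof proposal for Lemma \ref{Lem: Fermi-type distance function}.}

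The plan is to construct $(\wD_p)^2$ explicitly as a carefully chosen second-order ``polynomial-like'' correction of the Euclidean square-distance $|x|^2$ in the coordinates $x = (x_1,x_2,x')$ provided by Lemma \ref{Lem: local chart}. First I would set up an ansatz of the form $F(x) = \sum_{a,b} a_{ab}(x') x_a x_b$ where the coefficient matrix $(a_{ab})$ is symmetric, equals the identity at $p$ (so that $F$ is comparable to $|x|^2$ near $p$), and is chosen so that the $x'$-dependence enters only through lower-order terms. The function $\wD_p := \sqrt{F}$ will then automatically satisfy (i) provided $F$ is positive definite in a small enough neighborhood, which holds by continuity since $(a_{ab})(0) = \mathrm{Id}$. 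The level sets $\wS_t(p) = \{F = t^2\}$ are smooth away from $p$ because $\nabla F \neq 0$ there (again by comparability with $|x|^2$), and the locally wedge-shaped structure of $\wS_t(p)$ follows from the fact that $\bd\tM^{\pm}_{p,r}$ are the coordinate hyperplanes $\{x_1 = 0\}$ and $\{x_2 = 0\}$ by Lemma \ref{Lem: local chart}(i).

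The real content is arranging two things simultaneously: (a) $\wS_t(p)$ meets $\bd M$ orthogonally, even along the edge, and (b) $\wS_t(p)$ is strictly mean convex. For (a), I would impose the orthogonality conditions (1)--(5) displayed just before the lemma statement: conditions (1) and (2) say $\nabla F$ is tangent to $\bd^{\pm}M_{p,r}$ on those faces; since $F$ vanishes to second order at $p$, condition (3) ($\partial_1 F = \partial_2 F = 0$ at the edge) is automatic from the ansatz once $a_{11} = a_{22} = 1$, $a_{12} = \cos\theta(x')$ there — these are exactly the entries dictated by the metric in Lemma \ref{Lem: local chart}(ii) so that $\nabla F$ points along the bisecting direction of the wedge. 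Conditions (4) and (5) are the genuinely delicate ones: they constrain the mixed second derivatives $\partial_1\partial_2 F$, $\partial_2^2 F$, $\partial_1^2 F$ along the edge in terms of $\partial g^{1l}/\partial x_2$ and $\partial g^{2l}/\partial x_1$. Here is precisely where the hypothesis (\ref{dag}) or (\ref{ddag}) is used: under (\ref{ddag}) Lemma \ref{Lem: local chart}(iv) gives $\partial g^{1l}/\partial x_2 = \partial g^{2l}/\partial x_1 = 0$ at the edge, so (4)--(5) decouple cleanly; under (\ref{dag}) the wedge angle is strictly acute and one has extra room, handled by choosing the cross terms $a_{1l}, a_{2l}$ to absorb the metric coefficients. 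I would solve (1)--(5) order by order to pin down the coefficients $a_{ab}(x')$ and their first $x'$-derivatives along $\{x_1 = x_2 = 0\}$, leaving the remaining freedom to be fixed in the mean-convexity step.

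For (b), strict mean convexity, I would compute the mean curvature of $\wS_t(p) = \{F = t^2\}$ with respect to the inward normal using the standard formula $H = \mathrm{div}(\nabla F/|\nabla F|)$ and evaluate its leading behavior as $t \to 0$. Since $F$ is to leading order a positive-definite quadratic form, the level sets $\wS_t(p)$ are to leading order small ellipsoids (intersected with the wedge), whose mean curvature blows up like $n/t > 0$; the lower-order metric and coefficient corrections contribute terms that are $O(1)$ as $t \to 0$ and hence are dominated. This gives strict mean convexity for all $t < r_0$ after shrinking $r_0$. The main obstacle, as anticipated, will be step (a): verifying that the system (1)--(5) is actually solvable with a smooth positive-definite coefficient matrix and that the solution is consistent along the whole edge $\bd^E M \cap \mB^L_{r_0}(p)$ (not just at the single point $p$) — this is where the precise form of the metric in Lemma \ref{Lem: local chart}(ii)(iv), together with the angle hypothesis, must be used in full. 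Finally I would note that although the construction is local near each $p$, the lemma is stated pointwise in $p$, so no gluing across different base points is needed; one simply records that $r_0$ and the neighborhood depend on $p$ (and may be taken uniform by compactness of $\bd^E M$ if desired).
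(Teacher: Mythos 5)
Your overall strategy matches the paper's: take $(\wD_p)^2 = F$ for a ``second-order polynomial-like'' function $F$ built in the coordinates of Lemma \ref{Lem: local chart}, impose the orthogonality conditions (1)--(5), verify $F$ is close to the quadratic form $|x|_0^2$ coming from $g(0)$, and conclude strict mean convexity of the level sets by a blowup/perturbation argument ($H\sim n/t$ for small $t$). That end-game is essentially what the paper does.

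However, there is a genuine gap in your ansatz $F(x)=\sum_{a,b}a_{ab}(x')\,x_ax_b$ with coefficients depending on $x'$ only. Conditions (1) and (2) are identities that must hold for \emph{all} $x_2$ on $\{x_1=0\}$ and for all $x_1$ on $\{x_2=0\}$, and the metric coefficients $g^{ab}$ appearing there are genuine functions of $x_2$ (resp.\ $x_1$) along those faces. A quadratic form in $(x_1,x_2)$ with $x'$-dependent coefficients cannot absorb this dependence exactly, only to leading order at the edge; so the resulting level sets would meet $\bd M$ only approximately orthogonally, which is not enough for the maximum-principle application. The paper's $F$ is not a pure quadratic form: it first prescribes $F$ on the edge ($h_0=|x'|^2$) and on each face ($F_1,F_2$), and then extends to the interior via the formula \eqref{Eq: foliation function}, in which the $x_1$-linear and $x_2$-linear correction terms carry the metric evaluated along $(0,x_2,x')$ and $(x_1,0,x')$ respectively. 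This is exactly the extra flexibility your ansatz lacks.

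Two smaller inaccuracies. Condition (3) is \emph{not} automatic in your ansatz: $\partial_1F(0,0,x')=2\sum_{l\ge3}a_{1l}(x')x'_l$, so you must additionally impose $a_{1l}=a_{2l}=0$. This in turn means the cross terms $a_{1l},a_{2l}$ are not available ``to absorb the metric coefficients,'' and in any case they do not enter conditions (4)(5) at all, which only involve $\partial_{11}F,\partial_{12}F,\partial_{22}F$ and $\partial_lF$ along the edge. The actual mechanism for the angle hypothesis is different: under \eqref{dag} one has $\theta(x')<\pi/2$, so $g^{12}(0,0,x')=-\cos\theta(x')/\sin^2\theta(x')\ne0$, and one \emph{solves} (4) and (5) for the diagonal second derivatives $h_{22},h_{11}$ (formulas \eqref{Eq: foliation function x2x2},\eqref{Eq: foliation function x1x1}); under \eqref{ddag} this coefficient degenerates ($\theta=\pi/2\Rightarrow g^{12}=0$), and one instead uses Lemma \ref{Lem: local chart}(iv) to make the offending terms $\frac{\partial g^{1l}}{\partial x_2}$ and $\frac{\partial g^{2l}}{\partial x_1}$ vanish so that (4)(5) reduce to $\partial_{12}F=0$, which is consistent with $h_{12}=2\cos\theta=0$.
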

\begin{proof}
    For $p\in\bd^EM$, consider the coordinates $(x_1,\dots,x_{n+1})$ given by Lemma \ref{Lem: local chart}. 
    Firstly, we define $F$ (which will be $(\wD_p)^2$) on the edge $\bd^E M_{p,r}$ by 
    \begin{equation}\label{Eq: foliation function on edge}
        F(0,0,x') = h_0(x') := |x'|^2.
    \end{equation}
    Next, we take a function $h_{12}$ on the edge given by
    \begin{eqnarray}\label{Eq: foliation function x1x2}
	   h_{12}(x') := -2\frac{g^{12}}{g^{11}}(0,0,x') = -2\frac{g^{12}}{g^{22}}(0,0,x') =2\cos(\theta(x')),
    \end{eqnarray}
    which will stand for $\frac{\bd^2 F}{\bd x_1\bd x_2}(0,0,x')$. 
    Then, in the light of (4)(5) and (\ref{dag})(\ref{ddag}), we can define
    \begin{equation}\label{Eq: foliation function x1x1}
	   h_{11}(x')  := 
        \left\{ 
	    \begin{array}{ll}
	    	2- \frac{1}{g^{21}}\frac{\bd g^{2l}}{\bd x_1} \frac{\bd h_0}{\bd x_l} (0,0,x'), & \theta(0)\neq \frac{\pi}{2},
	    	\\
	    	2, & \theta(0)=\frac{\pi}{2},
	    \end{array}
	    \right.
    \end{equation}
    \begin{equation}\label{Eq: foliation function x2x2}
	 h_{22}(x')  :=
	 \left\{ 
	 \begin{array}{ll}
	 	2- \frac{1}{g^{12}}\frac{\bd g^{1l}}{\bd x_2} \frac{\bd h_0}{\bd x_l} (0,0,x'), & \theta(0)\neq \frac{\pi}{2},
	 	\\
	 	2, & \theta(0)=\frac{\pi}{2},
	 \end{array}
	 \right.
    \end{equation}
    which will stand for $\frac{\bd^2 F}{\bd x_1\bd x_1}(0,0,x')$ and $\frac{\bd^2 F}{\bd x_2\bd x_2}(0,0,x')$ respectively. 
    Using (3), we define $F$ on the faces $\bd^{\pm} M_{p,r}$ by second order polynomial-like functions $F_1$ and $F_2$ respectively:
    \begin{eqnarray}\label{Eq: foliation function on faces}
	   F(0,x_2,x') = F_2(x_2,x') &:=& |x'|^2 + \frac{x_2^2}{2} h_{22}(x'), \\
	   F(x_1,0,x') = F_1(x_1,x') &:=& |x'|^2 + \frac{x_1^2}{2} h_{11}(x').
    \end{eqnarray}
    Finally, in the view of (1)(2), we define 
    \begin{eqnarray}\label{Eq: foliation function}
	   F(x_1,x_2,x') &=& F_1(x_2,x') + F_2(x_1,x') - h_0(x') \nonumber
        \\&&  + x_1\left[\frac{1}{g^{11}}\left(-g^{12}\frac{\bd F_2}{\bd x_2} - g^{1l}\frac{\bd F_2}{\bd x_l} \right) \right](0,x_2,x')
        \\&&  +x_2\left[\frac{1}{g^{22}}\left(-g^{21}\frac{\bd F_1}{\bd x_1} - g^{2l}\frac{\bd F_1}{\bd x_l} \right) \right](x_1,0,x') - x_1x_2h_{12}(x') .\nonumber
        %\\ &=&|x'|^2 + x_1\left[\left(-g^{12}D_2F_2 - g^{1l}D_lF_2 \right) / g^{11} \right](0,x_2,x')\nonumber
        %\\&&  +x_2\left[\left(-g^{21}D_1F_1 - g^{2l}D_lF_1 \right) / g^{22}\right](x_1,0,x')
        %\\&&  +\frac{x_1^2}{2} h_{11}(x') +\frac{x_2^2}{2} h_{22}(x') - x_1x_2h_{12}(x'), \nonumber
    \end{eqnarray}
    One verifies that $F(0,0,x') =h_0$, $F(0,x_2,x') = F_2$, $F(x_1,0,x')=F_1$, and $\frac{\bd^2 F}{\bd x_ix_j} (0,0,x') = h_{ij}$ for $i,j\in\{1,2\}$. 
    Therefore, (1)-(5) are satisfied with $F$ in place of $f$, and $F$ is a smooth second order polynomial-like function in $M_{p,r}$.  

    We now show that $\wD_p$ can be taken as $\sqrt{F}$. To begin with, a direct computation shows 
	\begin{itemize}
		\item $F(0)=0$ and $DF(0) =0 $;
		\item $D_{11}F(0)= D_{22}F(0) = D_{ll}F(0) = 2$, for $l\geq 3$;
		\item $D_{12}F(0) = 2\cos(\theta(0))\geq 0$, and $D_{1l}F(0)=D_{2l}F(0)=0$ for $l\geq 3$, 
	\end{itemize}
	which implies $F\geq 0$ in a neighborhood of $0$, and $0$ is the unique minimum point. 
    Hence, $\wD_p=\sqrt{F}$ is well defined and satisfies (i). 
	
	Additionally, by a standard blowup argument, we can assume the Riemannian metric $g=g_{_M}$ in the coordinates is sufficiently close to the flat metric $g_0 = g(0,0,0)$ (in the form of Lemma \ref{Lem: local chart}(ii)). 
	%\[ g_0 \equiv \begin{pmatrix}
	%	1	&	\cos(\theta(0))	& \mathbf{0} \\
	%	\cos(\theta(0))	& 1 &	\mathbf{0} \\
	%	\mathbf{0}	&	\mathbf{0}	& \delta_{ij}
	%\end{pmatrix}.\]
	Thus, $F$ is sufficiently close to the square of the Euclidean distance function $d_0$ given by $(d_0(x))^2:=x_1^2+x_2^2 +2\cos(\theta(0))x_1x_2+|x'|^2$ in a $\theta(0)$-wedge domain with the above coordinates, which implies the level sets of $F$, as well as $\wD_p$, are small perturbations of the level sets of the Euclidean distance function $d_0$. 
	Therefore, in a small neighborhood of $0$ (i.e. $p$), the level sets of $\wD_p$ are smooth strictly mean convex hypersurfaces, which also meet $\bd^{\pm}M_{p,r} $ orthogonally since $F$ satisfies (1)(2). 
\end{proof}

\begin{definition}\label{Def: Fermi-type ball}
    For any $p\in\bd^EM$, let $r_0>0$ and $\wD_p$ be given as in Lemma \ref{Lem: Fermi-type distance function}. 
    Then for any $t\in (0,r_0]$, we define the {\em Fermi-type spherical wedge} and the {\em Fermi-type wedge-cut sphere} of radius $t$ centered at $p$ respectively by 
    \[\wB_t(p):=\{q\in M: \wD_p(q) < t \}, \qquad \wS_t(p):=\{q\in M: \wD_p(q) = t \}.\]
\end{definition}

Moreover, for $r>0$ small enough, we have $\wD_p$ is sufficiently close to the Euclidean distance $d_0$ in the above coordinates, and thus
\[ \frac{1}{2}\dist_{\R^L}(p,q) \leq  \wD_p(q) \leq 2 \dist_{\R^L}(p,q)\]
for $q\in M$ in a sufficient small neighborhood of $p$.

%%%%%%%%%%%%%%%%%%%%%%%%%%%%%%%%%%%%%%%%%%%%%%
%%%%%%%%%%%%%%%%%%%%%%%%%%%%%%%%%%%%%%%%%%%%%%
%%%              Appendix B                %%%
%%%%%%%%%%%%%%%%%%%%%%%%%%%%%%%%%%%%%%%%%%%%%%
%%%%%%%%%%%%%%%%%%%%%%%%%%%%%%%%%%%%%%%%%%%%%% 
\section{Maximum principle for stationary varifolds with free boundary}\label{Sec: maximum principle for varifolds}

In this appendix, we follow the process in \cite{li2021maximum} to show the maximum principal for stationary varifolds with free boundary in a strictly mean convex wedge-shaped domain. 
The main difficulty here is the lack of the Fermi coordinates. 
Nevertheless, we can use the idea in Lemma \ref{Lem: Fermi-type distance function} to construct the foliation in the coordinates given by Lemma \ref{Lem: local chart}

\begin{proof}[Proof of Theorem \ref{Thm: maximum principle for varifolds}]
    Let $V\in\V_n(M)$ be stationary in a relative open subset $U\subset M$ with free boundary, and $K\subset\subset U$ be a smooth relatively open connected subset in $M$ so that $\spt(\|V\|)\subset \Clos(K)$ and $\bd_{rel}K$ is a smooth strongly mean-convex hypersurface meeting $\bd M$ orthogonally. 
    Suppose on the contrary that there exists $p\in \spt(\|V\|)\cap \bd_{rel}K$. 
    Then by the interior (\cite{white2010maximum}) and the boundary (\cite{li2021maximum}) maximum principle, we must have $p$ lying on the edge of $\bd_{rel}K$. Let $x=(x_1,\dots,x_{n+1})$ be the coordinates given by Lemma \ref{Lem: local chart} near $p$.

    %%%%%%%%%%%%%%%%%%%%%%%%%
    %%%%%%%%%%%%%%%%%%%%%%%%%
    %%%%%%%%%%%%%%%%%%%%%%%%%
    {\bf Step 1:} {\em Construct locally a smooth hypersurface $\Gamma$ outside $K$ that is orthogonal to $\bd M$ and touches $\bd_{rel}K$ up to second order at $p$.} 
    
    The construction is parallel to Lemma \ref{Lem: Fermi-type distance function}. 
    Firstly, by the construction in Lemma \ref{Lem: local chart}, we can rotate $(x_3,\dots,x_{n+1}) = (x'',x_{n+1})$ the coordinates of the edge so that
    \[\bd_{rel}K \cap \mB^L_r(p) = {\rm Graph}(f) = \{(x_1,x_2,x'', x_{n+1}) : x_{n+1} = f(x_1,x_2,x'')\} ,\]
    and $K\cap \mB^L_r(p) \subset \{x_{n+1} > f(x_1,x_2,x'')\}$ for some real-valued smooth function $f$ defined in the $n$-wedge domain $\Omega:= \{x :x_1\geq 0, x_2\geq 0,  x_{n+1}=0 \} $ satisfying
    \begin{itemize}
	   \item[(I)] $f(0)=0$ and $Df(0)=0$. 
    \end{itemize} 
    Note the unit normal of ${\rm Graph}(f)$ (pointing inward $K$) is given by
    \[ \nu_{{\rm Graph}(f)} = \sum_{a}^{n+1} \frac{-D_if g^{ia} + g^{n+1,a}}{\sqrt{g^{n+1,n+1}-2D_if g^{i,n+1} + D_if D_jf g^{ij}}} (x_1,x_2,x'',f(x_1,x_2,x'')) \frac{\partial}{\partial x_a}  .\]
    Since $\bd_{rel} K$ is orthogonal to $\bd^{\pm} M_{p,r}$, we compute as before Lemma \ref{Lem: Fermi-type distance function} and see 
    \begin{itemize}
        \item[(II)]	$\left(g^{11}\frac{\bd f}{\bd x_1} +g^{12} \frac{\bd f}{\bd x_2} + g^{1l}\frac{\bd f}{\bd x_l}\right) (0,x_2,x'',f(0,x_2,x'')) = g^{1,n+1}(0,x_2,x'',f(0,x_2,x'')) $;
        \item[(III)] $\left(g^{21}\frac{\bd f}{\bd x_1} +g^{22} \frac{\bd f}{\bd x_2} + g^{2l}\frac{\bd f}{\bd x_l}\right) (x_1,0,x'',f(x_1,0,x'')) = g^{2,n+1}(x_1,0,x'',f(x_1,0,x''))$; 
        \item[(IV)] $\frac{\bd f}{\bd x_1}(0,0,x'') = \frac{\bd f}{\bd x_2}(0,0,x'') = 0$, and thus $\frac{\bd^2 f}{\bd x_1 x_l}(0,0,x'') = \frac{\bd^2 f}{\bd x_2x_l}(0,0,x'') = 0$; 
        \item[(V)] $\left(g^{11}\frac{\bd^2 f}{\bd x_1\bd x_2} +g^{12} \frac{\bd^2 f}{\bd x_2\bd x_2} + \frac{\bd g^{1l}}{\bd x_2}\frac{\bd f}{\bd x_l}\right) (0,0,x'',f(0,0,x'')) =\frac{\bd g^{1,n+1}}{\bd x_2}(0,0,x'',f(0,0,x''))$;
        \item[(VI)] $\left(g^{22}\frac{\bd^2 f}{\bd x_2\bd x_1} +g^{21} \frac{\bd^2 f}{\bd x_1\bd x_1} + \frac{\bd g^{2l}}{\bd x_1}\frac{\bd f}{\bd x_l}\right) (0,0,x'',f(0,0,x'')) = \frac{\bd g^{2,n+1}}{\bd x_1}(0,0,x'',f(0,0,x''))$,
    \end{itemize} 
    where the summation convention is used for $3\leq l\leq n$. 

    Next, we construct the desired $\Gamma$ as a graph of some third order polynomial-like function $\tilde{f} \leq f$. 
    Firstly, fix a small constant $\epsilon>0$ and take $h_0:\bd^E\Omega\to\R$ so that
    \begin{equation}\label{Eq: outside touching surface on edge}
    	{\rm Graph}(h_0) = \{x\in \bd^EM\setminus K: \dist_{\bd^EM}(x,\bd^E(\bd_{rel}K)) =     \epsilon\cdot \dist_{\bd^EM}^4(x,0)\}.
    \end{equation}
    Note $h_0$ will stand for $\tilde{f}(0,0,x'')$ making $\bd^E\Gamma$ a hypersurface in $\bd^E M$ touching $\bd^E(\bd_{rel}K)$ from outside $K\cap\bd^E M$ up to second order at $p$ (\cite[Claim 1]{li2021maximum}). 
    Additionally, since $\bd^E\Gamma:={\rm Graph}(h_0)$ is a smooth hypersurface in $\bd^EM$ near $0$, we can assume $(x_3,\dots,x_n)$ is a normal coordinate of $\bd^E\Gamma$ near $p$, and $x_{n+1}$ stands for the signed geodesic distance in $\bd^E M$ to $\bd^E\Gamma$ so that $x_{n+1}>0$ in $K$. 
    Then one can follow the constructions in the previous appendix to give a coordinates system satisfying Lemma \ref{Lem: local chart}(i)-(iv). 
    Moreover, in this renewed coordinate system, we rewrite the graph function of $\bd^E\Gamma$ by $h_0=0$, and the graph function of $\bd_{rel}K$ is still denoted by $f$, which is defined in $\Omega:= \{x :x_1\geq 0, x_2\geq 0,  x_{n+1}=0 \} $ satisfying (I)-(VI) as before. 
    The following constructions are in the renewed coordinates. 
    
    Secondly, in the light of (V)(VI), let
    \begin{align}%\label{Eq: outside touching surface 12 11 22}
    	 h_{12}(x'') &:= \frac{\bd^2 f}{\bd x_1\bd x_2}(0,0,0),\quad\mbox{($=0$ if $\theta(0)=\pi/2$ by (V)(VI)(\ref{ddag}))}, \label{Eq: outside touching surface 12}\\
    	 h_{22}(x'') &:=
    	\left\{ 
    	 \begin{array}{ll}
    	 	-\frac{1}{g^{12}}\Big[g^{11}\big(\frac{\bd^2 f}{\bd x_1\bd x_2}(0,0,0)\big) + \frac{\bd g^{1l}}{\bd x_2}   \frac{\bd h_0}{\bd x_l} \Big] (0,0,x'',h_0(x'')), & \theta(0)\neq \frac{\pi}{2},
    	 	\\
    	 	\frac{\bd^2 f}{\bd x_2\bd x_2}(0,0,0), & \theta(0)=\frac{\pi}{2},
    	 \end{array}
    	 \right. \label{Eq: outside touching surface 11}\\
         h_{11}(x'') &:=
    	\left\{ 
    	 \begin{array}{ll}
    	 	-\frac{1}{g^{21}}\Big[g^{22}\big(\frac{\bd^2 f}{\bd x_1\bd x_2}(0,0,0)\big) + \frac{\bd g^{2l}}{\bd x_1} \frac{\bd h_0}{\bd x_l} \Big] (0,0,x'',h_0(x'')), & \theta(0)\neq \frac{\pi}{2},
    	 	\\
    	 	\frac{\bd^2 f}{\bd x_1\bd x_1}(0,0,0), & \theta(0)=\frac{\pi}{2},
    	 \end{array}
    	 \right.\label{Eq: outside touching surface 22}
    \end{align}
    Using (IV), we define $\Gamma\cap\bd^{\pm}M_{p,r}$ on the faces by the graph of some third order polynomial-like functions $F_1$ and $F_2$ respectively:
    \begin{align}%\label{Eq: outside touching surface on faces}
    	F_2(x_2,x'') &:= h_0(x'') + \frac{x_2^2}{2}h_{22}(x'') + \frac{x_2^3}{6} \Big(\frac{\bd^3 f}{\bd x_2^3}(0,0,0) - \epsilon \Big), \label{Eq: outside touching surface on faces 2}\\
    	F_1(x_1,x'') &:= h_0(x'') + \frac{x_1^2}{2}h_{11}(x'') + \frac{x_1^3}{6} \Big(\frac{\bd^3 f}{\bd x_1^3}(0,0,0) - \epsilon \Big). \label{Eq: outside touching surface on faces 1}
    \end{align}
    Finally, in view of (II)(III), we define
    \begin{eqnarray}\label{Eq: outside touching surface}
    	\tilde{f}(x_1,x_2,x'') &=& F_1(x_1,x'') + F_2(x_2,x'') - h_0(x'') \nonumber
    	\\&& + x_1\left[\frac{1}{g^{11}}\left(g^{1,n+1}-g^{12}\frac{\bd F_2}{\bd x_2} - g^{1l}\frac{\bd F_2}{\bd x_l} \right) \right](0,x_2,x'',F_2(x_2,x''))
    	\\&&  +x_2\left[\frac{1}{g^{22}}\left(g^{2,n+1}-g^{21}\frac{\bd F_1}{\bd x_1} - g^{2l}\frac{\bd F_1}{\bd x_l} \right) \right](x_1,0,x'',F_1(x_1,x''))\nonumber
    	\\&&  - x_1x_2h_{12}(x''), \nonumber
    \end{eqnarray}
    which satisfies $\tilde{f}(0,0,x'')=h_0$, $\tilde{f}(x_1,0,x'')=F_1$, $\tilde{f}(0,x_2,x'')=F_2$, and $\frac{\bd^2 \tilde{f}}{\bd x_i\bd x_j} = h_{ij}$ for $i,j\in\{1,2\}$. 
    Additionally, (I)-(VI) are satisfied with $\tilde{f}$ in place of $f$, and thus $\Gamma={\rm Graph}(\tilde{f})$ is a smooth hypersurface meeting $\bd M$ orthogonally. 
    Moreover, one verifies that $\tilde{f}\leq f$ in a small neighborhood of $p$, and $\tilde{f}$ agrees with $f$ up to second order at $0$.

    %%%%%%%%%%%%%%%%%%%%%%%%%
    %%%%%%%%%%%%%%%%%%%%%%%%%
    %%%%%%%%%%%%%%%%%%%%%%%%%
    {\bf Step 2.} {\em Construct a local foliation $\{\Gamma_s\}_{s\in (-\delta,\delta)}$ orthogonal to $\bd M$ starting at $\Gamma_0 = \Gamma$.}
    
    %Since $\bd^E\Gamma$ is a smooth hypersurface in $\bd^EM$, we can assume $(x_3,\dots,x_n)$ is a normal coordinate of $\bd^E\Gamma$ near $p$, and $x_{n+1}$ stands for the signed geodesic distance in $\bd^E M$ to $\bd^E\Gamma$ so that $x_{n+1}>0$ in $K$. 
    %Then one can follow the constructions in the previous appendix to give a coordinates system satisfying Lemma \ref{Lem: local chart}(i)-(iv).  
    In the coordinates system after (\ref{Eq: outside touching surface on edge}), $\Gamma ={\rm Graph}(\tilde{f})=\{x: x_{n+1}=\tilde{f}(x_1,x_2,x'')\}$ for some function $\tilde{f}$ given by (\ref{Eq: outside touching surface}) %defined in the $n$-wedge domain $\Omega:= \{x :x_1\geq 0, x_2\geq 0,  x_{n+1}=0 \} $ 
    so that 
    \[\tilde{f}(0,0,x'')\equiv 0,\]
    and the above (I)-(VI) are satisfied with $\tilde{f}$ in place of $f$. 

    Next, we apply the technique in Lemma \ref{Lem: Fermi-type distance function} again to construct the foliation $\Gamma_s$. 
    For any $s\in\R$ sufficiently close to $0$, let 
    \begin{equation}
    	\tilde{h}_0^s (x'') = \tilde{f}(0,0,x'')  + s = s,
    \end{equation}
    whose graphs give a foliation in $\bd^EM$ starting at $\bd^E\Gamma$. 
    Then we use $\tilde{h}_0^s$ % and $\tilde{f}$ 
    in place of $h_0$ %and $f$ 
    in (\ref{Eq: outside touching surface 12})-(\ref{Eq: outside touching surface 22}) %respectively 
    to get $\tilde{h}_{12}^s(x''), \tilde{h}_{22}^s(x''), \tilde{h}_{11}^s(x'')$.
    %Define on each face that 
    %\begin{eqnarray}
    %	\tilde{F}_2^s(x_2,x'') &:=& \tilde{h}_0^s(x'') + \frac{x_2^2}{2}\tilde{h}_{22}^s(x'') + \frac{x_2^3}{6} \Big(\frac{\bd^3 f}{\bd x_2^3}(0,0,0) - \epsilon \Big) , \\
    %	\tilde{F}_1^s(x_1,x'') &:=& \tilde{h}_0^s(x'') + \frac{x_1^2}{2}\tilde{h}_{11}^s(x'') + \frac{x_1^3}{6} \Big(\frac{\bd^3 f}{\bd x_1^3}(0,0,0) - \epsilon \Big),
    %\end{eqnarray}
    %whose graphs give a foliation on $\bd^{\pm}M_{p,r}$ respectively.
    Next, define $\tilde{F}_2^s(x_2,x'')$ and $\tilde{F}_1^s(x_1,x'')$ on each face as in (\ref{Eq: outside touching surface on faces 2})-(\ref{Eq: outside touching surface on faces 1}) with $\tilde{h}_0^s, \tilde{h}_{12}^s, \tilde{h}_{22}^s, \tilde{h}_{11}^s$ in place of $h_0, h_{12}, h_{22},h_{11}$, whose graphs give a foliation in $\bd^{\pm}M_{p,r}$ starting at $\bd^{\pm}\Gamma$.  
    Finally, we define
    \begin{eqnarray}
    	\tilde{f}_s(x_1,x_2,x'') &:=& \tilde{F}_1^s(x_1,x'') + \tilde{F}_2^s(x_2,x'') - \tilde{h}_0^s(x'') \nonumber
    	\\&& + x_1\left[\frac{1}{g^{11}}\left(g^{1,n+1}-g^{12}\frac{\bd \tilde{F}_2^s}{\bd x_2} - g^{1l}\frac{\bd \tilde{F}_2^s}{\bd x_l} \right) \right](0,x_2,x'', \tilde{F}_2^s(x_2,x''))
    	\\&& +x_2\left[\frac{1}{g^{22}}\left(g^{2,n+1}-g^{21}\frac{\bd \tilde{F}_1^s}{\bd x_1} - g^{2l}\frac{\bd \tilde{F}_1^s}{\bd x_l} \right) \right](x_1,0,x'',\tilde{F}_1^s(x_1,x''))\nonumber
    	\\&& - x_1x_2\tilde{h}_{12}^s(x''), \nonumber
    \end{eqnarray}
    which satisfies $\tilde{f}_s(0,0,x'')=\tilde{h}_0^s$, $\tilde{f}_s(x_1,0,x'')=\tilde{F}_1^s$, $\tilde{f}_s(0,x_2,x'')=\tilde{F}_2^s$, and $\frac{\bd^2 \tilde{f}_s}{\bd x_i\bd x_j} = \tilde{h}_{ij}^s$ for $i,j\in\{1,2\}$. 
    In addition, (I)-(VI) are satisfied with $\tilde{f}_s$ in place of $f$ except $\tilde{f}_s(0)=s$.  Hence, $\Gamma_s:={\rm Graph}(\tilde{f}_s)$ is a smooth hypersurface orthogonal to $\bd M$ for each $s$ close to $0$. 
    One easily verifies that $\{\Gamma_s\}_{s\in (-\delta,\delta)}$ forms a foliation near $0$ with $\Gamma_0=\Gamma$ for $\delta >0$ small. 
    %$(x,s)\mapsto (x,\tilde{f}_s(x))$ is a smooth map non-degenerate at $0$, and thus it is a diffeomorphism near $0$, which implies $\{\Gamma_s\}_{s\in (-\delta,\delta)}$ is a foliation.  

    %For any $q\in\Omega$ near $p$, take $s(q)$ with $q\in \Gamma_{s(q)}$. 
    %Noting $\frac{\bd s}{\bd x_{n+1}}(0) = 1$, the map given by $(x_1,x_2,x'',x_{n+1})\to (x_1,x_2,x'', s)$ is non-degenerate at $0$, which implies $\{\Gamma_s\}_{s\in (-\delta,\delta)}$ forms a foliation near $q$ for $\delta >0$ small enough. 
    %Moreover, $\Gamma_0=\Gamma$ by the above constructions. 

    %%%%%%%%%%%%%%%%%%%%%%%%%
    %%%%%%%%%%%%%%%%%%%%%%%%%
    %%%%%%%%%%%%%%%%%%%%%%%%%
    {\bf Step 3.} {\em Construct $X\in\mathfrak{X}_{tan}(M)$ using $\{\Gamma_s\}_{s\in (-\delta,\delta)}$, and get a contradiction.}

    Firstly, we make the following observations from the above constructions. 
    Since $\bd_{rel}K$ is strictly mean convex and $\Gamma_0=\Gamma$ touches $\bd_{rel}K$ up to second order, there exist $c_0>0$ and $\delta_0>0$ small enough so that 
    \[ H_{\Gamma_s} >c_0, \qquad\forall s\in [0,\delta_0),\]
    where $H_{\Gamma_s}$ is the mean curvature of $\Gamma_s$ with respect to the unit normal pointing outside $K$. 
    Using the foliation $\{\Gamma_s\}_{s\in (-\delta_0,\delta_0)}$, we also have a small neighborhood $\mB^L_r(p)$ of $p$ and a smooth function $s: \mB^L_r(p)\cap M\to (-\delta_0,\delta_0)$ so that $s(q)$ is the unique $s$ with $q\in\Gamma_{s(q)}$. 
    In addition, note $\bd_{rel}\Gamma_0$ lies outside of $K$.
    We can further assume $\bd_{rel}\Gamma_{s(q)}$ lies outside of $K$ for all $q\in \mB^L_r(p)$ by shrinking to a smaller neighborhood $\mB^L_r(p)$ of $p$.

    Next, let $\nu$ be a unit vector field normal to each $\Gamma_s$ and pointing inward $K$. 
    Then, since $\Gamma_s$ is orthogonal to $\bd M$, $\nu$ is tangent to $\bd^FM$ at face points of $M$ and tangent to $\bd^EM$ at edge points of $M$. 
    In addition, the smooth function $s: \mB^L_r(p)\cap M\to (-\delta_0,\delta_0)$ satisfies
    \[\nabla s = \psi \nu \]
    for some function $\psi$ with $\psi(p)=1$ by the constructions. 
    After shrinking the neighborhood $\mB^L_r(p)$ even smaller, we can assume 
    \[\psi\geq \frac{1}{2} \qquad \mbox{in $\mB^L_r(p)\cap M$}.\]
    Now, given $\delta\in (0, \delta_0)$, define 
    \[ X(q):= \phi(s(q))\nu,\qquad\phi(s):=
	   \left\{ 
	   \begin{array}{ll}
		  e^{\frac{1}{s-\delta}} & \forall s\in [0,\delta),\\
		  0 & s\geq \delta,
	   \end{array} 
	   \right.
	   \quad{\rm with}\quad \phi'\leq -\frac{1}{\delta^2}\phi\leq 0.
    \]
    Since $\bd_{rel}\Gamma_{s(q)}$ lies outside of $K$ for all $q\in \mB^L_r(p)$, we can take $\delta\in (0, \delta_0)$ sufficiently small so that $X$ (as well as $\phi$) admits a smooth $0$ extension in $K$ which can be further extended to an element in $\mfX_{tan}(M)$ with compact support (still denote by $X$ and $\phi$). 

    Consider the bilinear form $Q$ on $T^*M$ given by 
    \[ Q(u,v)(q) = \langle\nabla_uX,v\rangle.\]
    Then for all $q\in \Clos(K) $ and $u,v\perp\nu$ at $q$,
    \begin{equation}\label{Eq: maximum principle: bilinear form}
    	Q(\nu,\nu) = \phi'\psi,\quad Q(u,\nu) = 0,\quad Q(\nu, u) = \phi\langle\nabla_\nu \nu, u\rangle,\quad Q(u, v)= -\phi A_{\Gamma_s}(u,v),
    \end{equation}
    where $A_{\Gamma_s}$ is the second fundamental form of $\Gamma_s$ with respect to $-\nu$. 
    We also have 
    \begin{equation}\label{Eq: maximum principle: bilinear form estimates}
        |Q(\nu, u)|\leq \phi|u|C \quad{\rm and}\quad |Q(u, v)|\leq \phi|u||v|C, 
    \end{equation}
    for some $C>0$ depending on $\{\Gamma_s\}$ (independent on $\delta$), where $u,v\perp\nu$ at $q\in \Clos(K) $.

    Given $q\in \Clos(K)$ with $X(q)\neq 0$, take any $n$-subspace $P\subset T_qM$.
    If $P=T_q\Gamma_s$, then ${\rm tr}_PQ \leq -\phi c_0 <0$. 
    If $P\neq T_q\Gamma_s$, let $\{v_1,\dots,v_{n-1}\}$ be an orthonormal basis of $P\cap T_q\Gamma_s$, $v_0\in T_q\Gamma_s$ and $v_{n}\in P$ be the unit normal of $P\cap T_q\Gamma_s$ in $T_q\Gamma_s$ and in $P$ respectively. 
    Hence, there exists $\theta\in [0, 2\pi]$ so that $v_n = \cos\theta v_0 + \sin\theta\nu$. 
    It follows from (\ref{Eq: maximum principle: bilinear form})(\ref{Eq: maximum principle: bilinear form estimates}) that 
    \begin{eqnarray*}
    	{\rm tr}_PQ(q) &=& \sum_{i=1}^{n-1}Q(v_i,v_i) + Q(v_n,v_n)
    	\\&=& \sum_{i=0}^{n-1}Q(v_i,v_i) + \sin^2\theta (Q(\nu,\nu) - Q(v_0,v_0)) + \sin\theta\cos\theta (Q(\nu,v_0)+Q(v_0,\nu))
    	\\&=& -\phi H_{\Gamma_{q(s)}} + \sin^2\theta(\phi'\psi + \phi A_{\Gamma_s}(v_0,v_0)) + \sin\theta\cos\theta Q(\nu,v_0)
    	\\&\leq& -c_0\phi + \phi \sin^2\theta \Big( -\frac{1}{2\delta^2}+C \Big) + \sin\theta\cos\theta \cdot \phi C
    	\\&\leq& -c_0\phi + \phi \Big( -\frac{\sin^2\theta}{2\delta^2}+\sin^2\theta C + |\sin\theta\cos\theta| C \Big)
    	\\&\leq& -c_0\phi + \phi \Big( -\frac{\sin^2\theta}{2\delta^2}+\sin^2\theta C + \frac{\sin^2\theta C^2}{c_0}+\frac{c_0\cos^2\theta }{4} \Big)
    	\\&\leq& -\frac{3}{4}c_0\phi  + \phi \sin^2\theta \Big( -\frac{1}{2\delta^2}+ C + \frac{ C^2}{c_0}\Big).
    \end{eqnarray*}
    By taking $\delta \in (0,\delta_0)$ small enough, we have ${\rm tr}_PQ(q)\leq -\frac{3}{4}c_0\phi < 0$. 

    Combining with $\spt(\|V\|)\subset\Clos(K)$, we have $\delta V(X)<0$ as a contradiction. 
\end{proof}

%%%%%%%%%%%%%%%%%%%%%%%%%%%%%%%%%%%%%%%%%%%%%%
%%%%%%%%%%%%%%%%%%%%%%%%%%%%%%%%%%%%%%%%%%%%%%
%%%              Appendix C                %%%
%%%%%%%%%%%%%%%%%%%%%%%%%%%%%%%%%%%%%%%%%%%%%%
%%%%%%%%%%%%%%%%%%%%%%%%%%%%%%%%%%%%%%%%%%%%%% 
\section{Elliptic regularity for FBMHs in non-obtuse wedge domain}\label{Sec: elliptic regularity}

Consider a locally wedge-shaped Riemannian manifold $M^{n+1}\subset\R^L$ and a $C^{1,\alpha}$-to-edge almost properly embedded locally wedge-shaped FBMH $(\Sigma,\{\bd_m \Sigma\})\subset  (M,\{\bd_{m+1}M\})$ for some $\alpha\in(0,1]$. 
By the classical regularity results, $\Sigma$ is smooth away from the edge $\bd^E\Sigma$. 
In this appendix, we will show the regularity of $\Sigma$ can be upgraded to  $C^{2,\alpha_0}$-to-edge for some $\alpha_0\in (0,1)$ provided (\ref{dag}) or (\ref{ddag}). 

Take any $p\in \bd^E\Sigma$. 
We first write down the elliptic equation and the boundary conditions for $\Sigma$ in a local model near $p$. 
Then in Case (\ref{dag}), we combine the regularity results in \cite{lieberman1988oblique}\cite{grisvard2011elliptic} to show the $C^{2,\alpha_0}$-regularity of $\Sigma$ near $p$ provided $n=2$. 
For higher dimensional cases, the proof can be restricted on each $2$-dimensional slice near $p$ together with a continuity argument. 
In Case (\ref{ddag}), we apply the reflection technique to show the regularity of $\Sigma$. 

\subsection{Equations of FBMHs}

Let $(\phi_p, \mB^L_R(p), \Omega(p))$ be a local model of $M$ centered at $p\in\bd^E\Sigma$, where $\Omega(p) = \Omega^{2}_{\theta_0} \times \R^{n-1}$ is an $(n+1)$-wedge domain in the standard form (\ref{Eq: standard wedge}) and $\theta_0 = \theta(p)$. 
Note 
\begin{equation}\label{Eq: metric is Euclidean at 0}
	g_{ij}(0) = \delta_{ij}, \quad\forall i,j\in\{1,\dots,n+1\}. 
\end{equation}
Additionally, the outward unit normal $\nu^{\pm}$ on $\bd^{\pm}\Omega(p) \subset \{x: x_1= \pm\cot(\theta_0/2)x_2\}$ with respect to the metric $g:=g_{_M}$ is given by 
\[ \nu_{\pm}  ~=~ - \frac{\mp\cot(\frac{\theta_0}{2})g^{2a} + {g}^{1a}}{\sqrt{g^{11} \mp 2\cot(\frac{\theta_0}{2})g^{12} + \cot^2(\frac{\theta_0}{2})g^{22}}} \frac{\partial}{\partial x_a} ,\]
where the summation convention is used, $a\in\{1,\dots,n+1\}$.

Let $\Omega:= \{x\in \Omega(p): x_{n+1}=0\}$, $\tx:=(x_1,\dots,x_n)$, and $\Omega_r:= \Omega\cap \Clos(\mB^n_r(0))$ for $r>0$. 
Without loss of generality, we may assume that $T_p\Sigma = \Omega$ and there exists a function $u\in C^{1,\alpha}(\Omega_{R})\cap C^{\infty}(\Omega_{R}\setminus\bd^E\Omega)$ so that 
$ \Sigma = {\rm Graph}(u) := \{(\tx, u(\tx)) :\tx\in\Omega_R\} $, and thus
\begin{equation}\label{Eq: u on tangent space}
	u(0)=0, \qquad Du(0)=0.
\end{equation}
Then, the upward unit normal of ${\rm Graph}(u)\subset\Omega(p)$ is given by 
$$\nu_{\Sigma}(\tx, u(\tx)) = \frac{-u_i g^{ia} + g^{n+1,a}}{\sqrt{g^{n+1,n+1}-2u_i g^{i,n+1} + u_i u_j g^{ij}}} (\tx,u(\tx)) \frac{\partial}{\partial x_a}\Big|_{(\tx,u(\tx))} ,$$
where $i,j\in\{1,\dots,n\}$, $a,b\in\{1,\dots,n+1\}$, and $u_i:=\frac{\partial u}{\partial x_i}$. 
By the definition of FBMHs, $u$ is a solution of the following elliptic PDE:
\begin{equation}\label{Eq: FBMH equation general}
	\left\{\begin{array}{ll}
		Lu := \Div_{\Sigma} \nu_\Sigma = 0 &\qquad  \mbox{in $\interior(\Omega_{R})$}, \\
		Bu := \beta_{\pm} \cdot Du = f_{\pm} &\qquad  \mbox{on $\bd^{\pm}\Omega_{R}$},
	\end{array}\right.
\end{equation}
where $\beta_{\pm}:=(\beta_{\pm, u}^1,\dots,\beta_{\pm, u}^n )$ and $f_{\pm}:=\beta_{\pm, u}^{n+1}$ are defined by
\begin{equation}\label{Eq: boundary coefficients}
	\beta_{\pm, u}^a(\tx):=(\pm\cos(\frac{\theta_0}{2})g^{2a} - \sin(\frac{\theta_0}{2})g^{1a})(\tx,u(\tx)) ~ \in C^{1,\alpha}(\bd^{\pm}\Omega)\cap C^{\infty}(\bd^{\pm}\Omega\setminus\bd^E\Omega).
\end{equation}
In addition, if we write $Lu$ in the form of 
\[Lu=a_{ij}u_{ij} + b_iu_i + cu -f ,\]
then 
\begin{itemize}
	\item[(i)] $a_{ij}(\tx) := a_{ij}(\tx, u(\tx), Du(\tx)) \in C^{0,\alpha}(\Omega_{R})\cap C^{\infty}(\Omega_{R}\setminus\bd^E\Omega) $ is a uniformly positive-definite matrix-valued function so that $a_{ij}(0) = \delta_{ij}$;
	\item[ (ii)] $b_i(\tx) := b_i(\tx, u(\tx), Du(\tx))\in C^{0,\alpha}(\Omega_{R})\cap C^{\infty}(\Omega_{R}\setminus\bd^E\Omega)$ and $c(\tx) =0$; 
	%\item[(iii)] $c(\tx) =0$;
	\item[(iii)] $f(\tx) := f(\tx, u(\tx)) \in C^{1,\alpha}(\Omega_{R})\cap C^{\infty}(\Omega_{R}\setminus\bd^E\Omega)$;
	\item[(iv)] $f_{\pm}(0)=0$, and $\beta_{\pm}(0) = \nu_{\pm, \delta}$ is the outward unit normal of $\bd^{\pm}\Omega$ with respect to the Euclidean metric $\delta$. 
\end{itemize}
Noting $a_{ij}(0)=\delta_{ij}$ and $\beta_{\pm}(0) = \nu_{\pm, \delta}$, we have the following lemma when $n=2$ by \cite[Theorem 1.4]{lieberman1988oblique}. 
\begin{lemma}\label{Lem: C1 alpha with large alpha}
	Suppose $R>0$, $\alpha\in (0,1]$, $\Omega$ is an $n$-wedge domain with wedge angle $\theta_0$, and $u\in C^{1,\alpha}(\Omega_{R})\cap C^{\infty}(\Omega_{R}\setminus\bd^E\Omega)$ is a function so that (\ref{Eq: metric is Euclidean at 0})-(\ref{Eq: boundary coefficients}) are satisfied. 
	Then $u$ is $C^{1,\alpha}$ in $\Omega_{R}$ for {\em all} $\alpha \in (0,1)$ provided $n=2$ and $\theta_0\in (0, \frac{\pi}{2}]$. 
\end{lemma}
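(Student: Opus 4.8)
The plan is to reduce Lemma~\ref{Lem: C1 alpha with large alpha} to an application of the boundary regularity theory for oblique derivative problems in planar domains with corners, as developed by Lieberman. Since $n = 2$, the domain $\Omega_R$ is a planar wedge of opening angle $\theta_0 \in (0, \pi/2]$ with vertex at the origin, and $u$ solves the quasilinear equation $Lu = 0$ in the interior together with the two oblique boundary conditions $Bu = \beta_\pm \cdot Du = f_\pm$ on the two edges $\bd^\pm \Omega_R$. The key structural facts we have are: the principal coefficient matrix $a_{ij}$ is continuous (indeed $C^{0,\alpha}$) up to the vertex and equals the identity there, so the equation is uniformly elliptic with a vanishing-oscillation condition at the corner; and the boundary vector fields $\beta_\pm$ are $C^{0,\alpha}$ up to the vertex with $\beta_\pm(0) = \nu_{\pm,\delta}$ the Euclidean conormals. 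In particular, the obliqueness is \emph{conormal at the vertex}: at the origin each $\beta_\pm$ is exactly perpendicular to the corresponding edge, so the two boundary operators are (to leading order) the Neumann condition for the Laplacian.

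The main step is to invoke the corner regularity theorem for such problems --- this is exactly the content of \cite[Theorem 1.4]{lieberman1988oblique}, whose hypotheses we must check and whose conclusion provides a H\"older exponent for $Du$ at the vertex governed by the geometry. Concretely, for the model problem $\Delta v = 0$ in a wedge of angle $\theta_0$ with homogeneous Neumann data on both sides, the homogeneous harmonic functions are spanned by $r^{k\pi/\theta_0}\cos(k\pi\vartheta/\theta_0)$, $k = 0, 1, 2, \dots$ (in polar coordinates $(r,\vartheta)$ centred at the vertex). The first non-constant mode has exponent $\pi/\theta_0 \ge 2$ precisely when $\theta_0 \le \pi/2$. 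Hence the model solutions lie in $C^{1,\gamma}$ at the vertex for every $\gamma < \pi/\theta_0 - 1$, and when $\theta_0 \le \pi/2$ this means $C^{1,\gamma}$ for every $\gamma \in (0,1)$. The perturbative theory of \cite{lieberman1988oblique} (treating $a_{ij} - \delta_{ij}$, $b_i$, $c \equiv 0$, $f$, and $\beta_\pm - \nu_{\pm,\delta}$ as small H\"older perturbations near the vertex, absorbed via a freezing/rescaling argument) then upgrades the a~priori $C^{1,\alpha}$ regularity of $u$ to $C^{1,\gamma}$ for every $\gamma \in (0,1)$, with the estimate localized to $\Omega_{R/2}$ say. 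The lower-order terms $b_i u_i$ and $f$ are harmless since $u \in C^{1,\alpha}$ already makes the frozen-coefficient source a H\"older function, and $c = 0$ removes any zeroth-order obstruction.

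First I would restate the hypotheses (i)--(iv) as saying that $(L, B)$ is, after the change of variables straightening the wedge to the standard sector $\Omega_{\theta_0}^2$, a uniformly oblique problem of the type covered by \cite[Theorem 1.4]{lieberman1988oblique}, with coefficients in $C^{0,\alpha}$ up to the corner and with the obliqueness angle at the corner equal to $\pi/2$ on each face. Then I would identify the critical exponent for the corresponding constant-coefficient model problem --- the Neumann Laplacian on the $\theta_0$-sector --- as $\pi/\theta_0$, observe that $\pi/\theta_0 \ge 2 > 1$ exactly under the hypothesis $\theta_0 \in (0,\pi/2]$, and conclude via Lieberman's theorem that $u \in C^{1,\gamma}(\Omega_R)$ for all $\gamma \in (0,1)$. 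The one point requiring care --- and the step I expect to be the main obstacle --- is verifying that the specific structure of our boundary operators matches the structural conditions in \cite{lieberman1988oblique}: in particular that the obliqueness is uniform (the angle between $\beta_\pm$ and the respective edge stays bounded away from tangency), that the compatibility at the vertex between the two boundary conditions holds (here automatic, since both reduce to the Neumann condition and $f_\pm(0) = 0$), and that the H\"older continuity of $\beta_\pm$ and $a_{ij}$ up to the vertex is genuinely what that theorem requires rather than a stronger Dini-type modulus. Once these are matched, the lemma follows immediately; the higher-dimensional case (alluded to in the surrounding text) is then handled separately by slicing, so no further work is needed here.
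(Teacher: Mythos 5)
Your proposal is correct and follows essentially the same route as the paper: both reduce to Lieberman's corner regularity theorem \cite[Theorem~1.4]{lieberman1988oblique} and identify the critical H\"older exponent for the frozen-coefficient Neumann model at the vertex as $\pi/\theta_0 - 1$, which is $\ge 1$ precisely when $\theta_0 \le \pi/2$. The only cosmetic difference is that the paper obtains the exponent by plugging into Lieberman's algebraic formulas ($T = \cot\theta_0$, $\alpha_1 = (\pi-\delta)/\theta_0$ with $\delta=\theta_0$) rather than by a separation-of-variables argument for the model Neumann problem, but these give the same number and the same conclusion.
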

\begin{proof}
	In a $2$-dimensional wedge domain $\Omega$ with wedge angle $\theta_0\in (0, \frac{\pi}{2}]$, the optimal weighted H\"older coefficient $\alpha_1$ in \cite[Lemma 1.3, Theorem 1.4]{lieberman1988oblique} can be taken as $1$ for the equations (\ref{Eq: FBMH equation general}), which immediately gives the lemma by the estimates \cite[(1.21)]{lieberman1988oblique} and \cite[Lemma 2.1]{gilbarg1980intermediate}. 
	Indeed, after rotating $\Omega_{\theta_0}$, we can take $A^{ij} := a_{ij}(0) = \delta_{ij}$, $\beta_1:=\nu_{-, \delta}$, $\beta_2:=\nu_{+, \delta}$, and $\tau=\cot(\theta_0/2)$ in \cite[Lemma 1.1]{lieberman1988oblique}. 
    Note the inward pointing condition \cite[(1.3a)]{lieberman1988oblique} can be changed equivalently to our outward pointing case $\tau \beta_1^1+\beta_1^2\leq -1$. 
	%Note also that \cite[(1.3a)]{lieberman1988oblique} can be changed to $\tau \beta_1^1+\beta_1^2\leq -1$, since the outward pointing vectors are equivalent to the inward pointing ones in the boundary condition. 
	Hence, we can follow the proof of \cite[Lemma 1.1]{lieberman1988oblique}  to get 
	$\beta_1=(0,-1)$, $\beta_2=(-\sin(\theta_0), \cos(\theta_0))$, and 
	$
		T = \frac{\beta_{2}^{1} \beta_{1}^{1}+\frac{A^{11}}{A^{22}} \beta_{2}^{2} -\frac{2 A^{12}}{A^{22}\beta_1^2} \beta_{2}^{2} \beta_{1}^{2}}{\beta_{2}^{1} \beta_{1}^{2} - \beta_{1}^{1} \beta_{2}^{2}} = \frac{\cos(\theta_0)}{\sin(\theta_0)} \geq 0,
	$
	which implies $T=\cot(\delta)$ for some $\delta\in (0, \frac{\pi}{2}]$. 
    Next, by the statements on \cite[Page 5]{lieberman1988oblique} before Lemma 1.2, the constant $\alpha_1$ in \cite[Lemma 1.1]{lieberman1988oblique} can be taken as $\alpha_1=(\pi - \delta)/\theta_0 \geq 1$. 
	Therefore, for any $\alpha\in (0,1)$ (i.e. $\alpha < \alpha_1$), after applying \cite[Lemma 1.2]{lieberman1988oblique} with our coefficients (i)-(iv), the arguments in \cite[Lemma 1.3, Theorem 1.4]{lieberman1988oblique} suggest $u$ is $C^{1,\alpha}$.
\end{proof}

\subsection{In $2$-dimension acute wedge domains}

We now consider the case that $n=2$ and $\theta_0<\pi/2$. 
By Lemma \ref{Lem: C1 alpha with large alpha}, we may assume $u\in C^{1,\alpha}(\Omega_{R})\cap C^{\infty}(\Omega_{R}\setminus\bd^E\Omega)$ with
\begin{equation}\label{Eq: C 1 alpha with large alpha}
	\frac{1}{2}< \alpha = \alpha_1 + \epsilon  <1, \quad \alpha_1>\frac{1}{2}>  \epsilon>0. 
\end{equation}
Then, rewrite the equations (\ref{Eq: FBMH equation general}) by 
\begin{equation}\label{Eq: FBMH equation 2-dim (1)}
	\left\{\begin{array}{ll}
		\hat{L}u := a_{ij}u_{ij} = \hat{f} &\qquad  \mbox{in $\interior(\Omega_{R})$}, \\
		\hat{B}u := \beta_{\pm}Du = f_{\pm} &\qquad  \mbox{on $\bd^{\pm}\Omega_{R}$},
	\end{array}\right.
\end{equation}
where $\hat{f} := -b_iu_i + f \in C^{0,\alpha}(\Omega_{R})\cap C^{\infty}(\Omega_{R}\setminus\bd^E\Omega)$. 

\begin{lemma}\label{Lem: growth rate of DDDu}
	Using the above notations, for all $i,j,k\in \{1,2\}$ and $x\in \Omega_{R}\setminus\{0\}$, we have 
	\[|x|^{1-\alpha}|u_{ij} (x)| \leq C \quad {\rm and}\quad |x|^{2-\alpha}|u_{ijk}(x)|\leq C,\]
	where $|x|:=\sqrt{x_1^2+ x_2^2}$ and $C= C(g,\|u\|_{C^{1,\alpha}(\Omega_{R})},\Omega_{R})>0$. 
\end{lemma}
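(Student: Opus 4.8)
The idea is a standard interior-Schauder rescaling (blow-up) argument centered at the edge vertex $0$, combined with the already-established boundary Schauder theory for oblique problems on half-discs. For $x \in \Omega_R \setminus \{0\}$, set $\rho := |x|$ and rescale by $\rho$: define $u_\rho(y) := \rho^{-(1+\alpha)} u(\rho y)$ on a fixed small scale-invariant region $\Omega_2 \setminus \Omega_{1/4}$ (intersected with $\Omega$). Since $u \in C^{1,\alpha}(\Omega_R)$ with $u(0) = 0$, $Du(0) = 0$ (see (\ref{Eq: u on tangent space})), the functions $u_\rho$ are uniformly bounded in $C^{1,\alpha}$ on that annular region, uniformly in $\rho$; indeed $|u(\rho y)| \le [u]_{C^{1,\alpha}}\, (\rho|y|)^{1+\alpha}$ and similarly for $Du$. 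The rescaled function $u_\rho$ solves a uniformly elliptic equation $\hat L_\rho u_\rho = \hat f_\rho$ with oblique boundary condition $\hat B_\rho u_\rho = (f_\pm)_\rho$ on the two flat faces, where the rescaled coefficients $a_{ij}^\rho(y) = a_{ij}(\rho y, u(\rho y), Du(\rho y))$ have $C^{0,\alpha}$-norms bounded independently of $\rho$ (they converge to the constant $\delta_{ij}$ as $\rho \to 0$ by (i) above), and likewise $\hat f_\rho$, $(\beta_\pm)_\rho$, $(f_\pm)_\rho$ are uniformly $C^{0,\alpha}$-bounded on the fixed annular region away from the edge.

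Now observe that the point $e = x/\rho$ lies on the unit sphere $|e| = 1$ inside the annular region $\Omega_2 \setminus \Omega_{1/4}$. Either $e$ is an interior point of $\Omega$, in which case interior Schauder estimates (e.g. \cite[Theorem 6.2, Chapter 6]{gilbarg1980intermediate}-type estimates) applied on a ball $\mB^2_{c}(e)$ of fixed radius $c$ (depending only on the geometry, since the edge and the two faces are at definite distance from the annular region points that are interior) give $\|u_\rho\|_{C^{2,\alpha}(\mB^2_{c/2}(e))} \le C(\|u_\rho\|_{C^0} + \|\hat f_\rho\|_{C^{0,\alpha}}) \le C$; or $e$ lies on one of the two flat faces $\bd^\pm\Omega$ but still at definite distance from the edge $\bd^E\Omega = \{0\}$ and from the other face (here $\theta_0 < \pi/2$ guarantees the two faces stay apart away from the vertex), so the boundary Schauder estimate for the oblique derivative problem on a half-disc — as invoked in Lemma \ref{Lem: C1 alpha with large alpha} via \cite{lieberman1988oblique} and the smooth interior/boundary theory away from the corner — gives the same $C^{2,\alpha}$ bound on a fixed-size neighborhood of $e$ in $\Omega_\rho$. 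In both cases $\|u_\rho\|_{C^{2,\alpha}}$ near $e$ is bounded by a constant $C = C(g, \|u\|_{C^{1,\alpha}(\Omega_R)}, \Omega_R)$ independent of $\rho$.

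Unwinding the rescaling then yields the claimed decay. From $|D^2 u_\rho(e)| \le C$ we get $\rho^{-(1+\alpha)} \cdot \rho^2 |D^2 u(x)| \le C$, i.e. $|u_{ij}(x)| \le C \rho^{\alpha - 1} = C |x|^{\alpha-1}$, which is the first estimate. Similarly $|D^3 u_\rho(e)| \le C$ gives $\rho^{-(1+\alpha)} \cdot \rho^3 |D^3 u(x)| \le C$, i.e. $|u_{ijk}(x)| \le C |x|^{\alpha - 2}$, the second estimate. (For the third-derivative bound one uses that once $u$ is $C^{2,\alpha}$ on a fixed neighborhood of $e$ with the bound above, the coefficients $a_{ij}^\rho, b_i^\rho$, the inhomogeneity, and the boundary data are in fact $C^{1,\alpha}$ there with uniform bounds — they depend on $x, u, Du$ which are now controlled to one more order — so one more application of Schauder theory upgrades $u_\rho$ to $C^{3,\alpha}$ near $e$ with a uniform bound.)

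The main obstacle is bookkeeping the uniformity of all the Schauder constants and coefficient norms under rescaling, and in particular verifying that the oblique boundary operator $\hat B_\rho$ stays uniformly oblique (the angle between $\beta_\pm^\rho$ and the face normal stays bounded away from tangency) — this follows from (iv), since $\beta_\pm(0) = \nu_{\pm,\delta}$ is exactly the Euclidean unit normal, so for $\rho$ small the obliqueness constant is close to $1$; but one must check there is no degeneration as the evaluation point $e$ approaches the edge along the unit sphere, which is handled by the fact that $e$ with $|e|=1$ stays at distance $1$ from the vertex. A secondary point is that the argument as written gives the estimate only for $\rho = |x|$ small; for $x$ bounded away from $0$ the bounds are immediate from $u \in C^\infty(\Omega_R \setminus \bd^E\Omega)$ and a standard compactness/covering argument on the compact set $\{ |x| \ge \delta_0\} \cap \Omega_R$, absorbing the resulting constant into $C$.
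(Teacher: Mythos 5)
Your proposal is correct and is essentially the same rescaling argument the paper uses: rescale by $\rho = |x|$, note that $u(0)=Du(0)=0$ and $f_\pm(0)=0$ give the needed decay of the data, apply standard interior/oblique Schauder estimates in a fixed annular neighborhood of $e = x/\rho$ on the unit circle (away from the vertex), then unwind and bootstrap once more for the third-derivative bound. The only cosmetic difference is that you normalize $u_\rho := \rho^{-(1+\alpha)}u(\rho\,\cdot)$, whereas the paper works with the unnormalized rescaling and phrases the conclusion in weighted H\"older norms $\Lambda^{l,\alpha}_\delta$, tracking a small exponent split $\alpha=\alpha_1+\epsilon$ to keep the bookkeeping clean when inserting the scale-dependent factors.
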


\begin{proof}
	Denote by $C>0$ a varying constant depending only on $g$, $\|u\|_{C^{1,\alpha}(\Omega_{R})}$ and $\Omega_{R}$. 
 
    Fix any $x\in \Omega_{R}$ with $0<|x|<1$. 
    Define $\tilde{u}(\xi):=u(|x|\xi)$, $\tilde{f}(\xi) := \hat{f}(|x|\xi)$, $\tilde{a}_{ij}(\xi):=a_{ij}(|x|\xi)$, $\tilde{\beta}_{\pm}(\xi):=\beta_{\pm}(|x|\xi)$, and $\tilde{f}_{\pm}(\xi):=f_{\pm}(|x|\xi)$. 
	Then we have 
	\[\left\{\begin{array}{ll}
		\tilde{a}_{ij}\tilde{u}_{ij} = |x|^2\tilde{f} &\qquad  \mbox{in $\interior(\Omega_{R/|x|})$}, \\
		\tilde{\beta}_{\pm}D\tilde{u} = |x|\tilde{f}_{\pm} &\qquad  \mbox{on $\bd^{\pm}\Omega_{R/|x|}$}.
	\end{array}\right.\]
	Note $\tilde{L}:=\tilde{a}_{ij} D_{ij}$ is uniformly elliptic, and $\|\tilde{a}_{ij}\|_{C^{0,\alpha_1}}$, $\|\tilde{\beta}_{\pm}\|_{C^{1,\alpha_1}}$ are uniformly bounded (independent on $|x|<1$). 
	Hence, for $y\in \Omega_{R}$ with $|x-y|<|x|/4$, we can take $\xi := x/|x| $ and $\eta:=y/|x|$ in $\Omega$, and then apply the standard elliptic estimates (c.f. \cite[Theorem 4.10]{lieberman2013oblique}) to $\tilde{u}$: 
	\begin{eqnarray*}
		|x|^2 |u_{ij}(x)| + |x|^{2+\alpha_1}\frac{|u_{ij}(x)-u_{ij}(y)|}{|x-y|^{\alpha_1}} &=& |\tilde{u}_{ij}(\xi)| + \frac{|\tilde{u}_{ij}(\xi) - \tilde{u}_{ij}(\eta)|}{|\xi - \eta|^{\alpha_1}}
		\\&\leq & C \left( |x|^2\| \tilde{f}\|_{C^{0,\alpha_1}(U_{\xi})}  + |x| \|\tilde{f}_{\pm}\|_{C^{1,\alpha_1}(U_\xi\cap\bd^{\pm}\Omega)} + \|\tilde{u}\|_{C^0(U_\xi)} \right)
		\\&\leq & C \Big ( |x|^{2}\| \hat{f}\|_{C^{0}(V_x)} + |x|^{2+\alpha_1}  [\hat{f}]_{\alpha_1,V_x} 
		\\&& \quad + \sum_{|\sigma|\leq 1} |x|^{1+|\sigma|} \sup_{z\in V_x\cap\bd^{\pm}\Omega} |D^\sigma f_{\pm}(z) | 
		\\&& \quad + \sum_{|\sigma|= 1} |x|^{2+\alpha_1} \sup_{z,w\in V_x\cap\bd^{\pm}\Omega}\frac{ |D^\sigma f_{\pm}(z) - D^\sigma f_{\pm}(w) |}{|z-w|^{\alpha_1}} 
		\\&& \quad + \|u\|_{C^0(V_x)} \Big),
	\end{eqnarray*}
	where $i,j\in\{1,2\}$, $U_{\xi}:= \{\zeta\in\Omega_{R/|x|}: |\zeta - \xi |<1/2\}$, $V_x:=\{y\in \Omega_{R}: |y-x|<|x|/2\}$, and $[\cdot]_{\alpha_1}$ is the H\"{o}lder semi-norm. 
	
	It follows immediately from (\ref{Eq: u on tangent space}) and $u\in C^{1,\alpha_1+\epsilon}(\Omega_{R})$ that 
	\[|x|^{-1-\alpha}|u(x)|=|x|^{-1-\alpha_1-\epsilon}|u(x)| \leq C.\]
	Similarly, since $f_{\pm}(0)=0$ and $f_{\pm}\in C^{1,\alpha}(\bd^{\pm}\Omega^{n}_{R})$, we also have 
	\[|x|^{-1}|f_{\pm}(x)| \leq C .\]
	Hence, using (\ref{Eq: boundary coefficients}), (i)-(iv) before Lemma \ref{Lem: C1 alpha with large alpha}, and the fact that $|x|/2<|y|<3|x|/2$ for $y\in V_x$, we can times the above inequality by $|x|^{-1-\alpha_1-\epsilon}$ and conclude:
	\begin{eqnarray*}
		|x|^{1-\alpha}|u_{ij}|(x) + |x|^{1-\epsilon}\frac{|u_{ij}(x)-u_{ij}(y)|}{|x-y|^{\alpha_1}} &\leq & C \Big(\|\hat{f}\|_{\Lambda_{1-\epsilon}^{0,\alpha_1}(\Omega_{R})} + \|f_{\pm}\|_{\Lambda_{1-\epsilon}^{1,\alpha_1}(\bd^{\pm}\Omega_{R})} + \sup_{x\in \Omega_{R}} |x|^{-1-\alpha}|u(x)| \Big) 
		\\&\leq & C \Big(\|\hat{f}\|_{C^{0,\alpha_1}(\Omega_{R})} + \|f_{\pm}\|_{C^{1,\alpha_1}(\bd^{\pm}\Omega_{R})} + \sup_{x\in \Omega_{R}} |x|^{-1-\alpha}|u(x)| \Big) 
		\\&\leq & C,
	\end{eqnarray*}
	where $\|\cdot\|_{\Lambda_\delta^{l,\alpha}}$ is defined by 
	\[\|v\|_{\Lambda_{\delta}^{l, \alpha}(K)} := \sum_{|\sigma| \leq l} \sup _{x \in K}|x|^{\delta-l-\alpha+|\sigma|}\left|D_{\sigma} v(x)\right|+\sum_{|\sigma|=l} \sup _{\substack{x, y \in K \\|x-y| \leq|x| / 2}}|x|^{\delta} \frac{\left|D_{\sigma} v(x)-D_{\sigma} v(y)\right|}{|x-y|^{\alpha}}.\]
	(See also \cite[Section 2.1, Lemma 3.4]{mazya2004schauder}.)
	This shows the first part of this lemma. 
	
	Next, noting $u\in\Lambda_{1-\epsilon}^{2,\alpha_1}(\Omega_{R})$ and $a_{ij}(x) = a_{ij}(x, u(x), Du(x))$, the above result further suggests $|D\tilde{a}_{ij}(\xi)| = |x||Da_{ij}(|x|\xi)| \leq C$ and 
	\[ \frac{|D\tilde{a}_{ij}(\xi) - D\tilde{a}_{ij}(\eta)|}{|\xi-\eta|^{\alpha_1}} = |x|^{1+\alpha_1}\frac{|Da_{ij}(|x|\xi) - Da_{ij}(|x|\eta)|}{\big||x|\xi-|x|\eta\big|^{\alpha_1}} \leq C .\]
	Thus, $\tilde{L}:=\tilde{a}_{ij} D_{ij}$ is uniformly elliptic and $\|\tilde{a}_{ij}\|_{C^{1,\alpha_1}}$ is uniformly bounded (independent on the choice of $x$). 
	Similarly, we have $\|\tilde{\beta}_{\pm}\|_{C^{2,\alpha_1}}$, $\|\hat{f}\|_{\Lambda_{1-\epsilon}^{1,\alpha_1}} $ and $\|f_{\pm}\|_{\Lambda_{1-\epsilon}^{2,\alpha_1}}$ are uniformly bounded. 
	Therefore, using the standard elliptic estimates as the above procedure,
	\begin{eqnarray*}
		|x|^3 |u_{ijk}(x)| = |\tilde{u}_{ijk}(\xi)| &\leq & C \Big ( \sum_{|\sigma|\leq 1} |x|^{2+|\sigma|} \sup_{z\in V_x} |D^\sigma \hat{f}(z) | 
		\\&& \quad + \sum_{|\sigma|= 1} |x|^{3+\alpha_1} \sup_{z,w\in V_x}\frac{ |D^\sigma \hat{f}(z) - D^\sigma \hat{f}(w) |}{|z-w|^{\alpha_1}} 
		\\&& \quad + \sum_{|\sigma|\leq 2} |x|^{1+|\sigma|} \sup_{z\in V_x\cap\bd^{\pm}\Omega} |D^\sigma f_{\pm}(z) | 
		\\&& \quad + \sum_{|\sigma|= 2} |x|^{3+\alpha_1} \sup_{z,w\in V_x\cap\bd^{\pm}\Omega}\frac{ |D^\sigma f_{\pm}(z) - D^\sigma f_{\pm}(w) |}{|z-w|^{\alpha_1}} 
		\\&& \quad + \|u\|_{C^0(V_x)} \Big).
	\end{eqnarray*}
	Multiplying this inequality by $|x|^{-1-\alpha_1-\epsilon}$, we conclude:
	\[ |x|^{2-\alpha}|u_{ijk}|(x)  \leq C \Big(\|\hat{f}\|_{\Lambda_{1-\epsilon}^{1,\alpha_1}(\Omega_{R})} + \|f_{\pm}\|_{\Lambda_{1-\epsilon}^{2,\alpha_1}(\bd^{\pm}\Omega_{R})} + \sup_{x\in \Omega_{R}} |x|^{-1-\alpha}|u(x)| \Big) 
		\leq C,\]
	which gives the lemma. 
\end{proof}

Using the above estimates, we show the $C^{2,\alpha_0}$-regularity of $u$ near $p$. 

\begin{theorem}\label{Thm: C2 alpha regularity in 2-dim acute angle domain}
    Given $R>0$ and $\alpha\in (0,1]$, suppose $\Omega$ is a $2$-dimensional wedge domain with wedge angle $\theta_0<\pi/2$, and $u\in C^{1,\alpha}(\Omega_{R})\cap C^{\infty}(\Omega_{R}\setminus\bd^E\Omega)$ is a function so that (\ref{Eq: metric is Euclidean at 0})-(\ref{Eq: boundary coefficients}) are satisfied. 
	Then $u\in C^{2,\alpha_0}(\Omega_{R})$ for some $\alpha_0\in (0,1)$. 
\end{theorem}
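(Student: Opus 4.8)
The plan is to freeze the quasilinearity, recognize the boundary value problem at the edge point as a perturbation of the Neumann Laplacian on a plane angle, and then invoke the corner regularity theory of Grisvard; the only hypothesis of that theory one has to check is the absence of characteristic exponents in a critical strip, and that absence is exactly what $\theta_0<\pi/2$ buys.

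First, by Lemma \ref{Lem: C1 alpha with large alpha} one may fix $\alpha\in(\tfrac12,1)$ with $u\in C^{1,\alpha}(\Omega_R)\cap C^\infty(\Omega_R\setminus\bd^E\Omega)$, so that the normalization (\ref{Eq: C 1 alpha with large alpha}) and the weighted bounds of Lemma \ref{Lem: growth rate of DDDu}, namely $|x|^{1-\alpha}|D^2u(x)|+|x|^{2-\alpha}|D^3u(x)|\le C$ on $\Omega_R\setminus\{0\}$, are at our disposal; in the notation of the weighted H\"older spaces $\Lambda^{l,\beta}_\delta$ this says $u\in\Lambda^{2,\alpha_1}_{1-\epsilon}(\Omega_R)$. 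Freezing the quasilinear coefficients along $u$, i.e. setting $\bar a_{ij}(\tx):=a_{ij}(\tx,u(\tx),Du(\tx))$ and likewise for $b_i,f$ and for $\beta_\pm,f_\pm$, turns (\ref{Eq: FBMH equation general}) into a \emph{linear} oblique boundary value problem for $u$ whose coefficients and data lie in $C^{0,\alpha}(\Omega_R)$ (with $\bar f,\beta_\pm,f_\pm\in C^{1,\alpha}$), and which, by items (i) and (iv) together with (\ref{Eq: metric is Euclidean at 0}), has principal part $\Delta$ and boundary operators $\nu_{\pm,\delta}\cdot D$ \emph{exactly at the vertex} $0$. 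Since classical oblique-derivative Schauder theory already gives $u\in C^{2,\alpha}_{\mathrm{loc}}(\Omega_R\setminus\{0\})$, it remains only to prove $C^{2,\alpha_0}$ regularity up to the corner point $0$.

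Next one identifies the model operator at $0$ and its characteristic exponents. Because $\bar a_{ij}(0)=\delta_{ij}$, $\beta_\pm(0)=\nu_{\pm,\delta}$, and the two model faces $\bd^\pm\Omega$ are flat, the frozen problem at the vertex is the Laplacian on the plane angle $\Omega_{\theta_0}$ with homogeneous Neumann data on both sides; separation of variables gives the homogeneous solutions $r^{k\pi/\theta_0}\cos(k\pi\omega/\theta_0)$, $k=0,1,2,\dots$, so the set of characteristic exponents is $\{\,k\pi/\theta_0:k\ge0\,\}$. The hypothesis $\theta_0<\pi/2$ forces $\pi/\theta_0>2$, hence there is \emph{no} characteristic exponent in the strip $\{1<\operatorname{Re}\lambda\le 2+\alpha_0\}$ once we set $\alpha_0:=\min\{\alpha,\ \pi/\theta_0-2\}$ (shrinking $\alpha_0$ to any number in $(0,1)$ if $\pi/\theta_0-2\ge1$). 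With this spectral gap, one then applies the Kondrat'ev-type weighted Schauder theory for oblique/mixed problems in a plane angle, which is the content of \cite{grisvard2011elliptic} together with the weighted H\"older Schauder estimates of \cite{mazya2004schauder}: near $0$ the solution decomposes as a $C^{2,\alpha_0}$ regular part plus finitely many singular terms $c_\lambda r^\lambda\phi_\lambda(\omega)$ with $\lambda\in[0,2+\alpha_0)$; by the gap the only such $\lambda$ are those $\le1$, and their coefficients (and the $\lambda=0$ constant) must vanish since $u(0)=0$ and $Du(0)=0$, a fact read off from $u\in\Lambda^{2,\alpha_1}_{1-\epsilon}$ as supplied by Lemma \ref{Lem: growth rate of DDDu}. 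Thus $u$ coincides near $0$ with its regular part, giving $u\in C^{2,\alpha_0}(\Omega_r)$ for small $r$, and with the first step this yields $u\in C^{2,\alpha_0}(\Omega_R)$ after possibly decreasing $\alpha_0$, proving Theorem \ref{Thm: C2 alpha regularity in 2-dim acute angle domain}.

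The obstacle is not conceptual but organizational: one must verify that freezing the quasilinearity genuinely produces a linear problem whose constant-coefficient principal part at $0$ is the Neumann Laplacian (this uses (\ref{Eq: metric is Euclidean at 0})--(\ref{Eq: boundary coefficients}) and Lemma \ref{Lem: C1 alpha with large alpha}), and --- more delicately --- that the a priori weighted bounds of Lemma \ref{Lem: growth rate of DDDu} are compatible with the scale $\Lambda^{l,\beta}_\delta$ in which Grisvard's decomposition is stated, so that the singular terms of exponent $\le1$ are truly absent and the only conceivable obstruction to $C^{2,\alpha_0}$ would be a characteristic exponent in $(1,2+\alpha_0)$, which $\theta_0<\pi/2$ forbids. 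Carrying out this last compatibility check in the weighted-norm bookkeeping is where the real labor lies, but it is routine once the spectral gap $\pi/\theta_0>2$ is in place.
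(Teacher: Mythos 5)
Your proposal follows essentially the same route as the paper: both recast the problem so that the principal part at the vertex is the Neumann Laplacian on the sector, use the weighted estimates of Lemma \ref{Lem: growth rate of DDDu} to obtain H\"older regularity of the data (the paper via a $W^{1,p}\hookrightarrow C^{0,\gamma}$ embedding after moving the variable-coefficient corrections to the right-hand side), and then invoke Grisvard's corner decomposition, with $\theta_0<\pi/2$ ensuring $\pi/\theta_0>2$ so that no singular exponent obstructs $C^{2,\alpha_0}$ regularity. The one inaccuracy in your write-up --- the claim that the coefficients of the low-order singular terms must vanish because $u(0)=0$ and $Du(0)=0$ --- is both unnecessary (a constant is already smooth) and not quite justified (the regular part in the decomposition need not itself vanish at the origin), though harmless for the conclusion; the paper instead simply observes that the singular functions $\widetilde{\Im}_m$ appearing in the decomposition are themselves $C^{2,\alpha_0}$ once $\theta_0<\pi/2$.
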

\begin{proof}
	Firstly, rewrite the equations (\ref{Eq: FBMH equation 2-dim (1)}) by 
		\begin{equation}\label{Eq: FBMH equation 2-dim (2)}
			\left\{\begin{array}{ll}
				\triangle_{\delta} u = F &\qquad  \mbox{in $\interior(\Omega_{R})$}, \\
				\nu_{\pm,\delta} \cdot Du = F_{\pm} &\qquad  \mbox{on $\bd^{\pm}\Omega_{R}$},
			\end{array}\right.
		\end{equation}
	where $F := (\delta_{ij}-a_{ij})u_{ij} + \hat{f} $, $F_{\pm} = (\nu_{\pm,\delta}-\beta_{\pm}) \cdot Du + f_{\pm}$, $\triangle_\delta$ is the Laplacian operator in the Euclidean metric $\delta$, and $\nu_{\pm,\delta}$ is the outward unit normal of $\bd^{\pm}\Omega$ under the metric $\delta$. 
	We then claim that there exists $\gamma\in (0,1)$ so that 
    \begin{equation}\label{Eq: Holder continuity of right hand side}
        F\in C^{0,\gamma}(\Omega_{R})\quad \mbox{and}\quad F_{\pm} \in C^{1,\gamma}(\bd^{\pm}\Omega_{R}).
    \end{equation}
	Indeed, by Lemma \ref{Lem: growth rate of DDDu}, we  can take $\alpha>\frac{1}{2}$ as in (\ref{Eq: C 1 alpha with large alpha}), and have $|x|^{1-\alpha} |u_{ij}(x)|\leq C$ and $|x|^{2-\alpha}|u_{ijk}(x)|\leq C$ for $x\in \Omega_{R}\setminus\{0\}$. 
	Since $a_{ij}(0)=\delta_{ij}$, we further obtain $|x|^{\alpha}(\delta_{ij}-a_{ij})\leq C$ and $|x|^{1-\alpha}|Da_{ij}(x)|\leq C$. 
	Together, we conclude 
	\[|((\delta_{ij}-a_{ij})u_{ij}) (x)| \leq C |x|^{2\alpha -1} \quad{\rm and}\quad |D((\delta_{ij}-a_{ij})u_{ij}) (x)|\leq C|x|^{2\alpha-2},\]
	which implies $(\delta_{ij}-a_{ij})u_{ij}\in W^{1,p}(\Omega^2_{\theta_0})$ for some $p>2$, and thus $(\delta_{ij}-a_{ij})u_{ij}$ as well as $F$ is in $C^{0,\gamma}(\Omega_{R})$ for some $\gamma\in (0,1)$. 
	A similar argument shows $F_{\pm} \in C^{1,\gamma}(\bd^{\pm}\Omega_{R})$. 
	
	Next, by (\ref{Eq: Holder continuity of right hand side}), we can apply \cite[Theorem 6.4.2.6]{grisvard2011elliptic} directly and see 
	\[u-\sum_{-(2+\sigma)<\lambda_{m}<0} C_{m} \widetilde{\Im}_{m} \in C^{2, \sigma}(\Omega_{R}),\]
	where $\sigma\in (0,1)$, $m\in\mZ$, $C_m\in\R$, $\lambda_{m}$ and $\widetilde{\Im}_{m}$ are given in \cite[Page 297, Definition 5.1.3.4]{grisvard2011elliptic}. 
	Using $\theta_0 \in (0,\frac{\pi}{2})$ and $\nu_{\pm,\delta} \cdot Du = F_{\pm} $, one easily check that $\sum_{-(2+\sigma)<\lambda_{m}<0} C_{m} \widetilde{\Im}_{m} \in C^{2,\alpha_0}(\Omega_{R})$ for some $\alpha_0\in (0, \min\{\gamma, \sigma\})$, which implies $u\in C^{2,\alpha_0}(\Omega_{R})$. 
\end{proof}

\subsection{In higher dimensional acute wedge domains}

\begin{theorem}\label{Thm: C2 alpha regularity in n-dim acute angle domain}
    Given $R>0$, $n\geq 2$ and $\alpha\in (0,1]$, suppose $\Omega$ is an $n$-dimensional wedge domain with wedge angle $\theta_0<\pi/2$, and $u\in C^{1,\alpha}(\Omega_{R})\cap C^{\infty}(\Omega_{R}\setminus\bd^E\Omega)$ is a function so that (\ref{Eq: metric is Euclidean at 0})-(\ref{Eq: boundary coefficients}) are satisfied. 
	Then $u\in C^{2,\alpha_0}(\Omega_{r})$ for some $\alpha_0\in (0,1)$ and $r\in (0,R)$. 
\end{theorem}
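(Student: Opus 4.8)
The plan is to follow the strategy sketched after Theorem~\ref{Thm: C2 alpha regularity in 2-dim acute angle domain}: reduce the higher-dimensional problem to a family of two-dimensional problems by slicing transversally to the edge, and then patch the slicewise estimates together using regularity in the edge directions. Write the wedge domain as $\Omega=\Omega^2_{\theta_0}\times\R^{n-2}$ with coordinates $\tx=(x_1,x_2,x')$, where $y:=(x_1,x_2)$ ranges over the two-dimensional wedge $\Omega^2_{\theta_0}$ and $x':=(x_3,\dots,x_n)$ parametrizes the edge $\bd^E\Omega=\{x_1=x_2=0\}$. For each fixed $x'=c$ near $0$, restricting \eqref{Eq: FBMH equation general} to the slice $\{x'=c\}$ gives, for $u_c(y):=u(y,c)$, a two-dimensional oblique-derivative problem
\[
\sum_{i,j=1}^{2}a_{ij}(y,c)\,(u_c)_{ij}=\tilde f_c(y)\ \text{ in }\interior(\Omega^2_{\theta_0,r}),\qquad \beta^{(y)}_{\pm}(y,c)\cdot D_y u_c=\tilde f_{\pm,c}(y)\ \text{ on }\bd^{\pm}\Omega^2_{\theta_0,r},
\]
where $\beta^{(y)}_{\pm}$ denotes the $(x_1,x_2)$-components of $\beta_{\pm}$ and $\tilde f_c,\tilde f_{\pm,c}$ absorb all terms of $Lu$ and $Bu$ that involve at least one derivative of $u$ in an edge direction. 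Thus the first task is to control these ``tangential'' terms, and the second is to invoke Theorem~\ref{Thm: C2 alpha regularity in 2-dim acute angle domain} for the reduced problem with constants uniform in $c$.

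For the tangential control I would differentiate \eqref{Eq: FBMH equation general} in the edge directions. Since each $\partial_{x_k}$, $k\ge 3$, is tangent to both faces $\bd^{\pm}\Omega$ and to the edge, and since the metric is smooth away from the edge and (by the $C^{l,\alpha}$-to-edge structure together with Lemma~\ref{Lem: local chart}) has coefficients whose edge-direction derivatives keep the same Hölder regularity up to the edge, the function $v=\partial_{x_k}u$ solves a \emph{linear} oblique-derivative problem of exactly the form \eqref{Eq: FBMH equation 2-dim (1)}, with right-hand sides in $C^{0,\alpha}$ resp.\ $C^{1,\alpha}$ built from $u$, $Du$, $D^2u$ and coefficient derivatives. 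Because the $x'$-directions are ``flat'' -- like interior directions, with no new boundary phenomena -- this can be iterated, yielding that $u$ is as smooth as desired in $x'$: precisely $D^{\ell}_{x'}u\in C^{1,\alpha}$ for every $\ell$ and every $\alpha<1$, after first upgrading to $u\in C^{1,\alpha}$ for all $\alpha<1$ by running the slicing reduction together with Lemma~\ref{Lem: C1 alpha with large alpha} (equivalently, the higher-dimensional version of Lieberman's estimate, whose governing Hölder exponent is dictated by the two-dimensional wedge angle $\theta_0$). In particular every mixed second derivative $u_{x_ix_j}$ with $i\ge 3$ or $j\ge 3$ lies in $C^{0,\alpha}$, and the data $\tilde f_c,\tilde f_{\pm,c}$ are uniformly bounded in $C^{0,\gamma}$ resp.\ $C^{1,\gamma}$ as $c$ varies, with modulus of continuity $O(|c_1-c_2|^{\alpha})$ in $c$.

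Next I would apply Theorem~\ref{Thm: C2 alpha regularity in 2-dim acute angle domain}, together with the quantitative linear Schauder-type estimate underlying its proof (through \cite{lieberman1988oblique} and \cite{grisvard2011elliptic}), to each slice problem; the hypotheses \eqref{Eq: metric is Euclidean at 0}--\eqref{Eq: boundary coefficients} hold at $y=0$ since $a_{ij}(0,0)=\delta_{ij}$ and $\beta^{(y)}_{\pm}(0,0)=\nu_{\pm,\delta}$. This gives $u_c\in C^{2,\alpha_0}(\Omega^2_{\theta_0,r})$ with $\sup_c\|u_c\|_{C^{2,\alpha_0}}<\infty$, i.e.\ the pure $y$-second derivatives $u_{x_1x_1},u_{x_1x_2},u_{x_2x_2}$ are bounded and $\alpha_0$-Hölder in $y$ uniformly in $c$. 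Subtracting the slice equations for two nearby parameters $c_1,c_2$ and using the same linear estimate converts the $O(|c_1-c_2|^{\alpha})$ control of the data into $\|u_{ij}(\cdot,c_1)-u_{ij}(\cdot,c_2)\|_{C^{0,\alpha_0}_y}\le C|c_1-c_2|^{\alpha}$ for $i,j\le 2$; combined with the tangential regularity this yields joint $C^{0,\alpha_0}$ control (after possibly lowering $\alpha_0$) of all second derivatives of $u$ on some $\Omega_r$, hence $u\in C^{2,\alpha_0}(\Omega_r)$.

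The \textbf{main obstacle} I anticipate is exactly this last patching step. Theorem~\ref{Thm: C2 alpha regularity in 2-dim acute angle domain} is proved by subtracting explicit singular solutions $\widetilde{\Im}_m$ (in the sense of \cite{grisvard2011elliptic}), so to difference it across slices and to couple the $y$-regularity with the $x'$-regularity one must check that this subtraction is performed with estimates linear and stable in the slice parameter -- equivalently, that the corner coefficients $C_m=C_m(c)$ depend on $c$ as regularly as the data. Keeping the weighted Hölder bookkeeping of Lemma~\ref{Lem: growth rate of DDDu} (the $\Lambda^{l,\alpha}_{\delta}$ spaces) uniform in $c$, so that the dangerous terms $(\delta_{ij}-a_{ij})u_{ij}$ near the edge do not destroy the uniformity, is where the genuine care lies; the hypothesis $\theta_0<\pi/2$ enters precisely to keep the indicial exponents $\lambda_m$ out of the bad range, just as in the two-dimensional case.
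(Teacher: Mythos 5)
Your proposal follows essentially the same route as the paper: get tangential (edge-direction) regularity of $u$ via Lieberman's oblique-derivative estimate combined with difference quotients, move all mixed/edge-direction terms to the right-hand side to obtain a two-dimensional oblique problem on each slice transverse to $\bd^E\Omega$, apply the two-dimensional result slice by slice (the paper normalizes each slice via $v_{x'}:=u(\cdot,x')-u(0,0,x')-x_1u_1(0,0,x')-x_2u_2(0,0,x')$ so that the centered conditions \eqref{Eq: metric is Euclidean at 0}--\eqref{Eq: u on tangent space} are met), and then patch by showing the Schauder/Kondrat'ev-type estimates are continuous in the slice parameter. The ``main obstacle'' you flag---stability of the singular-function subtraction $C_{m,x'}\widetilde{\Im}_{m,x'}$ in $x'$---is precisely the point the paper resolves by invoking the estimates underlying \cite[Theorem 6.4.2.5]{mazya2004schauder}, so your identification of the delicate step is accurate.
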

\begin{proof}
    Firstly, combining \cite{lieberman1988oblique} with a standard difference quotient method, we have $u_k\in C^{1,\alpha}(\Omega_{(2-\delta)R})$ for all $k\geq 3$. 
    It's now sufficient to consider $u_{ij}$ with $i,j\in\{1,2\}$. 
    Hence, rewrite the equation (\ref{Eq: FBMH equation general}) by 
    \begin{equation}\label{Eq: FBMH equation n-dim (1)}
    	\left\{\begin{array}{ll}
    		\tilde{L} u := \sum_{i,j=1}^2 a_{ij}u_{ij} = \tilde{f} &\qquad  \mbox{in $\interior(\Omega_{R})$}, \\
    		\tilde{B} u := \sum_{i=1}^2\beta_{\pm}^i u_i = \tilde{f}_{\pm} &\qquad  \mbox{on $\bd^{\pm}\Omega_{R}$},
    	\end{array}\right.
    \end{equation}
    where $\tilde{f} := -\sum_{i~{\rm or}~j \geq 3} a_{ij}u_{ij} - b_i u_i + f \in C^{0,\alpha}(\Omega_{R})\cap C^{\infty}(\Omega_{R}\setminus\bd^E\Omega)$, and $\tilde{f}_{\pm}:=-\sum_{i=3}^n\beta_{\pm}^iu_i + f_{\pm}\in C^{1,\alpha}(\bd^{\pm}\Omega_R)\cap C^{\infty}(\bd^{\pm}\Omega_R\setminus\bd^E\Omega)$.

    Then, in the $2$-dimensional subspace $\Omega^2(0):=\{(x_1,x_2,0,\dots,0)\in \Omega\}$, (\ref{Eq: metric is Euclidean at 0})(\ref{Eq: u on tangent space}) are satisfied. 
    Thus, by the proof of Theorem \ref{Thm: C2 alpha regularity in 2-dim acute angle domain}, we have $u$ is $C^{2,\alpha_0}$ for some $\alpha_0\in (0,1)$ in this $2$-dimensional subspace $\Omega^2(0)$. 

    Similarly, for any $x'\in \bd^E\Omega=\R^{n-2}$ sufficiently close to $0$, we consider the function 
    \[v_{x'}(x_1,x_2):=u(x_1,x_2,x')-u(0,0,x')-x_1\cdot u_1(0,0,x') -x_2\cdot u_2(0,0,x')\]
    restricted in the $2$-dimensional wedge domain $\Omega^2(x'):=\{(x_1,x_2,x')\in\Omega\}$. 
    By taking $x'\in\bd^E\Omega$ sufficiently close to $0$, $g\llcorner \Omega^2(x')$ is sufficiently close to $g\llcorner \Omega^2(0)$, and the angle of $\Omega^2(x')$ is still acute with respect to the metrics $g$ and $\delta$. 
    Therefore, by restricting (\ref{Eq: FBMH equation n-dim (1)}) in $\Omega^2(x')$ and applying the arguments in Lemma \ref{Lem: C1 alpha with large alpha}, we have $v_{x'}$ satisfies (\ref{Eq: C 1 alpha with large alpha})(\ref{Eq: FBMH equation 2-dim (1)}) with coefficients $a_{ij}(0,0,x')\approx \delta_{ij}$, $\beta_{\pm}(0,0,x') \approx \nu_{\pm, \delta}$, and $f_{\pm}(0,0,x')=0$ (since $Dv_{x'}(0)=0$). 
    Thus, the proof of Theorem \ref{Thm: C2 alpha regularity in 2-dim acute angle domain} would carry over for $v_{x'}$ in $\Omega^2_R(x')$, which indicates $u$ is $C^{2,\alpha_0}$ in the $2$-dimensional slice $\Omega^2_R(x')$ for some $\alpha_0\in(0,1)$. 

    Moreover, by varying $x'$, the coefficients in (\ref{Eq: FBMH equation 2-dim (2)}) with respect to $v_{x'}\llcorner\Omega^2(x')$ vary continuously in $C^{0,\gamma}$ or $C^{1,\gamma}$: 
    \[
	   \left\{\begin{array}{ll}
	   	\triangle_{\delta} v_0 = F_0 &\mbox{in $\interior(\Omega_{R}^2(0))$}, \\
	   	\nu_{\pm,\delta} \cdot Dv_0 = F_{\pm,0} &  \mbox{on $\bd^{\pm}\Omega_{R}^2(0)$},
	   \end{array}\right.
	   \rightarrow~
	   \left\{\begin{array}{ll}
		a_{ij}(0,0,x') (v_{x'})_{ij} = F_{x'} &  \mbox{in $\interior(\Omega_{R}^2(x'))$}, \\
		\beta_{\pm}(0,0,x') \cdot Dv_{x'} = F_{\pm,x'} &  \mbox{on $\bd^{\pm}\Omega_{R}^2(x')$},
	   \end{array}\right.
    \]
    Therefore, by the elliptic estimates in the proof of \cite[Theorem 6.4.2.5]{mazya2004schauder}, we have $v_{x'}\llcorner\Omega^2_R(x') - C_{m,x'}\widetilde{\Im}_{m,x'}$ is varying continuously in $C^{2,\sigma}$ with respect to $x'$ for some $\sigma\in (0,1)$, which implies $u$ is $C^{2,\alpha_0}$ in a small neighborhood of $0\in \Omega$ for some $\alpha_0\in (0,\min\{\gamma,\sigma\})$. 
\end{proof}

\subsection{In right angle wedge domains}

We now consider the case that $\theta(p)=\pi/2$. 
Then by (\ref{ddag}), $\theta(q)\equiv \pi/2$ for $q\in\bd^EM$ near $p$. 
Instead of using the local model as before, we take the local coordinates $(x_1,\dots,x_n)$ centered at $p$ given by Lemma \ref{Lem: local chart}, and write the FBMH $\Sigma$ as the graph of a function $u\in  C^{1,\alpha}(\Omega_{2R})\cap C^{\infty}(\Omega_{2R}\setminus\bd^E\Omega)$, where 
\begin{equation}\label{Eq: regularity in right wedge: wedge domain}
    \Omega:=\{ x_1\geq 0, x_2\geq 0, x_{n+1}=0 \} \qquad{\rm and}\qquad \Omega_r:=\Omega\cap\mB^n_r(0),~\forall r>0.
\end{equation}
Similar to the proof of Theorem \ref{Thm: C2 alpha regularity in n-dim acute angle domain}, we have $u_k\in C^{1,\alpha}(\Omega_{(2-\delta)R})$ for all $k\geq 3$, and the equations of $u$ can be written in the form of (\ref{Eq: FBMH equation n-dim (1)}) by: 
\begin{equation}\label{Eq: FBMH equation n dim right domain (1)}
	\left\{\begin{array}{ll}
		\tilde{L} u := \sum_{i,j=1}^2 a_{ij}u_{ij} = \tilde{f} &\qquad  \mbox{in $\interior(\Omega_{R})$}, \\
		\tilde{B} u := \sum_{i=1}^2\beta_{\pm}^i u_i = \tilde{f}_{\pm} &\qquad  \mbox{on $\bd^{\pm}\Omega_{R}$},
	\end{array}\right.
\end{equation}
where $\bd^+\Omega=\{x_1=0\}$, $\bd^-\Omega=\{x_2=0\}$, and 
\begin{itemize}
	\item $\tilde{f} := -\sum_{i~{\rm or}~j \geq 3} a_{ij}u_{ij} - b_i u_i + f \in C^{0,\alpha}(\Omega_{R})\cap C^{\infty}(\Omega_{R}\setminus\bd^E\Omega)$;
	\item  $\tilde{f}_{\pm}:=-\sum_{i=3}^n\beta_{\pm}^iu_i + f_{\pm}\in C^{1,\alpha}(\bd^{\pm}\Omega_R)\cap C^{\infty}(\bd^{\pm}\Omega_R\setminus\bd^E\Omega)$;
	\item $\beta_{+}^i = g^{1i}(0,x_2,\dots, u(0,x_2,x'))$ and $\beta_{-}^i = g^{2i}(x_1,0,\dots, u(x_1,0,x'))$;
	\item $f_{+} = g^{1,n+1}(0,x_2,\dots, u(0,x_2,x'))$ and $f_{-} = g^{2,n+1}(x_1,0,\dots, u(x_1,0,x'))$.
\end{itemize}
Note the boundary conditions can be computed as in Appendix \ref{Sec: maximum principle for varifolds} (II)(III). 

On $\bd^-\Omega=\{x_2=0\}$, the boundary condition implies $u_2(x_1, 0, x') = 0$, and thus
\[ u_{21}(0,0,x') =0 .\]
On $\bd^+\Omega=\{x_1=0\}$, the boundary condition together with Lemma \ref{Lem: local chart}(ii) imply 
\[u_1(0,0,x') = \tilde{f}_+(0,0,x') = 0,\]
and thus by (\ref{Eq: FBMH equation n dim right domain (1)}) and $\theta\equiv \pi/2$, 
\begin{eqnarray}\label{Eq: first order agree}
	\frac{\bd \tilde{f}_+}{\bd x_2}(0,0,x') &=& \sum_{i=1}^2 \frac{\bd \beta_{+}^i u_i}{\bd x_2}(0,0,x') 
	\\&=& \frac{\bd g^{11}}{\bd x_2}(0,0,x',u(0,0,x')) \cdot u_1(0,0,x') + \lim_{x_2\to 0_+}u_{12}(0,x_2,x')\nonumber
	\\&=& \lim_{x_2\to 0_+}u_{12}(0,x_2,x').\nonumber
\end{eqnarray}
As a necessary condition for $u\in C^2(\Omega_R)$, we shall have $\lim_{x_2\to 0_+}u_{12}(0,x_2,x')= 0$ and 
\begin{eqnarray}\label{Eq: necessary condition}
	0 = \frac{\bd \tilde{f}_+}{\bd x_2}(0,0,x') = \left( - \sum_{i\geq 3} \frac{\bd g^{1i}}{\bd x_2} u_i + \frac{\bd g^{1,n+1}}{\bd x_2}\right) (0,0,x',u(0,0,x')).
	%\\&=& -\sum_{i=1}^n \frac{\bd g^{1i}}{\bd x_2}(0,0,x',u(0,0,x')) u_i(0,0,x') - \frac{\bd g^{1,n+1}}{\bd x_2}(0,0,x',u(0,0,x'))	.\nonumber
\end{eqnarray}
Noted that this necessary condition (\ref{Eq: necessary condition}) is satisfied by the assumption (\ref{ddag}) and Lemma \ref{Lem: local chart}(iv). 
Moreover, it turns out (\ref{Eq: necessary condition}) is also a sufficient condition to show $u$ is $C^{2,\alpha_0}$.

\begin{theorem}\label{Thm: C2 alpha regularity in n-dim right domain}
    Given $R>0$, $n\geq 2$ and $\alpha\in (0,1]$, let $\Omega$ be a right angle wedge domain given as in (\ref{Eq: regularity in right wedge: wedge domain}) so that the metric $g$ in $\Omega_R\times (-R,R)$ satisfies the properties in Lemma \ref{Lem: local chart}. 
    Suppose $u\in C^{1,\alpha}(\Omega_{R})\cap C^{\infty}(\Omega_{R}\setminus\bd^E\Omega)$ is a function satisfying (\ref{Eq: FBMH equation n dim right domain (1)})(\ref{Eq: necessary condition}). 
	Then $u\in C^{2,\alpha_0}(\Omega_{r})$ for some $\alpha_0\in (0,1)$ and $r\in (0, R)$. 
\end{theorem}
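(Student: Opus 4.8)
The plan is to remove the right–angle corner by a single reflection across the face $\bd^-\Omega=\{x_2=0\}$, on which the free boundary condition degenerates to a pure Neumann condition, and then to run the linear elliptic machinery of \cite{lieberman1988oblique} and \cite{grisvard2011elliptic} on the resulting half–space problem exactly as in Theorem \ref{Thm: C2 alpha regularity in 2-dim acute angle domain}. The hypothesis (\ref{ddag}) enters only through the compatibility identity (\ref{Eq: necessary condition}), whose role is precisely to make this reflection legitimate up to second order at the edge.

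First I would record the structure of the problem in the coordinates of Lemma \ref{Lem: local chart}. Since $\theta\equiv\frac{\pi}{2}$ near $p$, the Fermi–type construction of Lemma \ref{Lem: local chart} gives $g_{22}\equiv 1$, $g_{2l}\equiv 0$ for $l\ge 3$, and $g_{12}(x_1,0,x')=\cos\theta(x')\equiv 0$, so that $g^{2a}(x_1,0,x')=\delta_{2a}$ on $\{x_2=0\}$ and the boundary condition of (\ref{Eq: FBMH equation n dim right domain (1)}) on $\bd^-\Omega$ reduces to $u_2(x_1,0,x')=0$; on $\bd^+\Omega=\{x_1=0\}$ one retains a genuine oblique condition $\beta_+\cdot Du=\tilde f_+$ whose coefficient vector is Euclidean–normal at the edge. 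Differentiating $u_2(x_1,0,x')=0$ in $x_1$ gives $u_{12}(x_1,0,x')=0$, while (\ref{Eq: first order agree}) combined with (\ref{Eq: necessary condition}) — which holds by (\ref{ddag}) and Lemma \ref{Lem: local chart}(iv) — gives $\lim_{x_2\to0^+}u_{12}(0,x_2,x')=0$. These two identities are exactly what is needed for an even reflection across $\{x_2=0\}$ to produce a function that is compatible with the equation to second order near the edge; as already noted after (\ref{Eq: FBMH equation n dim right domain (1)}), the tangential derivatives $u_k$, $k\ge 3$, are $C^{1,\alpha}$ by \cite{lieberman1988oblique}, so only $u_{ij}$ with $i,j\in\{1,2\}$ remains to be controlled.

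Next I would carry out the reflection and the reduction. Set $\rho(x_1,x_2,x'):=(x_1,-x_2,x')$, extend $u$ by $\bar u(x_1,x_2,x'):=u(x_1,|x_2|,x')$ and extend $g$ by $\bar g:=\rho^*g$ on $\{x_2<0\}$. Using the block structure above, $\bar g$ is well defined and lies in $C^{0,\alpha}$ on a full neighbourhood of $0$ in $\{x_1\ge 0\}$ (it is smooth off $\{x_2=0\}$ and Lipschitz across it), and $u_2(x_1,0,x')=0$ forces $\bar u\in C^{1,\alpha}(\{x_1\ge 0\})$; a first variation computation — the conormal terms along $\{x_2=0\}$ cancel because $\bar u\in C^1$ — shows $\bar u$ is a weak solution of the minimal surface equation for $\bar g$ on $\{x_1\ge 0\}$ with the oblique condition $\beta_+\cdot D\bar u=\tilde f_+$ on $\{x_1=0\}$. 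One is then in the setting of Theorem \ref{Thm: C2 alpha regularity in 2-dim acute angle domain}, but with the quarter–space replaced by a half–space: slicing into two–dimensional half–planes $\Omega^2(x')$ as in Theorem \ref{Thm: C2 alpha regularity in n-dim acute angle domain}, I would apply \cite[Theorem 1.4]{lieberman1988oblique} to raise the tangential Hölder exponent and then \cite[Theorem 6.4.2.6]{grisvard2011elliptic} (treating the crease of $\bar g$ along $\{x_2=0\}$ as a transmission interface, and with opening angle now $\pi$) to get $\bar u\in C^{2,\alpha_0}$ on a neighbourhood of $0$, and finally restrict back to $\Omega_r$.

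The hard part will be the middle step: checking that the reflected coefficients and, above all, the oblique data $(\beta_+,\tilde f_+)$ genuinely satisfy the hypotheses of the flat–boundary oblique Schauder theory near the old edge $\{x_1=x_2=0\}$, where the Lipschitz crease of $\bar g$ along $\{x_2=0\}$ abuts the boundary $\{x_1=0\}$. This is where (\ref{ddag}) is indispensable: it is exactly what yields (\ref{Eq: necessary condition}), hence the vanishing $\lim_{x_2\to0^+}u_{12}(0,x_2,x')=0$ and the appropriate parity of $\tilde f_+$ at the edge, so that $\bar u$ is $C^{2,\alpha_0}$ rather than merely $C^{1,\alpha}$ there; without it the cross term $u_{12}$ does not vanish at the edge and the reflection argument collapses. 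A secondary nuisance is keeping all the elliptic constants uniform in the slice parameter $x'$ so that the continuity–in–$x'$ argument of Theorem \ref{Thm: C2 alpha regularity in n-dim acute angle domain} can be run verbatim.
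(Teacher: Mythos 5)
You have the right idea at the level of strategy: reflecting across the Neumann face $\{x_2=0\}$ to reduce the corner to a half-space is exactly what the paper does, and you correctly identified that condition (\ref{ddag}) enters through the compatibility identity (\ref{Eq: necessary condition}), which kills the cross term $u_{12}$ at the edge. Where your proposal comes apart is the second half.

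Your extension of the metric by $\bar g:=\rho^*g$ is only Lipschitz across $\{x_2=0\}$. The pulled-back components $g_{11}$, $g_{1l}$, $g_{lm}$ ($l,m\ge3$) are extended evenly in $x_2$, and the even extension of a function $h$ is $C^1$ only when $\partial_2 h(x_1,0,x')=0$; Lemma \ref{Lem: local chart}(iv) gives such vanishing only at the edge $(0,0,x')$, not along all of $\{x_2=0\}$. So the quasilinear equation for $\bar u$ in the reflected metric has lower-order coefficients that are bounded but not H\"older across the interface, and standard oblique Schauder theory does not apply to it directly. Your proposed fix — treating the crease of $\bar g$ as a transmission interface and invoking \cite[Theorem 6.4.2.6]{grisvard2011elliptic} ``with opening angle $\pi$'' — does not work: that theorem is about geometric corners of the domain (and at opening angle $\pi$ there is no corner, so it reduces to the smooth boundary case), not about discontinuities of the coefficients meeting the boundary, and it does not handle an oblique condition with merely $L^\infty$ lower-order terms.

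The paper avoids this entirely by first \emph{freezing} the principal and boundary coefficients at $(0,0,x')$, rewriting (\ref{Eq: FBMH equation n dim right domain (1)}) as (\ref{Eq: FBMH equation n-dim right domain (2)}) with $A_{ij}:=a_{ij}(0,0,x')$ and $B_\pm^i:=\beta_\pm^i(0,0,x')$, and pushing the freezing error into the right-hand sides $F$, $F_\pm$. The decay estimates of Lemma \ref{Lem: growth rate of DDDu} then give $F\in C^{0,\gamma}$ and $F_\pm\in C^{1,\gamma}$. The frozen coefficients and right-hand sides are reflected with prescribed parity (even for $A_{11},A_{22},B_+^1,F,F_+$; odd for $A_{12},A_{21},B_+^2$), and the step your sketch skips is the verification that these extensions stay H\"older: one checks $a_{12}(0,0,x')=0$ and $\beta_+^2(0,0,x')=0$ from Lemma \ref{Lem: local chart}(ii) (so the odd extensions are continuous, hence $C^{0,\alpha}$), and $\partial_2 F_+(0,0,x')=0$ from $u_1(0,0,x')=0$, $u_2(x_1,0,x')=0$, and (\ref{Eq: necessary condition}) (so the even extension of $F_+$ stays $C^{1,\gamma}$). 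Because the frozen $A_{ij}$, $B_+^i$ are constant in $(x_1,x_2)$, the extended problem is a genuine flat-boundary linear oblique Schauder problem with H\"older data, and \cite[Theorem 4.10]{lieberman2013oblique} closes the argument. You should replace the Grisvard/transmission step with this freezing-and-parity argument and carry out those three vanishing checks.
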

\begin{proof}
    Suppose we have $u$ with (\ref{Eq: FBMH equation n dim right domain (1)})(\ref{Eq: necessary condition}). 
    Then we rewrite the equations of $u$ by 
    \begin{equation}\label{Eq: FBMH equation n-dim right domain (2)}
	   \left\{\begin{array}{ll}
		A_{ij}u_{ij} := a_{ij}(0,0,x') u_{ij}(x_1,x_2,x') = F(x_1,x_2,x') &\qquad  \mbox{in $\interior(\Omega_{R})$}, \\
		B_{\pm}^iu_i := \beta_{\pm}^i(0,0,x') u_i(x_1,x_2,x') = F_{\pm}(x_1,x_2,x') &\qquad  \mbox{on $\bd^{\pm}\Omega_{R}$},
	   \end{array}\right.
    \end{equation}
    where $F := (a_{ij}(0,0,x')-a_{ij})u_{ij} + \tilde{f} $, $F_{\pm} = (\beta_{\pm}^i(0,0,x')-\beta_{\pm}^i) u_i + \tilde{f}_{\pm}$, and $i,j\in\{1,2\}$. 
    Similar to the proof of Theorem \ref{Thm: C2 alpha regularity in 2-dim acute angle domain}, \ref{Thm: C2 alpha regularity in n-dim acute angle domain}, we have 
    \[ F\in C^{0,\gamma}(\Omega_{r})\quad \mbox{and}\quad F_{\pm} \in C^{1,\gamma}(\bd^{\pm}\Omega_{r}),\]
    for some $\gamma\in (0,1)$ and $r\in (0,R)$. 

    By Lemma \ref{Lem: local chart}(ii), we have $(\beta_-^1,\beta_-^2) = (0,-1)$ and $\tilde{f}_- = 0$ on $\bd^-\Omega=\{x_2=0\}$. 
    Thus, $u_2(x_1,0,x')=F_-=0$, $u_{21}(x_1,0,x')=u_{2l}(x_1,0,x')=0$ ($\forall l\geq 3$), and the even extension 
    \[ u(x_1, x_2,x') :=  u(x_1, -x_2,x') \qquad \forall x_2<0, x_1\geq 0,\]
    gives a $C^{1,\alpha}$ function $u$ in $\mB_r^+(0):=\{x\in \mB^n_r(0): x_1\geq 0\}$, which is also $C^2$ in $\interior(\mB_r^+(0))$.
    Next, extend the coefficients to $\{x: x_1\geq 0, x_2<0\}$ by 
    \[ F(x_1,x_2,x') := F(x_1, - x_2,x'), \qquad F_{+}(0,x_2,x') := F_+(0,-x_2,x'); \]
    \[
    	A_{ij}(x_1,x_2,x') :=  \left\{ 
    	\begin{array}{ll}
    		{a_{ij}(0,0,x')}, & {\mbox{if $\{i,j\}$ contains even number of $2$},}\\
    		{- a_{ij}(0,0,x')}, & {\mbox{if $\{i,j\}$ contains odd number of $2$};}
    	\end{array}
    	\right.
    \]
    \[ B_{+}^i(0,x_2,x'):= \left\{ 
    	\begin{array}{ll}
    		{B_{+}^i(0,0,x')}, & {\mbox{if $i=1$},}\\
    		{- B_{+}^i(0,0,x')}, & {\mbox{if $i=2$}.}
    	\end{array}
    	\right.
    \]
    By even extending, $F\in C^{0,\gamma}(\mB_r^+(0))$, $A_{11},A_{22}\in C^{0,\alpha}(\mB_r^+(0))$, and $B_+^1\in C^{1,\alpha}(\bd\mB_r^+(0)\cap \{x_1=0\})$.
    %\[ \mbox{$F\in C^{0,\gamma}(H_+\cap \mB^{n}_{R}(0))$, $A_{11},A_{22}\in C^{0,\alpha}(H_+\cap \mB^{n}_{R}(0))$, and $B_+^1\in C^{1,\alpha}(\bd H_+\cap \mB^{n}_{R}(0))$. }\]
    Additionally, by $u_1(0,0,x')=u_2(x_1,0,x')=0$ and (\ref{Eq: necessary condition}),
    \begin{eqnarray*}
    	 \frac{\bd F_+}{\bd x_2}(0,0,x') &=&  \frac{\bd \tilde{f}_+}{\bd x_2}(0,0,x') - \sum_{i=1}^2 \frac{\bd [\beta_{\pm}^i(0,0,x')-\beta_{\pm}^i)u_{i}]}{\bd x_2}(0,0,x')
    	 \\&=&\frac{\bd \tilde{f}_+}{\bd x_2}(0,0,x') - \sum_{i=1}^2 \Big[\frac{\bd \beta_{+}^i }{\bd x_2}u_i \Big](0,0,x') %+ \sum_{i=1}^2[(\beta_{\pm}^i(0,0,x')-\beta_{\pm}^i)u_{i2}](0,0,x') % \frac{\bd \beta_+^1}{\bd x_2}u_1 (0,0,x') - \frac{\bd \beta_+^2}{\bd x_2}u_2 (0,0,x') 
    	 = \frac{\bd \tilde{f}_+}{\bd x_2}(0,0,x') =0,
    	 %\\&=& \frac{\bd \tilde{f}_+}{\bd x_2}(0,0,x') + \frac{\bd g^{11}(0,x_2,x',u(0,x_2,x'))}{\bd x_2}\Big|_{x_2=0}\cdot u_1 (0,0,x')
    	 %\\&& \quad + \frac{\bd g^{12}(0,x_2,x',u(0,x_2,x'))}{\bd x_2}\Big|_{x_2=0}\cdot u_2 (0,0,x')
    	 %\\&=& 0,
    \end{eqnarray*}
    which implies the even extended function $F_+\in C^{1,\gamma}(\bd\mB_r^+(0)\cap \{x_1=0\})$.
    %\[ \mbox{the even extended $F_+\in C^{1,\alpha}(\bd H_+\cap \mB^{n}_{R}(0))$}.\]
    By $u_2(x_1,0,x')=0$ and Lemma \ref{Lem: local chart}(ii), a direct compute shows $A_{12}(0,0,x') = A_{21}(0,0,x') = B_+^2(0,0,x') = 0$, which implies the odd extended functions $A_{12}=A_{21}\in C^{0,\alpha}(\mB_r^+(0))$, and $B_+^2\in C^{1,\alpha}(\bd\mB_r^+(0)\cap \{x_1=0\})$.
    %\[ \mbox{the odd extended $A_{12}=A_{21}\in C^{0,\alpha}(H_+\cap \mB^{n}_{R}(0))$ and $B_+^1\in C^{1,\alpha}(\bd H_+\cap \mB^{n}_{R}(0))$.}\]
    Together, we see the extended $u$ is a solution of a second order elliptic PDE with oblique boundary condition in a half-space with nice coefficients, which implies $u\in C^{2,\alpha_0}(\mB_r^+(0))$ by \cite[Theorem 4.10]{lieberman2013oblique} for some $\alpha_0\in (0,1)$. 
\end{proof}

\begin{remark}
    For the case that $n=2$ and $\theta(p)=\pi/2$, we only need the curvature condition in (\ref{ddag}) is valid at a single point $p\in\bd^E\Sigma$ to show the $C^{2,\alpha_0}$ regularity of $\Sigma$ near $p$. 
\end{remark}

%%%%%%%%%%%%%%%%%%%%%%%%%%%%%%%%%%%%%%%%%%%%%%
%%%%%%%%%%%%%%%%%%%%%%%%%%%%%%%%%%%%%%%%%%%%%%
%%%              Appendix D                %%%
%%%%%%%%%%%%%%%%%%%%%%%%%%%%%%%%%%%%%%%%%%%%%%
%%%%%%%%%%%%%%%%%%%%%%%%%%%%%%%%%%%%%%%%%%%%%% 

% BibTeX users please use one of
%\bibliographystyle{amsplain}
\bibliographystyle{abbrv}

\providecommand{\bysame}{\leavevmode\hbox to3em{\hrulefill}\thinspace}
\providecommand{\MR}{\relax\ifhmode\unskip\space\fi MR }
% \MRhref is called by the amsart/book/proc definition of \MR.
\providecommand{\MRhref}[2]{%
  \href{http://www.ams.org/mathscinet-getitem?mr=#1}{#2}}
\providecommand{\href}[2]{#2}

\bibliography{reference.bib}   % name your BibTeX data base

% Non-BibTeX users please use
%\begin{thebibliography}{}
%
% and use \bibitem to create references. Consult the Instructions
% for authors for reference list style.
%
%\bibitem{RefJ}
% Format for Journal Reference
%Author, Article title, Journal, Volume, page numbers (year)
% Format for books
%\bibitem{RefB}
%Author, Book title, page numbers. Publisher, place (year)
% etc
%\end{thebibliography}

%\bibliographystyle{amsbook}
%\begin{thebibliography}{99}

%\bibitem{allard1972first}
%%W. Allard, \textit{First variation of a varifold}, Annals of Mathematics Second Series, Vol. 95, No. 3 (May, 1972), pp. 417-491.

%\bibitem{almgren1962homotopy}F. Almgren, \textit{The homotopy groups of the integral cycle groups}, Topology  (1962), 257--299.

%\bibitem{almgren1965theory}F. Almgren, \textit{The theory of varifolds}, Mimeographed notes, Princeton (1965).

%\end{thebibliography}

\end{document}